\title[]{The transfer map of free loop spaces}
\author{John A. Lind}
\author{Cary Malkiewich}
\numberwithin{equation}{section}
\newtheorem{theorem}[equation]{Theorem}
\newtheorem{proposition}[equation]{Proposition}
\newtheorem{lemma}[equation]{Lemma}
\newtheorem{corollary}[equation]{Corollary}
\theoremstyle{definition}
\newtheorem{definition}[equation]{Definition}
\newtheorem{remark}[equation]{Remark}
\newtheorem{construction}[equation]{Construction}
\newtheorem{example}[equation]{Example}
\newcommand{\bL}{\mathbf{L}}
\newcommand{\bbN}{\mathbb{N}}\newcommand{\bbS}{\mathbb{S}}
\newcommand{\cB}{\mathcal{B}}\newcommand{\cC}{\mathcal{C}}\newcommand{\cD}{\mathcal{D}}\newcommand{\cE}{\mathcal{E}}\newcommand{\cM}{\mathcal{M}}\newcommand{\cN}{\mathcal{N}}\newcommand{\cP}{\mathcal{P}}\newcommand{\cS}{\mathcal{S}}\newcommand{\cT}{\mathcal{T}}
\DeclareMathOperator{\id}{id}
\DeclareMathOperator{\eval}{eval}
\DeclareMathOperator{\tr}{tr}
\DeclareMathOperator{\ho}{ho}
\DeclareMathOperator{\ob}{ob}
\DeclareMathOperator{\op}{op}
\DeclareMathOperator{\Map}{Map}
\DeclareMathOperator{\Mod}{Mod}
\renewcommand{\phi}{\varphi}
\providecommand{\sma}{\wedge}
\providecommand{\osma}{\,\overline{\wedge}\,}
\providecommand{\arr}{\longrightarrow}
\providecommand{\oarr}[1]{\overset{ #1 }{\longrightarrow}}
\providecommand{\xarr}[1]{\xrightarrow{ #1 }}
\renewcommand{\_}[1]{\underline{ #1 }}
\providecommand{\oline}[1]{\overline{ #1 }}
\newcommand{\lan}[1]{\langle {#1} \rangle}
\providecommand{\THH}{\mathrm{THH}}
\newcommand{\cyc}{\textup{cyc}}
\newcommand{\gp}{\textup{gp}}
\newcommand{\Z}{\mathbf{Z}} 
\newcommand{\R}{\mathbf{R}} 
\newcommand{\ra}{\arr}
\newcommand{\Emb}{\textup{Emb}}
\newcommand{\simar}{\overset{\sim}{\ra}}
\newcommand{\hofib}{\textup{hofib}\,}
\providecommand{\Sp}[2]{{}_{#1}\cS p_{#2}}
\providecommand{\Ex}{\cE\mathrm{x}}
\providecommand{\Top}{\cT\mathrm{op}}
\providecommand{\Bimod}{\cB\mathrm{imod}}
\providecommand{\coev}{\mathrm{coev}}
\providecommand{\eval}{\mathrm{eval}}
\providecommand{\ev}{\mathrm{ev}}
\providecommand{\tr}{\mathrm{tr}}
\providecommand{\Perf}{\cP\mathrm{erf}}
\providecommand{\Diff}{\mathrm{Diff}}
\providecommand{\B}{\mathbf{B}}
\providecommand{\E}{\mathbf{E}}
\newsavebox{\@brx}
\newcommand{\llangle}[1][]{\savebox{\@brx}{\(\m@th{#1\langle}\)}%
  \mathopen{\copy\@brx\kern-0.5\wd\@brx\usebox{\@brx}}}
\newcommand{\rrangle}[1][]{\savebox{\@brx}{\(\m@th{#1\rangle}\)}%
  \mathclose{\copy\@brx\kern-0.5\wd\@brx\usebox{\@brx}}}
\newcommand{\llan}[1]{\llangle {#1} \rrangle} 
\tikzset{ed/.style={auto,inner sep=0pt,font=\scriptsize}} 
\tikzset{>=stealth'}
\colorlet{myblue}{blue!40!white}
\colorlet{mydarkblue}{blue!50!white}
\colorlet{myred}{red!35!white}
\colorlet{mygreen}{green!30!white}
\colorlet{myyellow}{yellow!10!white}
\colorlet{myorange}{orange!60!white}
\tikzset{bluefill/.style={fill=myblue}}
\tikzset{darkbluefill/.style={fill=mydarkblue}}
\tikzset{redfill/.style={fill=myred}}
\tikzset{greenfill/.style={fill=mygreen}}
\tikzset{yellowfill/.style={fill=myyellow}}
\tikzset{orangefill/.style={fill=myorange}}
\def\bgcylinder#1#2#3#4#5#6{
  \def\cylempty{}\def\cylfrontcolor{#5}\def\cylbackcolor{#6}
  \begin{pgfonlayer}{background}
    \ifx\cylfrontcolor\cylempty\draw\else\fill[fill=my#5]\fi
    (#1) coordinate (dl)
    -- ++(0,#2) node[coordinate] (ul) {} 
    arc (-180:0:#3 and #4) coordinate (ur)
    -- ++(0,-#2) node[coordinate] (dr) {}
    arc (0:-180:#3 and #4);
    \ifx\cylbackcolor\cylempty\draw\else\fill[fill=my#6!80!black]\fi
    ($(ul)!.5!(ur)$) node[coordinate] (top) {} ellipse (#3 and #4);
    \path (dl) arc (-180:-90:#3 and #4) node[coordinate] (bot) {};
    \clip (dl) -- (ul) arc (-180:0:#3 and #4) -- (dr) arc (0:-180:#3 and #4);
  \end{pgfonlayer}
  \clip (dl) -- (ul) arc (-180:0:#3 and #4) -- (dr) arc (0:-180:#3 and #4);
  \path (ul) ++(-0.1,0.1) coordinate (ul');
  \path (ur) ++(0.1,0.1) coordinate (ur');
  \path (dl) ++(-0.1,-0.1) coordinate (dl');
  \path (dr) ++(0.1,-0.1) coordinate (dr');
  \path (top) ++(0,0.1) coordinate (top');
  \path (bot) ++(-0,-0.1) coordinate (bot');
}
\begin{document}

\maketitle

\begin{abstract}
For any perfect fibration $E \arr B$, there is a ``free loop transfer map'' $LB_+ \arr LE_+$, defined using topological Hochschild homology. We prove that this transfer is compatible with the Becker-Gottlieb transfer, allowing us to extend a result of Dorabia\l{}a and Johnson on the transfer map in Waldhausen's $A$-theory.
In the case where $E \arr B$ is a smooth fiber bundle, we also give a concrete geometric model for the free loop transfer in terms of Pontryagin-Thom collapse maps.  We recover the previously known computations of the free loop transfer due to Schlichtkrull, and make a few new computations as well.
\end{abstract}

\setcounter{tocdepth}{1}
\tableofcontents

\section{Introduction}


Suppose that $f \colon E \arr B$ is a fibration of topological spaces, and that each fiber of $f$ is a retract up to homotopy of a finite CW complex. We call such a fibration a \emph{perfect fibration}. We study a stable, functorial, wrong-way map between the free loop spaces of $E$ and $B$
\begin{equation}\label{eq:transfer_intro}
\xymatrix{ \Sigma^\infty_+ LB \ar[r]^-{\tau_{\THH}} & \Sigma^\infty_+ LE }
\end{equation}
that is built from $f$ using topological Hochschild homology (THH). This map is linked by the topological Dennis trace \cite{BHM} to the transfer on Waldhausen's algebraic $K$-theory of spaces
\[ \xymatrix{ A(B) \ar[r]^-{\tau_A} & A(E) } \]
that plays a central role in the topological Riemann Roch theorem \citelist{\cite{DWW} \cite{williams}}.

We study two questions about the free loop transfer $\tau_\THH$. First, how does it relate to the classical transfer map 
\begin{equation*}
\xymatrix{ \Sigma^\infty_+ B \ar[r]^-{\tau} & \Sigma^\infty_+ E }
\end{equation*}
constructed by Becker and Gottlieb \cite{BG76}?
Second, how may we describe the free loop transfer geometrically, and compute its effect on cohomology?

Our answer to the first question is as follows. We let $c: X \arr LX$ denote inclusion of constant loops, and $e_0: LX \arr X$ denote evaluation of a free loop at the basepoint. 
\begin{theorem}\label{thm:intro_becker_gottlieb_compatibility}
For any perfect fibration $f$, the composite
\[ \xymatrix{
\Sigma^{\infty}_+ B \ar[r]^-{c} & \Sigma^{\infty}_+ LB \ar[r]^-{\tau_{\THH}} & \Sigma^{\infty}_+ LE \ar[r]^-{e_0} & \Sigma^{\infty}_+ E
} \]
is naturally homotopic to the Becker-Gottlieb transfer $\tau$.
\end{theorem}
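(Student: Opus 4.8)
The plan is to show that the evaluation map $e_0$ carries $\tau_{\THH}$ to the Becker--Gottlieb transfer --- that is, that the square
\[ \xymatrix{
\Sigma^{\infty}_+ LB \ar[r]^-{\tau_{\THH}} \ar[d]_-{e_0} & \Sigma^{\infty}_+ LE \ar[d]^-{e_0} \\
\Sigma^{\infty}_+ B \ar[r]^-{\tau} & \Sigma^{\infty}_+ E
} \]
commutes up to homotopy --- and then to precompose with $c$. Since $e_0 \circ c = \id$, precomposing the identity $e_0 \circ \tau_{\THH} = \tau \circ e_0$ with $c \colon \Sigma^{\infty}_+ B \arr \Sigma^{\infty}_+ LB$ immediately gives $e_0 \circ \tau_{\THH} \circ c \simeq \tau$, and naturality of all of the maps involved upgrades this to a natural homotopy. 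So the whole content is the homotopy-commutativity of the square.

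To establish it I would place both transfers in a common framework. On one side, $\tau_{\THH}$ is built, using topological Hochschild homology, from the fact that a perfect fibration $f$ determines a dualizable $1$-cell $[f]$ in the bicategory of parametrized spectra --- its underlying object being $\Sigma^{\infty}_{B+}E$, which is fiberwise dualizable precisely because $f$ is perfect --- and $\tau_{\THH}$ is the bicategorical trace of the identity $2$-cell of $[f]$ with respect to the ``$\THH$-shadow'' $\langle -\rangle_{\THH}$, which satisfies $\langle U_X\rangle_{\THH} \simeq \THH(\Sigma^{\infty}_+\Omega X) \simeq \Sigma^{\infty}_+ LX$. On the other side, the Becker--Gottlieb transfer of $f$ is the trace of the \emph{same} $2$-cell, now taken with respect to a coarser shadow $\langle -\rangle_0$ satisfying $\langle U_X\rangle_0 \simeq \Sigma^{\infty}_+ X$, with the diagonal of $E$ entering automatically through the composition and unit/counit $2$-cells; this identification of $\tau$ as a trace is the bicategorical form, due to Ponto, of the fiberwise duality picture of Dwyer--Weiss--Williams \cite{DWW}. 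Finally, the evaluation maps $e_0 \colon \Sigma^{\infty}_+ LX \arr \Sigma^{\infty}_+ X$ assemble into a map of shadows $\langle -\rangle_{\THH} \Rightarrow \langle -\rangle_0$; this is most transparent on cyclic bar constructions, where $e_0$ is the map $\THH(\Sigma^{\infty}_+\Omega X) \arr \Sigma^{\infty}_+ X$ obtained by applying the augmentation $\Sigma^{\infty}_+\Omega X \arr \mathbb{S}$ to the zeroth smash factor. The square then commutes because the bicategorical trace is natural in maps of shadows.

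The main obstacle I expect is the bookkeeping in this comparison: identifying the construction of $\tau_{\THH}$ from the body of the paper with the abstract trace under $\langle U_X\rangle_{\THH} \simeq \Sigma^{\infty}_+ LX$; identifying the classical Becker--Gottlieb transfer with the trace for $\langle -\rangle_0$ of the \emph{same} dualizability datum, in particular seeing that the two diagonal maps agree; and checking that $e_0$ is compatible with the cyclicity isomorphisms of both shadows. By contrast the concluding steps --- $e_0 \circ c = \id$ and the naturality statement --- are routine. A bicategory-free alternative would be an induction: $\tau$ and $e_0 \circ \tau_{\THH} \circ c$ are both additive along fiberwise cofiber sequences of perfect fibrations and both equal $\Sigma^{\infty}_+$ of the zero section for a trivial bundle $B \times D^n \arr B$, and every perfect fibration is, up to fiberwise stable equivalence, built from trivial bundles by finitely many fiberwise cofiber sequences and retracts; the delicate point there is to verify that $e_0 \circ \tau_{\THH} \circ c$ depends only on the fiberwise stable homotopy type of $\Sigma^{\infty}_{B+}E$.
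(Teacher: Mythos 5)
The proposal's central claim is false. You reduce the theorem to the assertion that the square
\[
\xymatrix{
\Sigma^{\infty}_+ LB \ar[r]^-{\tau_{\THH}} \ar[d]_-{e_0} & \Sigma^{\infty}_+ LE \ar[d]^-{e_0} \\
\Sigma^{\infty}_+ B \ar[r]^-{\tau} & \Sigma^{\infty}_+ E
}
\]
commutes up to homotopy, and then precompose with $c$. But the paper explicitly notes, immediately after Corollary \ref{cor:intro_becker_gottlieb_smooth_compatibility}, that ``the corresponding square with evaluation maps $e_0$ fails to commute, even for covering spaces.'' So the statement you are trying to prove is strictly stronger than the theorem and is simply wrong. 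The obstruction is exactly the point you flag as ``bookkeeping'' at the end: $e_0$ does \emph{not} give a map of shadows compatible with the cyclicity isomorphisms. In the language of \S\ref{sec:comparison}, the cyclic isomorphism $\theta$ for the shadow on $\Ex$ and the symmetry isomorphism $\gamma$ in the symmetric monoidal category $\Sp{}{B}=\Ex_B(B,B)$ (where the Becker--Gottlieb trace lives) do not agree in general; the discrepancy measures the monodromy of the relevant parametrized spectra around free loops in $B$ (Figure \ref{magritte} in the paper). Thus the step ``the bicategorical trace is natural in maps of shadows'' has no valid input here, and without it there is no reason the square should commute --- and indeed it does not.

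The paper avoids this precisely by precomposing with $c$ \emph{before} making the comparison, not after. Lemma \ref{lemma:DOES_commute} shows that, while Figure \ref{magritte} does not commute, it does commute after precomposing with the unit $\eta$ (constant loops), because on constant loops the monodromy is trivial. That is the key idea missing from your proposal: you must restrict to constant loops to make $\gamma$ and $\theta$ match. The paper also differs in a second respect: rather than describing $\tau$ as a trace for a ``coarse shadow,'' it expresses the Becker--Gottlieb pretransfer via Costenoble--Waner duality in the fiberwise bicategory $\Ex_B$ (Proposition \ref{prop:pretransfer_via_duality}), and then pushes this 2-cell forward along the oplax morphism $\iota_!\colon\Ex_B\to\Ex$ to compare with the Reidemeister trace, proving the stronger Theorem \ref{thm:stronger_compatibility} (that $c\circ\tau$ agrees with $\tau_{\THH}\circ c$ followed by $LE_+\to P_+$). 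Your ``bicategory-free alternative'' also does not obviously close: every finitely dominated fiber need not be built from finite CW complexes by cofiber sequences alone (one must handle retracts, and additivity along retracts is exactly where the Becker--Schultz axiomatization used by Dorabia{\l}a--Johnson runs into difficulties, which is why their result applies to a smaller class of fibrations).
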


As we explain in \S\ref{subsec:proof_cor}, this theorem implies a similar statement regarding $A$-theory.  Let $i$ and $p$ denote the maps giving Waldhausen's splitting of $\Sigma^\infty_+ X$ off of $A(X)$.

\begin{corollary}\label{cor:intro_a_theory}
For any perfect fibration $f$, the composite
\[ \xymatrix{
\Sigma^{\infty}_+ B \ar[r]^-{i} & A(B) \ar[r]^-{\tau_{A}} & A(E) \ar[r]^-{p} & \Sigma^{\infty}_+ E
} \]
is naturally homotopic to the Becker-Gottlieb transfer $\tau$.
\end{corollary}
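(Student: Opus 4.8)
The plan is to deduce the corollary from Theorem~\ref{thm:intro_becker_gottlieb_compatibility} by threading the topological Dennis trace between $A$-theory, $\THH$, and free loop spaces, so that no geometric input beyond Theorem~\ref{thm:intro_becker_gottlieb_compatibility} is needed. Recall that for every space $X$ there is a natural equivalence $\THH(\bbS[\Omega X]) \simeq \Sigma^{\infty}_+ LX$ (taking $\Omega X$ to be the loop groupoid if $X$ is disconnected), under which the Dennis trace \cite{BHM} becomes a natural map $\tr\colon A(X) \arr \Sigma^{\infty}_+ LX$. Two standard identifications pin down Waldhausen's splitting in these terms. First, the composite $\Sigma^{\infty}_+ X \xarr{i} A(X) \xarr{\tr} \Sigma^{\infty}_+ LX$ is homotopic to the inclusion of constant loops $c$: the image of $i$ is the unit object, its Dennis trace is the unit of $\THH(\bbS[\Omega X])$, and under the equivalence above the unit is carried to the constant loops. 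Second, since $e_0\circ c \simeq \id$, the map $e_0\circ\tr\colon A(X) \arr \Sigma^{\infty}_+ X$ is a retraction of $i$, and it is the standard model for Waldhausen's retraction $p$; I will take $p = e_0\circ\tr$ throughout.

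The one real input needed is the compatibility of the Dennis trace with the two transfer maps, namely that the square
\[ \xymatrix{
A(B) \ar[r]^-{\tau_{A}} \ar[d]_-{\tr} & A(E) \ar[d]^-{\tr} \\
\Sigma^{\infty}_+ LB \ar[r]^-{\tau_{\THH}} & \Sigma^{\infty}_+ LE
} \]
commutes up to natural homotopy. This ought to fall out of the construction of $\tau_{\THH}$ itself: both $\tau_{A}$ and $\tau_{\THH}$ are assembled from the same bimodule data attached to $f$ (an $\bbS[\Omega E]$-$\bbS[\Omega B]$-bimodule, equivalently a coefficient system of finite retractive spaces), and the Dennis trace is natural for the Morita-type transfers built from such data, so once the point-set models are matched up the square commutes. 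I expect this verification — reconciling the transfer on Waldhausen categories of retractive spaces with the transfer on $\THH$, keeping track of the $S^1$-equivariance of $\THH(\bbS[\Omega X]) \simeq \Sigma^{\infty}_+ LX$, and using the relevant Morita invariance — to be the technical heart of the argument, and it is what \S\ref{subsec:proof_cor} must supply.

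Granting these two ingredients, the corollary is a short diagram chase. Using $p = e_0\circ\tr$, naturality of the trace under the transfers, the identification $\tr\circ i \simeq c$, and finally Theorem~\ref{thm:intro_becker_gottlieb_compatibility}, we obtain
\begin{align*}
p\circ\tau_{A}\circ i &\simeq e_0\circ\tr\circ\tau_{A}\circ i \simeq e_0\circ\tau_{\THH}\circ\tr\circ i \\
&\simeq e_0\circ\tau_{\THH}\circ c \simeq \tau.
\end{align*}
Naturality in $f$ follows because each of the four homotopies used is natural in $f$: the first and third are properties of Waldhausen's splitting, the second is the trace--transfer square, and the fourth is Theorem~\ref{thm:intro_becker_gottlieb_compatibility}. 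Thus the main obstacle is not this formal chase but the construction of the trace--transfer square together with the bookkeeping identifying Waldhausen's abstract retraction $p$ with $e_0\circ\tr$; once these are in hand the corollary, and with it the extension of the Dorabia\l{}a--Johnson result to all perfect fibrations, follows immediately from Theorem~\ref{thm:intro_becker_gottlieb_compatibility}.
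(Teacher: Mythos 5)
Your proposal is correct and follows essentially the same approach as the paper: the paper likewise reduces the corollary to the commutativity of the three cells of the diagram in \S\ref{subsec:proof_cor} --- the left triangle ($\tr\circ i\simeq c$), the middle square (Dennis trace intertwines $\tau_A$ with $\tau_\THH$), and the right triangle identifying Waldhausen's $p$ with $e_0\circ\tr$ --- and then invokes Theorem~\ref{thm:intro_becker_gottlieb_compatibility}. The technical content you correctly flagged as the crux is supplied in the paper by appealing to the Blumberg--Mandell model of the Dennis trace on spectral Waldhausen categories for the middle square, and by Waldhausen's stabilization argument for reconciling the two definitions of $p$.
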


\noindent The corollary resolves a question about the $A$-theory transfer that has remained open in the general case of a fibration with finitely dominated fibers. Dorabia\l{}a and Johnson \cite{DJ} proved this statement for a smaller class of fibrations, using the Becker-Schultz axiomatization of the Becker-Gottlieb transfer. After projecting the space $E$ to a point, our technique also recovers a result of C. Douglas \cite{D}.

For the second question, we provide a geometric model for the free loop transfer $\tau_{\THH}$ when the fibration $f$ is a smooth fiber bundle with closed manifold fiber $M$. We construct the intermediate space $P$ of paths in $E$ whose endpoints lie in the same fiber of $f$:
\begin{figure}[H]\label{fig:intro_PplusEBLB}
\centering
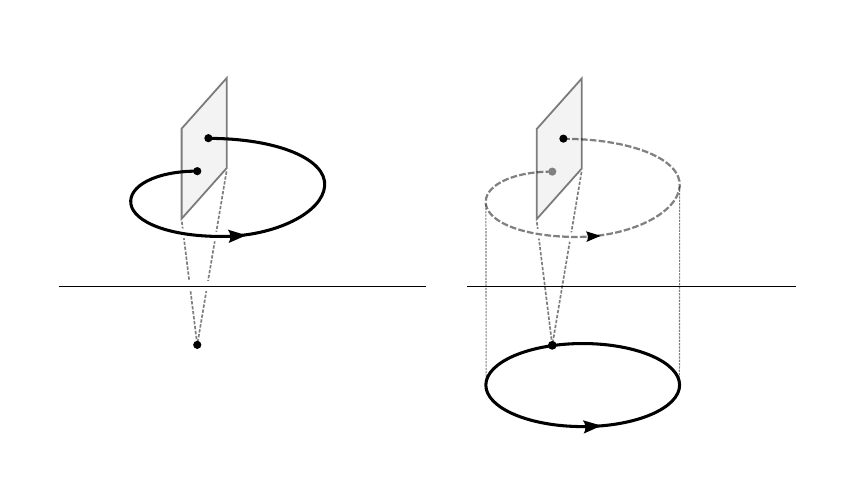
\caption{Generic points in two homotopy equivalent models of $P$.}
\label{fig:two_models_P}
\end{figure}\vspace{-2em}

As indicated in Figure \ref{fig:two_models_P}, the space $P$ is homotopy equivalent to the total space of the bundle $E \times_{B} LB \arr LB$ with fiber $M$. In addition, $P$ contains $LE$ as a subspace, with tubular neighborhood given by a pullback of the vertical tangent bundle $TM$ of the fiber bundle $f$. The free loop transfer of $f$ then decomposes into two different dimension-shifting transfers whose homological behavior is well understood:
\begin{theorem}\label{thm:intro_geometric_THH_transfer} If $f$ is a smooth fiber bundle with compact fibers, the free loop transfer for $f$ is the composite
\[ \xymatrix{ \Sigma^\infty_+ LB \ar[r] & P^{-TM} \ar[r] & \Sigma^\infty_+ LE } \]
of the Pontryagin-Thom umkehr map associated to the bundle $E \times_{B} LB \arr LB$ and the desuspension by $TM$ of the collapse of $P$ onto a tubular neighborhood of $LE$.
\end{theorem}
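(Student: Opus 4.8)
\emph{Proof sketch.}
The plan is to unwind the definition of $\tau_{\THH}$ as a bicategorical trace and then replace the abstract dualizability data appearing in it by the geometric model that is available when $f$ is a smooth fiber bundle. Recall that $\tau_{\THH}$ is built from the dualizability of the fiberwise suspension spectrum $\Sigma^\infty_B E_+$: one applies the shadow --- which is topological Hochschild homology, and which carries the unit $1$-cell over $B$ to $\Sigma^\infty_+ LB$ and a certain $1$-cell built from $f$ to $\Sigma^\infty_+ LE$ --- to the coevaluation and evaluation maps witnessing this dualizability. The first step is therefore to record, in a sufficiently rigid point-set model of parametrized spectra, the fiberwise Atiyah duality for a smooth fiber bundle: a choice of fiberwise embedding $E \hookrightarrow B \times \mathbf{R}^N$ identifies the dual of $\Sigma^\infty_B E_+$ with the fiberwise Thom spectrum $\Sigma^\infty_B E^{-TM}$ of minus the vertical tangent bundle, presents the coevaluation as the associated fiberwise Pontryagin--Thom collapse, and presents the evaluation as the collapse built from the fiberwise diagonal $E \hookrightarrow E \times_B E$, whose normal bundle is $TM$.

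The second step is to apply the shadow to this geometric data and to identify the output. The essential point is that the shadow ``sews up a circle'': on the unit over $B$ it produces $\Sigma^\infty_+ LB$; on the Thom-twisted pushforward of the fiberwise diagonal it produces $\Sigma^\infty_+ LE$; and on the untwisted pushforward built from $E \times_B E$ it produces $\Sigma^\infty_+ P$, together with the equivalence $P \simeq E \times_B LB$ and the identification of the normal bundle of $LE \subset P$ with the pullback of $TM$ that were indicated in the Introduction. Under these identifications, the shadow of the coevaluation map becomes precisely the Pontryagin--Thom umkehr map $\Sigma^\infty_+ LB \to P^{-TM}$ associated to the smooth bundle $E \times_B LB \to LB$ (a fiberwise embedding of $E \times_B LB$ over $LB$ being supplied by the one already chosen over $B$), and the shadow of the evaluation, suitably composed, becomes the desuspension by $TM$ of the collapse of $P$ onto a tubular neighborhood of $LE$. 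Assembling these according to the trace formula that defines $\tau_{\THH}$ then yields exactly the composite in the statement. Structurally this is the free-loop analogue of the classical factorization of the Becker--Gottlieb transfer through $E^{-TM}$, with the diagonal $E \subset E \times_B E$ replaced by the inclusion of constant loops $LE \subset P$.

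The hard part will be making the second step honest: verifying that applying the shadow to the abstract duality morphisms literally reproduces Pontryagin--Thom collapse maps, rather than merely something homotopic to them, while simultaneously keeping track of the cyclic, $S^1$-equivariant structure that distinguishes $\THH$ from an ordinary smash-power construction and of the accumulation of vertical-tangent-bundle Thom twists as the collapse maps are threaded through the cyclic bar construction. I expect this forces one to work throughout in a strictified framework --- say orthogonal parametrized spectra with a fixed ambient Euclidean space and functorial fiberwise tubular neighborhoods --- in which the fiberwise embeddings, the Thom collapses, and the cyclic bar construction computing $\THH$ can all be compared in each simplicial degree, and also to check separately that the equivalence $P \simeq E \times_B LB$ is compatible with the circle action, so that the umkehr map is genuinely the one attached to the bundle over $LB$. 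Once the geometric description is in place, the cohomological computations recovering Schlichtkrull's results follow from routine Thom-isomorphism bookkeeping applied to the two collapse maps.
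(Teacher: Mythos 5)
Your overall strategy matches the paper's: reduce to the Reidemeister trace, model the Costenoble--Waner dual pair geometrically when $f$ is a smooth bundle, and track what happens to the coevaluation, cyclic isomorphism, and evaluation after taking shadows. You also correctly anticipate the need for a rigid orthogonal-spectra model with fixed embeddings and $\epsilon$-tubular neighborhoods. However, there are two places where your account misdescribes what is actually going on. First, the dual pair that feeds into the Reidemeister trace is $(S_f, D S_f)$ with $S_f$ a 1-cell in $\Ex(B,E)$, i.e.\ a spectrum over $B\times E$; its dual $\E^{-TM}$ lives over $E\times B$. This is Costenoble--Waner duality in the bicategory $\Ex$, not fiberwise dualizability of $\Sigma^\infty_B E_+ = f_! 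S_E$ in $\ho\Sp{}{B}$. The latter exists too, but it is the input to the Becker--Gottlieb transfer, and the distinction matters: the shadow functor is bicategorical and sees the $B$- and $E$-legs separately before sewing them up. Second, in the paper the geometric evaluation map is \emph{not} the tubular-neighborhood collapse built from the fiberwise diagonal $E\hookrightarrow E\times_B E$. It is a scanning map (Construction \ref{construction_gamma} and \eqref{eq:eval_model}), chosen because the scanning equivalence identifies $-\odot_E \E^{-TM}$ with the derived right adjoint of $-\odot_B S_f$ and hence pins down the dual by the uniqueness argument of Proposition \ref{prop:geometric_duality}. The ``collapse onto the tubular neighborhood of $LE$ in $P$'' only appears at the level of shadows, after a further explicit homotopy that replaces vector addition $t+v$ by $v-\exp_m(-t)$ and then verifies, via estimates on the second derivative of geodesics in the $\epsilon$-tube, that the straight-line interpolation respects the modified suspension spectrum $\Sigma^{\infty,\epsilon}$.

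This leads to the more substantive gap: the part you flag as ``the hard part'' is real, but the specific concerns you raise are not the ones that arise. The $S^1$-equivariance of $P\simeq E\times_B LB$ plays no role here --- the shadow takes values in the non-equivariant stable homotopy category, and the proof of Theorem \ref{thm:geometric_THH_transfer} never invokes a circle action. Likewise no Thom twists ``accumulate through the cyclic bar construction'': after Theorem \ref{thm:reid_trace_is_THH_transfer} the trace is a three-step composite (coevaluation, cyclic isomorphism, evaluation), and the bar construction has been absorbed into the identification of the shadow with $\THH$. What actually requires care is (i) showing the strict circle products and shadows compute the derived ones (Lemma \ref{lem:only_one_fibration}, Proposition \ref{prop:model_for_odot_shadow}), which needs the fibration hypotheses and the $B^I$-fattening; (ii) verifying that the scanning map realizes the counit of the $((1\times f)^*, (1\times f)_*)$-adjunction and that the Pontryagin--Thom collapse, pushed through scanning, is homotopic to the unit (the two explicit homotopies at the end of Proposition \ref{prop:geometric_duality}); and (iii) the osculating-sphere estimate in Theorem \ref{thm:geometric_THH_transfer} that lets one pass from the scanning formula $v-i(\alpha(0))$ to the radial projection giving the literal collapse onto the $\epsilon$-neighborhood of $LE$ in $P$. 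Your sketch records the intended endpoint but does not attempt any of this, and it starts from a slightly wrong picture of what the evaluation map is.
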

\noindent Theorem \ref{thm:intro_geometric_THH_transfer} lets us easily characterize the free loop transfer for covering spaces.
\begin{corollary}\label{cor:intro_covering_spaces}
When $f: E \ra B$ is a finite-sheeted covering space, the free loop transfer is the Becker-Gottlieb transfer for the covering space $Lf: LE \ra LB$.
\end{corollary}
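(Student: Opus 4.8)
The plan is to specialize Theorem~\ref{thm:intro_geometric_THH_transfer}. A finite-sheeted covering space $f\colon E\ra B$ is in particular a perfect fibration and a smooth fiber bundle with compact fibers, so the theorem applies; its fiber $M$ is a finite discrete set, so the vertical tangent bundle $TM$ is the zero bundle. Hence $P^{-TM}=\Sigma^\infty_+ P$, the desuspension by $TM$ is vacuous, and a tubular neighborhood of $LE$ in $P$ (being a pullback of $TM=0$) is just $LE$ itself. Theorem~\ref{thm:intro_geometric_THH_transfer} therefore presents $\tau_{\THH}$ as a composite
\[ \xymatrix{ \Sigma^\infty_+ LB \ar[r] & \Sigma^\infty_+ P \ar[r] & \Sigma^\infty_+ LE } \]
whose first map is the Pontryagin--Thom umkehr of the finite covering $E\times_B LB\ra LB$ and whose second map is the collapse of $P$ onto the subspace $LE$.

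Next I would identify these two maps with pieces of the Becker--Gottlieb transfer of $Lf$. Under the homotopy equivalence $P\simeq E\times_B LB$, the subspace $LE$ consists of the pairs $(e,\bar\gamma)$ for which the lift of $\bar\gamma$ starting at $e$ closes up; since the fiber is discrete this condition is locally constant, so $LE$ is a union of path components of $P$, the collapse map is simply the projection $\Sigma^\infty_+ P\to\Sigma^\infty_+ LE$ onto that summand, and $LE\hookrightarrow P$ is an open--closed inclusion. Evaluation at the basepoint and $f$ then exhibit $Lf$ as the composite $LE\hookrightarrow E\times_B LB\xarr{q} LB$ of this open--closed inclusion with the covering projection $q$. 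Both maps are coverings (the inclusion having one-point fibers over the components of $LB$ that meet the image of $Lf$ and empty fibers elsewhere), and for coverings the Pontryagin--Thom umkehr agrees with the classical stable transfer, which is the Becker--Gottlieb transfer. So by functoriality of the Becker--Gottlieb transfer under composition, $\tau^{BG}_{Lf}$ is the composite of the covering transfer of $q$ with the projection onto the $LE$-summand---precisely the composite above.

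The only delicate point is the bookkeeping over path components of $LB$ on which $Lf$ (or $q$) has empty fibers; I expect this to be absorbed by the standard conventions for transfers of not-necessarily-surjective coverings, equivalently by additivity of the transfer over disjoint unions of sheets, rather than requiring any substantive argument. Once $TM$ is recognized as the zero bundle, the corollary is a direct translation of Theorem~\ref{thm:intro_geometric_THH_transfer}.
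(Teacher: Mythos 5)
Your argument is correct and follows essentially the same route as the paper's: specialize Theorem~\ref{thm:intro_geometric_THH_transfer}, observe that the discrete fiber makes $TM=0$, identify the first map with the covering transfer of $E\times_B LB\ra LB$ and the second with the projection deleting the complement of $LE$, and conclude that the composite is the transfer for $Lf$. Where the paper asserts that the composite of a transfer and such a deletion is again a transfer and calls it "easy to check," you make the same point by viewing the deletion as the transfer of the open--closed inclusion $LE\hookrightarrow P$ and invoking composability of covering-space transfers; these are two phrasings of the identical observation.
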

\noindent In the case of a covering map $BK \arr BG$, where $G$ is a discrete group and $K < G$ is a subgroup of finite index, this result is analogous to an observation of Bentzen-Madsen about ordinary Hochschild homology \cite{BeMa}*{Prop. 1.4} and recovers Schlichtkrull's computations of the THH transfer \cite{schlichtkrull}. 

Theorem \ref{thm:intro_geometric_THH_transfer} is also useful for cohomology computations:
\begin{proposition}\label{prop:calculation_bs1}
The free loop transfer for the fibration $BS^1 \ra BS^3$ associated to the inclusion of topological groups $S^1 \arr S^3$ induces on cohomology
\[ H^q(LBS^1) \ra H^q(LBS^3) \]
a map of degree $2$ when $q \equiv 0$ mod 4, and an isomorphism when $q \equiv 3$ mod 4.
\end{proposition}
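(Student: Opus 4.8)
The plan is to deduce both cases from Theorems~\ref{thm:intro_becker_gottlieb_compatibility} and~\ref{thm:intro_geometric_THH_transfer} together with elementary computations in the cohomology of free loop spaces; all signs below are immaterial for the statement. Fix generators $x\in H^2(BS^1)$ and $y\in H^4(BS^3)$, so that $f^*(y)=x^2$; the fiber of $f$ is $M=S^3/S^1\cong\CP^1\cong S^2$, and since the vertical tangent bundle $TM$ restricts to $T\CP^1$ on a fiber while $H^2(BS^3)=0$, we have $e(TM)=2x$ in $H^2(BS^1)$. The Serre spectral sequences of $\Omega BS^1\to LBS^1\to BS^1$ and $\Omega BS^3\to LBS^3\to BS^3$ collapse: the constant-loop sections make $e_0^*$ split injective, so no differential can hit the bottom row, and these are the only possible differentials because the fibers $S^1=\Omega BS^1$ and $S^3=\Omega BS^3$ have cohomology concentrated in two degrees. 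Thus $H^*(LBS^1)=\mathbf{Z}[x]\otimes\Lambda(s)$ with $|s|=1$ and $H^*(LBS^3)=\mathbf{Z}[y]\otimes\Lambda(t)$ with $|t|=3$, where $s$ and $t$ restrict to fundamental classes of the fibers; in particular $H^q(LBS^3)=0$ for $q\equiv 1,2\pmod 4$, so only the congruence classes $0$ and $3$ carry content.

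For $q=4m$ I apply Theorem~\ref{thm:intro_becker_gottlieb_compatibility}, which gives $c^*\circ\tau_\THH^*\circ e_0^*=\tau^*$ as maps $H^*(BS^1)\to H^*(BS^3)$. In degree $4m$ both $e_0^*$ and $c^*$ are isomorphisms $\mathbf{Z}\to\mathbf{Z}$ (there is no $t$-term in degree $4m$), so $\tau_\THH^*$ in degree $4m$ is computed by the Becker--Gottlieb transfer $\tau^*$. By the standard cohomological formula for that transfer, $\tau^*(\alpha)=f_!(\alpha\cup e(TM))$ with $f_!\colon H^*(BS^1)\to H^{*-2}(BS^3)$ fiber integration; since $f_!(x^{2k+1})=y^k$ and $f_!(x^{2k})=0$, we get $\tau^*(x^{2m})=f_!(2x^{2m+1})=2y^m$. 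Hence $\tau_\THH^*\colon H^{4m}(LBS^1)\to H^{4m}(LBS^3)$ is multiplication by $2$.

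For $q=4m+3$ I use Theorem~\ref{thm:intro_geometric_THH_transfer}. Write $\widetilde E=E\times_B LB=BS^1\times_{BS^3}LBS^3$, with $\widetilde f\colon\widetilde E\to LBS^3$ the $S^2$-bundle, $g\colon\widetilde E\to BS^1$ the other projection, and $j\colon LBS^1=LE\hookrightarrow P\simeq\widetilde E$ the codimension-$2$ inclusion with normal bundle $e_0^*TM$; note $\widetilde f\circ j=Lf$ and $g\circ j=e_0$. Passing to cohomology in the composite of Theorem~\ref{thm:intro_geometric_THH_transfer} and using $P\simeq\widetilde E$ and the Thom isomorphism $H^*(P^{-TM})\cong H^{*+2}(\widetilde E)$ (as $TM$ has rank $2$), one obtains $\tau_\THH^*=\widetilde f_!\circ j_!$, the composite of the Gysin map $j_!\colon H^*(LBS^1)\to H^{*+2}(\widetilde E)$ of the embedding followed by integration $\widetilde f_!\colon H^{*+2}(\widetilde E)\to H^*(LBS^3)$ over the $S^2$-fiber. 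By the Eilenberg--Moore spectral sequence (the base $BS^3$ is simply connected) --- or by naturality of the Serre spectral sequence of the bundle $f$, whose $d_3$ vanishes since $H^3(BS^3)=0$ --- $H^*(\widetilde E)=H^*(LBS^3)\otimes_{H^*(BS^3)}H^*(BS^1)=\mathbf{Z}[x]\otimes\Lambda(t)$ with $x=g^*(x)$, $t=\widetilde f^*(t)$ and $x^2=\widetilde f^*(y)$; also $j^*(x)=e_0^*(x)=x$ and $j^*(t)=(Lf)^*(t)$. As $\widetilde f_!$ is $\widetilde f^*H^*(LBS^3)$-linear with $\widetilde f_!(1)=0$ and $\widetilde f_!(x)=1$ (the class $x$ restricts to a generator of $H^2$ of the fiber), the projection formula gives $\widetilde f_!(x^{2m+1}t)=y^m t$.

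It remains to compute $j_!(x^{2m+1}s)$: by the projection formula and $j^*(x)=x$ it equals $x^{2m+1}\cup j_!(s)$, with $j_!(s)=\rho\, t\in H^3(\widetilde E)=\mathbf{Z}\langle t\rangle$ for some integer $\rho$. I pin down $\rho$ using the self-intersection identity $j^*\circ j_!={\cup}\, e(e_0^*TM)={\cup}\, 2x$: applying $j^*$ to $j_!(s)=\rho t$ gives $2xs=\rho\,(Lf)^*(t)$, so it suffices to compute $(Lf)^*(t)$. This follows from a short naturality argument with the evaluation maps $\mathrm{ev}_X\colon S^1\times LX\to X$: writing $\mathrm{ev}_X^*(\alpha)=1\otimes e_0^*(\alpha)+\sigma\otimes D_X(\alpha)$ for $\sigma\in H^1(S^1)$, restricting to the fibers $\Omega BS^1$ and $\Omega BS^3$ and using that the counit $\Sigma\Omega X\to X$ realizes the (iso) cohomology suspension in the relevant degrees gives $D_{BS^1}(x)=\pm s$ and $D_{BS^3}(y)=\pm t$; naturality of $\mathrm{ev}$ along $f$ applied to $y$ (with $f^*(y)=x^2$) then forces $(Lf)^*(t)=\pm 2\, xs$, whence $\rho=\pm1$. (Alternatively, in the conjugation model $LBG\simeq EG\times_G G$ the maps $g$ and $e_0=g\circ j$ are fiber bundles with fibers the closed manifolds $S^3$ and $S^1$, so umkehr maps compose: $g_!\circ j_!=(e_0)_!$, and $\rho=g_!(j_!(s))=(e_0)_!(s)=1$.) Therefore $\tau_\THH^*(x^{2m+1}s)=\widetilde f_!(\rho\, x^{2m+1}t)=\rho\, y^m t$ with $\rho=\pm1$, so $\tau_\THH^*\colon H^{4m+3}(LBS^1)\to H^{4m+3}(LBS^3)$ is an isomorphism, which completes the proof. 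The main obstacle is this last step: making the identification $\tau_\THH^*=\widetilde f_!\circ j_!$ precise --- handling the Thom-isomorphism and orientation bookkeeping in Theorem~\ref{thm:intro_geometric_THH_transfer} --- and computing the Gysin map $j_!$ on the odd class $s$, which the projection formula alone does not determine and which requires either the evaluation-map computation of $(Lf)^*(t)$ or the functoriality of stable umkehr maps.
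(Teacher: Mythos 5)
Your proof is correct, but it takes a genuinely different route from the paper's. After the same description of $H^*(LBS^1)$ and $H^*(LBS^3)$, the paper computes $Lf^*$ (sending the degree-$3$ exterior generator $b_3 = t$ to $\pm 2a_1a_2 = \pm 2xs$) via a pair of Serre spectral sequences and then invokes Proposition \ref{prop:intro_euler}, whose own proof rests on Theorem \ref{thm:intro_becker_gottlieb_compatibility} together with the $\Sigma^\infty_+ LB$-comodule property of $\tau_\THH$ (Proposition \ref{prop:comodule}): the composite $\tau_\THH^* \circ Lf^*$ is multiplication by $\chi(S^2)=2$ as an $H^*(LBS^3)$-module map, and the values of $\tau_\THH^*$ on $a_2^{2n}$ and $a_1a_2^{2n+1}$ then follow by torsion-freeness. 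You instead compute $\tau_\THH^*$ directly in each congruence class: for $q \equiv 0 \pmod{4}$ via Theorem \ref{thm:intro_becker_gottlieb_compatibility} and the Euler-class formula $\tau^* = f_!(-\cup e(TM))$ (bypassing the comodule structure, since $e_0^*$ and $c^*$ are isomorphisms in these degrees), and for $q \equiv 3 \pmod{4}$ by unwinding the geometric factorization of Theorem \ref{thm:intro_geometric_THH_transfer} into the Gysin composite $\widetilde f_!\circ j_!$ and pinning down $j_!(s)$ via the self-intersection formula, or alternatively by composing umkehr maps in the conjugation model. Both arguments ultimately need $(Lf)^*(t) = \pm 2xs$; your evaluation-map computation of this is a clean alternative to the paper's spectral-sequence derivation. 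The paper's route is shorter and makes Proposition \ref{prop:intro_euler} do the heavy lifting; yours is more self-contained and exercises the geometric model directly (which the paper does separately only for the Hopf fibration), at the cost of more orientation bookkeeping.
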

\begin{proposition}\label{prop:calculation_hopf}
The free loop transfer for the Hopf fibration $S^3 \ra S^2$ is nonzero on integral cohomology.
\end{proposition}
\noindent The first of these also follows from a convenient relation between $\tau_\THH$ and the Euler characteristic, which we establish in \S\ref{sec:computations}:
\begin{proposition}\label{prop:intro_euler}
If $f$ is a fibration with finite CW fiber $F$, and $B$ is simply-connected, the composite map
\[ \xymatrix{ H^*(LB) \ar[r]^-{Lf^*} & H^*(LE) \ar[r]^-{\tau_\THH^*} & H^*(LB) } \]
is multiplication by $\chi(F)$.
\end{proposition}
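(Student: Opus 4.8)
The plan is to deduce the proposition from Theorem~\ref{thm:intro_becker_gottlieb_compatibility}, together with a projection formula for $\tau_\THH$ and the classical computation of the degree of the Becker--Gottlieb transfer. The key structural input I would establish is that $\tau_\THH\colon\Sigma^\infty_+ LB\to\Sigma^\infty_+ LE$ is ``fiberwise over $LB$'' in the following precise sense: writing $\Delta_{LB}\colon\Sigma^\infty_+ LB\to\Sigma^\infty_+ LB\wedge\Sigma^\infty_+ LB$ for the diagonal and $q\colon\Sigma^\infty_+ LE\to\Sigma^\infty_+ LB\wedge\Sigma^\infty_+ LE$ for the map induced by $\gamma\mapsto(Lf(\gamma),\gamma)$, the square having these as its vertical maps, $\tau_\THH$ across the top, and $\id\wedge\tau_\THH$ across the bottom commutes up to homotopy. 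I would verify this directly from the THH-theoretic construction of $\tau_\THH$: it is the value of $\THH(\Sigma^\infty_+\Omega B;-)$ on a map of $\Sigma^\infty_+\Omega B$-bimodules that witnesses the dualizability of $\Sigma^\infty_+ F$ over $\Sigma^\infty_+\Omega B$, and $\THH(\Sigma^\infty_+\Omega B;-)$ carries bimodule maps to maps compatible with the evident comultiplications, which under $\THH(\Sigma^\infty_+\Omega B)\simeq\Sigma^\infty_+ LB$ is exactly the compatibility with $\Delta_{LB}$. This is the free-loop analogue of the $\Sigma^\infty_+ B$-linearity of the fiberwise Becker--Gottlieb transfer.

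Granting this, applying $H^*(-)$ to the square makes $\tau_\THH^*\colon H^*(LE)\to H^*(LB)$ a morphism of $H^*(LB)$-modules, where $H^*(LE)$ is an $H^*(LB)$-module via $(Lf)^*$. Taking $w=1_{LE}$ in the resulting projection formula $\tau_\THH^*\bigl((Lf)^*(x)\cdot w\bigr)=x\cdot\tau_\THH^*(w)$ shows that the composite $\tau_\THH^*\circ(Lf)^*$ of the proposition is cup product with the single class $n:=\tau_\THH^*(1_{LE})\in H^0(LB)$. Since $B$ is simply connected, $LB$ is connected, so $H^0(LB)=\Z$ and $n$ is an integer; it then remains to show $n=\chi(F)$.

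To identify $n$, I would restrict along the inclusion of constant loops $c\colon B\to LB$. The map $c^*\colon H^0(LB)\to H^0(B)$ is an isomorphism sending $n\mapsto n\cdot 1_B$. On the other hand, $1_{LE}=e_0^*(1_E)$, and Theorem~\ref{thm:intro_becker_gottlieb_compatibility} gives $(e_0\circ\tau_\THH\circ c)^*=\tau^*$, so
\[ c^*(n)\;=\;c^*\tau_\THH^*\bigl(e_0^*(1_E)\bigr)\;=\;(e_0\circ\tau_\THH\circ c)^*(1_E)\;=\;\tau^*(1_E)\;=\;\tau^*\bigl(f^*(1_B)\bigr)\;=\;\chi(F)\cdot 1_B, \]
the last equality being the standard fact that $f\circ\tau$ induces multiplication by $\chi(F)$ on cohomology \cite{BG76}. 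Comparing the two computations of $c^*(n)$ gives $n=\chi(F)$, which proves the proposition.

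The main obstacle is the first step; everything afterwards is formal or classical. The difficulty there is essentially point-set bookkeeping: arranging that the diagonal of $LE$, the projection $Lf$, and whichever model of the cyclic bar construction computes $\tau_\THH$ are \emph{strictly} compatible, rather than compatible only after passage to a homotopy category in which $\wedge$ is associative and the diagonal coassociative. In a sufficiently structured model of the construction --- for instance one phrased in parametrized spectra or $\infty$-categorically --- this linearity is automatic, since coevaluations of dualizable modules and the trace maps built from them are maps of modules over the base, and $\THH(R;-)$ preserves that structure.
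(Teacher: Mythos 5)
Your proposal follows the same strategy as the paper's Proposition~\ref{prop:euler}: reduce to the $H^*(LB)$-module property of $\tau_\THH^*\circ Lf^*$, then identify the multiplier on $H^0(LB)$ with $\chi(F)$ using Theorem~\ref{thm:intro_becker_gottlieb_compatibility}, the constant-loops and evaluation maps, and the classical degree of the Becker--Gottlieb transfer. The linearity input you need is exactly the paper's Proposition~\ref{prop:comodule} (that $\tau_\THH$ is a map of $\Sigma^\infty_+ LB$-comodules); the one inaccuracy in your sketch is the claim that this follows by applying a functor $\THH(\Sigma^\infty_+\Omega B;-)$ to a bimodule map --- $\tau_\THH$ is a bicategorical trace changing the ring from $\Sigma^\infty_+\Omega B$ to $\Sigma^\infty_+\Omega E$, not a coefficient change over a fixed ring, so the comodule compatibility is not an instance of such a naturality. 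It is instead established by tracking the $LB$-comodule structure through the geometric model of the Reidemeister trace in parametrized spectra, which is the ``sufficiently structured model'' you correctly point to as the clean route at the end of your first paragraph.
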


\noindent Finally, our geometric model allows us to improve Theorem \ref{thm:intro_becker_gottlieb_compatibility} in the special case of a smooth fiber bundle.
\begin{corollary}\label{cor:intro_becker_gottlieb_smooth_compatibility}
If $f$ is a smooth fiber bundle with closed manifold fibers, the following square commutes up to homotopy:
\[
\xymatrix @C=4em{
\Sigma^{\infty}_{+} B \ar[r]^-{\tau} \ar[d]_{c} & \Sigma^{\infty}_{+} E \ar[d]^{c} \\
\Sigma^{\infty}_{+} LB \ar[r]^-{\tau_{\THH}} & \Sigma^{\infty}_{+} LE
}
\]
\end{corollary}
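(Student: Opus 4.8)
The plan is to deduce the corollary from the geometric model of Theorem~\ref{thm:intro_geometric_THH_transfer}, together with Theorem~\ref{thm:intro_becker_gottlieb_compatibility}. By Theorem~\ref{thm:intro_geometric_THH_transfer}, write $\tau_{\THH}$ as the composite
\[
\Sigma^\infty_+ LB \xrightarrow{\ u\ } P^{-TM} \xrightarrow{\ q\ } \Sigma^\infty_+ LE ,
\]
where $u$ is the Pontryagin--Thom umkehr map for $E \times_B LB \to LB$ and $q$ is the collapse of $P$ onto a tubular neighborhood of $LE$, desuspended by $TM$. The crux is that, after precomposing with the constant-loop inclusion $c \colon B \to LB$, both $u$ and $q$ restrict compatibly to the locus of constant loops, so that $\tau_{\THH} \circ c$ factors as $\Sigma^\infty_+ B \xrightarrow{h} \Sigma^\infty_+ E \xrightarrow{\Sigma^\infty_+ c} \Sigma^\infty_+ LE$ for some $h$. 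Granting this, the corollary is formal: since $e_0 \circ c = \id$ on both $B$ and $E$, applying $\Sigma^\infty_+ e_0$ to the factorization and invoking Theorem~\ref{thm:intro_becker_gottlieb_compatibility} gives $h \simeq \Sigma^\infty_+ e_0 \circ \tau_{\THH} \circ c \simeq \tau$, whence $\tau_{\THH} \circ c \simeq (\Sigma^\infty_+ c) \circ \tau = c \circ \tau$, which is precisely the commutativity of the square.

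So the work is to produce the factorization. For the first map, restricting the bundle $E \times_B LB \to LB$ along $c$ recovers the bundle $f \colon E \to B$ itself, with matching vertical tangent bundles; the top edge of the resulting pullback square is the inclusion $j \colon E \to E \times_B LB \simeq P$ of constant paths, which factors through $LE \subset P$ as the subspace of constant loops. By naturality of the Pontryagin--Thom umkehr under pullback, $u \circ c$ factors as $\Sigma^\infty_+ B \xrightarrow{u_E} E^{-TM} \xrightarrow{j^{-TM}} P^{-TM}$, where $u_E$ is the umkehr map for $f$. For the second map, $j(E)$ lies in the zero section $LE$ of the tubular neighborhood, and the collapse $P \to \mathrm{Th}(\nu_{LE})$ restricts on $LE$ to the zero-section inclusion; tracking this through the desuspension by $TM$ identifies $q \circ j^{-TM} \colon E^{-TM} \to \Sigma^\infty_+ LE$ with $\Sigma^\infty_+ c$ precomposed by the standard ``zero section of $TM$'' map $z_E \colon E^{-TM} \to \Sigma^\infty_+ E$. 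This yields the factorization with $h = z_E \circ u_E$. (As a consistency check, $z_E \circ u_E$ is exactly the classical Pontryagin--Thom model for the Becker--Gottlieb transfer of $f$, so alternatively one could finish by citing that identification in place of Theorem~\ref{thm:intro_becker_gottlieb_compatibility}.)

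I expect the principal obstacle to be bookkeeping rather than concept. The delicate point is that the homotopy equivalence $E \times_B LB \simeq P$, the identification of the tubular neighborhood of $LE \subset P$ with the pullback of $TM$, and the pullback square above must be arranged so that the relevant vertical tangent data and Thom isomorphisms agree compatibly --- in particular so that the normal bundle of the constant loops $c(E) \subset LE$ inside that tubular neighborhood is identified with $TM$ over $E$ in the way dictated by \emph{both} the umkehr step and the collapse step. Verifying this amounts to inspecting the construction that underlies Theorem~\ref{thm:intro_geometric_THH_transfer}, precisely where these identifications are pinned down, so I expect it to go through --- but it is the step that carries the real content.
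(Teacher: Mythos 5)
Your proposal is correct and follows essentially the same approach as the paper's (very terse) argument: the paper simply observes that $\tau_{\THH}$ factors through $P^{-TM}$ by Theorem~\ref{thm:intro_geometric_THH_transfer} while $\tau$ has its classical Pontryagin--Thom factorization $B_+ \to E^{-TM} \to E_+$, and asserts the square commutes by comparing the two. Your write-up spells out the diagram chase the paper leaves implicit — the pullback square along $c$, naturality of the umkehr map, and the restriction of the collapse-onto-$LE$ step to the constant loops — which is exactly where the content lies.

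The one genuinely different flourish is your first paragraph: once you have the factorization $\tau_{\THH} \circ c \simeq (\Sigma^\infty_+ c)\circ h$ for \emph{some} $h$, you identify $h$ not by matching it against the classical model of $\tau$, but by hitting both sides with $\Sigma^\infty_+ e_0$ and invoking Theorem~\ref{thm:intro_becker_gottlieb_compatibility} (so $h \simeq e_0 \circ \tau_{\THH} \circ c \simeq \tau$). This is a clean trick: it lets you avoid the last bit of bookkeeping (that $z_E \circ u_E$ is the Becker--Gottlieb transfer), outsourcing that identification to the already-proved compatibility with $e_0$. As you correctly flag, though, it does not spare you the geometric work of establishing the factorization in the first place — showing the composite restricted to constant loops stays inside constant loops, with compatible $TM$-identifications on the umkehr and collapse steps. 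That is the real content, and your sketch of it (pullback naturality of the umkehr, and the collapse restricted to the zero section) is the right argument.
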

\noindent This is a new proof of an old result, as it follows from the topological Riemann-Roch theorem \cite[2.7]{williams}. It should be noted that the corresponding square for the $A$-theory transfer fails to commute in general, and that the corresponding square with evaluation maps $e_0$ fails to commute, even for covering spaces.

We now provide an overview of our definitions and proofs. To define the free loop transfer $\tau_\THH$, we observe that the homomorphism of ring spectra $\Sigma^\infty_+ \Omega E \arr \Sigma^\infty_+ \Omega B$ makes $\Sigma^\infty_+ \Omega B$ into a perfect module over $\Sigma^\infty_+ \Omega E$. This allows us to restrict from the category of perfect $\Omega B$-modules to perfect $\Omega E$-modules, inducing $\tau_{\THH}$ on the topological Hochschild homology of these categories.

To prove our main results, we must recognize that this definition of $\tau_\THH$ coincides with another map of free loop spaces, defined by Ponto and Shulman in the study of parametrized fixed-point invariants \cite{PS14}. They regard the graph $(f, \id) \colon E \arr B \times E$ as a dualizable 1-cell in the bicategory $\Ex$ of parametrized spectra over varying base spaces defined by May and Sigurdsson \cite{MS}. One may take the trace of the identity map of this 1-cell by equipping the bicategory with a shadow \citelist{\cite{PoThesis}\cite{PS13}}.  The result is called the refined Reidemeister trace of the identity of $E$, a map of free loop spaces as in (\ref{eq:transfer_intro}) above, and a generalization of an important classical fixed point invariant to the parametrized setting. The basic result which underlies all of the work in this paper is the following:

\begin{theorem}\label{thm:reid_trace_is_THH_transfer}
The THH transfer $\tau_{\THH}$ for a perfect fibration $E \to B$ is naturally homotopic to the refined Reidemeister trace of the identity map of $E$ over $B$.
\end{theorem}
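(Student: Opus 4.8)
\section*{Proof proposal for Theorem \ref{thm:reid_trace_is_THH_transfer}}

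The plan is to exhibit both maps as shadow-traces of identity $2$-cells in bicategories with shadow, and to connect the two by transporting traces along a morphism of such bicategories. Write $R = \Sigma^\infty_+ \Omega E$ and $S = \Sigma^\infty_+ \Omega B$, so that $\Omega f$ induces a ring map $R \to S$; since $f$ is a perfect fibration, $S$ is a perfect $R$-module. The first step is to identify $\tau_{\THH}$ — which is defined by applying $\THH$ to the restriction-of-scalars functor $\Perf(S) \to \Perf(R)$ — with the shadow-trace $\tr_{\Bimod}(\id)$ of the identity $2$-cell of a dualizable $1$-cell in the bicategory $\Bimod$ of ring spectra, bimodules, and bimodule maps equipped with its $\THH$-shadow. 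Concretely, this $1$-cell is $S$ regarded as a bimodule, with one action by $S$ itself and the other by $R$ through $R \to S$; it is dualizable precisely because $S$ is a perfect $R$-module (the involution $g \mapsto g^{-1}$ on a group ring lets us ignore the left/right distinction). This identification uses Morita invariance of $\THH$ as a shadow, so that $\THH(\Perf(R)) \simeq \THH(R)$ compatibly with the bicategorical structure and the restriction functor induces the bimodule trace, together with the classical computation $\THH(\Sigma^\infty_+ \Omega X) \simeq \Sigma^\infty_+ LX$ of \cite{BHM}; both are available in the foundational literature on shadows and the topological Reidemeister trace \citelist{\cite{PoThesis}\cite{PS13}}.

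On the other side, the refined Reidemeister trace of $\id_E$ is by construction the shadow-trace $\tr_{\Ex}(\id)$ of the identity $2$-cell of the graph $1$-cell of $f$, which is dualizable in $\Ex$ exactly because $f$ is a perfect fibration \cite{PS14}. The heart of the proof is to construct a morphism of bicategories with shadow $\Phi$ from a suitable sub-bicategory of $\Ex$ (spanned by spaces of the homotopy type of CW complexes, and working componentwise over $B$ and $E$) to $\Bimod$, sending a space $X$ to $\Sigma^\infty_+ \Omega X$. This $\Phi$ should carry the unit $1$-cell $\mathbf{1}_X$ to the unit bimodule $\Sigma^\infty_+ \Omega X$, carry the graph $1$-cell of $f$ to the dualizable bimodule described above, preserve dualizability, and — crucially — come equipped with natural equivalences of shadows $\llan{\mathbf{1}_X} = \Sigma^\infty_+ LX \simeq \THH(\Sigma^\infty_+ \Omega X)$ identifying the $\Ex$-shadow with the $\Bimod$-shadow. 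The existence of $\Phi$ rests on the Koszul-type equivalence between (nice) parametrized spectra over $X$ and $\Sigma^\infty_+ \Omega X$-modules, which is symmetric monoidal and compatible with base change, together with a careful comparison of the fiberwise topological Hochschild construction computing the $\Ex$-shadow over the free loop space with the cyclic bar construction computing $\THH$ of the group ring.

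Granting $\Phi$, the argument concludes via functoriality of the bicategorical shadow-trace: a morphism of bicategories with shadow carries $\tr(\id)$ of a dualizable $1$-cell to $\tr(\id)$ of its image. Applying $\Phi$ to the refined Reidemeister trace $\tr_{\Ex}(\id)$ of the graph $1$-cell produces $\tr_{\Bimod}(\id)$ of the bimodule $S$, which by the first step is $\tau_{\THH}$; the shadow-compatibility equivalences identify the source $\Sigma^\infty_+ LB$ and target $\Sigma^\infty_+ LE$ on both sides, so the two maps agree up to homotopy. Naturality in $f$ is automatic, since $\Phi$, the Morita and cyclic-bar identifications, and the formation of traces are all natural with respect to maps of perfect fibrations.

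The main obstacle is the construction and coherence of $\Phi$, and in particular its compatibility with the shadows: matching the $\Ex$-shadow (a fiberwise $\THH$-type construction over $LX$) with $\THH(\Sigma^\infty_+ \Omega X)$ naturally and in a way that respects the bicategory structure is the delicate point, as is checking that the graph $1$-cell is carried to the expected bimodule up to a coherent equivalence rather than merely abstractly. There are also model-categorical nuisances — the fibrancy and cofibrancy conditions in May--Sigurdsson's $\Ex$ \cite{MS}, the choice of point-set model for $\THH$, and the usual left-versus-right dualizability bookkeeping in a non-symmetric bicategory — that must be handled but do not affect the strategy. In principle one could avoid naming $\Phi$ and instead compare the two explicit trace diagrams cell by cell, but this amounts to the same verification.
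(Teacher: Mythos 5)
Your proposal follows essentially the same two-step strategy as the paper: first identify $\tau_{\THH}$, built from restriction of scalars on $\Perf$, with the bicategorical trace $\tr(\id)$ of the $(\Sigma^\infty_+\Omega B,\Sigma^\infty_+\Omega E)$-bimodule $\Sigma^\infty_+\Omega B$ in $\Bimod_S$; then transport this through the equivalence of shadowed bicategories $\Ex_*^{\mathrm{conn}} \simeq \Bimod_S^{\gp}$ given by the derived fiber functor (Proposition \ref{prop:equiv_of_shadows}, resting on the companion paper \cite{LiMa1}). One point worth flagging: you cite ``the foundational literature on shadows'' for the claim that the restriction functor induces the bimodule trace, but that identification is Proposition \ref{prop:bicat_trace_is_THH_map} of this paper --- an extension of the Dennis--Waldhausen--Morita argument of \cite{blumberg_mandell} from Morita equivalences to Morita adjunctions, established by an explicit zig-zag of cyclic nerves --- rather than something already available in the shadow literature; your own aside that one could ``compare the two explicit trace diagrams cell by cell'' is precisely what that proposition does.
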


The proof of this theorem, in turn, has two parts. First, we recognize that any map $\THH(A) \arr \THH(R)$ induced by smashing with a perfect $(A,R)$ bimodule $M$ must coincide with the trace of the 1-cell $M$ in the bicategory $\Bimod_S$ of ring and bimodule spectra. In essence, we do this by generalizing the Dennis-Waldhausen-Morita argument \cite{blumberg_mandell} from Morita equivalences to ``Morita adjunctions,'' which correspond precisely to dualities in $\Bimod_S$. The resulting trace has an effect on the cyclic bar constructions that can be captured pictorially as follows.

\begin{figure}[H]
\def\svgwidth{\linewidth}
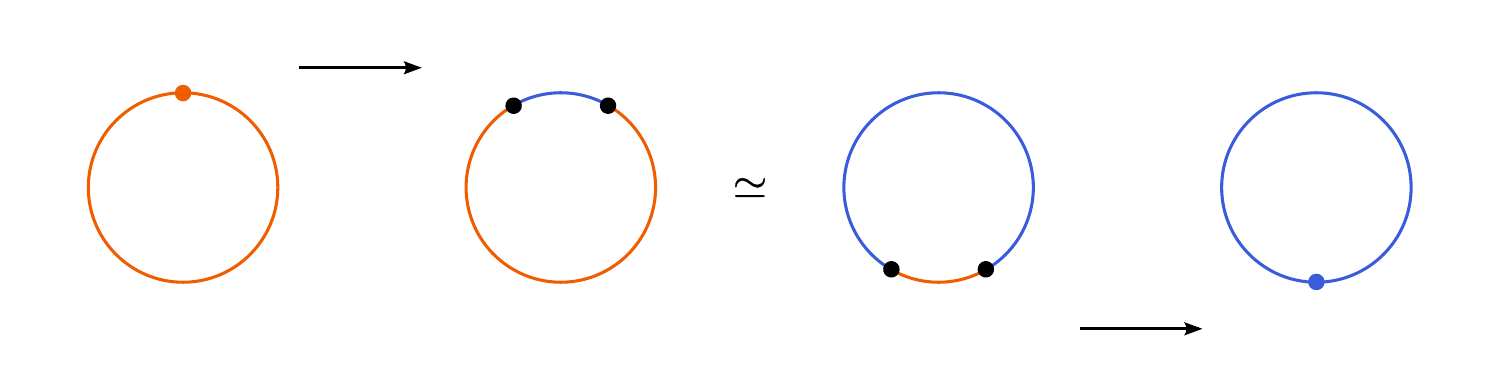
\caption{The bicategorical trace of $\id_M$. Here $N = F_R(M,R)$.}
\label{fig:bicat_trace}
\end{figure}\vspace{-2em}

The second step is to compare the trace of the bimodule $\Sigma^\infty_+ \Omega B$ over the ring spectra $\Sigma^\infty_+ \Omega B$ and $\Sigma^\infty_+ \Omega E$ in the bicategory $\Bimod_{S}$ to the refined Reidemeister trace of the space $E$ over $E \times B$ in the May-Sigurdsson bicategory $\Ex$. This follows from the fact that the fiber functor 
\begin{equation}\label{eq:fiber_functor}
(-)_a \colon \Sp{}{A} \arr \Mod_{\Sigma^{\infty}_{+} \Omega A}
\end{equation}
from parametrized spectra over $A$ to $\Sigma^{\infty}_{+} \Omega A$-modules induces an equivalence of homotopy categories that respects the base change functors in the variable $A$. In the language of \cite{PS12}, it is an equivalence of indexed symmetric monoidal categories. This follows from a corresponding statement at the level of $\infty$-categories which was proved in \cite{ando_blumberg_gepner}*{App B}. In the companion paper \cite{LiMa1} we will present a self-contained proof, using the stronger statement that the fiber functor \eqref{eq:fiber_functor} is a Quillen equivalence of model categories.

Our proof of Theorem \ref{thm:intro_becker_gottlieb_compatibility} relies on a new description of the Becker-Gottlieb transfer, using the language of duality in the bicategory $\Ex_{B}$ of parametrized spectra over spaces over $B$. A key point is the previously unobserved fact that the Becker-Gottlieb pretransfer $\tau_{B} \colon S_B \arr f_{!} S_E$ may be expressed in terms of the coevaluation and evaluation maps for certain dual pairs $(\_{S}_{f}, D \_{S}_{f})$ and $({}_{f}\_{S}, \_{S}_{f})$ in $\Ex_B$ (see Proposition \ref{prop:pretransfer_via_duality}). From here, the rest of the proof is pure bicategory theory.

An outline of the paper follows. In \S\ref{sec:spectral_cats} we review the theory of spectral categories and define $\tau_\THH$. In \S\ref{sec:parametrized_spectra}--\ref{sec:bicats} we review parametrized spectra, and duality, shadows, and traces in a bicategory. In \S\ref{sec:reidemeister} we prove Theorem \ref{thm:reid_trace_is_THH_transfer}, that the free loop transfer agrees with the Reidemeister trace. In \S\ref{sec:fiberwise_CW_duality}--\ref{sec:comparison} we provide our new description of the Becker-Gottlieb transfer and prove Theorem \ref{thm:intro_becker_gottlieb_compatibility}. In \S\ref{sec:geometric} we establish the geometric model of $\tau_\THH$ for smooth fiber bundles and a useful result about the module structure. In \S\ref{sec:computations} we deduce our applications and perform computations.

\subsection*{Conventions}

Most of our work takes place in homotopy categories, so our conventions are not so important. We assume that all the base spaces of our fibrations have the homotopy type of CW complexes. We use EKMM $S$-modules for our work on spectral categories, and orthogonal spectra for our geometric model of $\tau_\THH$.

\subsection*{Acknowledgements}

The authors thank Jon Campbell for asking how the Becker-Gottlieb transfer is related to the THH transfer and Kate Ponto for realizing that the THH transfer sounded suspiciously like the Reidemeister trace. The first author thanks Anssi Lahtinen for helpful conversations on the nature of the ``evil-eye'' product $\odot$.  The second author thanks Andrew Blumberg and Rune Haugseng for ideas and inspiration, and Randy McCarthy for proposing, and losing, a five-dollar bet that this transfer would easily generalize from $LB$ to $\Map(K,B)$ for any finite complex $K$. The joke is due to Bert Guillou. The first author was partially supported by the DFG through SFB1085, and the second author was partially supported by an AMS Simons Travel Grant.

\section{Spectral categories and perfect $R$-modules}\label{sec:spectral_cats}

In this section we recall the theory of spectral categories, and describe the category of perfect modules over a ring spectrum $R$. Then we define the free loop transfer $\tau_\THH$.

\subsection{Review of the $\THH$ of spectral categories}

We first recall how to define the $\THH$ of a spectral category $\cC$, and how to prove when two categories have the same $\THH$. Most of this material is from \cite{blumberg_mandell}. 

Let $\Sp{}{}$ denote any of the symmetric monoidal categories of symmetric spectra, orthogonal spectra \citelist{\cite{mmss} \cite{mandell_may}}, or $S$-modules \cite{ekmm}.  Suppose $\cC$ is a small category enriched in $\Sp{}{}$, and let $\cM$ be a $(\cC, \cC)$-bimodule, i.e. a spectrally enriched functor from $\cC \sma \cC^{\op}$ to the category $\Sp{}{}$ of spectra. 
\begin{definition}
The \emph{cyclic nerve} of $\cC$ with coefficients in $\cM$ is the simplicial spectrum with $n$-simplices
\[ N^\cyc_n (\cC; \cM) = \bigvee_{c_0,\ldots,c_n \in \ob C} \cC(c_0,c_1) \sma \cC(c_1,c_2) \sma \ldots \sma \cC(c_{n-1},c_n) \sma \cM(c_n,c_0), \]
and face and degeneracy maps induced by composition in $\cC$, the actions of $\cC$ on $M$, and the inclusions of identity morphisms in $\cC$.
\end{definition}

The \emph{cyclic bar construction} $N^\cyc(\cC; \cM)$ is the geometric realization of $N_{\bullet}^{\cyc}(\cC; \cM)$ in spectra. As a special case, we may take $\cM = \cC(-, -)$ to be the spectral enrichment of $\cC$, in which case we drop it from the notation and write $N^\cyc \cC$. For example, if $R$ is a ring spectrum and $M$ is an $(R, R)$-bimodule, then we define $N^\cyc(R;M)$ and $N^\cyc R$ by regarding $R$ as a spectral category $\lan{R}$ with one object and $M$ as a bimodule over that spectral category.

\begin{example}
If $G$ is a well-based topological group, or grouplike monoid, the cyclic bar construction of $G$ in spaces admits an equivalence to the free loop space $LBG$. As suspension spectra commute with cyclic bar constructions, we get an equivalence
\[ \xymatrix{ N^\cyc(\Sigma^\infty_+ G) \ar[r]^-\sim & \Sigma^\infty_+ LBG } \]
\end{example}

We recall from \cite{blumberg_mandell} the sense in which the cyclic nerve is homotopy invariant. We say that $\cC$ is \emph{pointwise cofibrant} if each of the maps
\[ S \arr \cC(a,a), \qquad * \arr \cC(a,b), \qquad a \neq b \]
are cofibrations in the usual stable model structure on symmetric spectra, orthogonal spectra, or $S$-modules. We remark that in the case of symmetric or orthogonal spectra, if $\cC(a,a)$ is cofibrant then the unit map from $\bbS$ is automatically a cofibration.
A spectral functor $\cC \ra \cD$ is a \emph{pointwise weak equivalence} if each map $\cC(a,b) \ra \cD(a,b)$ is a stable equivalence of spectra. The ``pointwise'' notions for modules are defined similarly on each spectrum $\cM(a, b)$.

\begin{proposition}\label{prop:cylic_nerve_ho_invariant}
Suppose that $f: \cC \ra \cD$ is a pointwise weak equivalence of spectral categories that is the identity on objects. Let $\cM$ be a $\cC$-module, and $\cN$ a $\cD$-module, $g: \cM \ra f^*\cN$ be a pointwise weak equivalence of $\cC$-modules. If $\cC, \cD, \cM$ and $\cN$ are all pointwise cofibrant then the map induced on cyclic nerves by $f$ and $g$
\[ N^\cyc(\cC;\cM) \ra N^\cyc(\cD;\cN) \]
is an equivalence.
\end{proposition}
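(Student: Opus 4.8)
The plan is to prove the statement one simplicial level at a time and then pass to geometric realization. Because $f$ is the identity on objects we have $\ob\cC = \ob\cD$, and in simplicial degree $n$ the map $N^\cyc_n(\cC;\cM) \ra N^\cyc_n(\cD;\cN)$ is a wedge, indexed by the \emph{same} set of tuples $(c_0,\dots,c_n)$, of the maps
\[ \cC(c_0,c_1)\sma\cdots\sma\cC(c_{n-1},c_n)\sma\cM(c_n,c_0) \ra \cD(c_0,c_1)\sma\cdots\sma\cD(c_{n-1},c_n)\sma\cN(c_n,c_0) \]
obtained by smashing together the structure maps $f_{c_{i-1},c_i}$ and the map $g_{c_n,c_0}$. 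So there is no reindexing to worry about, and it suffices to (i) show each such smash-product map is a stable equivalence, and (ii) show geometric realization carries the resulting levelwise equivalence of simplicial spectra to an equivalence.

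For (i), note that pointwise cofibrancy makes every spectrum of the form $\cC(a,b)$, $\cD(a,b)$, $\cM(a,b)$, $\cN(a,b)$ cofibrant: when $a\ne b$ this is immediate from the hypothesis that $\ast \ra \cC(a,b)$ is a cofibration, and when $a=b$ it follows because $S$ is cofibrant and $S \ra \cC(a,a)$ is a cofibration (similarly for $\cD$, and for the modules via their pointwise cofibrancy hypothesis). Since $\Sp{}{}$ is a symmetric monoidal model category, the $(n{+}1)$-fold smash product is a left Quillen functor of several variables, so by Ken Brown's lemma it preserves weak equivalences between cofibrant objects; thus each summand map above is a stable equivalence (and each wedge summand is cofibrant). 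A wedge of stable equivalences between cofibrant spectra is again a stable equivalence, so $N^\cyc_n(\cC;\cM)\ra N^\cyc_n(\cD;\cN)$ is a stable equivalence for all $n$.

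For (ii), I would check that both simplicial spectra are Reedy cofibrant, so that geometric realization computes the homotopy colimit and hence takes a levelwise stable equivalence to a stable equivalence. The objects $N^\cyc_n$ are cofibrant by the observation in (i). The degeneracy $s_i$ inserts the identity morphism of the object $c_i$, so it is the map obtained by smashing the unit $S \ra \cC(c_i,c_i)$ into one tensor factor; since this unit is a cofibration and smashing a cofibration with cofibrant spectra yields a cofibration, the degenerate summands include into $N^\cyc_n$ by a cofibration, and one assembles these into the statement that the latching map $L_n N^\cyc(\cC;\cM) \ra N^\cyc_n(\cC;\cM)$ is a cofibration. The same applies to $N^\cyc_\bullet(\cD;\cN)$. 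This is exactly where the pointwise cofibrancy hypothesis is used, and it is the technical core of the argument, carried out in \cite{blumberg_mandell}.

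The step I expect to be the main obstacle is (ii): correctly identifying the latching object of the (cyclic, though only its simplicial structure matters here) nerve and verifying uniformly in $n$ that the inclusion of the degenerate part is a cofibration, which requires some care with iterated pushout-products of the unit cofibrations $S \ra \cC(c_i,c_i)$. In the EKMM $S$-module setting this bookkeeping is eased by the good homotopical behaviour of the smash product there; everything in (i) is a formal consequence of working in a symmetric monoidal model category of spectra.
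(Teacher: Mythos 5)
You are reproducing the standard Blumberg--Mandell argument, which the paper cites rather than reproves: levelwise comparison of the simplicial spectra, and then properties of geometric realization so that a levelwise stable equivalence realizes to a stable equivalence. Your two-step outline is correct, and you rightly flag step (ii) --- Reedy cofibrancy (or ``properness'') obtained from the unit cofibrations $S \to \cC(c_i,c_i)$ feeding into the latching maps --- as the technical content.

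There is one concrete misstep in step (i). You deduce that $\cC(a,a)$ is cofibrant ``because $S$ is cofibrant and $S \ra \cC(a,a)$ is a cofibration.'' The sphere is cofibrant in symmetric and orthogonal spectra, but the paper's stated convention is to use EKMM $S$-modules for the spectral-categories portion, and there $S$ is \emph{not} cofibrant --- the paper itself recalls this when defining $\Perf_R$ in Definition~\ref{def:perf_r}, where $R$ has to be adjoined by hand to the cofibrant modules. Consequently, in $S$-modules the hypothesis ``$S \to \cC(a,a)$ is a cofibration'' does not make $\cC(a,a)$ cofibrant, and Ken Brown's lemma for the multivariable Quillen functor does not apply as you have written it. The hypothesis is nonetheless the right one: it exhibits $\cC(a,a)$ as a retract of a relative cell $S$-module under $S$, and such objects are flat for the EKMM smash product --- one inducts up the relative cell filtration, using that cell $S$-modules are flat (EKMM III.3.8) and that $S \sma_S N \cong N$ at the base. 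That EKMM-specific flatness, not Ken Brown applied to cofibrant objects, is what carries step (i) over $S$-modules; the remainder of your argument goes through once this is substituted.
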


In order to be able to make functorial cofibrant approximations of $\cC$ and $\cM$, we recall the following from \cite[Prop 6.3, Thm 7.2]{schwede_shipley}:
\begin{proposition}
Suppose that our enrichments are taken in symmetric and orthogonal spectra, equipped with either the stable model structure or the positive stable model structure from \cite{mmss}.
\begin{itemize}
\item There is a cofibrantly generated model structure on the category of spectral categories with a fixed object set, in which the weak equivalences are measured pointwise and the cofibrant categories are pointwise cofibrant.
\item For each fixed spectral category $\cC$, the category of $\cC$-modules has a cofibrantly generated model structure where the weak equivalences are the pointwise weak equivalences. If $\cC$ is cofibrant then any cofibrant module over $\cC$ is pointwise cofibrant.
\item Given a pointwise weak equivalence of spectral categories $f: \cC \ra \cD$, restriction and extension of modules gives a Quillen equivalence between their module categories.
\end{itemize}
\end{proposition}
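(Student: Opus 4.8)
The plan is to realize all three categories as (algebras over monads on) a diagram category of spectra and then transfer the model structure, following \cite{schwede_shipley}. Fix the object set $O$ and let $\Sp{}{}^{O\times O}$ be the category of $(O\times O)$-indexed families of spectra, with the projective model structure — cofibrantly generated because $\Sp{}{}$ is — and the ``matrix multiplication'' monoidal structure $(\cC\bullet\cD)(a,c)=\bigvee_{b\in O}\cC(a,b)\sma\cD(b,c)$, whose unit $I$ is $\bbS$ on the diagonal and $*$ off it. A spectral category with object set $O$ is exactly a monoid in $(\Sp{}{}^{O\times O},\bullet)$, a $\cC$-module is a module over that monoid, i.e.\ an algebra for the monad $\cC\bullet(-)$ on $\Sp{}{}^{O}$, and a bimodule is handled the same way as a module over $\cC\sma\cC^{\op}$. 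First I would check that $(\Sp{}{}^{O\times O},\bullet)$ is a cofibrantly generated monoidal model category satisfying the monoid axiom of \cite{schwede_shipley}, both of which reduce entrywise to the corresponding facts for $\Sp{}{}$ with the stable or positive stable model structure. The transfer theorems of \cite{schwede_shipley} then produce cofibrantly generated model structures on monoids and on modules in which the weak equivalences and fibrations are detected on underlying families — that is, pointwise — which gives the first bullet and the first sentence of the second.

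The substantive content is the two cofibrancy claims, and for these the tool is the standard filtration of a pushout along a map of free monoids (resp.\ free modules) built from a generating (trivial) cofibration. When $\bbS$ is cofibrant in $\Sp{}{}$ — true for the stable model structure on symmetric and orthogonal spectra — the unit $I$ is cofibrant in $\Sp{}{}^{O\times O}$, and the filtration argument shows that the underlying family of a cofibrant monoid is cofibrant, hence pointwise cofibrant; running the same filtration relative to a cofibrant $\cC$ shows that a cofibrant $\cC$-module has pointwise cofibrant underlying family. For the positive stable model structure, where $I$ is not cofibrant, I would instead use that the off-diagonal entries of $I$ are already cofibrant, that positively cofibrant spectra are flat, and that a positively cofibrant $\cC(a,a)$ is still cofibrant in the stable sense — exactly the remark preceding the proposition. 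Throughout, the monoid axiom is what keeps the pushout corners appearing in the filtration within the (trivial) cofibrations of families.

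For the Quillen equivalence in the third bullet, given a pointwise weak equivalence $f\colon\cC\ra\cD$ that is the identity on objects, restriction $f^{*}$ preserves fibrations and all weak equivalences (all detected pointwise), so $(f_{!},f^{*})$ with $f_{!}=\cD\bullet_{\cC}(-)$ is a Quillen adjunction. Since every object is fibrant and $f^{*}$ creates weak equivalences, it remains only to show that the unit $\cM\ra f^{*}f_{!}\cM$ is a weak equivalence for cofibrant $\cC$-modules $\cM$. Writing $\cM$ as a retract of a cell $\cC$-module and filtering by cell attachments, this reduces to the free case $\cM=\cC\bullet X$ with $X$ a (co)domain of a generating cofibration, where the unit is $\cC\bullet X\ra\cD\bullet X$; this is a pointwise weak equivalence because $\cC$ and $\cD$ are pointwise cofibrant, $X$ is levelwise cofibrant, and cofibrant spectra are flat. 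Propagating up the cell filtration with the gluing lemma finishes the argument.

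The hard part is entirely the bookkeeping in the filtration arguments of the last two paragraphs: arranging that the monoid-axiom and flatness inputs interact correctly at each attachment, and in particular handling the positive stable model structure, where the monoidal unit fails to be cofibrant. Since this is carried out carefully in \cite{schwede_shipley}, in the paper we simply cite it.
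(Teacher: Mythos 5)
Your reconstruction is the argument of Schwede--Shipley that the proposition cites: realize spectral categories with a fixed object set $O$ as monoids for the matrix product $\bullet$ on $\Sp{}{}^{O\times O}$, check the pushout-product and monoid axioms entrywise, transfer the model structure to monoids and to modules, and control the cofibrancy statements and the Quillen equivalence by the cell filtration. One harmless slip: in symmetric and orthogonal spectra not every object is fibrant (that is a feature of $S$-modules, not of the diagram spectra the proposition assumes), but your argument really only uses that $f^*$ preserves and reflects all weak equivalences, which holds because weak equivalences of modules are detected on underlying spectra.
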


By the monoidal Quillen equivalence $(\bbN, \bbN^\#)$ between orthogonal spectra and $S$-modules \cite{mandell_may}, we then get a functorial cofibrant approximation for categories enriched in $S$-modules. Over a fixed category of the form $\bbN \cC$, where $\cC$ is enriched in orthogonal spectra, we get a cofibrant approximation functor for modules given by $\bbN Q(\iota^*\bbN^\# \cM) \overset\sim\to \cM$, where $\iota$ is the map $\cC \ra \bbN^\# \bbN \cC$ and $Q$ refers to cofibrant approximation of $\cC$-modules.

For any of the enrichments considered so far, we let $(-)^{c}$ denote a fixed choice of a cofibrant approximation functor on spectral categories, and also a fixed cofibrant approximation functor for modules over a given spectral category. We derive the cyclic nerve $N^\cyc(\cC;\cM)$ in a two step process.  We first take a cofibrant replacement $\cC^{c}$ of $\cC$ as a spectral category.  The $\cC$-module $\cM$ inherits the structure of a $\cC^{c}$-module by restriction along the approximation map, and we then take a cofibrant replacement $\cM^{c}$ of $\cM$ as a $\cC^{c}$-module. There is a natural map
\[ N^\cyc(\cC^{c};\cM^{c}) \ra N^\cyc(\cC;\cM) \]
which is an equivalence if $\cC$ and $\cM$ are pointwise cofibrant.
\begin{definition}
The topological Hochschild homology of a small spectral category $\cC$ with coefficients in $\cM$ is the derived cyclic nerve
\[ \THH(\cC; \cM) := N^\cyc(\cC^{c};\cM^{c}). \]
\end{definition}
It follows from Proposition \ref{prop:cylic_nerve_ho_invariant} that $\THH(- ; -)$ preserves weak equivalences in both variables. Using the monoidal Quillen adjunctions $(\mathbb P, \mathbb U)$ and $(\bbN, \bbN^\#)$ from \cite{mmss} and \cite{mandell_may}, we may pass freely between enrichments in symmetric spectra, orthogonal spectra, and $S$-modules without contaminating the homotopy type of THH.

\begin{remark}
When $\cC = \lan{R}$ is a spectral category with a single object, the cofibrant approximation $\lan{R}^{c}$ is a cofibrant approximation of $R$ as an $S$-algebra. So $\THH(\lan{R})$ is the cyclic bar construction on a cofibrant approximation of $R$, as in the usual definition of $\THH(R)$ (cf. \cite{shipley_THH}).
\end{remark}

We sometimes need to change the set of objects of $\cC$. For this purpose, we need a more general homotopy invariance statement for $\THH$ than Proposition \ref{prop:cylic_nerve_ho_invariant}.

\begin{definition}
The \emph{homotopy category} $\pi_0\cC$ of a spectral category $\cC$ is the category obtained by applying $\pi_0$ to the mapping spectra.
A \emph{Dwyer-Kan equivalence} or \emph{DK-equivalence} of spectral categories is a spectral functor $f: \cC \ra \cD$ for which all the maps $\cC(a,b) \ra \cD(f(a), f(b))$ are equivalences of spectra, and every object of $\cD$ is equivalent in $\pi_0\cD$ to an object in the image of $f$.
\end{definition}

\begin{proposition} \cite[Thm 5.10]{blumberg_mandell}
Suppose that $f: \cC \ra \cD$ is a Dwyer-Kan equivalence of spectral categories, $\cM$ is a $\cC$-module, and $\cN$ is a $\cD$-module, and there is a pointwise equivalence of $\cC$-modules $g: \cM \ra f^*\cN$. Then the natural map induced by $f$ and $g$ on THH
\[ \THH(\cC;\cM) \ra \THH(\cD;\cN) \]
is an equivalence.
\end{proposition}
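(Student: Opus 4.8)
The plan is to identify $\THH$ of a spectral category with a derived tensor product over the enveloping category and then invoke Morita theory for spectral module categories, falling back on Proposition \ref{prop:cylic_nerve_ho_invariant} for the easy part. First, using the model structures on spectral categories and on modules recalled above, replace $\cC$, $\cD$, $\cM$ and $\cN$ by pointwise cofibrant models; since $\THH(-;-)$ already preserves pointwise weak equivalences in both variables, this changes neither side up to equivalence and makes all cyclic bar constructions homotopically meaningful. By the same token we may transport $g$ so that $\cM \to f^{*}\cN$ is a pointwise equivalence of pointwise cofibrant $\cC$-bimodules.

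The key identification is that the derived cyclic bar construction is a derived coend: $\THH(\cC;\cM) \simeq \cM \otimes^{\mathbb{L}}_{\cC \sma \cC^{\op}} \cC$, where $\cM$ and $\cC(-,-)$ are regarded as bimodules over the enveloping category $\cC \sma \cC^{\op}$ (this is the two-sided bar construction underlying $N^{\cyc}$). Now the product $f \sma f^{\op} \colon \cC \sma \cC^{\op} \to \cD \sma \cD^{\op}$ is again a DK-equivalence, so restriction and extension of scalars along it form a Quillen equivalence of module categories; in particular extension of scalars after restriction is naturally equivalent to the identity on derived module categories. Feeding this, the projection formula, the pointwise equivalence $\cM \simeq f^{*}\cN$, and the pointwise equivalence $\cC(a,b) \simeq \cD(f(a),f(b))$ --- which holds because $f$ is homotopically fully faithful --- into the coend yields a chain of equivalences $\THH(\cC;\cM) \simeq f^{*}\cN \otimes^{\mathbb{L}}_{\cC \sma \cC^{\op}} f^{*}\cD \simeq \cN \otimes^{\mathbb{L}}_{\cD \sma \cD^{\op}} \cD \simeq \THH(\cD;\cN)$, visibly compatible with the map induced by $f$ and $g$.

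Alternatively, one can avoid the coend by factoring $f$ (after a further pointwise-cofibrant replacement, via a standard mapping-cylinder construction) as a pointwise weak equivalence that is the identity on objects --- disposed of by Proposition \ref{prop:cylic_nerve_ho_invariant} and its module version --- followed by a homotopically fully faithful functor whose image meets every equivalence class of $\pi_0\cD$, and then inducting transfinitely on the objects of $\cD$ outside the image, reducing to the case of adjoining a single object $d$ that is $\pi_0$-equivalent to an existing object $c$ via chosen morphisms $\alpha$ and $\beta$ with $\beta\alpha = \id_c$, $\alpha\beta = \id_d$. For that case one writes down an explicit deformation retraction of the simplicial spectrum $N^{\cyc}_{\bullet}(\cD;\cN)$ onto the sub-simplicial-spectrum of tuples avoiding $d$, using $\alpha$ and $\beta$ to collapse the extra summands.

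Either way, the one genuinely nontrivial input --- the technical heart of \cite[Thm 5.10]{blumberg_mandell} --- is the same: upgrading the recalled Quillen-equivalence statement from identity-on-objects pointwise weak equivalences to arbitrary DK-equivalences, equivalently controlling the coherence of the homotopies that witness $\pi_0$-level equivalences of objects across all simplicial degrees. It is essential surjectivity, rather than mere density, that keeps the argument clean, since then no idempotent-completion correction is needed; the remainder is formal manipulation of bar constructions.
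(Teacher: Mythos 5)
The paper does not prove this proposition; it quotes Theorem~5.10 of Blumberg--Mandell. So there is no in-paper argument to compare against, and the question is whether your sketch stands on its own. The coend route is a valid \emph{reduction}: writing $\THH(\cC;\cM) \simeq \cM \otimes^{\mathbb{L}}_{\cC \sma \cC^{\op}} \cC$, noting that $f \sma f^{\op}$ is again a DK-equivalence, and combining the projection formula with the counit equivalence $(f \sma f^{\op})_{!}(f \sma f^{\op})^{*}\cD \simeq \cD$ does correctly reduce the claim to the assertion that restriction and extension along a DK-equivalence form a Quillen equivalence of module categories. But, as you yourself concede, that assertion is the technical heart of the cited theorem. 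The Quillen-equivalence statement actually recalled in the paper (from Schwede--Shipley) covers only pointwise weak equivalences between categories with the \emph{same} object set, so invoking the general DK version is invoking an unproved external input. The proposal is therefore an honest reduction, not a proof.

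The alternative route contains a concrete error. You posit morphisms with $\beta\alpha = \id_c$ and $\alpha\beta = \id_d$ as strict identities, but a DK-equivalence only furnishes $d$ isomorphic to $f(c)$ in $\pi_0\cD$, so those relations hold only up to homotopy. If they held on the nose, $c$ and $d$ would already be isomorphic in the underlying ordinary category and the deformation retraction would indeed be routine; the coherence of the witnessing homotopies across all simplicial degrees --- which you correctly flag at the end as the genuine content --- is precisely what the strict identities assume away. So Approach~2 quietly posits the hard part in its hypotheses, while Approach~1 at least names that hard part before deferring it.
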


We may define the THH of a large spectral category $\cC$ with skeletally small homotopy category $\pi_0 \cC$, by taking the THH of any small full subcategory containing at least one object in every isomorphism class in $\pi_0 \cC$. This is well-defined up to canonical zig-zag equivalence.

We now recall a result that allows us to compare morphisms of spectral categories. If $\cD$ is a spectral category, we let $\_{\cD}$ denote the underlying ordinary category, whose morphisms $\_{\cD}(a,b) = \Sp{}{}(S, \cD(a, b))$ are maps of spectra from the unit object $S$ to $\cD(a,b)$. A natural transformation $\eta$ between spectral functors $F, G: \cC \to \cD$, is the assignment of a morphism $\eta(c) \in \_{\cD}(F(c), G(c))$ to each $c \in \cC$, such the two obvious formulas give the same maps of spectra
\[ \cC(a,b) \to \cD(F(a),G(b)) \]
for every $a,b \in \cC$. We say that $\eta$ is a \emph{weak equivalence} if composing with the maps $\eta(c)$ induces a stable equivalence of mapping spectra in $\cD$.

\begin{proposition}\label{prop:equivalence_spectral_functors}
If $\eta$ is a weak equivalence between $F$ and $G$, then $\THH(F)$ and $\THH(G)$ are the same map in the homotopy category of spectra.
\end{proposition}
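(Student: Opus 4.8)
The plan is to upgrade a weak equivalence of spectral functors to a zig-zag of genuine (identity-on-objects) pointwise weak equivalences of spectral categories, to which Proposition~\ref{prop:cylic_nerve_ho_invariant} applies directly. The standard trick is to encode a natural transformation as a single functor out of the ``mapping cylinder'' or ``arrow category'' on $\cC$. Concretely, let $\cC \wedge (\Delta^1)_+$ denote the spectral category obtained by tensoring $\cC$ with the interval, i.e. the category with two copies $\cC_0, \cC_1$ of the object set and morphism spectra $(\cC \wedge (\Delta^1)_+)(a_i, b_j) = \cC(a,b)$ for $i \le j$ and $*$ otherwise. A weak equivalence $\eta \colon F \Rightarrow G$ between $F, G \colon \cC \to \cD$ is precisely a spectral functor $H \colon \cC \wedge (\Delta^1)_+ \to \cD$ restricting to $F$ on $\cC_0$ and $G$ on $\cC_1$. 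I would first verify this dictionary: the two ``obvious formulas'' in the definition of natural transformation are exactly the compatibility needed for $H$ to be a functor.

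Next I would record that the two inclusions $j_0, j_1 \colon \cC \to \cC \wedge (\Delta^1)_+$ are identity-on-objects (onto $\cC_0$, resp.\ $\cC_1$) pointwise weak equivalences of spectral categories: on the morphism spectra they are the identity $\cC(a,b) \to \cC(a,b)$, hence certainly stable equivalences. By the identity-on-objects homotopy invariance of Proposition~\ref{prop:cylic_nerve_ho_invariant} (applied with $\cM = \cC(-,-)$), the induced maps $N^\cyc(j_0), N^\cyc(j_1) \colon N^\cyc\cC \to N^\cyc(\cC \wedge (\Delta^1)_+)$ are equivalences. But $j_0$ and $j_1$ are connected by a simplicial homotopy inside $\cC \wedge (\Delta^1)_+$ (they are the two endpoint inclusions of the cylinder), so $N^\cyc(j_0)$ and $N^\cyc(j_1)$ agree in the homotopy category of spectra. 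Composing with $N^\cyc(H)$ and using $H \circ j_0 = F$, $H \circ j_1 = G$ gives $\THH(F) = N^\cyc(H) \circ N^\cyc(j_0) = N^\cyc(H) \circ N^\cyc(j_1) = \THH(G)$ in the homotopy category, once we remember that $\THH$ is computed on pointwise-cofibrant replacements and that the cofibrant replacement functor can be chosen to carry the whole cylinder diagram functorially (using the model structures recalled above, or simply by observing that $\cC \wedge (\Delta^1)_+$ inherits pointwise cofibrancy from $\cC$ after replacement).

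The main obstacle is bookkeeping at the level of cofibrant replacement: Proposition~\ref{prop:cylic_nerve_ho_invariant} requires all categories and modules in sight to be pointwise cofibrant, and the definition of $\THH(F)$ involves the chosen functors $(-)^c$ on categories and modules. So I would either (i) arrange the argument so that $\cC^c \wedge (\Delta^1)_+$ is itself a legitimate cofibrant replacement of $\cC \wedge (\Delta^1)_+$ compatible with the chosen $F^c, G^c$ --- this is where the explicit model-categorical input from \cite{schwede_shipley} and \cite{mandell_may} is used --- or (ii) appeal instead to the more flexible Dwyer--Kan invariance (\cite[Thm 5.10]{blumberg_mandell}) to sidestep the identity-on-objects constraint when passing from $\cC$ to the cylinder. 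Neither is deep, but getting the diagram of replacements to commute strictly, rather than just up to further weak equivalence, is the one place where care is needed; everything else is the homotopy-invariance machinery already set up in this section.
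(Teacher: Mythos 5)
Your approach has a genuine gap, and it occurs before the cofibrant-replacement bookkeeping you flag at the end. The directed cylinder $\cC \sma (\Delta^1)_+$ encodes an \emph{arbitrary} natural transformation: your dictionary is correct that a spectral functor $H \colon \cC \sma (\Delta^1)_+ \to \cD$ is the same data as a pair $(F,G)$ plus a natural transformation $\eta \colon F \Rightarrow G$, but nothing in this dictionary uses that $\eta$ is a weak equivalence. Since the proposition is false for arbitrary $\eta$ --- take $\eta = 0$, which is always a natural transformation, for any $F$ and $G$ with $\THH(F) \neq \THH(G)$ --- an argument that uses only the functor $H$ must break somewhere.

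It breaks at the assertion that $N^\cyc(j_0)$ and $N^\cyc(j_1)$ agree in the homotopy category. First, $j_0$ and $j_1$ are not identity-on-objects --- they are injections onto the disjoint subsets $\cC_0$ and $\cC_1$ --- so Proposition~\ref{prop:cylic_nerve_ho_invariant} does not apply. They are not Dwyer--Kan equivalences either: the mapping spectrum $(\cC \sma (\Delta^1)_+)(a_1, b_0)$ is $*$, so $a_1$ admits no morphisms back into $\cC_0$ and in particular is not $\pi_0$-isomorphic to anything in the image of $j_0$; thus Blumberg--Mandell's Theorem~5.10 does not help here. (For the same reason the fold map $\cC \sma (\Delta^1)_+ \to \cC$ is not a pointwise equivalence, so this construction is not a cylinder in the model-categorical sense.) Most importantly, the asserted simplicial homotopy does not exist. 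A natural transformation does give a simplicial homotopy on the ordinary nerve, but the cyclic nerve $N^\cyc_n$ carries the extra ``wrap-around'' factor $\cC(c_n, c_0)$; to interpolate a chain supported in $\cC_0$ with one supported in $\cC_1$ you would need a nontrivial morphism spectrum from $\cC_1$ back to $\cC_0$, and the directed cylinder has none.

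The paper's proof instead builds a path object on $\cD$, not a cylinder on $\cC$: it uses the Moore nerve $w_1^M\cD$ of \cite[2.3.7]{blumberg_mandell2}, whose objects are the weak equivalences $a \to a'$ in $\_{\cD}$ and whose mapping spectra are homotopy pullbacks of $\cD(a,b)$ and $\cD(a',b')$ over $\cD(a,b')$. Because the objects are weak equivalences, morphisms genuinely go both ways, which is exactly what the cyclic nerve needs. The inclusion of identities $\cD \to w_1^M\cD$ is a DK-equivalence, split by the two endpoint evaluations; the hypothesis that $\eta$ is a weak equivalence is used precisely to produce the spectral functor $\cC \to w_1^M\cD$ restricting to $F$ and $G$ along the two splittings; and the conclusion follows from a diagram chase using DK-invariance of $\THH$. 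To salvage a cylinder argument on $\cC$ one would have to formally invert the interpolating morphisms, a hammock-localization type construction substantially harder than working directly on $\cD$.
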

\begin{proof}
This follows easily from the observation of Blumberg and Mandell that $\eta$ induces a map into the 1-simplices of the \emph{Moore nerve} $w_1^M \cD$ \cite[2.3.7]{blumberg_mandell2}. This is a spectral category with objects the weak equivalences $a \to a'$ in $\_{\cD}$. The maps from $a \to a'$ to $b \to b'$ are given by a homotopy pullback of $\cD(a,b)$ and $\cD(a',b')$ along $\cD(a,b')$, but the paths in $\cD(a,b')$ are allowed to have any nonnegative length. The inclusion of constant paths gives a DK-equivalence $\cD \ra w_1^M \cD$, split by the evaluation at the two endpoints. The data of the natural transformation $\eta$ gives a spectral functor $\cC \to w_1^M \cD$, whose composition with the two splittings $w_1^M \cD \to \cD$ give the functors $F$ and $G$. It follows from a formal diagram-chase that $F$ and $G$ induce homotopic maps on $\THH$.
\end{proof}

\subsection{Perfect $R$-modules}\label{subsec:Perf}

We are ultimately interested in the THH of a ring spectrum $R$, but the transfer map is defined on the THH of the category of perfect modules over $R$.  Recall that a right $R$-module $M$ is \emph{perfect} if it is a retract in the homotopy category of a finite cell $R$-module. Equivalently, the composition map
\[ 
M \sma_R F_R(M,R) \cong F_R(R,M) \sma_R F_R(M,R) \arr F_R(M,M) 
\]
is an equivalence, where the smash product and mapping spectra are derived. The perfect $R$-modules form the smallest thick subcategory of right $R$-modules containing $R$ itself. In other words, they are the smallest full subcategory containing $R$ and closed under weak equivalences, cofiber sequences, and retracts.  

The notion of a perfect $R$-module makes sense in any symmetric monoidal category of spectra. However, we will need to choose a model whose mapping spectra $F_R(M,N)$ are derived, and which is closed under tensoring with bimodules. There are a few ways to do this, but the easiest way is to use the EKMM category of $S$-modules.
\begin{definition}\label{def:perf_r}
Let $R$ be an $S$-algebra in the sense of \cite{ekmm}. Then $\Perf_{R}$ will refer to the subcategory of perfect right $R$-modules consisting of all the cofibrant objects, and in addition the module $R$ itself. We enrich $\Perf_R$ in $S$-modules by the usual spectral enrichment $F_R(-,-)$.
\end{definition}
\noindent Since every $S$-module is fibrant, when $M$ is cofibrant the mapping spectrum $F_{R}(M, N)$ has the correct homotopy type. We recall that $R$ itself is not cofibrant as an $R$-module \cite[II.1.10]{ekmm}, but of course $F_R(R,N) \cong N$, and so all of the mapping spectra in $\Perf_R$ are actually derived.

It is not difficult to verify that our chosen model is Dwyer-Kan equivalent to several other models of $\Perf_{R}$, including the enriched Waldhausen category of cofibrant perfect $R$-modules as defined in \cite{blumberg_mandell2}.

\begin{lemma}
The homotopy category $\pi_0 \Perf_R$ is skeletally small.
\end{lemma}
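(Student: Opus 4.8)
The plan is to exhibit a small set $S$ of objects of $\Perf_R$ such that every object of $\Perf_R$ is isomorphic in $\pi_0 \Perf_R$ to an object of $S$; since $\pi_0$ of each mapping spectrum is a set, this suffices. The natural candidate for $S$ is the collection of \emph{finite cell $R$-modules}, or rather a skeleton thereof. Recall that a perfect $R$-module is by definition a retract in the homotopy category of a finite cell $R$-module. So first I would observe that a retract, in a triangulated (or homotopy) category, of an object $X$ is isomorphic to the image of an idempotent $e \colon X \to X$; and in a category with countable coproducts the image of an idempotent in the homotopy category can be realized by a telescope construction, so the retract is itself isomorphic (in $\pi_0 \Perf_R$) to a homotopy colimit built out of copies of $X$. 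Thus every perfect $R$-module is, up to isomorphism in $\pi_0\Perf_R$, determined by the data of a finite cell $R$-module $X$ together with an idempotent $e \in \pi_0 F_R(X,X)$.

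Next I would bound the number of such data up to isomorphism. A finite cell $R$-module is built from finitely many cells $R \wedge S^n$ via finitely many attaching maps; the homotopy type of the resulting module is determined by a finite amount of combinatorial data (the cells) together with finitely many homotopy classes of attaching maps, each of which lies in a set $\pi_* F_R(\text{skeleton}, \text{skeleton})$. Running this inductively over the (finite) number of cells shows that finite cell $R$-modules form a set up to equivalence, hence up to isomorphism in $\pi_0\Perf_R$. For each such $X$, the set $\pi_0 F_R(X,X)$ is a set, so the idempotents in it form a set; combining with the previous paragraph, the isomorphism classes of objects of $\pi_0 \Perf_R$ form a set. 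Therefore $\pi_0\Perf_R$ is skeletally small.

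Alternatively — and perhaps more cleanly — I would invoke the fact that $\Perf_R$ is Dwyer--Kan equivalent to the enriched Waldhausen category of cofibrant perfect $R$-modules of \cite{blumberg_mandell2}, as noted just above the lemma, and that the perfect $R$-modules are the smallest thick subcategory of the homotopy category of $R$-modules containing $R$. Since the homotopy category of $R$-modules is locally small and $R$ is a single object, the thick subcategory it generates — built by iterating cofiber sequences and retracts countably many times — has only a set of isomorphism classes, because at each stage we adjoin objects parametrized by sets (maps out of previously-constructed objects, and splittings of idempotents). A DK-equivalence is in particular essentially surjective on homotopy categories, so $\pi_0\Perf_R$ has a set of isomorphism classes as well.

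The main obstacle is the retract/idempotent-splitting step: one must be careful that passing to retracts in the homotopy category does not enlarge the class of isomorphism types beyond a set. This is handled by the telescope argument (idempotents split in any homotopy category with countable coproducts, by a result of Bökstedt--Neeman), which shows each retract is realized by a homotopy colimit of copies of a fixed finite cell module, so no genuinely new objects are introduced beyond those classified by the set of idempotents in the (set-sized) endomorphism monoid. Everything else is bookkeeping about finite cell modules being parametrized by finite combinatorial data together with elements of homotopy groups, which are sets.
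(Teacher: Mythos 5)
Your proof is correct and shares the paper's skeleton: both reduce a perfect $R$-module, up to equivalence, to the data of a finite cell $R$-module $X$ together with an idempotent $e \colon X \to X$ in the homotopy category, and then argue that each piece of data ranges over a set. The one substantive divergence is in how to bound the finite cell modules. You argue inductively on the number of cells, noting that at each stage the attaching map is determined up to homotopy by an element of a homotopy group, hence lies in a set; the paper instead gives a blunt cardinality bound, embedding the spectrum levels of every finite $R$-module set-theoretically into a single large set determined by the cardinality of $R$. Your inductive argument is the more standard and more conceptual one; the paper's embedding trick is quicker but leans on the specific point-set model. You also spell out the idempotent-splitting step via the B\"okstedt--Neeman telescope, which the paper takes for granted (and which is indeed valid, since the homotopy category of $R$-modules has countable coproducts). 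Both routes land in the same place, and your alternative via thick subcategories is a clean third option that avoids cell structures entirely.
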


\begin{proof}
Each perfect $R$-module $M$ is described up to weak equivalence by a finite $R$-module $X$ and a choice of self-map $e: X \to X$ which is idempotent on the homotopy category. One may embed the spectrum levels of all finite $R$-modules $X$ as set-theoretic subsets of $|R| \times 2^{|\R|}$, where $|R|$ is the total cardinality of the levels of $R$. Therefore the weak equivalence classes of perfect $R$-modules form a set.
\end{proof}

The following Morita invariance result allows us to recast the $\THH$ of $R$ in terms of its category of perfect modules.
\begin{lemma}\cite[Thm 5.12]{blumberg_mandell}\label{lem:BM}
Let $\cC$ be a spectral category that is pretriangulated in the sense of \cite[Def 5.4]{blumberg_mandell}, so in particular $\pi_0\cC$ is triangulated. If $\cC_0$ is a full spectral subcategory, and $\cC'$ is its thick closure in $\cC$, then the inclusion $\THH(\cC_0) \rightarrow \THH(\cC')$ is an equivalence of spectra.
\end{lemma}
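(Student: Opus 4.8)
The plan is to reduce the statement to the known special case where $\cC_0 = \lan{R}$ is the one-object category on a ring spectrum $R$ and $\cC'$ is the category of perfect $R$-modules, which is exactly the Morita invariance theorem of Blumberg--Mandell that recasts $\THH(R)$ as $\THH(\Perf_R)$. So the first step is to recall the structure of a pretriangulated spectral category: $\pi_0\cC$ is triangulated, and $\cC$ is closed under (spectral) shifts and mapping cones up to weak equivalence. The thick closure $\cC'$ of $\cC_0$ is then the smallest full spectral subcategory of $\cC$ containing $\cC_0$ and closed under shifts, cofiber sequences, and retracts, formed inside $\cC$.

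Second, I would set up the two reductions that make the statement tractable. The key observation is that $\THH$ of a spectral category only depends on its DK-equivalence class (Proposition \cite[Thm 5.10]{blumberg_mandell} quoted above), so I am free to replace $\cC'$ by any DK-equivalent category. In particular, if $\cA$ is the spectral endomorphism ring of a suitable generator or small collection of generators of $\cC'$ built out of $\cC_0$, then $\cC'$ is DK-equivalent to a category of perfect $\cA$-modules, and correspondingly $\cC_0$ sits inside as the thick subcategory generated by finitely many free modules. The heart of the argument is then the ``agreement'' statement: for a pretriangulated category, passing to the thick closure does not change $\THH$. This is because a thick subcategory inclusion $\cC_0 \hookrightarrow \cC'$ induces an equivalence on algebraic $K$-theory in degree zero and, more to the point, can be analyzed by the theorem of Blumberg--Mandell that $\THH$ sends ``Morita equivalences'' (exact functors inducing equivalences on idempotent-completed triangulated categories) to equivalences. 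Concretely: the inclusion $\cC_0 \hookrightarrow \cC'$ induces an equivalence on $\pi_0$ after thick closure by definition of $\cC'$, so it is a Morita equivalence in the Blumberg--Mandell sense, hence a $\THH$-equivalence.

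Third, to make this fully rigorous one must handle the passage from $\pi_0$-level statements to spectrum-level statements, which is where the localization/cofiber-sequence machinery enters. The argument runs by an induction (a ``filtration by cell attachments'' or equivalently by the triangulated filtration of $\cC'$ over $\cC_0$): adding a single cofiber of a map in $\cC_0$ enlarges the category by a step whose effect on $N^\cyc$ is an equivalence, because the cyclic nerve takes a cofiber sequence of categories with a common set of generators to a cofiber sequence of spectra, and the extra piece contributes nothing new at the level of mapping spectra. One then takes a colimit over all such attachments and identifies the thick closure, and finally throws in retracts using that $\THH$ is an additive invariant and so is insensitive to idempotent splitting.

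The main obstacle is the last point: controlling the behavior of $N^\cyc$ under the thick-closure operation, i.e.\ proving that adjoining cofibers and retracts of objects already present does not alter the derived cyclic nerve. The cleanest route is not to reprove this by hand but to cite the Blumberg--Mandell framework directly --- they prove precisely that $\THH$ (as a functor on spectral categories) is invariant under exact functors that induce equivalences of associated triangulated categories up to thick closure, and the inclusion $\cC_0 \hookrightarrow \cC'$ is exactly such a functor by construction. So in the final write-up I would state the reduction to their Theorem 5.12 (or to the combination of their Morita invariance and agreement theorems) and note that the hypothesis ``$\cC$ pretriangulated'' guarantees $\cC'$ is again a well-behaved spectral category to which those results apply.
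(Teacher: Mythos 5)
The paper does not prove this statement; it is imported verbatim as \cite[Thm 5.12]{blumberg_mandell}. So if your intent is simply to cite the result, your last paragraph is right and you match the paper. But the sketch of a proof you give on the way there has two real problems, and they are worth flagging because they touch the very point the theorem is asserting.

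First, the inclusion $\cC_0 \hookrightarrow \cC'$ is \emph{not} a Dwyer--Kan equivalence in the sense of \cite[Thm 5.10]{blumberg_mandell}, which is what you invoke when you write that the inclusion ``induces an equivalence on $\pi_0$ after thick closure by definition of $\cC'$, so it is a Morita equivalence in the Blumberg--Mandell sense.'' DK-equivalence requires that every object of $\cC'$ be $\pi_0$-\emph{equivalent} to an object in the image of $\cC_0$; an object built as a retract or a cofiber of objects of $\cC_0$ generally is not. Closing this gap is exactly the content of Theorem 5.12; it is strictly stronger than the DK-invariance theorem, so you cannot reduce to it. Second, the final step of your argument is circular: you propose to ``cite the Blumberg--Mandell framework directly,'' pointing to precisely the theorem (thick-closure invariance) that is being stated. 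If you want to give an actual proof rather than a citation, you would need to supply the two nontrivial pieces that Blumberg--Mandell prove: invariance of $\THH$ under closure by shifts and cofiber sequences (the triangulated hull), and invariance under idempotent completion. Your ``filtration by cell attachments'' paragraph gestures at the first but the claim that ``the cyclic nerve takes a cofiber sequence of categories with a common set of generators to a cofiber sequence of spectra'' is not something you can assert without argument---this is where the substantive work lies. The reduction to a single ring $R$ at the start is also misleading, since $\cC_0$ is an arbitrary full spectral subcategory, not necessarily compactly generated by one object. As a cited lemma the statement is fine; as a proof sketch, it both presupposes the result and misapplies the weaker DK-invariance theorem.
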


The definition of thick closure used by Blumberg-Mandell is given in terms of the spectral enrichment.  In particular, it only agrees with the usual notion in the triangulated homotopy category if the mapping spectra are always derived.  It is for this reason that we must restrict to cofibrant perfect $R$-modules. The lemma provides a composite equivalence
\begin{equation}\label{eq:perfect_equivalence}
\THH(R) \oarr{\cong} \THH(F_{R}(R, R)) = \THH(\lan{R}) \oarr{\simeq} \THH(\Perf_{R}).
\end{equation}
If we had restricted to cofibrant $R$-modules, and included $THH(R)$ into the endomorphisms of the cofibrant $R$-module $R \sma QS$, we would get the same map $\THH(R) \ra \THH(\Perf_R)$ up to homotopy, by Proposition \ref{prop:equivalence_spectral_functors}.



\begin{remark}
If we had used orthogonal spectra to define $\Perf_R$, we would need to pass to bifibrant modules to make the mapping spectra derived. Unfortunately, that model does not accommodate the functor $M \sma_A -$ defined below, because the output of the functor is not fibrant, and small-object fibrant approximation is not spectrally enriched.
\end{remark}

\subsection{Bimodules and maps of spectral categories}\label{subsec:bimodules}

Continuing to work with $S$-modules, suppose that $A$ and $R$ are cofibrant $S$-algebras, and that $M$ is a cofibrant $(A, R)$-bimodule that is perfect as a right $R$-module. The functor $\Mod_{A} \arr \Mod_{R}$ given by $X \mapsto X \sma_{A} M$ is a Quillen left adjoint and restricts to give a spectral functor 
\[
\lambda_{M} = (-) \sma_{A} M \colon \Perf_{A} \arr \Perf_{R}
\]
on the subcategories of cofibrant perfect modules.  We write 
\[
\THH(\lambda_{M}) \colon \THH(\Perf_{A}) \arr \THH(\Perf_{R})
\]
for the induced map of derived cyclic nerves. When working in the homotopy category, we will sometimes write this as a map $\THH(A) \arr \THH(R)$ using (\ref{eq:perfect_equivalence}). If $M$ is not cofibrant, we write $\THH(\lambda_{M})$ for the induced map obtained by taking a cofibrant replacement of $M$. By Proposition \ref{prop:equivalence_spectral_functors} it immediately follows that this is invariant under the choice of $M$ up to weak equivalence:

\begin{proposition}\label{prop:lambda_m_invariant}
If $M \ra M'$ is a weak equivalence of cofibrant $(A,R)$ bimodules that are perfect over $R$, then $\THH(\lambda_M)$ and $\THH(\lambda_{M'})$ are homotopic.
\end{proposition}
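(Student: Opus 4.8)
The plan is to reduce the claim to the functoriality of $\THH$ and to Proposition~\ref{prop:equivalence_spectral_functors}. A weak equivalence $\psi \colon M \to M'$ of cofibrant $(A,R)$-bimodules that are perfect over $R$ gives rise to a natural transformation between the spectral functors $\lambda_M$ and $\lambda_{M'}$ on $\Perf_A$: for each cofibrant perfect right $A$-module $X$, the map $\mathrm{id}_X \sma_A \psi \colon X \sma_A M \to X \sma_A M'$ is the component at $X$. First I would check that this assignment $X \mapsto \mathrm{id}_X \sma_A \psi$ really is a natural transformation in the spectrally-enriched sense: the required commuting square of maps of mapping spectra
\[
F_A(X,Y) \to F_R(X \sma_A M, Y \sma_A M')
\]
commutes because $\psi$ is a map of $(A,R)$-bimodules, so smashing a morphism $X \to Y$ over $A$ commutes with applying $\psi$ on the $M$-factor. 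This is the routine part and I would not spell out the diagram chase in detail.

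Next I would verify that $\eta$ is a \emph{weak} equivalence of spectral functors in the sense defined just before Proposition~\ref{prop:equivalence_spectral_functors}: composition with the components $\eta(X) = \mathrm{id}_X \sma_A \psi$ must induce stable equivalences on mapping spectra of $\Perf_R$. Since $M$ and $M'$ are cofibrant as $A$-modules (hence $X \sma_A M$ and $X \sma_A M'$ are the derived smash products) and $\psi$ is a weak equivalence, the map $X \sma_A M \to X \sma_A M'$ is a stable equivalence for every cofibrant perfect $X$; post-composition with a weak equivalence of objects in $\Perf_R$ induces an equivalence on mapping spectra, which is exactly what is required. Here one uses that $X$ is cofibrant and built from finitely many cells in the pretriangulated category, so that $X \sma_A -$ preserves weak equivalences of cofibrant bimodules.

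With $\eta$ a weak equivalence between $\lambda_M$ and $\lambda_{M'}$, Proposition~\ref{prop:equivalence_spectral_functors} immediately yields that $\THH(\lambda_M)$ and $\THH(\lambda_{M'})$ are the same map in the homotopy category of spectra, i.e.\ homotopic. If $M$ or $M'$ is not itself cofibrant, one first replaces them by cofibrant bimodules and observes that a weak equivalence $M \to M'$ lifts (up to homotopy) to a weak equivalence of cofibrant replacements, using that bimodule spectra over cofibrant $S$-algebras form a model category; the argument above then applies to the replacements, and the comparison maps between $\THH$ of the original and replaced bimodule functors are handled by the same naturality. I expect the only genuinely delicate point to be the second step — confirming that $X \sma_A (-)$ sends the weak equivalence $\psi$ to a weak equivalence for \emph{all} cofibrant perfect $X$ simultaneously, in a way compatible with the enrichment — but this follows from the standard homotopical properties of the derived smash product of $S$-modules together with the fact that cofibrant perfect modules are retracts of finite cell modules.
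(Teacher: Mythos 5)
Your proof is correct and follows the same route the paper takes: the paper states this follows ``immediately'' from Proposition~\ref{prop:equivalence_spectral_functors}, and you have supplied exactly the verification that $\eta(X) = \id_X \sma_A \psi$ is a spectrally enriched natural transformation whose components are weak equivalences. One small imprecision: the parenthetical remark that $M,M'$ are cofibrant as $A$-modules is not quite the point --- what makes $\id_X \sma_A \psi$ a weak equivalence is that each $X \in \Perf_A$ is a cofibrant right $A$-module, so $X \sma_A (-)$ is left Quillen and preserves the weak equivalence $\psi$ between cofibrant bimodules by Ken Brown's lemma.
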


It follows that when $A$ and $R$ are not cofibrant, we can take cofibrant replacements before defining $\lambda_M$, and the resulting map $\THH(\lambda_{M})$ on the homotopy category will be independent of the choice of replacement. We conclude this section by considering two examples, and defining the free loop transfer $\tau_\THH$.

\begin{example}
Suppose that $f \colon A \arr R$ is any map of ring spectra.  Then the $(A, R)$-bimodule $R$ is perfect as an $R$-module, and the operation $\lambda_{R} = (-) \sma_A R$ induces the usual covariant functoriality of $\THH$:
\[ \THH(\lambda_{R}) \colon \THH(A) \arr \THH(R) \]
\end{example}

\begin{example}\label{ex:lambda}
Suppose that $f \colon R \arr A$ is a map of ring spectra and that $A$ is perfect as an $R$-module. Then the functor $\lambda_{A} \colon \Perf_{A} \arr \Perf_{R}$ given by smashing with the $(A, R)$-bimodule $A$ is naturally isomorphic to the restriction functor $f^*$. We define the $THH$ transfer of $f$ to be the map induced by $\lambda_A$:
\[
 \tau_{\THH} = \THH(\lambda_{A}) \colon \THH(A) \arr \THH(R)
\]
\end{example}

\begin{definition}\label{def:tau_THH}
If $f: E \arr B$ is a perfect fibration of pointed, path-connected spaces, we define the free loop transfer of $f$ to be the THH transfer 
\[
  \Sigma^{\infty}_{+} LB \simeq \THH(\Sigma^{\infty}_{+} \Omega B) \xarr{\tau_{\THH}} \THH(\Sigma^{\infty}_{+} \Omega E) \simeq \Sigma^{\infty}_{+} LE
\]
of the map of ring spectra
\[ \Sigma^{\infty}_{+} \Omega f \colon \Sigma^{\infty}_{+} \Omega E \arr \Sigma^{\infty}_{+} \Omega B. \]
This definition is invariant under weak equivalence, so we may tacitly use the Kan loop group model for $\Omega B$ so that it is a topological group. That $\Sigma^{\infty}_{+} \Omega B$ is perfect over $\Sigma^{\infty}_{+} \Omega E$ can be deduced from extending the fiber sequence to the left
\[
\dotsm \arr \Omega E \oarr{\Omega f} \Omega B \arr F \arr E \oarr{f} B,
\]
and realizing $\Omega B \arr F$ as a principal $\Omega E$-bundle.  We will see that $\tau_\THH$ does not depend on the choice of basepoint. When $E$ is not connected, we construct $\tau_{\THH}$ on each component and add the resulting maps together. Similarly, when $B$ is not connected, we define the transfer on each component and add the results together.
\end{definition}

\section{Parametrized spectra}\label{sec:parametrized_spectra}

We now recall the facts that we need about parametrized spectra, following the book of May-Sigurdsson \cite{MS}.  Let $B$ be a topological space, homotopy equivalent to a cell complex.  Let $\Top/B$ denote the category of spaces $(Y, p) = (p \colon Y \arr B)$ over $B$. We write $\Top_{B}$ for the category of retractive spaces, or ex-spaces, which are spaces $(Y, p, s)$ over $B$ equipped with a section $s \colon B \arr Y$. We let $S_B^m$ denote the trivial product fibration over $B$ whose fiber is the sphere $S^m$; this naturally belongs to $\Top_B$.

We recall from \cite{mmss} that orthogonal spectra are given by diagrams of spaces over a certain topological category $\mathscr J$. We define a (parametrized) spectrum $X$ over $B$ to be a diagram of retractive spaces $\Top_B$ over $\mathscr J$. Concretely, $X$ consists of a sequence of $O(n)$-equivariant ex-spaces $(X(n), p(n), s(n))$ for $n \geq 0$, along with $(O(m) \times O(n))$-equivariant spectrum structure maps
\[
\sigma_{m, n} \colon S_{B}^{m} \sma_{B} X(n) \arr X(m + n) \quad \text{in $\Top_{B}$}
\] 
where $\sma_{B}$ denotes the fiberwise smash product of retractive spaces.  We write $\Sigma^{\infty}_{B} Y = \Sigma^{\infty}_{B} (Y, p, s)$ for the fiberwise suspension spectrum of an ex-space $(Y, p, s)$, defined by $(\Sigma^{\infty}_{B} Y)(n) = S^n_{B} \sma_{B} Y$, and write $\Sigma^{\infty}_{B} (Y, p)_{+}$ for the fiberwise suspension spectrum of the ex-space $(Y \sqcup B, p \sqcup \id_B, \id_B)$ obtained from $(Y, p)$ by adjoining a disjoint section. There is a fiberwise smash product $\sma_B$ of spectra over $B$, and it is symmetric monoidal with unit the fiberwise sphere spectrum $S_{B} = \Sigma^{\infty}_{B} (B, \id)_{+}$.  We write $\ho \Sp{}{B}$ for the homotopy category of May-Sigurdsson's stable model category of spectra over $B$.  

When $X$ is a parametrized spectrum over $B$, for each point $b \in B$, the fibers $X(n)_b$ give an orthogonal spectrum $X_b$, called the fiber spectrum of $X$ over $b$. This notion is homotopically meaningful if the projection maps $X(n) \arr B$ are at least quasifibrations. More formally, we characterize the fiber as a derived pullback. Recall that for every map of spaces $f \colon A \arr B$, May and Sigurdsson construct derived base-change functors
\[
f_{!} \colon \ho \Sp{}{A} \arr \ho \Sp{}{B} \quad f^* \colon \ho \Sp{}{B} \arr \ho \Sp{}{A} \quad f_* \colon \ho \Sp{}{A} \arr \ho \Sp{}{B}
\]
and two adjunctions, $(f_{!}, f^*)$ and $(f^*, f_*)$.  We emphasize that these categories are homotopy categories, and the symbols $f_{!}, f^*, f_*$ denote derived base-change functors.  The derived fiber spectrum $F_{b} X$ is the pullback $i_{b}^* X$, where $i_{b} \colon \{b\} \arr B$ is the inclusion of the point $b$.  A map of parametrized spectra $X \arr Y$ is an equivalence precisely when the map of derived fibers $F_b X \overset\sim\arr F_b Y$ is an equivalence for every $b \in B$.

We also remark that the fiberwise suspension spectrum operation $\Sigma^\infty_B (-)_+$ commutes with pushforward and pullback, both on a point-set level and in the derived sense:
\[
f_{!} \Sigma^{\infty}_{A} (Y, p)_{+} \cong \Sigma^{\infty}_{B} (Y, f \circ p)_{+}, \qquad f^* \Sigma^{\infty}_B (Z, p)_{+} \cong \Sigma^{\infty}_A (f^* Z, f^*(p))_{+}.
\]
Of course, to make the pullback $f^*Z$ derived we first replace $Z \arr B$ by a fibration.

\begin{example}\label{ex:def_of_r}
We generically write $r \colon B \arr \ast$ for the projection map to a point.  The base-change functor $r_{!} \colon \ho \Sp{}{B} \arr \ho \Sp{}{}$ simply collapses the section $s$ to a single basepoint.  In particular, $r_!$ takes each fiberwise suspension spectrum $\Sigma^{\infty}_{B} (Y, p)_{+}$ to an ordinary suspension spectrum $\Sigma^{\infty} Y_+$.
\end{example}

\begin{example}\label{ex:loops}
Let $\Delta \colon B \arr B \times B$ denote the diagonal map.  Then there is a canonical equivalence
\[
\Delta^* \Delta_{!} S_{B} \cong \Delta^* \Sigma^{\infty}_{B \times B} (B, \Delta)_{+} \cong \Sigma^{\infty}_{B} (LB, e_0)_+,
\]
where $LB = \Map(S^1, B)$ is the free loop space of $B$ and $e_0$ is the evaluation of a loop at the basepoint $0 \in S^1$.  Under this identification, the unit $\eta \colon S_{B} \arr \Delta^* \Delta_{!} S_{B}$ of the adjunction $(\Delta_{!}, \Delta^*)$ is the map of suspension spectra over $B$ given by the constant loops map $c \colon B \arr LB$.
\end{example}

\begin{example}
If $X$ is a spectrum over $A$ and $Y$ is a spectrum over $B$, then the external smash product $X \osma Y$ is a spectrum over $A \times B$ whose fiber at $(a, b)$ is the smash product $X_a \sma Y_b$.  When $A = B$, the fiberwise smash product $X \sma_{B} Y$ is canonically equivalent to the pullback $\Delta^* (X \osma Y)$ along the diagonal.
\end{example}

Finally we discuss some compatibilities between base-change functors.
Suppose we are given a commutative diagram of maps of topological spaces
\begin{equation}\label{eq:simple_square}
\xymatrix{
A \ar[r]^{f} \ar[d]_{i} & B \ar[d]^{j} \\
C \ar[r]^{g} & D 
}
\end{equation}
and a spectrum $X$ over $C$.  Then there is a natural map $\alpha \colon f_{!} i^* X \arr j^* g_{!} X$ of spectra over $B$ defined by the composite
\begin{equation}\label{eq:beck_chevalley}
\alpha \colon f_{!} i^* \oarr{\eta} f_{!} i^* g^* g_{!} \cong f_{!} f^* j^* g_{!} \oarr{\epsilon} j^* g_{!}
\end{equation}
of the unit and counit for the adjunctions of base change functors induced by $f$ and $g$.  When the square (\ref{eq:simple_square}) is a homotopy pullback square, then the transformation $\alpha$ is an equivalence of derived functors, called the \emph{Beck-Chevalley isomorphism} \cite{MS}*{Thm. 13.7.7}. There is also a map
\[
\rho \colon f_{!}(f^*X \sma_{A} Y) \arr X \sma_{B} f_{!} Y
\]
induced by the diagram
\[
\xymatrix{
A \ar[r]^{f} \ar[d]_{(f, 1)} & B \ar[d]^{\Delta} \\
B \times A \ar[r]^{1 \times f} & B \times B
}
\]
and $\rho$ is an equivalence of derived functors, even though the diagram need not be a homotopy pullback \cite{MS}*{(11.4.5) and proof of Thm. 13.7.6}.  The equivalence $\rho$ is often called the \emph{projection formula}.

The construction of the map \eqref{eq:beck_chevalley} is not only natural in $X$ but also functorial with respect to pasting of commutative squares.  If we mark the square with the name of the transformation constructed in this manner, then the equality of commutative diagrams 
\[
\xymatrix{
A \ar[r]^{f} \ar[d]_{i} \ar@{}[dr]|{\Downarrow \alpha} & B \ar[d]^{j} \\
C \ar[r]^{g} \ar[d]_{k} \ar@{}[dr]|{\Downarrow \beta} & D  \ar[d]^{l} \\
U \ar[r]^{h} & V
}
\quad \xymatrix{ \ar@{}[dr]|{=} & \\ & } \quad  
\xymatrix{
A \ar[d]_{k \circ i} \ar[r]^{f} \ar@{}[dr]|{\Downarrow \gamma} & B \ar[d]^{l \circ j} \\
U \ar[r]^{h} & V
}
\]
implies the equality of natural transformations $\beta \circ \alpha = \gamma$.  

\section{Duality, shadows, and traces in a bicategory}\label{sec:bicats}

In this section we briefly recall the theory of duality and traces in a bicategory.  See the papers of Ponto-Shulman \citelist{\cite{PS12} \cite{PS13}} for a more extensive treatment.

\subsection{Bicategories: review and examples}

Let $\cB$ be a bicategory.  $\cB$ consists of objects $A, B, C, \dotsc$, and a category $\cB(A, B)$ for each pair of objects.  We write $X \colon A \arr B$ for an object of $\cB(A, B)$ and call it a 1-morphism in $\cB$.  A morphism $f \colon X \arr Y$ between 1-morphisms is called a 2-morphism.  The bicategory $\cB$ also has a horizontal composition functor
\[
- \otimes_{B} - \colon \cB(A, B) \times \cB(B, C) \arr \cB(A, C)
\]
which is associative and unital with respect to unit 1-morphisms $U_A \in \cB(A, A)$, all up to coherent 2-isomorphisms.  A map of bicategories $\cB \arr \cB'$ consists of maps of 0-cells, 1-cells, and 2-cells, which commute with vertical composition strictly and horizontal composition up to coherent 2-isomorphisms.

Note that we use diagrammatic order in our notation for horizontal composition, so that the composite of $X \colon A \arr B$ and $Y \colon B \arr C$ is $X \otimes Y \colon A \arr C$, and not $Y \otimes X$ as in functional notation.  For simplicity, we will often omit the associativity and unit isomorphisms, as is common practice with symmetric monoidal categories.

\begin{example}
There is a bicategory $\Bimod$ whose objects are rings and whose category $\Bimod(A, B)$ of 1- and 2-morphisms from a ring $A$ to a ring $B$ is the category of $(A, B)$-bimodules and bimodule homomorphisms.  The horizontal composition of $M \in \Bimod(A, B)$ and $N \in \Bimod(B, C)$ is the tensor product $M \otimes_{B} N \in \Bimod(A, C)$.

Similarly, if we let $S$ denote the sphere spectrum, there is a bicategory $\Bimod_{S}$ whose 0-cells are ring spectra $A$. The 1-morphisms from $A$ to $B$ are $(A, B)$-bimodules and the 2-morphisms are morphisms in the homotopy category of bimodules. The composition of 1-cells in $\Bimod_{S}$ is given on homotopy categories by the derived smash product over a ring $A$. When the rings and bimodules are ``pointwise cofibrant'' as in section \ref{sec:spectral_cats}, this can be modeled by the two-sided bar construction:
\begin{align*}
\Bimod_{S}(A_1, A_2) \times \Bimod_{S}(A_2, A_3) &\oarr{\circ} \Bimod_{S}(A_1, A_3) \\
(M, N) &\longmapsto M \sma^{\bL}_{A_2} N = B(M, A_2, N).
\end{align*}
This definition may be carried out in orthogonal spectra or in $S$-modules, but the resulting bicategories are equivalent.
\end{example}

\begin{example}\label{ex:Ex}
There is a bicategory $\Ex$ whose objects are topological spaces and whose category $\Ex(A, B)$ of 1- and 2-morphisms from a space $A$ to a space $B$ is the homotopy category $\ho \Sp{}{A \times B}$ of spectra over the cartesian product $A \times B$.  The horizontal composition of $X \in \Ex(A, B)$ and $Y \in \Ex(B, C)$ is the spectrum
\[
X \odot_{B} Y = {\pi_{!}^B} \Delta_{B}^* (X \osma Y) \quad \text{over $A \times C$},
\] 
where 
\begin{align*}
\Delta_{B} \colon A \times B \times C &\arr A \times B \times B \times C \\
\pi^B \colon A \times B \times C &\arr A \times C 
\end{align*}
are the map induced by the diagonal of $B$ and the projection off of $B$.


One of the important features of $\Ex$ is that certain 1-cells encode the base-change functors $f_{!}$ and $f^*$.  For each map of spaces $f \colon A \arr B$, we define a 1-morphism $S_{f} \in \Ex(B, A)$ by
\[
S_{f} = \Sigma^{\infty}_{B \times A} (A, (f, \id))_{+} \cong (f, \id)_{!} S_{A}.
\]
We also define ${}_fS \in \Ex(A, B)$ by the same rule, but over $A \times B$:
\[
{}_fS = \Sigma^{\infty}_{A \times B} (A, (\id, f))_{+} \cong (\id, f)_{!} S_{B}.
\]
Because of the convention on the order of composition for $\odot$, it is important to distinguish ${}_{f}S$ from $S_{f}$.  They are different one-cells in the bicategory $\Ex$.  Composing with $S_{f}$ and ${}_{f}S$ on the right in $\Ex$ gives the pullback and pushforward functors in the second entry \cite{MS}*{\S 17.2}:
\begin{align*}
X \in \Ex(C, B) \quad & \Longrightarrow \quad X \odot_{B} S_{f} \cong (\id \times f)^* X \in \Ex(C, A) \\
X \in \Ex(C, A) \quad & \Longrightarrow \quad X \odot_{A} {}_{f}S \cong (\id \times f)_{!} X \in \Ex(C, B).
\end{align*}
Similarly, composing with $S_{f}$ and ${}_{f}S$ on the left gives pushforward and pullback functors in the first entry:
\begin{align*}
X \in \Ex(A, C) \quad & \Longrightarrow \quad S_{f} \odot_{A} X \cong (f \times \id)_{!} X \in \Ex(B, C) \\
X \in \Ex(B, C) \quad & \Longrightarrow \quad {}_{f}S \odot_{B} X \cong (f \times \id)^* X \in \Ex(A, C).
\end{align*}
\end{example}
\noindent One may adapt the mnemonic that the $f$ decoration is on the same side as the source of the map $f$, and that tensoring with a base-change functor gives a pushforward if $f$ is on the same side as the tensor, and a pullback if $f$ is opposite the tensor.

\begin{example}\label{ex:Ex_B}
Fix a space $B$.  There is a variant of $\Ex_{B}$ of the bicategory $\Ex$ whose objects are spaces $A$ over $B$, and with $\Ex_{B}(A, C)$ given by the homotopy category of parametrized spectra over $A \times_{B} C$.  This is the example most relevant to our proof of Theorem \ref{thm:intro_becker_gottlieb_compatibility}.
\end{example}

\subsection{Duality in a bicategory}
We now fix a bicategory $\cB$ and recall the notion of duality internal to $\cB$.

\begin{definition}\label{def:dual_pair_bicat}
Suppose that $X \in \cB(A, B)$ and $Y \in \cB(B, A)$ are 1-morphisms in $\cB$.  We say that $(X, Y)$ is a \emph{dual pair} if there are 2-morphisms
\[
\coev(X) \colon U_{A} \arr X \otimes_{B} Y \qquad \eval(X) \colon Y \otimes_{A} X \arr U_{B}
\]
such that 
\begin{align*}
X \cong U_{A} \otimes_{A} X \xarr{\coev \otimes 1} X \otimes_{B} Y \otimes_{A} X \xarr{1 \otimes \eval} X \otimes_{B} U_{B} \cong X \\
Y \cong Y \otimes_{A} U_{A} \xarr{1 \otimes \coev} Y \otimes_{A} X \otimes_{B} Y \xarr{\eval \otimes 1} U_{B} \otimes_{B} Y \cong Y
\end{align*}
are the identity 2-morphisms of $X$ and $Y$.

If in addition the maps $\coev(X)$ and $\ev(X)$ are isomorphisms, then $X$ and $Y$ form an \emph{equivalence} between the 0-cells $A$ and $B$. An \emph{equivalence of bicategories} is a morphism $\phi: \cB \ra \cB'$ giving equivalences of categories $\cB(A,B) \arr \cB'(\phi(A),\phi(B))$, such that every 0-cell in $\cB'$ is equivalent to a 0-cell in $\phi(\cB)$.
\end{definition}

\noindent Notice that Definition \ref{def:dual_pair_bicat} is \emph{not} symmetric.  If $(X, Y)$ form a dual pair, then it need not be true that $(Y, X)$ is a dual pair.  To emphasize this, we say that $X$ is right-dualizable, or dualizable over $B$, with right dual $Y$.

It is a straightforward consequence of the definitions that if $(X, Y)$ and $(X', Y')$ are dual pairs, then $(X \otimes X', Y' \otimes Y)$ is a dual pair (assuming that the one-cells are composable).  The evaluation and coevaluation morphisms for the new dual pair are the composites of the evaluation and coevaluation morphisms for the constituent dual pairs. One easy consequence is that dual pairs may be transported along any equivalence of bicategories.

In addition, if the categories $\cB(A,B)$ have finite biproducts, and the horizontal composition in $\cB$ has the distributive property, then a finite sum of dual pairs $(X_i,Y_i)$ is a dual pair. The coevaluation and evaluation morphisms are the block sum of the coevaluation and evaluation morphisms for each of the pairs $(X_i,Y_i)$.

\begin{remark}
The definition of duality implies that $- \otimes_A X$ and $- \otimes_B Y$ are adjoint functors between $\cB(C,A)$ and $\cB(C,B)$ for all 0-cells $C$. As a consequence, if the functors $- \otimes_A X: \cB(C,A) \arr \cB(C,B)$ come with right adjoints $R$, the evaluation map is adjoint to an isomorphism $Y \cong R(U_B)$, and the coevaluation map is the unit $U_A \arr R(X) \cong X \otimes_B Y$ of the adjunction. A consequence of this observation is that duals are unique. We will use this explicitly in the proof of Proposition \ref{prop:geometric_duality}.
\end{remark}

\begin{example}\label{ex:dualizable_in_bimodS}
If $A$ and $R$ are ring spectra, an $(A, R)$-bimodule $M$ is right dualizable if and only if $M$ is perfect as an $R$-module. The right dual admits a canonical equivalence $DM \simeq F_{R}(M,R)$ such that the coevaluation and evaluation maps are given by the left $A$-module action on $M$ and the composition pairing:
\[
\coev(M) \colon A \arr  F_{R}(M,M) \simeq M \sma_{R}^{\bL} DM \]
\[ \eval(M) \colon DM \sma_{A}^{\bL} M \simeq F_{R}(M,R) \sma_A^{\bL} F_{R}(R,M) \arr R
\]
\end{example}

\begin{example}\label{ex:dual_pair_f_S}
Duality in the bicategory $\Ex$ is called \emph{Costenoble-Waner duality}, and was studied extensively by May-Sigurdsson. 
For any map of spaces $f \colon A \arr B$, the base-change spectra ${}_{f}S$ and $S_{f}$ form a Costenoble-Waner dual pair $({}_{f}S, S_{f})$ in $\Ex$ \cite{MS}*{17.3.1}. This is a formal consequence of the properties of the diagonal map.  Note that the order here is crucial, since $(S_{f}, {}_{f}S)$ is rarely a dual pair.
\end{example}

\begin{example}\label{ex:CWduality_of_M}
If $M$ is a closed smooth manifold, then the sphere spectrum $S_M$ may be considered as a 1-morphism in $\Ex(\ast, M)$, and it is right dualizable. The right dual is the stable spherical fibration $S^{-TM}$ associated to the negative tangent bundle of $M$ \cite{MS}*{18.2.5}. The Costenoble-Waner dual pair $(S_M, S^{-TM})$ in the bicategory $\Ex$ gives rise to Atiyah duality, namely the dual pair 
\[
(M_+, M^{-TM}) = (S_M \odot_{M} {}_{r}S, S_{r} \odot_{M} S^{-TM}) 
\]
in the homotopy category of spectra. Here, $r \colon M \arr \ast$ is the projection off of $M$ so that the base-change spectra ${}_{r}S$ and $S_{r}$ encode the Thom space construction $r_{!}$, as in Example \ref{ex:def_of_r}.

More generally, the sphere spectrum $S_{X}$ over a space $X$, considered as a 1-morphism in $\Ex(\ast, X)$, is right dualizable if $X$ is a retract of a homotopy finite space \cite{MS}*{18.5.1}.  Even more generally, we will see below in Proposition \ref{prop:base_change_dualble_Ex} that the base-change spectrum $S_{f}$ associated to a perfect fibration $f$ is right dualizable.  In this case, $S_{f}$ participates in both the canonical dual pair $({}_{f}S, S_{f})$ and in the dual pair $(S_{f}, D S_{f})$. When $f$ is a smooth bundle with closed fibers, we will see that the dual pair $(S_f, D S_f)$ is given by a fiberwise variant of Atiyah duality.
\end{example}

A monoidal category $(\cM, \otimes, U)$ may be considered as a one-object bicategory with $\cB(\ast, \ast) = \cM$, and the usual notion of a dual pair in $\cM$ coincides with the notion of a dual pair in $\cB$.  If $(X, Y)$ is a dual pair in $\cM$ and $\cM$ is a symmetric monoidal category, then the trace of a morphism $\phi \colon P \otimes X \arr X \otimes Q$ is defined to be the composite
\[
\tr(\phi) \colon P \cong P \otimes U \oarr{\coev} P \otimes X \otimes Y \oarr{\phi \otimes 1} X \otimes Q \otimes Y \oarr{\gamma} Y \otimes X \otimes Q \oarr{\eval} U \otimes Q \cong Q.
\]
The structure of the symmetry isomorphism $\gamma$ of $\cM$ also means that we don't need to be careful about the order of $\otimes$ in defining coevaluation and evaluation maps: $(X, Y)$ is a dual pair if and only if $(Y, X)$ is a dual pair.  

\begin{example}\label{ex:becker_gottlieb_transfer}
An object of the symmetric monoidal category of spectra $(\ho \Sp{}{}, \sma, S)$ is dualizable if and only if it is a finite spectrum.  Similarly, an object of the symmetric monoidal category $(\ho \Sp{}{B}, \sma_{B}, S_{B})$ of spectra over $B$ is dualizable if and only if each derived fiber is dualizable in $\ho \Sp{}{}$ \cite{MS}*{15.1.1}. If $f \colon E \arr B$ is a perfect fibration, then each fiber of the parametrized spectrum $f_{!} S_{E} \cong \Sigma^{\infty}_{B} (E, f)_+$ is a dualizable spectrum.  Thus $f_{!}S_{E}$ admits a fiberwise dual $D f_{!} S_{E}$.  The trace of the fiberwise diagonal map 
\[
\delta_{} \colon f_{!} S_{E} \cong \Sigma^{\infty}_{B} (E, f)_+ \arr  \Sigma^{\infty}_{B} (E \times_{B} E, f)_+ \cong f_{!} S_{E} \sma_{B} f_{!} S_{E}
\]
is the composite
\[
\tr(\delta_{}) \colon S_{B} \oarr{\coev} f_{!} S_{E} \sma_{B} D f_{!} S_{E} \xarr{\delta_{} \sma \id} f_{!} S_{E} \sma_{B} f_{!} S_{E} \sma_{B} D f_{!} S_{E} \xarr{\eval \circ \gamma} f_{!} S_{E}
\]
This trace is called the Becker-Gottlieb pretransfer and we use the notation $\tau_{B}(f) = \tr(\delta_{})$.  The Becker-Gottlieb transfer is obtained by base-change to a point \cite{BG76}:
\[
\tau(f) = r_{!} \tau_{B}(f) \colon \Sigma^{\infty}_{+} B \cong r_{!} S_{B} \arr r_{!} f_{!} S_{E} \cong \Sigma^{\infty}_{+} E.
\]
\end{example}

\begin{remark}
Morphisms in a symmetric monoidal category, such as $\Sp{}{B}$, may be described using the calculus of string diagrams (see, for example, \citelist{\cite{JS} \cite{Sel}} and references therein).  Using this notation, the Becker-Gottlieb pretransfer is represented by Figure \ref{fig:pretransfer1}, where we have used the abbreviations $E_+ = f_{!} S_{E}$ and $DE_+ = D f_{!} S_{E}$.
\begin{figure}
\begin{tikzpicture}[scale=0.8]
      \node[fill=white,draw,circle,inner sep=1pt, font=\scriptsize] (delta) at (3,0) {$\delta$};
      \draw (delta) to[out=-135, in=90] node [ed, near end] {$E_+$} (2.5, -1.5);
      \draw (delta) to[out=-45, in=90] node [ed, near end] {$E_+$} (3.5, -1.5) to  ++(0,-1.5);
      \draw[out=90, in=-45] (delta) to (2,2) coordinate (cross);
      \draw[out=135, in= -90] (cross) to ++(-.5,1) 
      to[out=90, in=180] node [ed,near start] {$E_+$} ++(.5, 1)
      to[out=0, in=90] node [ed,near end] {$DE_+$} ++(.5, -1)
      to[out=-90, in=45] (cross);
      \draw (cross) to[out=-135, in=90] (1.5,0) to node [ed, near end, swap] {$DE_+$}  ++(0,-1.5) coordinate (bl);
      \draw (bl) to[out=-90, in=180] ++(.5, -1)
      to[out=0, in=-90]  ++(.5, 1);
\end{tikzpicture}
\caption{The Becker-Gottlieb pretransfer $\tau_{B}(f) \colon S_{B} \arr f_{!} S_{E}$ as a morphism in the symmetric monoidal category $(\Sp{}{B}, \sma_{B}, S_{B})$.}\label{fig:pretransfer1}
\end{figure}
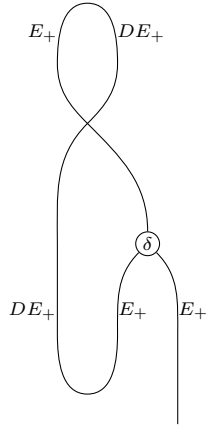
\end{remark}

We would like to use the same definition to take the trace of an endomorphism $\phi \colon X \arr X$ of a right dualizable 1-morphism $X \in \cB(A, B)$ in a bicategory.  Unfortunately, the formula for $\tr(\phi)$ does not make sense in a general bicategory because there is no symmetry isomorphism $\gamma \colon X \otimes_{B} Y \arr Y \otimes_{A} X$.  Notice that when $A \neq B$, these objects don't even live in the same category.

\subsection{Shadows}
Ponto provided a solution to this problem by introducing the notion of a shadow functor \cite{PoThesis}.  A shadow functor on a bicategory $\cB$ with values in a category $\cC$ consists of a functor
\[
\llan{-} \colon \cB(A, A) \arr \cC \qquad \text{for each object $A$ in $\cB$}
\] 
along with a natural transformation
\[
\theta \colon \llan{X \otimes_{B} Y} \arr \llan{Y \otimes_{A} X}
\]
for $X \in \cB(A, B)$ and $Y \in \cB(B, A)$ such that the following diagrams commute (when they parse correctly).
\begin{equation}\label{eq:shadow1}
\xymatrix{
\llan{(X \otimes Y) \otimes Z} \ar[r]^{\theta} \ar[d] & \llan{Z \otimes (X \otimes Y)} \ar[r] & \llan{(Z \otimes X) \otimes Y} \ar[d]^{\theta} \\
\llan{X \otimes (Y \otimes Z)} \ar[r]^{\theta} & \llan{(Y \otimes Z) \otimes X} \ar[r] & \llan{Y \otimes (Z \otimes X)}
}
\end{equation}
\begin{equation}
\xymatrix{
\llan{X \otimes U_{A}} \ar[r]^{\theta} \ar[dr] & \llan{U_{A} \otimes X} \ar[r]^{\theta} \ar[d] & \llan{X \otimes U_A} \ar[dl] \\
& \llan{X}
}
\end{equation} 
The unmarked arrows are induced by the associativity and unit isomorphisms in the bicategory $\cB$.  On first glance \eqref{eq:shadow1} might look like one of the axioms for a braiding on a monoidal category, but it is a straightforward consequence of the axioms that $\theta^2 = 1$.  Instead, one should think of $\theta$ as a cyclic rotation.  In fact, the manipulation of diagrams for a shadow on a bicategory can be done using a calculus of string diagrams on a cylinder \cite{PS13}.  We call $\theta$ the cyclic isomorphism associated to the shadow functor $\llan{-}$.

If $(X, Y)$ is a dual pair in $\cB$, and $\cB$ is equipped with a shadow functor valued in $\cC$, then the trace $\tr(\phi)$ of a 2-morphism $\phi \colon P \otimes_{A} X \arr X \otimes_{B} Q$ is defined to be the composite
\begin{align*}
\tr(\phi) \colon \llan{P} \cong \llan{P \otimes_{A} U_{A}} \oarr{\coev} &\llan{P \otimes_{A} X \otimes_{B} Y} \oarr{\phi \otimes 1} \llan{X \otimes_{B} Q \otimes_{B} Y} \\
&\oarr{\theta} \llan{Y \otimes_{A} X \otimes_{B} Q} \oarr{\eval} \llan{ U_{B} \otimes_{B} Q} \cong \llan{Q}
\end{align*}
in the category $\cC$.  Notice that this only makes sense if $P \in \cB(A, A)$ and $Q \in \cB(B, B)$. We will focus on the case where $P = U_A$, $Q = U_B$, and $\phi$ is the identity of $X$.

\begin{example}\label{ex:THH_as_shadow}
The bicategory $\Bimod$ has a shadow functor taking values in abelian groups.  The shadow of an $(R, R)$-bimodule $M$ is the coequalizer
\[
\xymatrix{
R \otimes M \ar@<.5ex>[r] \ar@<-.5ex>[r] & M \ar[r] & \llan{M}
}
\]
of the left and right actions of $R$ on $M$. The bicategory $\Bimod_S$ of ring spectra and bimodules has a similar shadow given by THH:
\[ \llan{M} = \THH(R;M) \]
We choose the cyclic isomorphism $\theta$ for an $(A,R)$ bimodule $M$ and an $(R,A)$ bimodule $N$ to be the map
\begin{equation}\label{eq:def_theta}
\xymatrix{N^{\cyc}(A; B(M, R, N)) \ar[r]^{\theta} & N^{\cyc}(R; B(N, A, M)) }
\end{equation}
which interchanges the two simplicial directions and on the $(m, n)$-simplices applies the symmetry isomorphism in the category of spectra
\[
(\overbrace{A \sma \dotsm \sma A}^{m} \sma M) \sma ( \overbrace{R \sma \dotsm \sma R}^{n} \sma N) \oarr{\gamma} (R \sma \dotsm \sma R \sma N) \sma (A \sma \dotsm \sma A \sma M).
\]

If $A$ and $R$ are both suspension spectra of topological groups, this symmetry isomorphism may be recast as the canonical isomorphism
\begin{equation}
\xymatrix{S \sma_A [B(M, R, R) \sma_R B(N, A, A)] \ar[r]^{\theta} & S \sma_R [B(N, A, A) \sma_A B(M, R, R)] }
\end{equation}
where $A$ acts on the left on both $B(M,R,R)$ and $B(N,A,A)$, the left action on $B(N,A,A)$ being the composition of the right $A$-action on the rightmost copy of $A$ and the involution of $A$.
\end{example}

\begin{example}\label{ex:shadow_Ex} The bicategory $\Ex$ has a shadow functor with values in $\ho \Sp{}{}$.  The shadow of a spectrum $X$ over $B \times B$ is the spectrum
\[
\llan{X} = r_{!} \Delta_{B}^* X.
\]
For example, the shadow of the unit 1-cell $U_{B} \in \Ex(B, B)$ is the suspension spectrum of the free loop space (Example \ref{ex:loops}):
\[
\llan{U_{B}} \cong r_{!} \Delta^* \Delta_{!} S_{B} \cong \Sigma^{\infty}_{+} LB.
\]
Now suppose that $f \colon E \arr B$ is a perfect fibration.  Later we will prove that the base-change spectrum $S_{f}$ is right dualizable in $\Ex$ (Proposition \ref{prop:base_change_dualble_Ex}).  In other words, there is a dual pair $(S_{f}, D S_{f})$ of 1-morphisms in $\Ex$.  The trace of the identity 2-morphism $\id \colon S_{f} \arr S_{f}$ is the map of spectra
\[
 \Sigma^{\infty}_{+} LB \cong \llan{U_{B}} \oarr{\coev} \llan{S_{f} \odot_{E} D S_{f}} \oarr{\theta} \llan{D S_{f} \odot_{B} S_{f}} \oarr{\eval} \llan{U_{E}} \cong \Sigma^{\infty}_{+} LE.
\]
This map is known to be a special case of the fiberwise Reidemeister trace by the work of Ponto-Shulman \cite{PS14}*{\S 7}.  We will simply call it the \emph{Reidemeister trace} associated to $f$.  
\end{example}

\begin{remark}\label{rem:additive}
In the previous two examples, if the bimodule or parametrized spectrum $M$ is expressed as a finite wedge of dualizable pieces $M_i$, then the trace of the identity of $M$ is the sum of the traces of the identity maps of the pieces $M_i$.
\end{remark}

\section{The free loop transfer is the Reidemeister trace}\label{sec:reidemeister}

In this section, we show that the Reidemeister trace associated to a perfect fibration $f \colon E \arr B$ (Example \ref{ex:shadow_Ex}) agrees with the THH transfer (Definition \ref{def:tau_THH}), proving Theorem \ref{thm:reid_trace_is_THH_transfer}.

\subsection{Indexed monoidal categories and an equivalence of shadows}

The first step is to understand the Reidemeister trace as a trace in the bicategory $\Bimod_S$ of ring spectra and bimodules.  Our treatment will use a result whose proof appears in the companion paper \cite{LiMa1}: the derived fiber functor $F_b \colon \Sp{}{B} \arr \Mod_{\Sigma^{\infty}_{+}\Omega B}$ is the right adjoint in a Quillen equivalence between the May-Sigurdsson model category of parametrized spectra over $B$ and the stable model structure on $\Sigma^{\infty}_{+}\Omega B$-modules. In fact, the Quillen equivalence respects the symmetric monoidal structures and the functoriality in the entry $B$ along base change functors.  We now introduce the language of indexed symmetric monoidal categories in order to make this agreement precise.

Let $S$ be a cartesian monoidal category.  An $S$-indexed symmetric monoidal category is a pseudofunctor $\cM $ from $S^{\op}$  to the 2-category of symmetric monoidal categories, strong symmetric monoidal functors, and monoidal transformations.  In other words, for each object $A$ of $S$ there is a symmetric monoidal category $\cM^{A}$, and for each morphism $f \colon A \arr B$ there is a strong symmetric monoidal functor $f^* \colon \cM^{B} \arr \cM^{A}$, along with natural monoidal isomorphisms $(g \circ f)^* \cong f^* \circ g^*$ and $(\id_{A})^* \cong \id_{\cM^{A}}$ satisfying associativity and unit conditions.

For example, when $S = \Top$, the assignment $B \longmapsto \ho \Sp{}{B}$, along with the symmetric monoidal structure and base change functors, defines a $\Top$-indexed symmetric monoidal category $\ho \Sp{}{(-)}$.  When $S$ is the category $\Top_*^{\mathrm{conn}}$ of based connected topological spaces, the assignment $B\longmapsto \ho \Mod_{\Sigma^{\infty}_{+}\Omega B}$ defines a  $\Top_*^{\mathrm{conn}}$-indexed symmetric monoidal category $\ho \Mod_{\Sigma^{\infty}_{+}\Omega(-)}$.  We continue to write $\ho\Sp{}{(-)}$ for the restriction of $B \longmapsto \ho \Sp{}{B}$ to a $\Top_*^{\mathrm{conn}}$-indexed symmetric monoidal category.  
\begin{proposition}\label{prop:companion}\cite[Thm. 1.2]{LiMa1}
The derived fiber functor induces an equivalence 
\[
\ho \Sp{}{(-)} \simeq \ho \Mod_{\Sigma^{\infty}_{+}\Omega(-)}
\]
of $\Top_*^{\mathrm{conn}}$-indexed symmetric monoidal categories.  
\end{proposition}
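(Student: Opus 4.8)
The plan is to reduce Proposition~\ref{prop:companion} to the case of a single base space and then assemble the resulting equivalences coherently. Fix a based connected space $B$ and, using the Kan loop group model, regard $G = \Omega B$ as a topological group with $B \simeq BG$, so that $\Sigma^{\infty}_{+} G = \Sigma^{\infty}_{+}\Omega B$ is a cocommutative Hopf algebra (a group ring) in spectra. The essential input, whose proof occupies the companion paper \cite{LiMa1} and whose $\infty$-categorical counterpart is \cite{ando_blumberg_gepner}*{App.~B}, is that the derived fiber functor $F_{b_0}\colon \ho\Sp{}{B} \arr \ho\Mod_{\Sigma^{\infty}_{+}\Omega B}$ is the derived right adjoint of a Quillen equivalence. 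Its left adjoint $\Lambda$ is built from the universal principal $G$-bundle: a $\Sigma^{\infty}_{+} G$-module $M$, regarded as a spectrum with $G$-action, is sent to the fiberwise Borel construction $EG \times_{G} M$, viewed as a parametrized spectrum over $BG$. One checks that the derived unit and counit are stable equivalences by cell induction; the base case is the free module $\Sigma^{\infty}_{+} G$, for which $\Lambda(\Sigma^{\infty}_{+} G)$ is the fiberwise suspension spectrum of the universal bundle $EG \arr BG$, whose derived fiber is again the free module.

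Granting this, I would first check that the equivalence is symmetric monoidal for the pairing $(\sma_B, S_B)$ on $\ho\Sp{}{B}$ and the pairing on $\ho\Mod_{\Sigma^{\infty}_{+}\Omega B}$ coming from the Hopf algebra coproduct of $\Sigma^{\infty}_{+}\Omega B$ --- so that the tensor product of two modules is their smash product over the sphere equipped with the diagonal $\Omega B$-action, with unit the sphere with trivial $\Omega B$-action. The point-set verification is short: the derived fiber of $S_B$ is the sphere with trivial monodromy, hence the monoidal unit, and the derived fiber of $X \sma_B Y = \Delta^*(X \osma Y)$ over $b_0$ is $F_{b_0} X \sma F_{b_0} Y$ with the diagonal $\Omega B$-action, which is precisely the coproduct-induced tensor product of $F_{b_0} X$ and $F_{b_0} Y$. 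Compatibility of the resulting structure isomorphisms with the associativity, unit, and symmetry constraints then follows from the corresponding identities for point-set smash products and fibrant replacements.

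Next I would establish naturality in the base. For a map $f\colon A \arr B$ of based connected spaces, realized by a homomorphism $\Omega f\colon \Omega A \arr \Omega B$ of Kan loop groups, the pullback $f^*\colon \ho\Sp{}{B} \arr \ho\Sp{}{A}$ corresponds under derived fibers to restriction of modules along $\Sigma^{\infty}_{+}\Omega f$: after replacing $X$ by a fibration, the fiber of $f^*X$ over $a_0$ is the fiber of $X$ over $b_0 = f(a_0)$, with its $\Omega A$-action obtained by precomposing the $\Omega B$-action with $\Omega f$. Since the diagonal maps are natural, $\Sigma^{\infty}_{+}\Omega f$ is a map of Hopf algebras, so restriction is strong symmetric monoidal, and the comparison isomorphisms $F_{a_0}\circ f^* \cong \mathrm{res}_{\Sigma^{\infty}_{+}\Omega f}\circ F_{b_0}$ are compatible with composition of maps, with identities, and with the monoidal structure isomorphisms of the previous paragraph. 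All of these compatibilities are assembled from the single operation of taking derived fibers, whose interaction with pullbacks and with fiberwise smash products is visible on point-set models after fibrant replacement.

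Packaging these data, the derived fiber functors form a morphism of pseudofunctors $(\Top_*^{\mathrm{conn}})^{\op} \arr \mathrm{SymMonCat}$, i.e.\ a morphism of $\Top_*^{\mathrm{conn}}$-indexed symmetric monoidal categories, and since it is an equivalence of symmetric monoidal categories at each object $B$, it is an equivalence of indexed symmetric monoidal categories. The main obstacle is the single-base Quillen equivalence: constructing $\Lambda$ and controlling the homotopical behaviour of $F_{b_0}$ requires genuine model-categorical work, and this is exactly the content of \cite{LiMa1}*{Thm.~1.2}; once it is in hand, the remaining steps amount to forced coherence checks.
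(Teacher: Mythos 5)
The paper does not prove this proposition here: it is cited directly from the companion paper \cite{LiMa1}*{Thm.~1.2}, whose content is precisely the single-base Quillen equivalence that you correctly isolate as the substantive input. Your outline --- the Borel construction $M \mapsto EG \times_G M$ as the left Quillen adjoint to the derived fiber functor, the Hopf-algebra tensor structure on $\Sigma^{\infty}_{+}\Omega B$-modules matching the fiberwise smash product, and restriction along $\Sigma^{\infty}_{+}\Omega f$ matching $f^*$ --- sketches the same route the companion paper takes, so the approach agrees with what the paper relies on.
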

\noindent This result may also be deduced from a corresponding result at the level of $\infty$-categories proved by Ando-Blumberg-Gepner \cite{ando_blumberg_gepner}.

As explained in \cite[Thm. 5.2]{PS12}, each $S$-indexed symmetric monoidal category $\cM$ gives in a canonical way a bicategory $\cB$ equipped with a shadow functor that takes values in the symmetric monoidal category $\cM^{\ast}$.  We will not need all of the details, but the essential point is that the 0-cells of the bicategory are the objects of $S$ and the category $\cB(A, B)$ of 1-cells and 2-cells is the category $\cM^{A \times B}$ indexed by the cartesian product $A \times B$.  The shadow functor is defined on an endo 1-cell $X \in \cB(A, A)$ by the formula $\llan{X} = (\pi_A)_{!} (\Delta_{A})^* X$, where $\Delta_{A} \colon A \arr A \times A$ is the diagonal and $\pi_A \colon A \arr \ast$ is the canonical morphism to the final object.  The composition $-\otimes_{A} -$ in the bicategory $\cB$ is defined in a similar manner and the cyclic isomorphism 
\[
\theta \colon \llan{X \otimes_{B} Y} \arr \llan{Y \otimes_{A} X}
\]
is given by a series of canonical isomorphisms interchanging the constituent derived base change functors. 

For the $\Top$-indexed symmetric monoidal category $\ho\Sp{}{(-)}$, this process gives the bicategory $\Ex$ and its shadow functor $\llan{-}$ defined as in Example \ref{ex:shadow_Ex}. In this section, we will work with the full sub-bicategory $\Ex_*^{\mathrm{conn}}$ spanned by the pointed connected spaces which arises by restricting the indexing category from $\Top$ to $\Top_*^{\mathrm{conn}}$.

On the other hand, applying this procedure to the $\Top_*^{\mathrm{conn}}$-indexed symmetric monoidal category $B \mapsto \ho \Mod_{\Sigma^{\infty}_{+}\Omega B}$ gives the full sub-bicategory $\Bimod_{S}^{\gp}$ of $\Bimod_{S}$ on the rings of the form $\Sigma^{\infty}_{+}\Omega B$, where $B$ is a pointed connected space. The shadow functor takes each $(\Sigma^{\infty}_{+} \Omega B, \Sigma^{\infty}_{+} \Omega B)$-bimodule $M$ to the spectrum $\llan{M} = \epsilon_{!} \Delta^* M$, where $\Delta^*$ is the derived restriction along the composite ring map
\[
\Sigma^{\infty}_{+} \Omega B \oarr{\Delta} \Sigma^{\infty}_{+} \Omega B \sma \Sigma^{\infty}_{+} \Omega B \xarr{ \id \sma \chi} \Sigma^{\infty}_{+} \Omega B \sma \Sigma^{\infty}_{+} \Omega B^{\op} =: \Sigma^{\infty}_{+} \Omega B^{e}
\]
induced by the diagonal of $B$ and the anti-automorphism of $\Omega B$ taking an element to its inverse. The functor $\epsilon_{!}$ is the left adjoint of the derived restriction functor along the augmentation $\epsilon \colon \Sigma^\infty_{+} \Omega B \arr S$, and takes a module $M$ to the derived smash product $M \sma^{\bL}_{\Sigma^{\infty}_{+} \Omega B} S$.  Expanding and simplifying, we find that the shadow is given by the topological Hochschild homology of $\Sigma^{\infty}_{+} \Omega B$ with coefficients in $M$:
\[
\llan{M} \simeq M \sma^{\bL}_{\Sigma^{\infty}_{+} \Omega B^e} \Sigma^{\infty}_{+} \Omega B^{e} \sma^{\bL}_{\Sigma^{\infty}_{+} \Omega B} S \simeq M \sma^{\bL}_{\Sigma^{\infty}_{+} \Omega B^{e}} \Sigma^{\infty}_{+} \Omega B \simeq \THH(\Sigma^{\infty}_{+} \Omega B; M).
\]
A careful reading of the proof of \cite[Thm. 5.2]{PS12} shows that this shadow functor and its cyclic symmetry isomorphism $\theta$ agree with the ones we constructed in Example \ref{ex:THH_as_shadow}.  The equivalence  of $\Top_*^{\mathrm{conn}}$-indexed symmetric monoidal categories from Prop. \ref{prop:companion} now implies:

\begin{proposition}\label{prop:equiv_of_shadows}
The derived fiber functors
\[
F_{(a, b)} \colon \Ex(A, B) \arr \Bimod_{S}(\Sigma^{\infty}_{+} \Omega A, \Sigma^{\infty}_{+} \Omega B)
\]
induce an equivalence of bicategories $\Ex_*^{\mathrm{conn}} \simeq \Bimod_{S}^{\gp}$. There is a canonical natural equivalence of shadow functors
\begin{equation}\label{eq:equiv_of_shadows}
r_{!} \Delta_{B}^* X \simeq \THH(\Sigma^{\infty}_{+} \Omega B; F_{(b,b)} X)
\end{equation}
commuting with the cyclic isomorphisms $\theta$.
\end{proposition}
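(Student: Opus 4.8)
The plan is to deduce the proposition formally from Proposition~\ref{prop:companion} by invoking the functoriality of the construction of Ponto and Shulman \cite[Thm.~5.2]{PS12} that produces a bicategory equipped with a shadow functor from an indexed symmetric monoidal category. The key point, which I would isolate first, is that this construction is natural enough that a pseudonatural equivalence $\Phi\colon \cM \simeq \cM'$ of $S$-indexed symmetric monoidal categories induces an equivalence between the associated bicategories which respects the shadow functors up to a canonical natural equivalence and is compatible with the cyclic isomorphisms $\theta$. Granting this, one applies it to the equivalence $\Phi = F_{(-,-)}$ of Proposition~\ref{prop:companion}, restricted along the inclusion $\Top_*^{\mathrm{conn}} \hookrightarrow \Top$ of indexing categories.

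To justify the functoriality statement I would unwind the construction. Recall that in the bicategory attached to $\cM$ the $0$-cells are the objects of $S$, the category of $1$- and $2$-cells from $A$ to $B$ is $\cM^{A\times B}$, and the horizontal composition, the unit $1$-cells, the associativity and unit constraints, the shadow $\llan{X} = (\pi_A)_!(\Delta_A)^* X$, and the cyclic isomorphism $\theta$ are assembled entirely out of the base-change functors $f^*$, their left adjoints $f_!$, and the structural isomorphisms of $\cM$ (in particular the projection formula and the Beck--Chevalley isomorphisms). A pseudonatural equivalence of indexed symmetric monoidal categories is, by definition, a system of symmetric monoidal equivalences $\Phi^A\colon \cM^A \simeq (\cM')^A$ intertwining the functors $f^*$ and all of this structural data, and hence commuting with the $f_!$ up to the canonical mate isomorphism. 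Feeding these comparison isomorphisms through the (purely formal) recipes for horizontal composition, units, shadows, and $\theta$ produces the asserted equivalence of bicategories-with-shadow; the verification is a coherence diagram chase.

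Then I would read off the two instances. As recorded in the discussion preceding the statement, the bicategory attached to $\ho\Sp{}{(-)}$ is $\Ex$ with the shadow $r_!\Delta_B^*$ of Example~\ref{ex:shadow_Ex}, so restricting the indexing to $\Top_*^{\mathrm{conn}}$ gives $\Ex_*^{\mathrm{conn}}$; the bicategory attached to $B \mapsto \ho\Mod_{\Sigma^\infty_+\Omega B}$ is the full sub-bicategory $\Bimod_S^{\gp} \subseteq \Bimod_S$ on the rings $\Sigma^\infty_+\Omega B$, and its shadow is $M \mapsto \THH(\Sigma^\infty_+\Omega B; M)$, as computed there. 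Since $\Omega \ast$ is trivial, $\Sigma^\infty_+\Omega \ast \simeq S$ and the fibers of both indexed categories over the terminal object are identified with $\ho\Sp{}{}$, with $\Phi$ the identity there; this is why no fiber functor appears on the target of \eqref{eq:equiv_of_shadows}. Combining with the previous step yields the equivalence $\Ex_*^{\mathrm{conn}} \simeq \Bimod_S^{\gp}$ induced by the functors $F_{(a,b)}$, together with the natural equivalence $r_!\Delta_B^* X \simeq \THH(\Sigma^\infty_+\Omega B; F_{(b,b)}X)$ commuting with $\theta$.

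The main obstacle is the functoriality claim itself, which is not packaged in this form in \cite{PS12} and must be extracted by unwinding that paper's proof; the delicate part is confirming that an equivalence of indexed symmetric monoidal categories carries along \emph{all} of the data entering the construction, including the mates of the base-change isomorphisms and the projection-formula maps that occur in the definition of $\theta$. A subsidiary but genuine check is that the shadow on $\Bimod_S^{\gp}$ emerging from \cite[Thm.~5.2]{PS12}, together with its cyclic isomorphism, coincides with the hands-on $\THH$ shadow and the map \eqref{eq:def_theta} of Example~\ref{ex:THH_as_shadow}; this is the ``careful reading'' mentioned above, and comes down to matching the two prescriptions for the structure maps term by term. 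With these two points settled, the proposition follows.
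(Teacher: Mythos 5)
Your proposal matches the paper's argument almost exactly: both start from Proposition~\ref{prop:companion}, feed it through the Ponto–Shulman construction \cite[Thm.~5.2]{PS12}, identify the resulting bicategories-with-shadow as $\Ex_*^{\mathrm{conn}}$ and $\Bimod_S^{\gp}$, and invoke the same ``careful reading'' to match the abstract shadow with the hands-on $\THH$ shadow and $\theta$ from Example~\ref{ex:THH_as_shadow}. You are actually somewhat more explicit than the paper about the two things it leaves implicit — the functoriality of the \cite{PS12} construction in equivalences of indexed symmetric monoidal categories, and the identification $\Sigma^\infty_+\Omega\ast \simeq S$ explaining why no fiber functor appears in the target of \eqref{eq:equiv_of_shadows} — which is a fair reading of what is really being used.
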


\begin{remark}
In the case $X = U_B$, we recover the well-known equivalence 
\[
\Sigma^{\infty}_{+} LB \simeq \THH(\Sigma^{\infty}_{+} \Omega B).
\]
\end{remark}

Proposition \ref{prop:equiv_of_shadows} allows us to compare shadows. In particular, if $(X, Y)$ is a dual pair in $\Ex_*^{\mathrm{conn}}$, then under the equivalence \eqref{eq:equiv_of_shadows}, the trace of $\id_X$ in $\Ex$ agrees with the trace of $\id_{F_{(a, b)} X}$ in $\Bimod_{S}$. Let us take $X = S_{f} \in \Ex(B, E)$ for a perfect fibration $f \colon E \arr B$ of path-connected spaces. Pick basepoints $b \in B$ and $e \in E$ such that $f(e) = b$. Recall that $S_{f} = \Sigma^{\infty}_{B \times E} (E, (f, \id))_{+}$ and observe that the homotopy fiber at $(b, e)$ is $\Sigma^{\infty}_{+} \Omega B$, with the concatenation action of $\Sigma^{\infty}_{+} \Omega B$ on the left and the action induced by $\Sigma^{\infty}_{+} \Omega f \colon \Sigma^{\infty}_{+} \Omega E \arr \Sigma^{\infty}_{+} \Omega B$ on the right.  Since $f$ is a perfect fibration, $\Sigma^{\infty}_{+} \Omega B$ is perfect as a $\Sigma^{\infty}_{+} \Omega E$-module, hence right-dualizable as a $(\Sigma^{\infty}_{+} \Omega B, \Sigma^{\infty}_{+} \Omega E)$-bimodule.  

\begin{corollary}\label{cor:Reid_trace_is_bimodule_trace}
The Reidemeister trace $\tr(S_{f})$ agrees with the trace
\[
\Sigma^{\infty}_{+} LB \simeq \THH(\Sigma^{\infty}_{+} \Omega B) \xarr{\tr(\id_{\Sigma^{\infty}_{+} \Omega B})} \THH(\Sigma^{\infty}_{+} \Omega E) \simeq \Sigma^{\infty}_{+} LE
\]
of the identity map of the $(\Sigma^{\infty}_{+} \Omega B, \Sigma^{\infty}_{+} \Omega E)$-bimodule $\Sigma^{\infty}_{+} \Omega B$.
\end{corollary}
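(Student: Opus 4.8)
The plan is to deduce the statement directly from Proposition \ref{prop:equiv_of_shadows}, together with the formal fact, recalled in \S\ref{sec:bicats}, that dual pairs and their coevaluation/evaluation morphisms are transported along any equivalence of bicategories, and that the bicategorical trace of an identity 2-cell depends only on this duality data, the shadow functor, and its cyclic isomorphism $\theta$.

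First I would invoke Proposition \ref{prop:base_change_dualble_Ex} so that $S_{f}$ is right dualizable in $\Ex$, giving a dual pair $(S_{f}, D S_{f})$ in $\Ex_*^{\mathrm{conn}}$; by Example \ref{ex:shadow_Ex} the trace of $\id_{S_{f}}$ computed from this pair is by definition the Reidemeister trace $\tr(S_{f})$. Next, apply the equivalence of bicategories $F \colon \Ex_*^{\mathrm{conn}} \xrightarrow{\ \sim\ } \Bimod_{S}^{\gp}$ of Proposition \ref{prop:equiv_of_shadows}. As computed in the paragraph preceding the corollary, the derived fiber of $S_{f} = \Sigma^{\infty}_{B \times E}(E,(f,\id))_{+}$ at $(b, e)$ is $\Sigma^{\infty}_{+} \Omega B$, regarded as a $(\Sigma^{\infty}_{+} \Omega B, \Sigma^{\infty}_{+} \Omega E)$-bimodule via concatenation on the left and $\Sigma^{\infty}_{+} \Omega f$ on the right; since $f$ is a perfect fibration, this bimodule is perfect over $\Sigma^{\infty}_{+} \Omega E$ and hence right dualizable (Example \ref{ex:dualizable_in_bimodS}). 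An equivalence of bicategories carries a dual pair to a dual pair, so the image of $D S_{f}$ is a right dual of $\Sigma^{\infty}_{+} \Omega B$ in $\Bimod_{S}$; by uniqueness of duals it is canonically $F_{\Sigma^{\infty}_{+} \Omega E}(\Sigma^{\infty}_{+} \Omega B, \Sigma^{\infty}_{+} \Omega E)$, with coevaluation and evaluation as in Example \ref{ex:dualizable_in_bimodS}.

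Then I would push the trace of $\id_{S_{f}}$ through the equivalence step by step. Its defining composite is assembled from the associativity and unit isomorphisms of the bicategory, the shadow of the relevant composites of 1-cells, the coevaluation and evaluation of the dual pair, and the cyclic isomorphism $\theta$. Proposition \ref{prop:equiv_of_shadows} supplies a natural equivalence of shadow functors \eqref{eq:equiv_of_shadows} commuting with $\theta$, and the equivalence of bicategories automatically matches the associativity/unit isomorphisms and sends $\coev(S_{f}), \eval(S_{f})$ to the coevaluation and evaluation for the image dual pair; so $F$ carries $\tr(\id_{S_{f}})$ to $\tr(\id_{\Sigma^{\infty}_{+} \Omega B})$. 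It remains to identify source and target: by \eqref{eq:equiv_of_shadows} the shadow $\llan{U_{B}}$ corresponds to $\THH(\Sigma^{\infty}_{+} \Omega B)$ and $\llan{U_{E}}$ to $\THH(\Sigma^{\infty}_{+} \Omega E)$, which, precomposed and postcomposed with the standard equivalences $\Sigma^{\infty}_{+} LB \simeq \THH(\Sigma^{\infty}_{+} \Omega B)$ and $\Sigma^{\infty}_{+} LE \simeq \THH(\Sigma^{\infty}_{+} \Omega E)$, yields exactly the displayed map.

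The step I expect to require the most care is verifying that a shadow-compatible equivalence of bicategories preserves bicategorical traces of identity 2-cells — i.e. packaging ``preserves shadows and $\theta$'' (Proposition \ref{prop:equiv_of_shadows}) together with ``transports duality data'' into a single statement. This is a diagram chase using the coherence 2-isomorphisms of the bicategory map, and is the only place where one must attend to the difference between an equivalence of bicategories and a strict isomorphism; everything else is bookkeeping of base-change functors already carried out in \S\ref{sec:parametrized_spectra} and \S\ref{sec:bicats}.
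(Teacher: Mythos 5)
Your proposal is correct and follows essentially the same route as the paper: identify the fiber of $S_f$ at $(b,e)$ as the $(\Sigma^{\infty}_{+}\Omega B, \Sigma^{\infty}_{+}\Omega E)$-bimodule $\Sigma^{\infty}_{+}\Omega B$, then transport the dual pair $(S_f, DS_f)$ and the trace of $\id_{S_f}$ across the shadow-compatible equivalence of bicategories supplied by Proposition~\ref{prop:equiv_of_shadows}. The paper treats the step you flag as delicate --- that a bicategory equivalence commuting with shadows and $\theta$ carries $\tr(\id_X)$ to $\tr(\id_{FX})$ --- as immediate once the equivalence of indexed symmetric monoidal categories is in hand, but your spelling it out as the load-bearing diagram chase is an accurate reading of where the content lies.
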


\subsection{The free loop transfer as a bicategorical trace}

We spend the rest of the section comparing the bicategorical trace $\tr(\id_{\Sigma^{\infty}_{+} \Omega B})$ to the free loop transfer $\tau_\THH$, working entirely in the bicategory $\Bimod_{S}$. At this point, it is easiest to proceed in greater generality.

\begin{proposition}\label{prop:bicat_trace_is_THH_map} Suppose that $M$ is an $(A, R)$-bimodule and that $M$ is perfect as an $R$-module.  Then the map $\THH(\lambda_{M}) \colon \THH(A) \arr \THH(R)$ induced by
\[ \lambda_{M} = (-) \sma_{A} M \colon \Perf_{A} \arr \Perf_{R} \]
is homotopic to the trace of the identity of $M$ in the bicategory $\Bimod_{S}$.
\end{proposition}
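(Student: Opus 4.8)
The plan is to realize both $\THH(\lambda_{M})$ and the trace of $\id_{M}$ as traces of the identity of the \emph{same} bimodule, computed in the bicategory whose $0$-cells are small pointwise-cofibrant spectral categories, whose $1$-cells are bimodules, and which carries the $\THH$-shadow --- the full sub-bicategory on one-object categories being the $\Bimod_{S}$ of Example \ref{ex:THH_as_shadow}. First I would reduce to the pointwise-cofibrant case, replacing $A$, $R$, and $M$ by cofibrant models so that $F_{R}(M,R)$, $\Perf_{A}$, $\Perf_{R}$, and all the bar constructions below have the correct homotopy type; by Propositions \ref{prop:cylic_nerve_ho_invariant} and \ref{prop:lambda_m_invariant} nothing is lost. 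Since $\Perf_{A}$ is by definition the thick closure of $\langle A\rangle$ inside $\Mod_{A}$, Lemma \ref{lem:BM} shows that $\langle A\rangle\hookrightarrow\Perf_{A}$ induces an equivalence on $\THH$; in fact the corresponding $1$-cell (its graph bimodule) is invertible, so this inclusion is an equivalence of $0$-cells, and likewise for $R$. Since equivalences of $0$-cells transport dual pairs and their traces (part of the Ponto--Shulman formalism recalled in \S\ref{sec:bicats}), it is enough to identify $\THH(\lambda_{M})$ with the trace of the $(\langle A\rangle,\langle R\rangle)$-bimodule into which the graph of $\lambda_{M}$ is carried by these Morita equivalences.

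That bimodule is $M$ itself: $\lambda_{M}$ sends the object $A$ to $A\sma_{A}M\cong M$ and acts on $F_{A}(A,A)=A$ by the ring map $A\to F_{R}(M,M)$ classifying the left $A$-action, which is exactly the coevaluation $\coev(M)$ of Example \ref{ex:dualizable_in_bimodS}, with canonical right dual $DM:=F_{R}(M,R)$. Unwinding the Morita identifications, both maps then factor through the common first leg $\THH(A)\xrightarrow{\coev(M)_{*}}\THH(A;M\sma_{R}DM)\simeq\THH(A;F_{R}(M,M))$: on the bicategorical side this is $\llan{U_{A}}\to\llan{M\otimes_{R}DM}$, and on the $\THH(\lambda_{M})$ side it is the standard factorization of $\THH(\coev(M))$ through $\THH$ with coefficients. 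So the claim reduces to comparing the two resulting maps $\THH(A;F_{R}(M,M))\to\THH(R)$: the trace applies the cyclic symmetry $\theta$ of Example \ref{ex:THH_as_shadow} followed by the evaluation $DM\sma_{A}M\to R$, whereas $\THH(\lambda_{M})$ passes first to $\THH$ of the endomorphism ring $F_{R}(M,M)$ and then through the Morita equivalence $\THH(\langle M\rangle)\to\THH(\Perf_{R})\xleftarrow{\sim}\THH(R)$.

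To match these --- the generalized Dennis--Waldhausen--Morita step --- I would resolve $F_{R}(M,M)$ by the two-sided bar construction $B(M,R,DM)$, so that $\THH(A;F_{R}(M,M))$ becomes the realization of the bisimplicial spectrum $([p],[q])\mapsto A^{\sma p}\sma M\sma R^{\sma q}\sma DM$. On this model $\theta$ is the transposition of the two simplicial directions together with the symmetry isomorphism of the smash product, and the evaluation collapses the $DM\sma A^{\sma p}\sma M$ arc to $R$ by pushing the $A$'s into $M$ and $DM$ and then applying $F_{R}(M,R)\sma M\to R$. The other route builds $\THH$ of the ring $F_{R}(M,M)$ out of the same bar pieces assembled along the ``evil-eye'' composition $\odot$ --- which plugs the trailing $DM$ of one simplex into the leading $M$ of the next via $DM\sma M\to R$ --- and then applies the explicit Morita collapse onto $\THH(R)$. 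Unwinding both, each is the ``necklace of $A$'s with one $M$--$R^{\bullet}$--$DM$ arc, reread as a necklace of $R$'s with one $DM$--$A^{\bullet}$--$M$ arc, then collapsed'' recipe of Figure \ref{fig:bicat_trace}, so they agree as maps of bisimplicial spectra.

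The main obstacle is exactly this last identification. Unlike the classical situation, $M$ need not generate $\Perf_{R}$: the functor $\lambda_{M}$ is a ``Morita adjunction'' rather than a Morita equivalence, so one obtains a map and not an isomorphism. Making the explicit Morita collapse $\THH(\langle M\rangle)\to\THH(\Perf_{R})\xleftarrow{\sim}\THH(R)$ strictly compatible with the bar resolutions, the $\odot$-product, and the cyclic symmetry $\theta$ --- so that the comparison is a genuine map of simplicial spectra rather than a tangle of zig-zags --- is the bookkeeping-heavy heart of the argument; keeping track of which simplicial direction becomes which, and of the cofibrancy hypotheses that make the assembly equivalences $F_{R}(M,-)\simeq(-)\sma_{R}F_{R}(M,R)$ hold on the nose, is the chief source of technical friction. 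Everything else is formal manipulation with duality and shadows together with the homotopy-invariance results of \S\ref{sec:spectral_cats}.
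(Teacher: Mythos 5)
Your proposal follows essentially the same route as the paper: cofibrant replacement, passage between $\langle A\rangle$ and $\Perf_A$ via Lemma~\ref{lem:BM}, the bar resolution $B(M,R,DM)$ of $F_R(M,M)$, and the cyclic isomorphism $\theta$, reducing the comparison to a statement about (bi)simplicial spectra. Two caveats: the ``equivalence of $0$-cells'' framing, while true, does no real work --- transporting $\tr(\id_M)$ to the trace of the graph bimodule of $\lambda_M$ merely restates the goal, since $\THH(\lambda_M)$ is not a priori any bicategorical trace --- and for the step you call the ``bookkeeping-heavy heart,'' the paper instead lifts the trace zig-zag to the $\Perf$-level cyclic nerves and compares it with $\THH(\lambda_M)$ in one commuting diagram, handling the single nontrivial triangle by the explicit homotopy of Lemma~A.2 of \cite{lind_diagram_spaces}, which is precisely the tool your outline would need to supply.
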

\noindent

Along with Corollary \ref{cor:Reid_trace_is_bimodule_trace}, this proves Theorem \ref{thm:reid_trace_is_THH_transfer} in the case where $E$ and $B$ are connected.  But the general case follows because both the Reidemeister trace and the free loop transfer are additive over the components of $E$ (see Remark \ref{rem:additive}).


\begin{remark}
Proposition \ref{prop:bicat_trace_is_THH_map} shows that \emph{every} map $\THH(A) \arr \THH(R)$ arising from a functor of module categories of the form $- \sma_A M$ is given directly on the cyclic nerves by the bicategorical trace of $M$. By the variant of Eilenberg-Watts found in \cite{bgt}*{Cor 3.3}, we have therefore given a small, computable model for every map on $\THH$ induced from an exact functor of stable $\infty$-categories $\Perf_A \arr \Perf_R$.
\end{remark}

\begin{proof}[Proof of Proposition \ref{prop:bicat_trace_is_THH_map}]
We observe that the statement of the proposition is invariant under weak equivalence in $A$, $R$, and $M$. For $\lambda_M$ this follows from Proposition \ref{prop:lambda_m_invariant}, while for the bicategorical trace it follows from the naturality and homotopy-invariance of our choice of shadow functor and cyclic isomorphism $\theta$. Note that for $\lambda_M$ to be derived, $M$ has to be made cofibrant as an $(A,R)$-bimodule, whereas for the trace to be derived we need the maps $S \ra A$, $S \ra R$, $* \ra M$, and $* \ra DM \simeq F_R(M,R)$ to be cofibrations of spectra. (We are using $S$-modules throughout this proof.)

Without loss of generality, we assume that $A$ and $R$ are cofibrant $S$-algebras, and that $M$ is a cofibrant $(A, R)$-bimodule. We let $\cP_{A} = \Perf_{A}$ and $\cP_R = \Perf_R$ denote the categories of perfect cofibrant modules as in \S\ref{subsec:Perf}, and we write $\cP_{A}^c$ and $\cP_{R}^c$ for cofibrant approximations of $\cP_{A}$ and $\cP_{R}$ as spectral categories. We let $A^c = F_A(A,A)^c$, $R^c = F_R(R,R)^c$, $M^c = F_R(R,M)^c$, and $DM^c = F_R(M,R)^c$. These four spectra are appropriately cofibrant, so the bicategorical trace $\tr(\id_{M})$ is given by the zig-zag
\begin{equation}\label{eq:new_trM}
\xymatrix @C=-2em @R=2em{
N^\cyc(A^c; A^c) \ar@{-->}[rr]^-{ \coev} \ar[rd]_-{N^\cyc(1;\lambda_M) \hspace{2pc}} & & N^{\cyc}(A^c; B(M^c, R^c, DM^c)) \ar[dd]^{\theta} \ar[ld]_-\sim \\
& N^\cyc(A^c; F_R(M,M)^c) & \\
N^\cyc(R^c; R^c) & & \ar[ll]_-{\eval} N^{\cyc}(R^c; B(DM^c, A^c, M^c)) \ar[dl]^-{\hspace{5pc}N^\cyc(1;B(1,\lambda_M,1)) \hspace{3em}} \\
& N^{\cyc}(R^c; B(DM^c, F_R(M,M)^c, M^c)) \ar[ul]_-\sim & \\
}
\end{equation}
The map labeled coev is indeed a coevaluation map, because along our cofibrant replacements, it agrees with a map $A \arr M \sma_R^L DM$ for which the composite
\[ \xymatrix{ A \ar[r] & M \sma_{R}^L DM \ar[r]^-\sim & F_R(M,M) } \]
is the $A$ action on $M$.

Recall that the inclusions of spectral categories $A^c \arr \cP_A^c$ and $R^c \arr \cP_{R}^c$ induce weak equivalences of cyclic nerves by Lemma \ref{lem:BM}. Along these equivalences, the zig-zag \eqref{eq:new_trM} agrees with the zig-zag along the right-most route through the following diagram:
\[
\xymatrix{
N^{\cyc}(\cP_{A}^c; \cP_{A}^c) \ar[d]_{N(1; \lambda_{M})} & N^{\cyc}(\cP_{A}^c; B(\cP_{R}^c, \cP_{R}^c, \cP_{R}^c)) \ar[dl]_{\simeq}  \ar[d]^{N(\lambda_{M}; 1)} \ar[r]^{\theta} & N^{\cyc}(\cP_{R}^c; B(\cP_{R}^c, \cP_{A}^c, \cP_{R}^c)) \ar[d]^{N(1; \lambda_{M})} \\
N^{\cyc}(\cP_{A}^c; \cP_{R}^c) \ar[d]_{N(\lambda_{M}; 1)} & N^{\cyc}(\cP_{R}^c; B(\cP_{R}^c, \cP_{R}^c, \cP_{R}^c)) \ar[dl]_{\simeq} \ar[r]^{\theta} & N^{\cyc}(\cP_{R}^c; B(\cP_{R}^c, \cP_{R}^c, \cP_{R}^c))  \ar[dll]^{\simeq} & \\
N^{\cyc}(\cP_{R}^c; \cP_{R}^c) & & 
}
\]
Here the maps $\theta$ are rotation maps for the cyclic nerve of a two-sided bar construction defined in the same way as the cyclic isomorphism \eqref{eq:def_theta}. The unmarked equivalences are all augmentation maps for the bar construction.  The parallelogram commutes by naturality and the square involving $\theta$ commutes by inspection of the definition. The remaining triangle commutes in the homotopy category, by observing that the explicit homotopy in Lemma A.2 of \cite{lind_diagram_spaces} works on the cyclic bar construction as well.
The left vertical maps compose to give $\THH(\lambda_{M})$. This finishes the proof of Proposition \ref{prop:bicat_trace_is_THH_map}. 
\end{proof}

\section{The Becker-Gottlieb transfer in the fiberwise bicategory $\Ex_{B}$}\label{sec:fiberwise_CW_duality}

Now that the free loop transfer $\tau_\THH$ has been identified with the Reidemeister trace $\tr(S_{f})$, our next task is to express the Becker-Gottlieb transfer in terms that can be compared to the Reidemeister trace. In this section, we show how it fits into fiberwise bicategory $\Ex_B$ of spectra over spaces over $B$ from \cite{MS}*{19.2}.

\subsection{The fiberwise bicategory $\Ex_B$}
Fix a base space $B$. The bicategory $\Ex_{B}$ is defined as follows. The 0-cells are spaces $A$ equipped with a map into $B$. Given a pair $C, D$ of spaces over $B$, the category $\Ex_{B}(C, D)$ is the homotopy category of parametrized spectra over the pullback $C \times_{B} D$. The horizontal composition is defined by
\begin{gather*}
- \boxdot_{C} - \colon \Ex_{B}(A, C) \times \Ex_{B}(C, D) \arr \Ex_{B}(A, D) \\
X \boxdot_{C} Y = \pi^{C}_{!} \Delta_{C}^* (X \osma Y),
\end{gather*}
where $X \osma Y$ is the external smash product spectrum over $(A \times_{B} C) \times (C \times_{B} D)$ and 
\begin{align*}
\Delta_{C} \colon A \times_{B} C \times_{B} D &\arr (A \times_{B} C) \times (C \times_{B} D) \\
\pi^{C} \colon A \times_{B} C \times_{B} D  &\arr A \times_{B} D
\end{align*}
are the maps induced by the diagonal of $C$ and the projection $C \arr B$. We abbreviate $\boxdot = \boxdot_{C}$ when possible to keep the notation less cluttered. Notice that our spaces over $B$ are not required to be fibrant, so that the pullbacks of spaces may not be homotopy pullbacks, but all of the operations on spectra over these spaces are required to be derived.

The identity 1-morphism in $\Ex_{B}(A, A)$ is the spectrum
\[
\_{U}_{A} = \delta_{!} S_{A} \cong \Sigma^{\infty}_{A \times_{B} A} (A, (\id, \id))_{+}  \quad \text{over $A \times_{B} A$,}
\] 
where $\delta \colon A \arr A \times_{B} A$ is the fiberwise diagonal map. Since $\delta \colon B \arr B \times_{B} B$ is an isomorphism, we may identify $\_{U}_{B}$ with the sphere spectrum $S_{B}$ over $B$. In the special case of $A = B = C = D$, the horizontal composition operator $-\boxdot_{B}-$ is just the fiberwise smash product $- \sma_{B} -$ in $\Sp{}{B}$.

\begin{definition}
The objects $X \in \Ex_{B}(A, C)$ and $Y \in \Ex_{B}(C, A)$ form a \emph{fiberwise Costenoble-Waner dual pair over $B$} if $(X, Y)$ is a dual pair in the bicategory $\Ex_{B}$. So there are coevaluation and evaluation maps
\[
\coev(X) \colon \_{U}_{A} \arr X \boxdot_{C} Y \qquad \eval(X) \colon Y \boxdot_{A} X \arr \_{U}_{C}
\]
satisfying two triangle identities (Definition \ref{def:dual_pair_bicat}). As before, in this case we say that $X$ is right dualizable with dual $Y$. Since $Y$ is unique up to canonical isomorphism in $\Ex_{B}(C, A)$, we will often write $DX$ for the right dual of $X$.
\end{definition}

When $B = \ast$, the bicategory $\Ex_{*}$ coincides with $\Ex$. However, we will distinguish this from the case of $B \neq *$ by writing $\odot$ for the composition product in $\Ex$ and $\boxdot$ for the composition product in $\Ex_B$. \footnote{The reader is warned that May-Sigurdsson write $\odot_{B}$ for the product $\boxdot$ in $\Ex_{B}$.  We instead let the subscript denote the 0-cell over which the composition is taken, in analogy with the tensor products of bimodules over a ring.}

Following \cite{MS}*{\S 19.3}, there is a morphism of bicategories $\iota_{!} \colon \Ex_{B} \arr \Ex$ that takes each 0-cell $A \arr B$ to the space $A$, and pushes forward each spectrum $X$ over $A \times_{B} C$ to the spectrum $\iota_{!} X$ over $A \times C$, where
\[
\iota = (\pi_1, \pi_2) \colon A \times_{B} C \arr A \times C.
\]
We will study the interaction of $\iota_{!}$ with $\boxdot$, dual pairs, and shadows in \S\ref{sec:IOTA}.



For each map of spaces $f \colon C \arr A$ over $B$, there is a base-change object $\_{S}_{f}$ in $\Ex_{B}(A, C)$ defined by
\[
\_{S}_{f} = \Sigma^{\infty}_{A \times_{B} C} (C, (f, \id))_{+} \cong (f, \id)_{!} S_{C}.
\]
For example, $\_{U}_{A} = \_{S}_{\id_{A}}$ is the unit object for the composition product $\boxdot$.  We write ${}_{f}\_{S} = \Sigma^{\infty}_{C \times_{B} A} (C, (\id, f))_{+}$ for the base change spectrum $\_{S}_{f}$, considered as a spectrum parametrized over $C \times_{B} A$.  

In the absolute case when $B = \ast$, we recover the definition of the base-change spectra $S_{f}$ (Example \ref{ex:Ex}).  The underline on $\_{S}_{f}$ is meant to remind that we are working relative to the base $B$.  The comparison functor $\iota_{!} \colon \Ex_{B} \arr \Ex$ takes $\_{S}_{f}$ to $S_{f}$.

\begin{example}\label{ex:fiberwise_base_change_duality}
For any map of spaces $f \colon C \arr A$ over $B$, the base change spectra $\_{S}_f$ and ${}_{f}\_{S}$ form a fiberwise Costenoble-Waner dual pair $({}_{f}\_{S}, \_{S}_{f})$ in $\Ex_{B}$ \cite{MS}*{17.3.1}.  We will be particularly interested in a special case arising from a map $f \colon E \arr B$, regarded as a map $(E, f) \arr (B, \id)$ of spaces over $B$. Writing $\delta_{} \colon E \arr E \times_{B} E$ for the fiberwise diagonal, the coevaluation and evaluation maps
\begin{align*}
\coev({}_{f}\_{S}) \colon U_{E} \cong \Sigma^{\infty}_{E \times_{B} E} (E, \delta_{})_+ &\arr  \Sigma^{\infty}_{E \times_{B} E} (E \times_{B} E, \id)_{+} \cong {}_{f}\_{S} \boxdot_{B} \_{S}_{f} \\
\eval({}_{f}\_{S}) \colon \_{S}_{f} \boxdot_{E} {}_{f}\_{S} \cong  \Sigma^{\infty}_{B} (E, f)_{+} &\arr  \Sigma^{\infty}_{B} (B, \id)_{+} = U_{B}
\end{align*}
are the fiberwise stabilizations of $\delta_{}$ and $f$, respectively.
\end{example}

We next show how dualizability in $\Ex_B$ can be detected in $\Ex$.
\begin{lemma}\label{lemma:dualizable_in_exb_detected_on_fibers}
Suppose that $A \arr B$ and $C \arr B$ are fibrations. Then a 1-morphism $X \in \Ex_{B}(A, C)$ is right dualizable in $\Ex_{B}$ if and only if the derived fiber $X_{b} \in \Ex(A_b, C_b)$ is right dualizable in $\Ex$ for every $b \in B$.
\end{lemma}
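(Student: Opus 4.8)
The plan is to prove the lemma by reducing dualizability in $\Ex_B$ to a fiberwise condition, using the characterization of dualizable objects via adjoints together with a fiberwise analysis of the relevant base-change maps. I would first recall the standard fact (used already in the excerpt, e.g. in the remark following Definition \ref{def:dual_pair_bicat}) that $X \in \Ex_B(A,C)$ is right dualizable if and only if, for every 0-cell $D$ over $B$, the functor $- \boxdot_A X \colon \Ex_B(D,A) \arr \Ex_B(D,C)$ has a right adjoint which moreover satisfies the appropriate projection-formula-type compatibility, and in that case the dual is $DX \simeq R(\_{U}_C)$ with coevaluation the unit of the adjunction. The key translation is: since $A \arr B$ and $C \arr B$ are fibrations, the categories $\Ex_B(A,C) = \ho\Sp{}{A\times_B C}$ and their base-change functors are genuinely homotopy-invariant, and a map of parametrized spectra over $A \times_B C$ is an equivalence if and only if it is so on each derived fiber over each point of $A \times_B C$, hence (since $A,C$ lie over $B$) it suffices to check on fibers lying over a fixed $b \in B$. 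So the triangle identities defining a dual pair in $\Ex_B$ can be checked fiberwise over $B$.

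The forward direction is then essentially immediate: if $(X, DX)$ is a dual pair in $\Ex_B$ with coevaluation and evaluation 2-cells, I would apply the derived fiber functor over $b$. The point is that passing to the fiber over $b \in B$ takes the composition product $\boxdot$ in $\Ex_B$ to the composition product $\odot$ in $\Ex$ over the fibers: concretely, $(X \boxdot_C Y)_b \simeq X_b \odot_{C_b} Y_b$, because pullback along $i_b \colon \{b\} \arr B$ commutes with the derived pushforwards $\pi^C_!$ and restrictions $\Delta_C^*$ appearing in the definition of $\boxdot$, by the Beck-Chevalley isomorphism applied to the (homotopy) pullback squares cutting out the fibers (here we use that $A, C, D$ are fibrant over $B$). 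Likewise $(\_{U}_A)_b \simeq U_{A_b}$. Therefore the fiber of a dual pair is a dual pair: the fibered coevaluation and evaluation 2-cells satisfy the fibered triangle identities, since these are obtained by applying the (exact, monoidal) fiber functor to the identities in $\Ex_B$.

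For the reverse direction, suppose $X_b$ is right dualizable in $\Ex(A_b, C_b)$ for every $b$. I would construct a candidate dual $DX \in \Ex_B(C,A)$ and candidate (co)evaluation 2-cells, and then verify the triangle identities fiberwise using the forward-direction computations. The natural candidate is $DX := F_{C\times_B A}(\ldots)$ — more precisely, the relative function spectrum realizing the right adjoint to $- \boxdot_C X$, which exists in $\Ex_B$ by the closed structure on parametrized spectra (May-Sigurdsson); one then takes $DX = R(\_{U}_C)$ and $\coev$ the unit. The content is that the two composites in the triangle identities are maps of parametrized spectra over fibrant bases, so each is an equivalence if and only if it is one on every derived fiber over every $b \in B$; and on the fiber over $b$ it becomes, by the same Beck-Chevalley computation as above, the corresponding triangle-identity composite for $X_b$ in $\Ex(A_b, C_b)$, which is an equivalence by hypothesis. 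I expect the main obstacle to be the bookkeeping needed to identify the fiber of the constructed $\coev$ and $\eval$ with the actual coevaluation and evaluation of $X_b$ — i.e., checking that "take the dual, then take the fiber" agrees with "take the fiber, then take the dual," including the compatibility of the relative function spectrum with restriction to $b$. This is where one must be careful that all functors in sight are derived and that the relevant squares of spaces are homotopy pullbacks, which is exactly why the fibration hypotheses on $A \arr B$ and $C \arr B$ are imposed.
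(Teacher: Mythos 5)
Your forward direction and your overall framing (use the closed structure of $\Ex_B$, commute the fiber functor past $\boxdot$ and the internal homs via Beck--Chevalley, thus reduce to fibers) are the right ideas and match the paper's setup. But your reverse direction as stated has a genuine gap, and the paper patches it in a specific way you did not supply.

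First, the coevaluation cannot simply be ``the unit.'' The unit of the adjunction for $-\boxdot_A X$, evaluated at $\_{U}_A$, is a map $\_{U}_A \arr \llbracket X, X\rrbracket^r$; it does not land in $X \boxdot_C D X = X \boxdot_C \llbracket X, \_{U}_C\rrbracket^r$. To route the unit through $X \boxdot_C DX$ you need the canonical map
$\mu_X \colon X \boxdot_C \llbracket X, \_{U}_C\rrbracket^r \arr \llbracket X, X\rrbracket^r$
to be invertible --- but that invertibility \emph{is} the dualizability criterion, so assuming you can define coevaluation this way is circular.

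Second, and more fundamentally, the triangle identities ask that two specific composites be the \emph{identity} 2-morphism, not merely equivalences. Equalities of 2-morphisms in the homotopy category $\ho\Sp{}{A\times_B C}$ are not detected on derived fibers: two maps can agree on every fiber without being equal. So ``check the triangle identities fiberwise'' does not go through as written; it only shows that the composites are equivalences. (There is a standard rigidity maneuver that upgrades ``the triangle composites are equivalences'' to a genuine dual pair by rescaling the coevaluation, but you do not invoke it, and one still needs a coevaluation to start with, which returns to the first problem.)

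The paper's proof sidesteps both issues by invoking the characterization \cite{MS}*{16.4.12}: $X$ is right dualizable if and only if the canonical map $\mu_X$ above is an equivalence. This converts dualizability from a structural condition (existence of 2-cells satisfying identities) into a \emph{property} (a single natural map being an equivalence), which \emph{is} detected on derived fibers --- precisely by the Beck--Chevalley compatibilities you describe, namely $(X \boxdot_C Y)_b \cong X_b \odot_{C_b} Y_b$ and $(\llbracket Y, Z\rrbracket^r)_b \simeq [Y_b, Z_b]^r$, with the latter requiring the fibration hypotheses on the 0-cells. Once one knows $(\mu_X)_b \cong \mu_{X_b}$, both directions of the lemma follow in one stroke. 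If you adopt that criterion in place of the triangle-identity check, the rest of your proposal is sound.
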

\begin{proof}
By \cite{MS}*{19.3.6}, restriction to a single point $b \in B$ defines a map of bicategories $\Ex_B^{\textup{fib}} \arr \Ex$, where $\Ex_B^{\textup{fib}}$ consists of only those 0-cells $A$ for which the map into $B$ is a fibration. In particular, for each $X \in \Ex_{B}(A, C)$ and $Y \in \Ex_{B}(C, D)$ there is an isomorphism in $\Ex(A_b,D_b)$
\begin{equation}\label{eq:fiber_circle_product}
(X \boxdot_C Y)_b \cong X_b \odot_{C_b} Y_b
\end{equation}
Next, recall that $\Ex_{B}$ and $\Ex$ are closed bicategories \cite{MS}*{\S 16}.  This means that there are left and right internal hom objects $\llbracket-,-\rrbracket^{l}$ and $\llbracket-,-\rrbracket^{r}$ in $\Ex_{B}$ satisfying the adjunctions
\[
\Ex_{B}(Y, \llbracket X, Z \rrbracket^{l}) \cong \Ex_{B}(X \boxdot Y, Z) \cong \Ex_{B}(X, \llbracket Y, Z\rrbracket^{r}).
\]
for every $Z \in \Ex_{B}(A, D)$. Similarly, there are left and right internal hom objects $[-,-]^{l}$ and $[-,-]^{r}$ in $\Ex$ satisfying the analogous adjunctions with respect to the composition operator $\odot$. Setting $Y = \llbracket X, Z \rrbracket^l$ or $X =  \llbracket Y, Z \rrbracket^r$ in these adjunctions and using the isomorphisms (\ref{eq:fiber_circle_product}) gives maps
\begin{equation}\label{eq:fiber_internal_hom}
(\llbracket X, Z \rrbracket^l)_b \arr [ X_b, Z_b ]^l, \qquad (\llbracket Y, Z \rrbracket^r)_b \arr [ Y_b, Z_b ]^r
\end{equation}
Using the definitions of the internal hom objects in \cite{MS}*{17.1.4} and \cite{MS}*{19.2.8}, and the adjoint of the Beck-Chevalley isomorphism for pullbacks and their derived right adjoints $f_*$, the maps (\ref{eq:fiber_internal_hom}) are equivalences when $A$ and $D$ are fibrations over $B$.

Now consider the map
\[
\mu_{X} \colon X \boxdot_{C} \llbracket X, U_{C} \rrbracket^{r} \arr \llbracket X, X \rrbracket^{r}
\]
of spectra over $A \times_{B} A$ that is adjoint to the composite
\begin{equation}\label{eq:adjoint_lemma63}
X \boxdot_{C} \llbracket X, U_{C} \rrbracket^{r} \boxdot_{A} X \xarr{1 \boxdot \epsilon} X \boxdot_{C} U_{C} \cong X
\end{equation}
of the counit for the adjunction defining $\llbracket-,-\rrbracket^{r}$ and the unit isomorphism in $\Ex_{B}$.  As in the theory of duality in symmetric monoidal categories, the internal hom object $\llbracket X, U_{C} \rrbracket^{r}$ provides a canonical choice for the right dual of $X$ when $X$ is right dualizable.  In fact, $X$ is right dualizable if and only if the map $\mu$ is an equivalence \cite{MS}*{16.4.12}.  Similarly, the fiber $X_{b} \in \Ex(A_b, C_b)$ is right dualizable in $\Ex$ if and only if the analogous map
\[
\mu_{X_b} \colon X_{b} \odot_{C_b} [X_b, U_{C_{b}}]^{r} \arr [X_b, X_b]^{r}
\]
of spectra over $A_b \times A_b$ is an equivalence. This analogous map is adjoint to the fiber of \eqref{eq:adjoint_lemma63} along (\ref{eq:fiber_circle_product}) and (\ref{eq:fiber_internal_hom}), so it is identified with the fiber of the map $\mu_{X}$. Therefore $\mu_X$ is an equivalence if and only if $\mu_{X_b}$ is an equivalence for every $b \in B$.
\end{proof}

\begin{proposition}\label{prop:base_change_dualble}
Suppose that $f \colon E \arr B$ is a perfect fibration.  Then the base change spectrum $\_{S}_{f}$ is right dualizable and there is a dual pair $(\_{S}_{f}, D \_{S}_{f})$ in $\Ex_{B}$.
\end{proposition}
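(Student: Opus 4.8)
The plan is to deduce the statement from the fiberwise detection criterion of Lemma~\ref{lemma:dualizable_in_exb_detected_on_fibers}, combined with the known right dualizability of a sphere spectrum over a homotopy finite retract recorded in Example~\ref{ex:CWduality_of_M}. Recall that $\_{S}_{f}$ is the base change spectrum for $f$ viewed as a map $(E,f) \arr (B,\id)$ of spaces over $B$, hence a $1$-cell in $\Ex_{B}(B, E)$, where the $0$-cell $B$ carries the identity map and the $0$-cell $E$ carries $f$. Since a perfect fibration is in particular a fibration, both structure maps $\id_{B} \colon B \arr B$ and $f \colon E \arr B$ are fibrations, so Lemma~\ref{lemma:dualizable_in_exb_detected_on_fibers} applies: $\_{S}_{f}$ is right dualizable in $\Ex_{B}$ if and only if the derived fiber $(\_{S}_{f})_{b} \in \Ex(B_{b}, E_{b})$ is right dualizable in $\Ex$ for every $b \in B$, where $B_{b}$ is a point and $E_{b}$ is the homotopy fiber of $f$ over $b$.

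The next step is to identify this derived fiber. Since $\_{S}_{f} = (f, \id)_{!} S_{E} \cong \Sigma^{\infty}_{B \times_{B} E}(E, (f,\id))_{+}$ lives over $B \times_{B} E$ and $(f,\id) \colon E \arr B \times_{B} E$ is a homeomorphism, we may identify $\_{S}_{f}$ with $S_{B \times_{B} E}$, where the structure map $B \times_{B} E \arr B$ is carried to $f$ under this identification. Restricting to $b$ then amounts to pulling back along the inclusion of the fiber $E_{b} \hookrightarrow B \times_{B} E$, and since $\Sigma^{\infty}_{(-)}(-)_{+}$ commutes with base change along the fibration $f$ (\S\ref{sec:parametrized_spectra}), this yields $(\_{S}_{f})_{b} \cong S_{E_{b}}$, the sphere spectrum over $E_{b}$ regarded as a $1$-cell in $\Ex(\ast, E_{b})$. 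Because $f$ is a perfect fibration, each $E_{b}$ is a retract up to homotopy of a finite CW complex, hence a retract of a homotopy finite space, so $S_{E_{b}}$ is right dualizable in $\Ex$ by \cite{MS}*{18.5.1}. Combining the two steps shows $\_{S}_{f}$ is right dualizable, and letting $D\_{S}_{f}$ denote its right dual (unique up to canonical isomorphism in $\Ex_{B}(E, B)$) produces the dual pair $(\_{S}_{f}, D\_{S}_{f})$.

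The step I expect to require the most care is the identification $(\_{S}_{f})_{b} \cong S_{E_{b}}$: one must keep track of which of the two cartesian factors of $B \times_{B} E$ the restriction to $b$ affects, and ensure the relevant pullback is correctly derived, which is exactly where the compatibility of fiberwise suspension spectra with base change along a fibration is used. Everything else is a direct appeal to Lemma~\ref{lemma:dualizable_in_exb_detected_on_fibers} and the cited dualizability statement, both already in hand.
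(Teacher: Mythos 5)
Your proposal is correct and follows essentially the same route as the paper: reduce to the fiberwise criterion of Lemma~\ref{lemma:dualizable_in_exb_detected_on_fibers}, identify the derived fiber $(\_{S}_{f})_{b}$ with the sphere spectrum $S_{E_{b}}$ over the homotopy fiber, and invoke the dualizability of $S_{X}$ for $X$ a retract of a homotopy finite space (Example~\ref{ex:CWduality_of_M}, citing \cite{MS}*{18.5.1}). The extra care you take in tracking the identification $B \times_{B} E \cong E$ and the derived nature of the restriction makes explicit what the paper's one-sentence proof leaves implicit.
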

\begin{proof}
This follows from the lemma since the fiber of $\_{S}_{f} \in \Ex_{B}(B, E)$ over $b \in B$ is the sphere spectrum $S_{E_{b}} = \Sigma^{\infty}_{E_b} (E_b, \id)_+$ over $E_b$, which is right dualizable (Example \ref{ex:CWduality_of_M}).  
\end{proof}

\begin{remark}
We write $D \_{S}_{f}$ for the right dual of $\_{S}_{f}$.  Notice that the right dual $D \_{S}_{f}$ is generally \emph{not} the base-change spectrum ${}_{f}\_{S}$.  In fact, this will only happen when the homotopy fibers of $f$ are finite homotopy 0-types, i.e. equivalent to a finite discrete space.  It is important that we carefully distinguish the dual pairs $({}_{f}\_{S}, \_{S}_{f})$ and $(\_{S}_{f}, D \_{S}_{f})$.

\end{remark}

\subsection{The Becker-Gottlieb transfer via Costenoble-Waner duality}

Let us now turn to the Becker-Gottlieb transfer associated to a perfect fibration $f \colon E \arr B$. Recall from Example \ref{ex:becker_gottlieb_transfer} that the spectrum $f_{!} S_{E}$ is dualizable in $\ho \Sp{}{B}$, and that the trace of the stabilized fiberwise diagonal $\delta_{} \colon E \arr E \times_{B} E$ defines the pretransfer
\[
\tau_{B} = \tau_{B}(f) \colon S_{B} \oarr{\coev} f_{!} S_{E} \sma_{B} D f_{!} S_{E} \xarr{ (\id \sma_{B} \delta_{}) \circ \gamma} D f_{!} S_{E} \sma_{B} f_{!} S_{E} \sma_{B} f_{!} S_{E} \oarr{\eval} f_{!} S_{E}.
\]
Applying the base change functor $r_{!}$ gives the Becker-Gottlieb transfer
\[
\tau = r_{!} \tau_{B} \colon \Sigma^{\infty}_{+} B \cong r_{!} S_{B} \arr r_{!} f_{!} S_{E} \cong \Sigma^{\infty}_{+} E.
\]
In this section we decompose the parametrized spectrum $f_{!} S_{E}$ using the $\boxdot$ product in $\Ex_{B}$.  This will allow for a description of the pretransfer that is not available when working only with fiberwise duality in $\ho \Sp{}{B}$.

\begin{lemma} Considering the spectrum $f_{!} S_{E}$ as a one-cell in $\Ex_{B}(B, B)$, there is a canonical isomorphism $f_{!} S_{E} \cong \_{S}_{f} \boxdot_{E} {}_{f}\_{S}$.  Under this identification, the fiberwise diagonal $\delta_{} \colon f_{!} S_{E} \arr f_{!} S_{E} \sma_{B} f_{!} S_{E}$ coincides with the map
\[
1 \boxdot \coev({}_{f}\_{S}) \boxdot 1 \colon \_{S}_{f} \boxdot_{E} {}_{f} \_{S} \arr \_{S}_{f} \boxdot_{E} ({}_{f} \_{S} \boxdot_{B} \_{S}_{f}) \boxdot_{E} {}_{f}\_{S}.
\]
\end{lemma}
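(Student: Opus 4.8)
The plan is to prove both assertions by unwinding the definition of the composition product $\boxdot$ in $\Ex_{B}$, reducing everything to manipulations of fiberwise suspension spectra of spaces over $B$. The key general facts are that $\Sigma^{\infty}_{(-)}(-)_{+}$ commutes with the base-change functors (Section \ref{sec:parametrized_spectra}), and that because $f$ is a fibration all of the pullback squares that arise are homotopy pullbacks, so the Beck--Chevalley and projection-formula isomorphisms of Section \ref{sec:parametrized_spectra} apply. For the first assertion I would simply note that the isomorphism $f_{!} S_{E} \cong \_{S}_{f} \boxdot_{E} {}_{f}\_{S}$ has already appeared implicitly: in Example \ref{ex:fiberwise_base_change_duality} the evaluation map of the dual pair $({}_{f}\_{S}, \_{S}_{f})$ is built from an identification $\_{S}_{f} \boxdot_{E} {}_{f}\_{S} \cong \Sigma^{\infty}_{B}(E, f)_{+}$, and $\Sigma^{\infty}_{B}(E, f)_{+} \cong f_{!} S_{E}$. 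A from-scratch derivation is equally short: $\_{S}_{f} \boxdot_{E} {}_{f}\_{S} = \pi^{E}_{!} \Delta_{E}^{*}(\_{S}_{f} \osma {}_{f}\_{S})$, and since $\_{S}_{f} \cong (f,\id)_{!} S_{E}$ and ${}_{f}\_{S} \cong (\id, f)_{!} S_{E}$ are suspension spectra, $\Delta_{E}^{*}(\_{S}_{f} \osma {}_{f}\_{S})$ is the suspension spectrum of an iterated pullback of $E$ along identity and diagonal maps, hence $\simeq S_{E}$, while $\pi^{E}_{!} = f_{!}$.

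For the target of the displayed map I would use that $\boxdot_{B}$ agrees with $\sma_{B}$ on $1$-cells over $B$ together with the associativity isomorphism in $\Ex_{B}$:
\[
\_{S}_{f} \boxdot_{E} ({}_{f}\_{S} \boxdot_{B} \_{S}_{f}) \boxdot_{E} {}_{f}\_{S} \;\cong\; (\_{S}_{f} \boxdot_{E} {}_{f}\_{S}) \sma_{B} (\_{S}_{f} \boxdot_{E} {}_{f}\_{S}) \;\cong\; f_{!} S_{E} \sma_{B} f_{!} S_{E},
\]
so that, after absorbing the unit isomorphism $\_{S}_{f} \boxdot_{E} U_{E} \boxdot_{E} {}_{f}\_{S} \cong \_{S}_{f} \boxdot_{E} {}_{f}\_{S}$ on the source, $1 \boxdot \coev({}_{f}\_{S}) \boxdot 1$ is indeed a map $f_{!} S_{E} \to f_{!} S_{E} \sma_{B} f_{!} S_{E}$.

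The heart of the argument is identifying this map with $\delta$. Here I would recall from Example \ref{ex:fiberwise_base_change_duality} that, under the identifications $U_{E} \cong \Sigma^{\infty}_{E \times_{B} E}(E, \delta)_{+}$ and ${}_{f}\_{S} \boxdot_{B} \_{S}_{f} \cong \Sigma^{\infty}_{E \times_{B} E}(E \times_{B} E, \id)_{+}$, the coevaluation $\coev({}_{f}\_{S})$ is the fiberwise stabilization of the fiberwise diagonal $E \to E \times_{B} E$. Since all of the $1$-cells in play ($\_{S}_{f}$, $U_{E}$, ${}_{f}\_{S}$, and ${}_{f}\_{S} \boxdot_{B} \_{S}_{f}$) are fiberwise suspension spectra of spaces, and $\boxdot_{E}$ with the base-change spectra $\_{S}_{f}, {}_{f}\_{S}$ is computed on these by honest pullback of spaces along the identity and projection maps occurring in their definitions, the whiskered map $1 \boxdot \coev({}_{f}\_{S}) \boxdot 1$ is itself $\Sigma^{\infty}_{B}(-)_{+}$ of a map of spaces over $B$. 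Carrying out the pullbacks: the source is $\Sigma^{\infty}_{B}(E, f)_{+} = f_{!} S_{E}$, the target is $\Sigma^{\infty}_{B}(E \times_{B} E, f)_{+} \cong f_{!} S_{E} \sma_{B} f_{!} S_{E}$, and the map between them is $\Sigma^{\infty}_{B}$ of the fiberwise diagonal $\delta \colon E \to E \times_{B} E$. This is exactly the map $\delta$ of Example \ref{ex:becker_gottlieb_transfer}.

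The main obstacle will be the bookkeeping. First, one must thread the unit and associativity coherence isomorphisms of $\Ex_{B}$ carefully through the computation so that the square and the whiskerings actually compose. Second, and more substantively, one must justify that the composites $\boxdot_{E}$ of the suspension-spectrum $1$-cells appearing here are genuinely computed by strict pullbacks of spaces, rather than only up to some correction; this is precisely where the hypothesis that $f$ is a fibration enters, guaranteeing that the squares defining $E \times_{B} E$ and the iterated pullbacks through the identity legs of $\_{S}_{f}$ and ${}_{f}\_{S}$ are homotopy pullbacks, so that the relevant Beck--Chevalley maps are equivalences. Once this is in place, the verification is purely formal.
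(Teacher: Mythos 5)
Your proposal is correct and follows essentially the same line as the paper's short argument: both $f_{!}S_{E}$ and $\_{S}_{f}\boxdot_{E}{}_{f}\_{S}$ are identified with the fiberwise suspension spectrum $\Sigma^{\infty}_{B}(E,f)_{+}$, and since whiskering on both sides by the base-change spectra implements the pushforward from $E\times_{B}E$ down to $B$ (not a pullback, as you briefly suggest), the coevaluation map --- the fiberwise stabilization of $\delta$ over $E\times_{B}E$ --- becomes $\delta$ as a map of spaces over $B$. The appeal to the fibration hypothesis and Beck--Chevalley is not actually needed here, since the operation in question is a pushforward of suspension spectra, which is strictly computed on the underlying spaces; otherwise your bookkeeping matches the paper's.
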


\begin{proof}
Both $f_{!} S_{E}$ and $\_{S}_{f} \boxdot_{E} {}_{f}\_{S}$ are identified with the fiberwise suspension spectrum of $E$ over $B$. From Example \ref{ex:fiberwise_base_change_duality}, the coevaluation map $\coev({}_{f}\_{S})$ is the stabilization of the fiberwise diagonal $\delta: E \arr E \times_B E$ of spaces over $E \times_B E$. The tensoring on both sides by base-change spectra pushes the base forward from $E \times_B E$ to $B$, leaving us with $\delta$ considered as a map of spaces over $B$.
\end{proof}

The decomposition $f_{!} S_{E} \cong \_{S}_{f} \boxdot_{E} {}_{f}\_{S}$ gives another proof that $f_{!}S_{E}$ is fiberwise dualizable: each of $\_{S}_{f}$ and ${}_{f}\_{S}$ is right CW dualizable, and so their $\boxdot$ product is right CW dualizable.  By the uniqueness of duals, this gives a canonical isomorphism $D f_{!} S_{E} \cong \_{S}_{f} \boxdot_{E} D \_{S}_{f}$.  The coevaluation map $\coev( f_{!} S_{E})$ may be identified with the composite
\[
U_{B} \xarr{\coev(\_{S}_{f})} \_{S}_{f} \boxdot_E D \_{S}_{f} \xarr{ 1 \boxdot \coev({}_{f}\_{S}) \boxdot 1} \_{S}_{f} \boxdot_E {}_{f}\_{S} \boxdot_B \_{S}_{f} \boxdot_E D \_{S}_{f},
\]
and similarly for the evaluation maps.  This allows us to rewrite the pretransfer entirely in terms of data coming from fiberwise Costenoble-Waner duality:
\[
\xymatrix{
S_{B} = \_{U}_{B} \ar[d]^{\coev(\_{S}_{f})} \\
\_{S}_{f} \boxdot D \_{S}_{f} \ar[d]^{1 \boxdot \coev({}_{f}\_{S}) \boxdot 1} \\
\_{S}_{f} \boxdot {}_{f}\_{S} \boxdot \_{S}_{f} \boxdot D\_{S}_{f} \ar[d]^{\gamma} \\
\_{S}_{f} \boxdot D \_{S}_{f} \boxdot \_{S}_{f} \boxdot {}_{f}\_{S} \ar[d]^{1 \boxdot 1 \boxdot 1 \boxdot \coev({}_{f}\_{S}) \boxdot 1} \\
\_{S}_{f} \boxdot D \_{S}_{f} \boxdot \_{S}_{f} \boxdot {}_{f} \_{S} \boxdot \_{S}_{f} \boxdot {}_{f}\_{S} \ar[d]^{1 \boxdot \eval(\_{S}_{f}) \boxdot 1 \boxdot 1 \boxdot 1} \\
\_{S}_{f} \boxdot {}_{f}\_{S} \boxdot \_{S}_{f} \boxdot {}_{f}\_{S} \ar[d]^{\eval({}_{f}\_{S}) \boxdot 1 \boxdot 1} \\
\_{S}_{f} \boxdot {}_{f}\_{S}
}
\]
The map $\gamma$ is the symmetry isomorphism of the fiberwise smash product $\sma_B = \boxdot_B$. We may take this diagram and switch the order of $\coev({}_{f}\_{S})$ and $\eval(\_{S}_{f})$ because they involve different summands.  Applying one of the triangle identities for the dual pair $({}_{f}\_{S}, \_{S}_{f})$, we simplify and deduce:

\begin{proposition}\label{prop:pretransfer_via_duality}
After identifying $f_{!} S_{E}$ with $\_{S}_{f} \boxdot_{E} {}_{f}\_{S}$, the pretransfer $\tau_{B}(f)$ is canonically homotopic to the following composite
\[
\xymatrix{
S_{B} = \_{U}_{B} \ar[d]^{\coev(\_{S}_{f})} \\
\_{S}_{f} \boxdot D \_{S}_{f} \ar[d]^{1 \boxdot \coev({}_{f}\_{S}) \boxdot 1} \\
\_{S}_{f} \boxdot {}_{f}\_{S} \boxdot \_{S}_{f} \boxdot D\_{S}_{f} \ar[d]^{\gamma} \\
\_{S}_{f} \boxdot D \_{S}_{f} \boxdot \_{S}_{f} \boxdot {}_{f}\_{S} \ar[d]^{1 \boxdot \eval(\_{S}_{f}) \boxdot 1} \\
\_{S}_{f} \boxdot {}_{f}\_{S}
}
\]
\end{proposition}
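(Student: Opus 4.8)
The plan is to take as given the six-step composite displayed just above the statement, which the preceding discussion has already identified with the pretransfer $\tau_{B}(f)$: that identification uses the canonical isomorphism $f_{!}S_{E}\cong\_{S}_{f}\boxdot_{E}{}_{f}\_{S}$ from the lemma identifying $f_!S_E$ with $\_{S}_{f}\boxdot_{E}{}_{f}\_{S}$, the induced identification $Df_{!}S_{E}\cong\_{S}_{f}\boxdot_{E}D\_{S}_{f}$ coming from uniqueness of duals, and the resulting formulas for $\coev(f_{!}S_{E})$, for the fiberwise diagonal, and for $\eval(f_{!}S_{E})$. It then remains only to collapse this six-step composite down to the four-map composite in the statement, and I would do this by a single interchange of two arrows followed by one application of a triangle identity.

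First I would interchange the fourth arrow $1\boxdot 1\boxdot 1\boxdot\coev({}_{f}\_{S})\boxdot 1$ with the fifth arrow $1\boxdot\eval(\_{S}_{f})\boxdot 1\boxdot 1\boxdot 1$. Writing the output of the third arrow as $\_{S}_{f}\boxdot_{E}(D\_{S}_{f}\boxdot_{B}\_{S}_{f})\boxdot_{E}{}_{f}\_{S}$, the fifth arrow contracts the central one-cell $D\_{S}_{f}\boxdot_{B}\_{S}_{f}$ to $\_{U}_{E}$ via $\eval(\_{S}_{f})$ and is the identity everywhere else, whereas the fourth arrow is the identity on that central one-cell and on the outer $\_{S}_{f}$, merely inserting a copy of ${}_{f}\_{S}\boxdot_{B}\_{S}_{f}$ at the node separating the central factor from the trailing ${}_{f}\_{S}$. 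Being supported on complementary tensor slots, these two $2$-cells commute by the interchange (Godement) law for the horizontal composition $\boxdot$ in $\Ex_{B}$, so the fifth arrow may be performed first. Once it is, the fifth arrow is exactly the map $1\boxdot\eval(\_{S}_{f})\boxdot 1\colon\_{S}_{f}\boxdot D\_{S}_{f}\boxdot\_{S}_{f}\boxdot{}_{f}\_{S}\arr\_{S}_{f}\boxdot{}_{f}\_{S}$ of the statement, and the relocated fourth arrow becomes $1_{\_{S}_{f}}\boxdot(\coev({}_{f}\_{S})\boxdot 1_{{}_{f}\_{S}})$ on $\_{S}_{f}\boxdot_{E}{}_{f}\_{S}$.

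Next, the relocated fourth arrow is now immediately followed by the sixth arrow $\eval({}_{f}\_{S})\boxdot 1\boxdot 1$, and after regrouping their composite is $\big([\eval({}_{f}\_{S})\boxdot_{B}1_{\_{S}_{f}}]\circ[1_{\_{S}_{f}}\boxdot_{E}\coev({}_{f}\_{S})]\big)\boxdot_{E}1_{{}_{f}\_{S}}$; the parenthesized $2$-cell $\_{S}_{f}\arr\_{S}_{f}\boxdot{}_{f}\_{S}\boxdot\_{S}_{f}\arr\_{S}_{f}$ is precisely one of the two triangle identities for the fiberwise Costenoble--Waner dual pair $({}_{f}\_{S},\_{S}_{f})$ of Example \ref{ex:fiberwise_base_change_duality} (written out in the normalized form of Definition \ref{def:dual_pair_bicat}), hence equals $1_{\_{S}_{f}}$, so the composite of the fourth and sixth arrows is $1_{\_{S}_{f}\boxdot_{E}{}_{f}\_{S}}$. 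What survives is the composite of the first, second, and third arrows followed by the relocated fifth arrow, which is exactly the four-map composite asserted in the proposition.

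I expect the only real difficulty to be bookkeeping: labeling which $0$-cell each $\boxdot$ is taken over so that the interchange law is applied to genuinely complementary slots, and arranging the relocated arrows into exactly the normalized shape of the triangle identities while suppressing the coherence $2$-isomorphisms consistently. There is no geometric or homotopy-theoretic input needed beyond what has already been established; the argument is entirely formal bicategory theory.
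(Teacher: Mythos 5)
Your proposal is correct and reproduces essentially the paper's own argument: the paper arrives at the same six-arrow composite via the decomposition $f_!S_E \cong \_{S}_f \boxdot_E {}_f\_{S}$ and the identification $Df_!S_E \cong \_{S}_f \boxdot_E D\_{S}_f$, then states (without elaboration) that one may interchange $\coev({}_f\_{S})$ with $\eval(\_{S}_f)$ since they act in disjoint tensor positions, and then apply a triangle identity for $({}_f\_{S}, \_{S}_f)$. Your write-up fills in precisely these two steps, correctly identifying the relevant triangle identity as the one of the form $\_{S}_f \to \_{S}_f \boxdot {}_f\_{S} \boxdot \_{S}_f \to \_{S}_f$.
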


\begin{remark}
2-cells in a bicategory, such as $\Ex_{B}$, may be described using the calculus of colored string diagrams, where the colors that label two dimensional regions correspond to 0-cells of the bicategory and strings correspond to 1-cells \cite{PS13}.  Using this notation, Proposition \ref{prop:pretransfer_via_duality} asserts that the Becker-Gottlieb pretransfer is represented by Figure \ref{fig:pretransfer2}.
\begin{figure}
\begin{tikzpicture}[scale=0.8]
    \begin{pgfonlayer}{background}
      \fill[bluefill] (2.5,-7) rectangle (-2.5,.5);
       \fill[orangefill] (.5,-1) to[out=270,in=180]  (1,-2) 
       to[out=0,in=90]  (1.5,-3)
       to (.5, -4) to (0,-3.5) to (.5,-3)
       to[out=90,in=0] node [ed,near end] {} (0,-2)
       to[out=180,in=90] node [ed,near end] {} (-.5,-3)
       to (0,-3.5) to (-.5,-4) to (-1.5,-3)
       to[out=90, in=180] (-1,-2)
       to[out=0, in=270] (-.5,-1)
       to[out=90,in=180] node [ed,near start] {} (0,0)
       to[out=0,in=90] node [ed,near end] {} (.5,-1) ;
       \fill[orangefill] (0,-3.5) to (.5, -4) to (0, -4.5) to (-.5, -4); 
       \fill[orangefill] (-.5,-7) to[out=90, in=0] (-1,-6) 
       to[out=180, in=270] (-1.5,-5) to (-.5,-4) to (0,-4.5) to (-.5,-5)
       to[out=270, in=180] (0,-6) to[out=0, in=270] (.5, -5)
       to (0, -4.5) to (.5, -4) to (1.5, -5) 
       to[out=270, in=0] (1,-6)
       to[out=180, in=90] (.5, -7);
      \end{pgfonlayer}
      \draw (-.5,-1) to[out=90,in=180] node [ed,near start] {$\_{S}_{f}$} (0,0)
      to[out=0,in=90] node [ed,near end] {$D\_{S}_{f}$} (.5,-1) ;
      \draw (.5,-1) to[out=270,in=180] node [ed,near start] {} (1,-2)
      to[out=0,in=90] node [ed,near end] {} (1.5,-3);
      \draw (-.5,-1) to[out=270,in=0] node [ed,near start] {} (-1,-2)
      to[out=180,in=90] node [ed,near end] {} (-1.5,-3);
      \draw (-.5,-3) to[out=90,in=180] node [ed,near start] {${}_{f}\_{S}$} (0,-2)
      to[out=0,in=90] node [ed,near end] {$\_{S}_{f}$} (.5,-3);
      \draw (.5,-3) to (0,-3.5);
      \draw [dashed] (0,-3.5) to (-.5,-4);
      \draw (-.5,-4) to (-1.5, -5);
      \draw (1.5,-3) -- (.5,-4);
      \draw [dashed] (.5,-4) to (0,-4.5);
      \draw (0,-4.5) to (-.5,-5);
      \draw (-1.5,-3) -- (.5,-5);
      \draw (-.5,-3) -- (1.5,-5);
      \draw (-.5,-5) to[out=270,in=180] node [ed,near start,swap] {$D\_{S}_{f}$} (0,-6)
      to[out=0,in=270] node [ed,near end,swap] {$\_{S}_{f}$} (.5,-5);
      \draw (1.5,-5) to[out=270,in=0] node [ed,near start] {} (1,-6)
      to[out=180,in=90] node [ed] {${}_{f}\_{S}$} (.5,-7);
      \draw (-1.5,-5) to[out=270,in=180] node [ed,near start,swap] {} (-1,-6)
      to[out=0,in=90] node [ed,swap] {$\_{S}_{f}$} (-.5,-7);
      \node[anchor=north west,blue] at (-2,.25) {$B$};
      \node[anchor=north east,blue] at (2,.25) {$B$};
      \node[anchor=north, orange!50!black] at (0,-1) {$E$};
      \node[anchor=south, orange!50!black] at (0,-6.75) {$E$};
      \node[anchor=south, blue] at (0,-5.5) {$B$};
      \node[anchor=north, blue] at (0, -2.5) {$B$};
\end{tikzpicture}
\caption{The Becker-Gottlieb pretransfer $\tau_{B}(f) \colon S_{B} \arr f_{!}S_{E}$ as a 2-cell in the bicategory $\Ex_{B}$.}\label{fig:pretransfer2}
\end{figure}
Note that the crossing in the middle of the figure is the symmetry isomorphism $\gamma$ in the symmetric monoidal category $\Ex_{B}(B, B) = \Sp{}{B}$.  It cannot be decomposed into multiple crossings of individual strands, but rather must be taken as a single crossing of the two bands labeled by the 0-cell $(E \arr B)$.  
\end{remark}

\section{The Becker-Gottlieb transfer and the Reidemeister trace}\label{sec:comparison}

Proposition \ref{prop:pretransfer_via_duality} gives a description of the Becker-Gottlieb pretransfer as a 2-cell in the bicategory $\Ex_B$. In this section we discuss how its pushforward to $\Ex$ compares with the Reidemeister trace. We finish by proving Theorem \ref{thm:intro_becker_gottlieb_compatibility}.

\subsection{The comparison of $\Ex_B$ with $\Ex$ on shadows}\label{sec:IOTA}

We now study how the oplax morphism of bicategories
\[ \iota_{!} \colon \Ex_B \arr \Ex \]
interacts with shadows. Recall that $\iota_{!}$ takes each space $A \arr B$ over $B$ to the underlying space $A$, and each spectrum $X$ over $A \times_{B} C$ to the spectrum $\iota_{!} X := (\pi_1 \times \pi_2)_{!} X$ over $A \times C$.  The term oplax refers to the natural transformation
\[ \psi \colon \iota_{!}(X \boxdot Y) \arr \iota_{!} X \odot \iota_{!} Y \]
relating the composition products in the bicategories $\Ex_{B}$ and $\Ex$. $\psi$ is induced by the commutative diagram of spaces with spectra over them
\[
\xymatrix{
X \boxdot_C Y \ar@{--}[dr] & & & & \iota_{!} X \odot_C \iota_{!} Y \ar@{--}[dl] \\
& A \times_B D \ar[rr]^{\iota} & & A \times D \\
X \osma Y \ar@{--}[dr] & A \times_B C \times_B D \ar[rr] \ar[d]_{\Delta} \ar@{}[drr]|{ \Downarrow \psi}  \ar[u]_-{\pi_{13}} & & A \times C \times D \ar[u]_{1 \times r \times 1} \ar[d]^{1 \times \Delta \times 1} & \iota_{!} X \osma \iota_{!} Y \ar@{--}[dl] \\
& A \times_B C \times C \times_B D \ar[rr]^-{\iota \times \iota} & & A \times C \times C \times D
}
\]
as in \eqref{eq:beck_chevalley}. It fails to be an isomorphism because the bottom square is not in general a homotopy pullback. However, we have the following special case:

\begin{lemma}\label{lem:psi_equiv}  Let $p \colon A \arr B$ be a space over $B$, and suppose that $X$ and $Y$ are spectra over $A$, considered as 1-cells $X \in \Ex_{B}(B, A)$ and $Y \in \Ex_{B}(A, B)$.  Then the natural transformation 
\[
\psi \colon \iota_{!}(X \boxdot_{A} Y) \arr \iota_{!} X \odot_{A} \iota_{!} Y
\]
is a stable equivalence of spectra over $B \times B$.
\end{lemma}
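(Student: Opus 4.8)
The plan is to recognize $\psi$ as a Beck-Chevalley transformation and then to check that the corresponding square of spaces is a homotopy pullback. Unwinding the construction of $\psi$ from the diagram preceding the statement, $\psi$ is obtained by pasting the two squares appearing there: the upper square commutes strictly, so it contributes only the canonical isomorphism supplied by functoriality of $(-)_{!}$, while the lower square (the one carrying the label $\psi$) contributes the Beck-Chevalley transformation $\alpha$ of \eqref{eq:beck_chevalley}, which one then applies to the external smash product $X \osma Y$ and pushes forward along a base-change functor. Since derived pushforwards preserve equivalences, and using the functoriality of the Beck-Chevalley construction under pasting of squares recorded at the end of \S\ref{sec:parametrized_spectra}, it suffices to show that the lower square is a homotopy pullback; then $\alpha$, and hence $\psi$, is an equivalence by \cite{MS}*{Thm. 13.7.7}.

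Specializing the lower square to the situation of the lemma, where $X$ and $Y$ are spectra over $A$ regarded as $1$-cells $B \to A$ and $A \to B$, the identifications $B \times_{B} A \cong A \cong A \times_{B} B$ collapse it to
\[
\xymatrix{
A \ar[r]^-{\iota} \ar[d]_-{\Delta_A} & B \times A \times B \ar[d] \\
A \times A \ar[r] & B \times A \times A \times B
}
\]
where $\iota(a) = (p(a), a, p(a))$, the left vertical map is the diagonal of $A$, the right vertical map is the diagonal of $A$ on the two middle coordinates, and the bottom map sends $(a_1, a_2)$ to $(p(a_1), a_1, a_2, p(a_2))$. Note that the lower-left corner is the \emph{ordinary} product $A \times A$, not $A \times_{B} A$, since the external smash in $\Ex_{B}$ is still taken over a point; this is precisely the feature that makes the verification work. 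I would check that the square is a homotopy pullback by replacing the right vertical map with an equivalent fibration, using the path space of $A$ in the two middle coordinates: a direct inspection identifies the homotopy pullback with the space $\{(a_1, a_2, \gamma) : \gamma \text{ a path from } a_1 \text{ to } a_2 \text{ in } A\}$ and the comparison map out of the top-left corner with the inclusion of constant paths, which is a homotopy equivalence. The geometric point is that the outer maps $B \to B$ are identities, so the two endpoints of $\gamma$ pin down the $B$-coordinates and no loop space of $B$ intrudes; for a general $1$-cell of $\Ex_{B}$ the analogous square is \emph{not} a homotopy pullback, which matches the fact that $\psi$ is not an equivalence in general.

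Granting this, $\alpha$ is a Beck-Chevalley isomorphism of derived functors, so $\psi$ is a stable equivalence of spectra over $B \times B$, as claimed. I expect the only real work to be in the previous step: transcribing the lower square correctly (taking care to keep the fiber products over $B$ distinct from ordinary products) and verifying the homotopy-pullback property; everything else is formal bookkeeping with the Beck-Chevalley calculus of \S\ref{sec:parametrized_spectra}.
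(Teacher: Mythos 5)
Your proof is correct, and it takes a genuinely different route from the paper's. The paper attacks the Beck-Chevalley transformation by first subdividing the square into two pieces: an upper square whose transformation $\rho$ is an instance of the projection formula (and hence an equivalence for formal reasons, without the square needing to be a homotopy pullback), and a lower square whose transformation $\beta$ comes from a homotopy pullback. You instead observe that, in this specific case, the composite square is \emph{itself} a homotopy pullback and check that directly: replacing the right vertical $1 \times \Delta_A \times 1 \colon B \times A \times B \to B \times A \times A \times B$ by the fibration $B \times A^I \times B \to B \times A \times A \times B$ identifies the strict pullback against $(a_1,a_2) \mapsto (p(a_1),a_1,a_2,p(a_2))$ with the free path space $A^I$, and the comparison map out of $A$ is the inclusion of constant paths. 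This is a cleaner argument: you avoid having to find the intermediate object $B \times A \times B \times A$ and check commutativity of both subdivided squares, and the geometric reason for the equivalence (the path spaces in the $B$-coordinates are based, hence contractible, because the outer $0$-cells are $B$ itself) is made visible. The paper's modular approach buys a little robustness — the projection-formula transformation is an equivalence even where the square fails to be a homotopy pullback — but that flexibility is not needed here, and your direct check gets the same conclusion with less bookkeeping.

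One small point worth flagging explicitly in a writeup: the Beck-Chevalley transformation for the $\psi$-square is applied to $X \osma Y$ and then pushed forward along the (derived) functor $(1 \times r \times 1)_{!}$ to obtain $\psi$; since this derived pushforward preserves stable equivalences, it suffices to verify that the transformation on the $\psi$-square is an equivalence, which is what your homotopy-pullback check delivers. You say essentially this, but phrasing it as ``the map $\psi$ is $(1 \times r \times 1)_{!}$ applied to a Beck-Chevalley transformation'' makes the reduction airtight.
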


\begin{proof} We subdivide the bottom square from the diagram just above:
\[
\xymatrix{
A  \ar@{}[dr]|{\Downarrow \rho} \ar[r]^-{g} \ar[d]_{(g, 1)} & B \times A \times B \ar[d]^{\Delta_{B \times A \times B}} \\
B \times A \times B \times A  \ar@{}[dr]|{\Downarrow \beta} \ar[r]^-{1 \times g} \ar[d]_{\pi_{24}} & B \times A \times B \times B \times A \times B \ar[d]^{\pi_{1256}} \\
A \times A \ar[r]^-{(p, 1) \times (1, p)} & B \times A \times A \times B
}.
\]
Here $g \colon A \arr B \times A \times B$ is the map $g(a) = (p(a), a, p(a))$.
The map $\psi$ is given by the composite $\beta \circ \rho$ as defined in \eqref{eq:beck_chevalley}.  Given a spectrum $Z$ over $B \times A \times B$ and a spectrum $Y$ over $A$, the map $\rho$ induces the equivalence in the projection formula:
\[
\rho \colon g_{!} (g^* Z \sma_{A} Y) \oarr{\simeq} Z \sma_{B \times A \times B} g_{!} Y.
\]
The natural transformation $\beta$ is always an equivalence because the lower square is a homotopy pullback square.  Together, this implies that $\psi$ is an equivalence.
\end{proof}

\noindent Since $\iota_{!} \_{S}_{f} \cong S_{f}$, the lemma implies that if $\_{S}_{f}$ is right dualizable in $\Ex_{B}$, then $S_{f}$ is right dualizable in $\Ex$.  Proposition \ref{prop:base_change_dualble} then gives the next result, which was also observed by Ponto-Shulman \cite{PS14}*{Prop. 4.7}.

\begin{proposition}\label{prop:base_change_dualble_Ex}
If $f \colon E \arr B$ is a perfect fibration, then the base change spectrum $S_{f}$ is right dualizable and there is a dual pair $(S_{f}, D S_{f})$ in $\Ex$.
\end{proposition}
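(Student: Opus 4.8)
The plan is to transport the fiberwise dual pair $(\_{S}_{f}, D\_{S}_{f})$ of Proposition~\ref{prop:base_change_dualble} from $\Ex_{B}$ to $\Ex$ along the comparison morphism $\iota_{!}\colon\Ex_{B}\arr\Ex$. First I would record the relevant compatibilities of $\iota_{!}$: it is the identity on underlying spaces, so it carries the 0-cells $B$ and $E$ to $B$ and $E$; it carries $\_{S}_{f}$ to $S_{f}$; and since $\iota\circ\delta$ is the ordinary diagonal, it carries unit 1-cells to unit 1-cells, $\iota_{!}\_{U}_{B}\cong U_{B}$ and $\iota_{!}\_{U}_{E}\cong U_{E}$. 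I will take the right dual of $S_{f}$ in $\Ex$ to be $D S_{f}:=\iota_{!}D\_{S}_{f}$, and define its coevaluation and evaluation maps by applying $\iota_{!}$ to those of $(\_{S}_{f}, D\_{S}_{f})$ and correcting by the oplax comparison 2-cells $\psi$ of $\iota_{!}$. Explicitly, $\coev(S_{f})$ is the composite
\[
U_{B}\cong\iota_{!}\_{U}_{B}\xarr{\iota_{!}\coev(\_{S}_{f})}\iota_{!}(\_{S}_{f}\boxdot_{E}D\_{S}_{f})\xarr{\psi}S_{f}\odot_{E}D S_{f},
\]
and $\eval(S_{f})$ is the composite
\[
D S_{f}\odot_{B}S_{f}\xarr{\psi^{-1}}\iota_{!}(D\_{S}_{f}\boxdot_{B}\_{S}_{f})\xarr{\iota_{!}\eval(\_{S}_{f})}\iota_{!}\_{U}_{E}\cong U_{E}.
\]

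The first comparison map $\psi$ here is a stable equivalence by Lemma~\ref{lem:psi_equiv}, applied with the space $E$ over $B$ and the two spectra $\_{S}_{f}$ and $D\_{S}_{f}$ over $E$. The second $\psi$ is a composition taken over the 0-cell $B$ itself, whose structure map $\id_{B}$ is a fibration; I would check that $\psi$ is an equivalence in this case too, by the same factorization into a Beck--Chevalley equivalence and the projection-formula equivalence $\rho$ used in the proof of Lemma~\ref{lem:psi_equiv} --- no diagonal of a space that fails to be fibrant over $B$ intervenes. It then remains to verify the two triangle identities for $(S_{f}, D S_{f})$. Because $\iota_{!}$ is an oplax morphism of bicategories, applying it to a triangle identity for $(\_{S}_{f}, D\_{S}_{f})$ and then using the comparison 2-cells $\psi$ to identify $\iota_{!}$ of a horizontal composite with the horizontal composite of $\iota_{!}$'s reproduces exactly the corresponding triangle-identity diagram for $(S_{f}, D S_{f})$. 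The one point requiring attention is that every $\psi$ appearing is invertible: bracketing the triple composite in the first identity as $(\_{S}_{f}\boxdot_{E}D\_{S}_{f})\boxdot_{B}\_{S}_{f}$ and the one in the second as $D\_{S}_{f}\boxdot_{B}(\_{S}_{f}\boxdot_{E}D\_{S}_{f})$, each comparison map that occurs is either a composition over $E$ of two spectra over $E$, covered by Lemma~\ref{lem:psi_equiv}, or a composition over the 0-cell $B$, covered by the variant just described. Hence $(S_{f}, D S_{f})$ is a dual pair in $\Ex$, and in particular $S_{f}$ is right dualizable.

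The main obstacle is this last bookkeeping step: keeping track of which comparison maps $\psi$ occur in the triangle identities and checking they are all equivalences; once that is settled the argument is purely formal manipulation of dual pairs and oplax functors. As an alternative to transporting the dual pair, one could instead imitate the proof of Lemma~\ref{lemma:dualizable_in_exb_detected_on_fibers}, comparing the canonical map $\mu_{S_{f}}$ that detects right dualizability in $\Ex$ with the corresponding map $\mu_{\_{S}_{f}}$ in $\Ex_{B}$ through $\iota_{!}$ and Lemma~\ref{lem:psi_equiv}. Either way, the statement also appears in the work of Ponto--Shulman~\cite{PS14}*{Prop.~4.7}.
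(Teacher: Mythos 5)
Your proof follows the same route as the paper: transport the dual pair $(\_{S}_{f},D\_{S}_{f})$ from $\Ex_{B}$ to $\Ex$ along the oplax morphism $\iota_{!}$, using Lemma~\ref{lem:psi_equiv} to see that the oplax constraint cells $\psi$ that arise are equivalences. The paper dispatches the proposition in one sentence (``Since $\iota_!\_{S}_f\cong S_f$, the lemma implies\dots''), whereas you unfold what that sentence is hiding: Lemma~\ref{lem:psi_equiv} as literally stated only covers the coevaluation-type composition $\_{S}_{f}\boxdot_{E}D\_{S}_{f}$ (two spectra over $E$, composed over the $0$-cell $E$), and one still needs $\psi$ to be invertible for the evaluation-type composition $D\_{S}_{f}\boxdot_{B}\_{S}_{f}$ over the $0$-cell $B$, as well as for the various compositions occurring in the triangle identities. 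You correctly observe that these remaining $\psi$'s are handled by a variant of the same $\rho\circ\beta$ decomposition, valid because the relevant Beck--Chevalley squares are homotopy pullbacks once $f\colon E\to B$ is a fibration and $B$ sits over itself by the identity. This is a faithful elaboration rather than a different argument, and your bookkeeping of which $\psi$'s appear under the two bracketings of the triple composites is the right thing to check. One small wording nit: the heuristic ``no diagonal of a space that fails to be fibrant over $B$ intervenes'' is a bit loose --- the cleaner thing to say is that the square detecting $\psi$ for the $\boxdot_{B}$-composition is a strict pullback which is a homotopy pullback because $E\to B$ is a fibration, so the Beck--Chevalley map $\beta$ is an equivalence just as in the proof of Lemma~\ref{lem:psi_equiv}. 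Your suggested alternative, running the $\mu_{S_f}$-criterion of Lemma~\ref{lemma:dualizable_in_exb_detected_on_fibers} through $\iota_!$, would also work and would sidestep the triangle-identity bookkeeping, at the cost of comparing internal homs across $\iota_!$.
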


Now we may examine how $\iota_!$ interacts with the shadow functor 
\[
\llan{-} \colon \Ex(A, A) \arr \ho \Sp{}{}
\]
from Example \ref{ex:shadow_Ex}, which takes a spectrum $X$ over $A \times A$ to the spectrum $r_{!} \Delta_{A}^* X$.
Notice that for $X \in \Ex_{B}(B, B)$, considered as a spectrum over $B$, the image of $X$ in $\Ex$ is the pushforward $\iota_{!} X = \Delta_{!} X$ along the diagonal.  After applying the shadow functor, we have a canonical identification $\llan{ \iota_{!} X} \cong r_{!} \Delta^* \Delta_{!} X$.  In particular, the unit of the adjunction $(\Delta_{!} ,\Delta^*)$ induces a natural transformation
\[ \eta \colon r_{!} X \arr \llan{ \iota_{!} X }. \]
When $X = S_{B}$ is the sphere spectrum over $B$, the map $\eta$ may be identified with the inclusion of constant loops $c \colon \Sigma^{\infty}_{+} B \arr \Sigma^{\infty}_{+} LB$.

If $X$ and $Y$ are spectra over $B$, considered as 1-cells in $\Ex_{B}$, then $X \boxdot_{B} Y = X \sma_{B} Y$ is just the fiberwise smash product over $B$. We write
\[
\gamma \colon X \boxdot_B Y \cong Y \boxdot_B X
\]
for the symmetry isomorphism coming from the symmetric monoidal structure of the fiberwise smash product.
On the other hand, $\iota_! X$ and $\iota_! Y$ are spectra over $B \times B$, and the cyclic isomorphism for the shadow functor is a natural map
\[
\theta \colon \llan{\iota_! X \odot_B \iota_! Y} \arr \llan{\iota_! Y \odot_B \iota_! X}
\]
One might expect that $\gamma$ and $\theta$ are compatible after taking shadows, in the sense that Figure \ref{magritte} commutes in the homotopy category.
\begin{center}
\begin{figure}[h]
\[
\xymatrix{
\llan{\iota_{!} (X \boxdot_{B} Y)} \ar[r]^{\psi} \ar[d]_{\llan{ \iota_{!} \gamma}} & \llan{ \iota_{!} X \odot_{B} \iota_{!} Y } \ar[d]^{\theta} \\
\llan{\iota_{!} (Y \boxdot_{B} X)} \ar[r]^{\psi} & \llan{ \iota_{!} Y \odot_{B} \iota_{!} X } 
}
\]
    \caption{This is not a commutative diagram.}
    \label{magritte}
    \vspace{-2em}
\end{figure}
\end{center}
But this is usually not the case. In fact, if $X$ and $Y$ are fiberwise suspension spectra of fibrations over $B$, then the failure of the diagram to commute measures the non-triviality of the monodromy of $X \sma_{B} Y$ around each free loop in $B$. However, after precomposing with the unit map $\eta$, which corresponds to the inclusion of constant loops, Figure \ref{magritte} does commute.


\begin{lemma}\label{lemma:DOES_commute}
When $X$ and $Y$ are 1-cells in $\Ex_B(B,B)$, the diagram
\begin{equation*}\label{eq:DOES_commute}
\xymatrix{
r_{!} (X \sma_{B} Y) \ar[r]^-{\eta} \ar[d]_{r_{!} (\gamma)}^-\cong & \llan{\iota_{!} (X \boxdot_{B} Y)} \ar[r]^{\psi}_-\cong & \llan{ \iota_{!} X \odot_{B} \iota_{!} Y } \ar[d]^{\theta}_-\cong \\
r_{!} (Y \sma_{B} X) \ar[r]^-{\eta} & \llan{\iota_{!} (Y \boxdot_{B} X)} \ar[r]^{\psi}_-\cong & \llan{ \iota_{!} Y \odot_{B} \iota_{!} X }
}
\end{equation*}
commutes.
\end{lemma}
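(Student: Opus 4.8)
The plan is to reduce the statement to a computation on the free loop space, where the monodromy becomes manifestly trivial on constant loops. Write $V = X \sma_B Y$ and $V' = Y \sma_B X$, so that $\iota_!(X\boxdot_B Y) = \Delta_! V$ and $\iota_!(Y\boxdot_B X) = \Delta_! V'$, that $\llan{\Delta_!(-)} = r_!\Delta^*\Delta_!(-)$, and that $\eta$ is the image under $r_!$ of the unit $V \arr \Delta^*\Delta_! V$ of the adjunction $(\Delta_!,\Delta^*)$. By Lemma \ref{lem:psi_equiv}, applied with $p = \id_B$, the comparison map $\psi$ is an equivalence, so we may form $\Theta = \psi^{-1}\circ\theta\circ\psi \colon r_!\Delta^*\Delta_! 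V \arr r_!\Delta^*\Delta_! V'$ (here $\psi$ denotes $\llan{-}$ applied to the morphism of Lemma \ref{lem:psi_equiv}). Using the naturality of the unit of $(\Delta_!,\Delta^*)$ in its argument, the bottom-then-right route in the diagram equals $\psi\circ r_!\Delta^*\Delta_!(\gamma)\circ\eta$, where $\gamma\colon V\arr V'$ is the symmetry of $\sma_B$. Thus the lemma is equivalent to the single identity
\[
\Theta\circ\eta \;=\; r_!\Delta^*\Delta_!(\gamma)\circ\eta \colon r_! V \arr r_!\Delta^*\Delta_! V' .
\]

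Next I would make $\Delta^*\Delta_!$ geometric. The square with $LB$ in the corner, obtained as the homotopy pullback of $\Delta\colon B\arr B\times B$ along itself, has both legs $LB\arr B$ homotopic to the evaluation $e_0$; hence the Beck--Chevalley isomorphism \eqref{eq:beck_chevalley} gives a natural equivalence $r_!\Delta^*\Delta_!(-) \simeq (\pi_{LB})_!\, e_0^*(-)$, where $\pi_{LB}\colon LB\arr\ast$. Under this identification $\eta$ is the map induced by the constant-loop section $c\colon B\arr LB$ (the text already records this for $V = S_B$, and the identification is natural in $V$), while $r_!\Delta^*\Delta_!(\gamma)$ is $(\pi_{LB})_!$ applied to the fiberwise map $e_0^*(\gamma)$. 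It therefore remains to compute $\Theta$. Unwinding the cyclic isomorphism $\theta$ of the shadow on $\Ex$ as a composite of the base-change isomorphisms coming from the diagonal and projection maps in the definition of $\odot$, and using the compatibility of Beck--Chevalley transformations with pasting of squares recalled at the end of \S\ref{sec:parametrized_spectra}, together with the explicit form of $\psi$ from Lemma \ref{lem:psi_equiv}, one finds that $\Theta$ is the map induced by a self-equivalence $\phi$ of $LB$ --- reversal of loops, or rotation by a half-period, depending on the conventions chosen --- together with the fiberwise symmetry $e_0^*(\gamma)$ transported along the homotopy $e_0\circ\phi\simeq e_0$.

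The identity then falls out: $\phi$ fixes the constant loops, i.e.\ $\phi\circ c = c$, so precomposing $\Theta$ with $\eta$ (the map induced by $c$) collapses the contribution of $\phi$ and leaves precisely $(\pi_{LB})_!\,e_0^*(\gamma)$ composed with the map induced by $c$, which is $r_!\Delta^*\Delta_!(\gamma)\circ\eta$. Conceptually, the discrepancy measured by the non-commuting square of Figure \ref{magritte} is exactly the difference between $\gamma$ and its monodromy-twisted version around a free loop, and this difference vanishes on constant loops. The main obstacle in carrying out the plan is the explicit computation of $\Theta$: this is a finite but somewhat intricate diagram chase, tracking the several canonical base-change isomorphisms that constitute $\theta$ through the equivalence $\psi$ and through the identification $r_!\Delta^*\Delta_!(-)\simeq(\pi_{LB})_!e_0^*(-)$. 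Once $\Theta$ is identified, the rest is immediate, since $c$ is a section of $e_0$ fixed by $\phi$.
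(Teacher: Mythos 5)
Your reduction to the identity $\Theta\circ\eta = r_!\Delta^*\Delta_!(\gamma)\circ\eta$ with $\Theta=\psi^{-1}\theta\psi$ is correct, as are the preliminary identifications $r_!\Delta^*\Delta_!(-)\simeq(\pi_{LB})_!\,e_0^*(-)$ and of $\eta$ with the constant-loops section. But the proposal has a genuine gap exactly where the content of the lemma lies: the geometric computation of $\Theta$. You assert that $\Theta$ is induced by a self-equivalence $\phi$ of $LB$ together with a monodromy-twisted $e_0^*(\gamma)$, and then explicitly defer the verification (``the main obstacle \ldots\ is the explicit computation of $\Theta$: this is a finite but somewhat intricate diagram chase''). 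That diagram chase \emph{is} the proof. The cyclic isomorphism $\theta$ is defined in \cite{PS12}*{Thm.~5.2} as a composite of canonical base-change isomorphisms attached to a chain of pasted squares, and tracking $\psi$ and $\gamma$ through those pastings is precisely the work. The hedge that $\phi$ is ``reversal of loops, or rotation by a half-period, depending on the conventions chosen'' underscores that the identification has not been pinned down: these are different self-maps of $LB$ (one lies in the identity component of $\Diff(S^1)$ acting on $LB$, the other does not), they produce different natural transformations, and the claim cannot be checked as stated. Worse, the assertion that $\Theta$ ``is induced by $\phi$ and $e_0^*(\gamma)$'' is exactly the kind of statement that the remark after Figure~\ref{magritte} warns against: $\Theta$ and $r_!\Delta^*\Delta_!(\gamma)$ genuinely differ by monodromy, so the content is entirely in how that monodromy term is organized and why it dies on constant loops.

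The paper's proof attacks the same comparison but avoids passing through the geometric model of $LB$: it expresses $\psi\circ\eta$ as the Beck--Chevalley transformation of a single large pasting of commutative squares of spaces (diagrams \eqref{eq:Lshape2}, \eqref{eq:Lshape}), subdivides that pasting, uses functoriality of Beck--Chevalley under pasting to commute $\gamma$ (promoted to the natural isomorphism $X\osma Y\arr\gamma^*(Y\osma X)$) past $\psi\circ\eta$, and then stacks two copies of the resulting rectangle \eqref{eq:fromLdiagram} so that the vertical composite on the right is literally the Ponto--Shulman formula for $\theta$. That argument leaves nothing deferred. To complete your version you would need to unwind $\theta$ explicitly in terms of the decomposition of a free loop into two half-paths, fix the precise $\phi$ (not ``either, depending on conventions''), verify that the reconcatenation after swapping the halves differs from $e_0^*(\gamma)$ over $\id_{LB}$ by exactly the $\phi$ and monodromy transport you postulate, and then check $\phi\circ c=c$. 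Until that is done the argument is a plausible plan, not a proof.
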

\begin{proof}
As observed above, the map $\iota$ on the $0$-cell $B$ in $\Ex_B$ is simply the diagonal map $\Delta \colon B \arr B \times B$, so we freely use $\Delta$ in place of $\iota$ throughout this proof. The natural transformation
\[ \xymatrix{ \Delta_! (X \boxdot_{B} Y) \ar[r]^{\psi} & \Delta_! X \odot_{B} \Delta_{!} Y } \]
is induced by the Beck-Chevalley isomorphism in the diagram of spaces
\begin{equation*}\label{eq:Lshape_small}
\xymatrix{
B \ar@{}[dr]|{\Downarrow \psi} \ar[d]_{\Delta} \ar[r]^-{\Delta^3} & B \times B \times B \ar[d]^{1 \times \Delta \times 1} \ar[r]^-{\pi_{13}} & B \times B \\
B \times B \ar[r]^-{\Delta \times \Delta} & B \times B \times B \times B & 
} 
\end{equation*}
by starting in the lower left corner with the spectrum $X \osma Y$, then pushing forward and pulling up along two routes to the upper-right (cf. diagram before Lemma \ref{lem:psi_equiv}). We write $\pi_i$, $\pi_{ij}$ for the projection map to the $i$-th, or $i$-th and $j$-th factor of a product, respectively.

We may express the composite
\[ \xymatrix{ r_{!} (X \sma_{B} Y) \ar[r]^-{\eta} & \llan{\Delta_{!} (X \boxdot_{B} Y)} \ar[r]^{\psi} & \llan{ \Delta_{!} X \odot_{B} \Delta_{!} Y } } \]
in the same way with the larger diagram
\begin{equation}\label{eq:Lshape2}
\xymatrix{
B \ar@{=}[rr] \ar@{=}[d] \ar@{}[drr]|{\Downarrow \eta} & & B \ar[d]^{\Delta} \ar[r]^-r & {*} \\ 
B \ar[r]^-{\Delta^3} \ar@{}[dr]|{\Downarrow \psi} \ar[d]_{\Delta} & B \times B \times B \ar[r]^{\pi_{13}}  \ar[d]^{1 \times \Delta \times 1} & B \times B \\
B \times B \ar[r]^-{\Delta \times \Delta} & B \times B \times B \times B & 
}
\end{equation}
which may be subdivided as
\begin{equation}\label{eq:Lshape}
\xymatrix{
B \ar@{=}[d] \ar[r]^-{\Delta} \ar@{}[dr]|{\Downarrow \alpha} & B \times B \ar@{}[dr]|{ \Downarrow \phi} \ar[d]^{\Delta_{o}} \ar[r]^{\pi_{1}} & B \ar[d]^{\Delta} \ar[r]^-r & {*} \\
B \ar@{}[dr]|{\Downarrow \psi} \ar[d]_{\Delta} \ar[r]^-{\Delta^3} & B \times B \times B \ar[d]^{1 \times \Delta \times 1} \ar[r]^-{\pi_{13}} & B \times B \\
B \times B \ar[r]^-{\Delta \times \Delta} & B \times B \times B \times B & 
} 
\end{equation}
where $\Delta_{o} \colon B \times B \arr B \times B \times B$ is the map $(a, b) \mapsto (a, b, a)$. The coincidence of \eqref{eq:Lshape2} and \eqref{eq:Lshape} gives the commuting rectangle
\begin{equation}\label{eq:fromLdiagram}
\xymatrix{
r_{!} (X \sma_{B} Y) \ar@{=}[d] \ar[r]^-{\eta}  & \llan{\Delta_{!}(X \boxdot_{B} Y)} \ar[r]^-{\psi}_-\cong & \llan{\Delta_{!} X \odot_{B} \Delta_{!} Y} \\
r_{!} \Delta^*(X \osma Y) \ar[r]^-{\alpha} & r_{!} \Delta_{o}^* \Delta^3_{!} \Delta^*(X \osma Y) \ar[r]^-{\psi}_-\cong & r_{!}D^* (\Delta \times \Delta)_{!} (X \osma Y) \ar[u]_{\phi}^{\cong}
}
\end{equation}
where we abbreviate $D = (1 \times \Delta \times 1) \circ \Delta_{o}$. Note that the maps $\psi$ and $\phi$ are isomorphisms in the homotopy category because they come from homotopy pullback squares.

Next we examine the effect of the symmetry isomorphism $\gamma \colon X \sma_{B} Y \arr Y \sma_{B} X$ on the lower route in diagram \eqref{eq:fromLdiagram}. We think of $\gamma$ as constructed by applying $\Delta^*$ to the natural isomorphism $ \gamma \colon X \osma Y \arr \gamma^* (Y \osma X)$ of spectra over $B \times B$. The equality 
\[
\xymatrix{
B \ar@{=}[d] \ar[r]^-{\Delta} \ar@{}[dr]|{\Downarrow \alpha} & B \times B \ar[d]^{\Delta_o} \\
B \ar[d]_{\Delta} \ar[r]^-{\Delta^3} \ar@{}[dr]|{\Downarrow \psi} & B \times B \times B \ar[d]^{1 \times \Delta \times 1} \\
B \times B \ar[d]_{\gamma} \ar[r]^-{\Delta \times \Delta} & B \times B \times B \times B \ar[d]^{\gamma_{B \times B}} \\
B \times B \ar[r]^-{\Delta \times \Delta} & B \times B \times B \times B 
}
\quad \xymatrix{ &  \\ \ar@{}[dr]_{=} & \\ & } \quad
\xymatrix{
B \ar[r]^{\Delta} \ar@{=}[d] & B \times B \ar[d]^{\gamma} \\
B \ar@{=}[d] \ar[r]^-{\Delta} \ar@{}[dr]|{\Downarrow \alpha} & B \times B \ar[d]^{\Delta_o} \\
B \ar[d]_{\Delta} \ar[r]^-{\Delta^3} \ar@{}[dr]|{\Downarrow \psi} & B \times B \times B \ar[d]^{1 \times \Delta \times 1} \\
B \times B \ar[r]^-{\Delta \times \Delta} & B \times B \times B \times B 
}
\] 
implies that the following diagram of natural transformations commutes:
\[
\xymatrix{
\Delta_{!} \Delta^* \gamma^* \ar[d]_{\cong} \ar[r]^-{\psi \circ \alpha} & D^* (\Delta \times \Delta)_{!} \gamma^* \ar[r]^-{\cong} & D^* \gamma_{B \times B}^* (\Delta \times \Delta)_{!} \ar[d]^{\cong} \\
\Delta_{!} \Delta^* \ar[r]^-{\cong} &  \gamma^* \Delta_{!} \Delta^* \ar[r]^-{\psi \circ \alpha} & \gamma^* D^* (\Delta \times \Delta)_{!}
}
\]
We apply each of these functors to the external smash product $Y \osma X$ over $B \times B$ and take $r_{!}$ of the results. Suppressing the canonical isomorphisms $r_{!} \gamma^* \cong r_{!}$, this gives the lower rectangle in the diagram
\[
\xymatrix{
r_{!} \Delta^*(X \osma Y) \ar[d]^{\gamma}_-\cong \ar[r]^-{\psi \circ \alpha} & r_{!} D^* (\Delta \times \Delta)_{!} (X \osma Y) \ar[d]^{\gamma}_-\cong & \\
r_{!} \Delta^* \gamma^* (Y \osma X) \ar[r]^-{\psi \circ \alpha} \ar[d]_{\cong} & r_{!} D^* (\Delta \times \Delta)_{!} \gamma^* (Y \osma X)  \ar[r]^-\cong & r_{!} D^* \gamma_{B \times B}^* (\Delta \times \Delta)_{!} (Y \osma X) \ar[d]^{\cong} \\
r_{!} \Delta^* (Y \osma X)  \ar[rr]^{\psi \circ \alpha} & & r_{!} D^* (\Delta \times \Delta)_{!} (Y \osma X)
}
\]
The upper rectangle also commutes by the naturality of the canonical isomorphism $\gamma \colon X \osma Y \arr \gamma^* (Y \osma X)$, so the entire diagram commutes.  Paste a copy of diagram \eqref{eq:fromLdiagram} on top of this diagram, and another underneath with the roles of $X$ and $Y$ swapped.  The resulting vertical composite on the right, involving a chosen inverse to the equivalence $\phi$, is the definition of $\theta$ from \cite{PS12}*{Thm. 5.2}, and the commutative diagram we have constructed is the desired one.
\end{proof}

\subsection{The proof of Theorem \ref{thm:intro_becker_gottlieb_compatibility}}\label{sec:proof_main_thm}


Now, apply the natural transformations $\eta$ and $\psi$ to the Becker-Gottlieb transfer $\tau_{B}(f)$ as described in Proposition \ref{prop:pretransfer_via_duality}.  The result is the diagram
\[
\resizebox{\textwidth}{!}{
\xymatrix{
r_{!} \_{U}_{B} \ar[r]^-{\eta} \ar[d]^-{\coev(\_{S}_{f})} & \llan{\iota_{!} \_{U}_{B}} \ar[r]^{\psi} \ar[d]^{\coev(S_{f})} & \llan{U_{B}} \ar[d]^{\coev(S_{f})} \\
r_{!}(\_{S}_{f} \boxdot D \_{S}_{f}) \ar[r]^-{\eta} \ar[d]^-{\coev({}_{f}\_{S})} & \llan{\iota_{!}(\_{S}_{f} \boxdot D \_{S}_{f})} \ar[d]^{\coev({}_{f}S)} \ar[r]^{\psi} & \llan{S_{f} \odot D S_{f}} \ar[d]^{\coev({}_{f}S)} \\
r_{!}(\_{S}_{f} \boxdot {}_{f}\_{S} \boxdot \_{S}_{f} \boxdot D \_{S}_{f}) \ar[r]^-{\eta} \ar[d]^-{r_{!} ( \gamma )} & \llan{\iota_{!}(\_{S}_{f} \boxdot {}_{f}\_{S} \boxdot \_{S}_{f} \boxdot D \_{S}_{f})}  \ar[r]^-{\psi} & \llan{S_{f} \odot {}_{f}S \odot S_{f} \odot D S_{f}} \ar[d]^{\theta} \\
r_{!} (\_{S}_{f} \boxdot D \_{S}_{f} \boxdot \_{S}_{f} \boxdot {}_{f}\_{S}) \ar[r]^-{\eta} \ar[d]^-{\eval(\_{S}_{f})} & \llan{\iota_{!}(\_{S}_{f} \boxdot D \_{S}_{f} \boxdot \_{S}_{f} \boxdot {}_{f}\_{S})} \ar[d]^{\eval(S_{f})} \ar[r]^-{\psi} & \llan{S_{f} \odot D S_{f} \odot S_{f} \odot {}_{f}S}  \ar[d]^{\eval(S_{f})} \\
r_{!}(\_{S}_{f} \boxdot {}_{f}\_{S}) \ar[r]^-{\eta} & \llan{\iota_{!}(\_{S}_{f} \boxdot {}_{f}\_{S})} \ar[r]^{\psi} & \llan{S_{f} \odot {}_{f}S} 
}
}
\]
where some of the instances of $\psi$ are really iterated applications of $\psi$. The large rectangle commutes by Lemma \ref{lemma:DOES_commute}, and the remaining squares commute by naturality and the oplax structure of $\psi$.

The vertical composite on the left is the Becker-Gottlieb transfer. Along the top we get the inclusion of constant loops $B \arr LB$, and along the bottom we get a map described in Remark \ref{rmk:easy}. To compare the vertical composite on the right to the THH transfer, we observe that it is the rightmost route in the next commutative diagram.
\[
\xymatrix @C=4em{
\llan{U_{B}}  \ar[d]_-{\coev(S_{f})} & & & \\
\llan{S_{f} \odot DS_{f}} \ar[r]^-{1 \odot \,\coev({}_{f}S) \odot 1} \ar[d]_{\theta} & \llan{S_{f} \odot {}_{f}S \odot S_{f} \odot D S_{f} } \ar[d]_{\theta} \ar[dr]^{\theta} & \\ 
\llan{DS_{f} \odot S_{f}} \ar[d]_{\eval(S_{f})} \ar[r]^-{1 \odot 1 \odot \,\coev({}_{f}S)} & \llan{D S_{f} \odot S_{f} \odot {}_{f}S \odot S_{f}} \ar[d]^-{\eval(S_{f}) \odot 1 \odot 1} \ar[r]^-{\theta} & \llan{S_{f} \odot D S_{f} \odot S_{f} \odot {}_{f}S } \ar[d]^{1 \odot \,\eval(S_{f}) \odot 1} \\
\llan{U_{E}} \ar[r]^-{\coev({}_{f}S)} & \llan{{}_{f}S \odot S_{f} } \ar[r]^{\theta} & \llan{S_{f} \odot {}_{f}S}
}
\]
All of the squares are naturality squares, and the triangle commutes by one of the shadow axioms for $\theta$.  The left-hand route is the THH transfer followed by $\theta \circ \coev({}_{f}S)$.  We now give an explicit description of the coevaluation map.

The operator $ - \odot_{B} S_{f}$ encodes the pullback functor $(\id \times f)^*$.  Since we are working with derived functors, the pullback
\[
{}_{f}S \odot_{B} S_{f} \cong (\id \times f)^*(E, (\id, f))_+
\]
is given by the fiberwise suspension spectrum over $E \times E$ of the homotopy pullback
\[
\xymatrix{
E^{I} \times_{B} E \ar[r]^-{\pi_1} \ar[d]_{(e_0 \pi_1, \pi_2)} & E^{I} \ar[d]^{(e_0, fe_1)} \\
E \times E \ar[r]^{\id \times f} & E \times B
}
\]
To compute its shadow $\llan{{}_{f}S \odot_{B} S_{f}}$, we take the pull back of $E^{I} \times_{B} E$ along the diagonal of $E$:
\[
\xymatrix{
P \ar[r] \ar[d] & E^{I} \times_{B} E \ar[d]^{(e_0 \pi_1, \pi_2)} \\
E \ar[r]^-{\Delta_{E}} & E \times E 
}
\]
The space $P$ consists of paths $\gamma \in E^I$ for which the endpoints $\gamma(0)$ and $\gamma(1)$ lie in the same fiber over $B$.

\begin{lemma} There is a natural equivalence $\llan{{}_{f}S \odot_{B} S_{f}} \simeq \Sigma^{\infty}_{+} P$ and under this equivalence 
\[
\llan{\coev({}_{f}S)} \colon \llan{U_{E}} \arr \llan{{}_{f}S \odot_{B} S_{f}}
\]
may be identified with the stabilization of the inclusion map $i \colon LE \arr P$.  
\end{lemma}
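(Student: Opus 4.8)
The plan is to unwind the shadow functor $\llan{-} = r_{!}\Delta_{E}^{*}$ on $\Ex(E,E)$, evaluated on the $1$-cell ${}_{f}S \odot_{B} S_{f}$ and on the coevaluation $2$-cell, using explicit point-set models together with the fact that $\Sigma^{\infty}_{E}(-)_{+}$ commutes with derived base change.

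First I would establish $\llan{{}_{f}S \odot_{B} S_{f}} \simeq \Sigma^{\infty}_{+} P$. As recorded in the discussion preceding the statement, the operator $-\odot_{B} S_{f}$ is the derived pullback $(\id \times f)^{*}$, so ${}_{f}S \odot_{B} S_{f}$ is the fiberwise suspension spectrum over $E \times E$ of the fibration $E^{I} \times_{B} E \to E \times E$ given by $(e_{0}\pi_{1}, \pi_{2})$ (this is a fibration precisely because $f$ is). Pulling this fibration back along the diagonal $\Delta_{E}$ produces exactly the space $P$ of paths $\gamma \in E^{I}$ with $f\gamma(0) = f\gamma(1)$, with structure map $e_{0} \colon P \to E$. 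Since $\Sigma^{\infty}_{E}(-)_{+}$ commutes with derived pullback, $\Delta_{E}^{*}({}_{f}S \odot_{B} S_{f}) \simeq \Sigma^{\infty}_{E}(P, e_{0})_{+}$, and applying $r_{!}$, which collapses the section (Example~\ref{ex:def_of_r}), yields $\llan{{}_{f}S \odot_{B} S_{f}} \simeq \Sigma^{\infty}_{+} P$.

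Next I would identify the shadow of the coevaluation. The Costenoble--Waner coevaluation $\coev({}_{f}S) \colon U_{E} \to {}_{f}S \odot_{B} S_{f}$ (Example~\ref{ex:dual_pair_f_S}) is the unit of the base-change adjunction $\big((\id \times f)_{!}, (\id \times f)^{*}\big)$ at $U_{E}$; compare Example~\ref{ex:fiberwise_base_change_duality}, where the analogous fiberwise coevaluation is the stabilization of the fiberwise diagonal. In the path-space models that make $\Delta_{E}^{*}$ derived --- the free path fibration $E^{I} \to E \times E$ for $U_{E}$ and $E^{I} \times_{B} E \to E \times E$ for ${}_{f}S \odot_{B} S_{f}$ --- this unit is represented by the map $E^{I} \to E^{I} \times_{B} E$ sending $\gamma \mapsto (\gamma, \gamma(1))$. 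Pulling back along $\Delta_{E}$ turns the source into $\{\gamma \in E^{I} : \gamma(0) = \gamma(1)\} \cong LE$ (structure map $e_{0}$) and the target into $P$, and the induced map is the evident inclusion --- a loop is in particular a path whose endpoints lie in a common fiber of $f$. Applying $r_{!}$ and using $\llan{U_{E}} \cong \Sigma^{\infty}_{+} LE$ from Example~\ref{ex:shadow_Ex}, this is the stabilization of $i \colon LE \to P$. Naturality of $\Sigma^{\infty}_{E}(-)_{+}$ and of the Beck--Chevalley maps promotes the result to a natural equivalence.

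The main obstacle is bookkeeping rather than conceptual: one must choose fibrant replacements of the structure maps coherently so that simultaneously $\Delta_{E}^{*}\Sigma^{\infty}_{E \times E}(-)_{+}$ computes the suspension spectrum of the honest homotopy pullback, and the naive map $\gamma \mapsto (\gamma, \gamma(1))$ represents the coevaluation $2$-cell $\coev({}_{f}S)$ in the homotopy category. With these choices fixed, the remainder is a routine chase of homotopy pullback squares of spaces over $E$.
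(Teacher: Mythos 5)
Your proposal is correct and follows essentially the same route as the paper: both identify the coevaluation concretely as the stabilized fiberwise diagonal $E \to E \times_B E$ (you phrase this via the unit of the base-change adjunction, the paper cites Example~\ref{ex:fiberwise_base_change_duality}, but these are the same fact), model it with $(\id, e_1)\colon E^I \to E^I \times_B E$ over $E \times E$, and pull back along $\Delta_E$ to get the inclusion $LE \hookrightarrow P$. The paper treats the first claim as already established in the preceding discussion and only proves the second, so the only difference is that you spell out that first step explicitly.
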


\begin{proof}
It remains to prove the second claim. Using Example \ref{ex:fiberwise_base_change_duality}, the map 
\[
\coev({}_{f}S) \colon U_{E} \arr {}_{f}S \odot_{B} S_{f}
\]
of spectra over $E \times E$ is the fiberwise stabilization of the diagonal map $E \arr E \times_{B} E$. To apply $\llan{-}$ we make both sides fibrant over $E$, giving the map 
\[
(\id, e_1) \colon E^I \arr E^I \times_{B} E
\]
over $E \times E$.  Pulling back along the diagonal of $E$, we get the inclusion of the loop space $LE$ into $P$.
\end{proof}

\begin{remark}
The space $P$ is also equivalent to the pullback $E \times_{B} LB$, and under this equivalence the coevaluation map $LE \ra E \times_B LB$ becomes $(e_0, Lf)$. We will use both descriptions of $P$ in our geometric model for $\tau_\THH$.
\end{remark}

\begin{remark}\label{rmk:easy}
By a more elementary argument, the map
\[
\psi \circ \eta \colon r_{!}(\_{S}_{f} \boxdot {}_{f}\_{S}) \arr \llan{S_{f} \odot {}_{f}S}
\]
is equivalent to the stabilization of the inclusion of $E$ into $P$ as the constant loops. 
\end{remark}

\noindent This finishes the proof of the following strengthening of Theorem \ref{thm:intro_becker_gottlieb_compatibility}.

\begin{theorem}\label{thm:stronger_compatibility}
For any perfect fibration $f \colon E \arr B$, the diagram
\[
\xymatrix{
\Sigma^{\infty}_{+} B \ar[rr]^-{\tau(f)} \ar[d]_{c} & & \Sigma^{\infty}_{+} E \ar[d]^{c} \\
\Sigma^{\infty}_{+} LB \ar[r]^-{\tau_{\THH}} & \Sigma^{\infty}_{+} LE \ar[r]^-{i} & \Sigma^{\infty}_{+} P
}
\]
commutes up to a natural homotopy.
\end{theorem}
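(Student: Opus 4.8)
The plan is to assemble the diagrams constructed above into a proof. Starting from the description of the Becker--Gottlieb pretransfer $\tau_{B}(f)$ as a $2$-cell in $\Ex_{B}$ furnished by Proposition~\ref{prop:pretransfer_via_duality}, I would apply to it the base-change functor $r_{!}$, the unit transformation $\eta$ of $(\Delta_{!}, \Delta^{*})$, and the oplax structure map $\psi \colon \iota_{!}(- \boxdot -) \to \iota_{!}(-) \odot \iota_{!}(-)$. This yields the large three-column diagram displayed above, whose leftmost vertical composite is by construction $r_{!}\tau_{B}(f) = \tau(f)$ (Example~\ref{ex:becker_gottlieb_transfer}). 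It then remains to identify the other three sides of the outer square and to verify commutativity.

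For the top row, $\iota_{!}\_{U}_{B} \cong U_{B}$, and by Example~\ref{ex:loops} the resulting map $r_{!}\_{U}_{B} = \Sigma^{\infty}_{+} B \to \llan{U_{B}} \cong \Sigma^{\infty}_{+} LB$ is the stabilized inclusion of constant loops $c$. For the bottom row, $r_{!}(\_{S}_{f} \boxdot_{E} {}_{f}\_{S}) \cong r_{!}f_{!}S_{E} = \Sigma^{\infty}_{+} E$, and by Remark~\ref{rmk:easy} the map $\psi \circ \eta$ is the stabilized inclusion $c \colon E \to P$, once $\llan{S_{f} \odot {}_{f}S}$ is identified with $\Sigma^{\infty}_{+} P$ by means of the lemma above identifying $\llan{{}_{f}S \odot_{B} S_{f}}$ with $\Sigma^{\infty}_{+} P$ together with the cyclic isomorphism $\theta$.

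For the right column, I would rewrite it as the rightmost route of the auxiliary diagram constructed above to compare it with the $\THH$ transfer --- in which every square is a naturality square for $\theta$ and the triangle is one of the shadow axioms --- so that it becomes the Reidemeister trace of Example~\ref{ex:shadow_Ex} followed by $\theta \circ \coev({}_{f}S)$. By Theorem~\ref{thm:reid_trace_is_THH_transfer} the Reidemeister trace is $\tau_{\THH}$; by the lemma cited above $\llan{\coev({}_{f}S)}$ is the stabilized inclusion $i \colon LE \to P$; and the remaining $\theta$ is absorbed into the identification of $\llan{S_{f} \odot {}_{f}S}$ with $\Sigma^{\infty}_{+} P$ already used for the bottom row, so the right column is $i \circ \tau_{\THH}$. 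The small squares of the big diagram commute by naturality of $\eta$, $\psi$, $\coev$, and $\eval$ together with the oplax compatibility of $\psi$ with $\boxdot$ and $\odot$, and all of the ingredients are natural in $f$, so the homotopy produced is natural. Everything therefore reduces to the commutativity of the outer rectangle.

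The hard part is precisely that outer rectangle, which is the content of Lemma~\ref{lemma:DOES_commute}: the symmetry isomorphism $\gamma$ of $\sma_{B}$ must be shown compatible with the cyclic isomorphism $\theta$ of the shadow \emph{after} precomposition with the unit $\eta$, even though the two disagree in general --- before precomposing, the discrepancy measures the monodromy of the relevant fiberwise smash product around the free loops in $B$. Establishing this requires careful bookkeeping of Beck--Chevalley $2$-cells through the pasting of squares in diagram~\eqref{eq:Lshape}, the delicate points being the consistent choice of the canonical isomorphisms $r_{!}\gamma^{*} \cong r_{!}$ and the recognition of the construction of $\theta$ from \cite{PS12}*{Thm. 5.2} inside the pasted diagram. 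Once that compatibility is in hand, the rest is a formal diagram chase.
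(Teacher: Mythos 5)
Your proposal is correct and follows essentially the same route as the paper: applying $r_{!}$, $\eta$, and $\psi$ to the description of the pretransfer from Proposition~\ref{prop:pretransfer_via_duality} to obtain the three-column diagram, invoking Lemma~\ref{lemma:DOES_commute} for the outer rectangle and naturality/oplaxness for the small squares, and then rewriting the right column via the auxiliary diagram so that it becomes $\theta \circ \coev({}_{f}S)$ postcomposed with the Reidemeister trace, with the remaining identifications supplied by the lemma on $\llan{{}_{f}S\odot_B S_f}\simeq\Sigma^\infty_+ P$ and Remark~\ref{rmk:easy}. The only point at which you are slightly more explicit than the paper is in noting that the trailing $\theta$ must be absorbed into the identification of $\llan{S_f\odot{}_fS}$ with $\Sigma^\infty_+ P$; this is implicit in the paper's proof and your remark is a harmless clarification.
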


\begin{remark}
In terms of the string diagram calculus, applying the shadow functor $\llan{-}$ to a 2-cell in $\Ex$ corresponds to placing the string diagram on a cylinder \cite{PS13}.  Using the representation of the Becker-Gottlieb pretransfer given in Figure \ref{fig:pretransfer2}, the proof of Theorem \ref{thm:stronger_compatibility} may then be summarized by the isotopy of string diagrams indicated in Figure \ref{fig:main_string_diagram}.

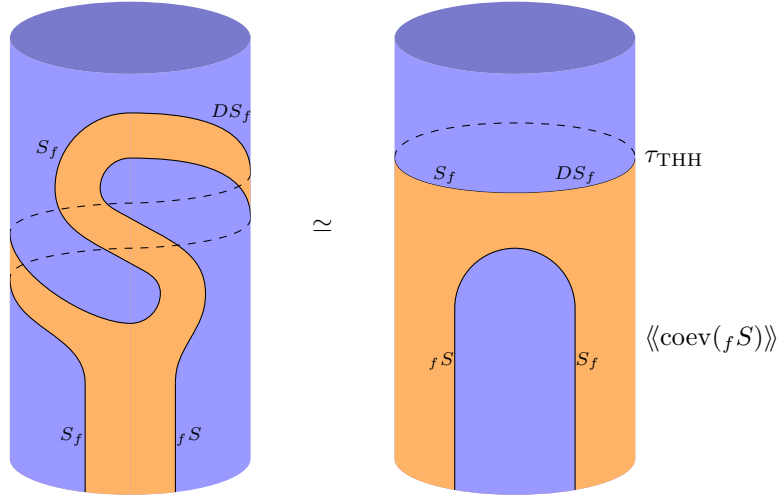
\begin{figure}
\centering
\begin{tikzpicture}[scale=0.8]
\bgcylinder{0,0}{7}{2}{.6}{blue}{blue}
  \coordinate (StopB) at (2,5.75) ;
  \coordinate (StopA) at (2,5) ;
  \draw (StopB) to[out=0, in=90] node [ed] {$DS_f$} ++(2,-1) coordinate (RedgeB);
  \draw (StopA) to[out=0, in=90] ++(2,-1) coordinate (RedgeA);
  \draw (StopB) to[out=180, in=90] node [ed, near end, swap] {$S_f$} ++(-1.25,-1.25) to[out=-90, in=150] ++(1.25, -1.25) coordinate (SmidB);
  \draw (StopA) to[out=180, in=90] ++(-.5, -.5) to[out=-90, in=150] ++(.5, -.5) coordinate (SmidA);
  \draw (SmidA) to[out=-30, in=90] ++(1.25,-1.25) to[out=-90, in=60] ++(-.25, -.75) to[out=240, in=90] ++(-.25, -.75) coordinate (RbotA);
  \draw (SmidB) to[out=-30, in=90] ++(.5, -.5) to[out=-90, in=0] ++(-.5, -.5) coordinate (SbotB);
  \draw[dashed] (RedgeB) to[out=-90, in=0, looseness=.5] ++(-2,-.5) to[out=180, in=90, looseness=.5] ++(-2,-.5) coordinate (LedgeB);
  \draw[dashed] (RedgeA) to[out=-90, in=0, looseness=.5] ++(-2,-.5) to[out=180, in=90, looseness=.5] ++(-2,-.5) coordinate (LedgeA);
  \draw (LedgeB) to[out=-90, in=180, looseness=.7] ++(2, -1.5);
  \draw (LedgeA) to[out=-90, in=90] ++(1.25, -1.75) coordinate (LbotA);
  \draw (LbotA) to node [ed, near start, swap] {$S_f$} ++(0, -3.5);
  \draw (RbotA) to node [ed, near start] {${}_{f}S$} ++(0, -3.5);
  \begin{pgfonlayer}{background}
  \fill[orangefill] (StopB) to[out=180, in=90] ++(-1.25,-1.25) to[out=-90, in=150] ++(1.25, -1.25) to (StopB);
  \fill[orangefill] (SmidA) to[out=-30, in=90] ++(1.25,-1.25) to[out=-90, in=60] ++(-.25, -.75) to[out=240, in=90] ++(-.25, -.75) to ++(0, -3.5) to ++(-.75,0) to (SmidA);
  \fill[orangefill] (LedgeB) to[out=-90, in=180, looseness=.7] ++(2, -1.5) to ++(0,-3) to ++(-2.1,0) to (LedgeB);
  \fill[orangefill] (StopB) to[out=0, in=90] ++(2,-1) to ++(0, -.75) to[out=90, in=0] (StopA) to (StopB);
  \fill[bluefill] (StopA) to[out=180, in=90] ++(-.5, -.5) to[out=-90, in=150] ++(.5, -.5) to (StopA);
  \fill[bluefill] (SmidB) to[out=-30, in=90] ++(.5, -.5) to[out=-90, in=0] ++(-.5, -.5) to (SmidB);
  \fill[bluefill] (LedgeA) to[out=-90, in=90] ++(1.25, -1.75) to  ++(0, -3.5) to ++(-2.1,0) to (LedgeA);
  \end{pgfonlayer}
  \begin{pgfonlayer}{foreground}
  \end{pgfonlayer}
\end{tikzpicture}
  \quad\quad \raisebox{3.5cm}{$\simeq$} \quad\quad 
\begin{tikzpicture}[scale=0.8]
\bgcylinder{0,0}{7}{2}{.6}{blue}{blue}
    \path ($(ul)!(0,5)!(dl)$) coordinate (topcoevL);
  \draw[dashed] (topcoevL) to [out=90,in=90,looseness=0.5] ($(ur)!(topcoevL)!(dr)$) coordinate (topcoevR);
  \draw (topcoevL) to [out=-90, in=-90, looseness=0.5] node [ed, near start] {$S_{f}$} node [ed, near end] {$DS_{f}$} (topcoevR);
  \fill[orangefill] (topcoevL) to [out=-90, in=-90, looseness=0.5] (topcoevR) to ++(.1, 0) to ++(0,-6) 
  to ++(-4.2, 0) to ++(0,6) to (topcoevL);
  \path ($(top)!(0,3)!(bot)$) coordinate (bottomcoev);
  \draw[bluefill] (2,3.5) to[out=180, in=90] (1, 2.5) to node[ed, near start, swap] {${}_{f}S$} (1,-1) to (3, -1) to node [ed, near end, swap] {$S_{f}$} (3, 2.5) 
  to[out=90, in=0] (2,3.5);
\begin{pgfonlayer}{foreground}
\node[anchor=west] at (topcoevR) {$\tau_{\mathrm{THH}}$};
\node[anchor=west] at (4,2) {$\llan{\mathrm{coev}({}_{f}S)}$};
\end{pgfonlayer}
\end{tikzpicture}
\caption{The proof of Theorem \ref{thm:stronger_compatibility}}\label{fig:main_string_diagram}
\end{figure}
\end{remark}

\noindent As the inclusion of constant paths is a section of the evaluation map $e_0 \colon P \arr E$, Theorem \ref{thm:intro_becker_gottlieb_compatibility} follows immediately from Theorem \ref{thm:stronger_compatibility}: the composite
\[
\Sigma^{\infty}_+ B \oarr{c} \Sigma^{\infty}_+ LB \xarr{\tau_{\THH}} \Sigma^{\infty}_+ LE \oarr{e_0} \Sigma^{\infty}_+ E
\]
is naturally homotopic to the Becker-Gottlieb transfer.

\subsection{The proof of Corollary \ref{cor:intro_a_theory}}\label{subsec:proof_cor}

To see how Theorem \ref{thm:intro_becker_gottlieb_compatibility} implies Corollary \ref{cor:intro_a_theory}, it suffices to recall the construction of the maps $i \colon \Sigma^{\infty}_{+}X \arr A(X)$ and $p \colon A(X) \arr \Sigma^{\infty}_{+} X$ that split stable homotopy off of Waldhausen's $A$-theory, and to argue that the following diagram commutes up to canonical homotopy.
\begin{equation} \label{eq:trace_and_loops} \xymatrix{
	\Sigma^{\infty}_+ B \ar[r]^-{i} \ar[rd]_-c & A(B) \ar[d] \ar[r]^-{\tau_{A}} & A(E) \ar[d] \ar[rd]^-{p} & \\
	& \Sigma^{\infty}_+ LB \ar[r]_-{\tau_{\THH}} & \Sigma^{\infty}_+ LE \ar[r]_-{e_0} & \Sigma^{\infty}_+ E
} \end{equation}
The vertical maps are the topological Dennis trace map---we will use two different definitions of this map, but they are known to be equivalent. As we remarked after Definition \ref{def:perf_r}, our chosen model of $\Perf_{R}$ is Dwyer-Kan equivalent to the one produced from the enriched Waldhausen category of cofibrant perfect $R$-modules as defined in \cite{blumberg_mandell2}, and therefore the model of the Dennis trace from \cite[\S 2.5]{blumberg_mandell2} makes the middle square of \eqref{eq:trace_and_loops} commute. Indeed, this agreement with the $A$-theory transfer is the reason why we first defined $\tau_{\THH}$ using spectral categories, instead of simply taking the Reidemeister trace.

The map $i$ is defined in e.g. \cite[\S 1]{wald2} as an inclusion-of-units map
\[ B(\Omega B) \to BGL_1(\Sigma^\infty_+ \Omega B) \to \Omega B \left( \coprod_k BGL_k(\Sigma^\infty_+ \Omega B) \right) \simeq \Omega^\infty A(B) \]
although it is more commonly known as a composite of the unit $S \to A(*)$ and an assembly morphism in the sense of \cite{weiss_williams_assembly}. Composing this definition of $i$ with B\" okstedt's original definition of the topological Dennis trace \cite[\S 5]{BHM},\cite[\S 2.6]{madsen_survey} shows that the left-hand triangle of \eqref{eq:trace_and_loops} commutes. 

Finally, the map $p$ has two definitions, one of which is simply by the right-hand triangle of \eqref{eq:trace_and_loops}. We briefly recall the other definition and why it is equivalent. It uses Waldhausen's stabilization procedure\footnote{This stabilization is similar to, but distinct from, the linear approximation of $A(X)$ in the functor calculus of Goodwillie \cite{calc1}} $F \leadsto F^S$ for homotopy functors $F$ from well-based spaces to spectra. The composite of $i$ with $A(X) \to A^S(X)$ is an equivalence of spectra, and $p$ is defined to be the composition of $A(X) \to A^S(X)$ with the inverse of this equivalence. Applying stabilization to the topological Dennis trace gives a commuting diagram
\[ \xymatrix{
	\Sigma^\infty_+ X \ar[r]^-i \ar[d]^-\sim & A(X) \ar[r] \ar[d] & \Sigma^\infty_+ LX \ar[r]^{e_0} \ar[d] & \Sigma^\infty_+ X \ar[d]^-\sim \\
	(\Sigma^\infty_+)^S (X) \ar[r]^-\sim & A^S(X) \ar[r] & (\Sigma^\infty_+ L)^S (X) \ar[r]^{e_0} & (\Sigma^\infty_+)^S (X).
} \]
By the previous paragraph, the composite along the top row is homotopic to the identity.
The commutativity of this diagram then implies that our two definitions of $p$ agree in the homotopy category. (This conclusion is also essentially contained in \cite{wald2}.) This finishes the justification of Corollary \ref{cor:intro_a_theory}.

\section{A geometric model of the Reidemeister trace}\label{sec:geometric}

In this section we will prove Theorem \ref{thm:intro_geometric_THH_transfer} by giving a geometric model for the Reidemeister trace, which by Theorem \ref{thm:reid_trace_is_THH_transfer} is the $\THH$ transfer. Along the way we will establish a useful multiplicative structure on $\tau_\THH$ (Proposition \ref{prop:comodule}).

The main idea is to give explicit geometric descriptions of the evaluation and coevaluation morphisms for the dual pair $(S_{f}, D S_{f})$ (Proposition \ref{prop:base_change_dualble_Ex}) in the case where $f$ is a smooth fiber bundle.  Intuitively, $S_f$ is a copy of $E$ sitting over $B \times E$, while $DS_f$ is a copy of $E$ that has been desuspended by the vertical tangent bundle, sitting over $E \times B$. The coevaluation map is a Pontryagin-Thom collapse and the evaluation map is a scanning map. As a result, the Reidemeister trace takes a free loop in $B$ to a free loop in $E$ by the following sequence of steps:
\begin{figure}[H]
\def\svgwidth{\linewidth}
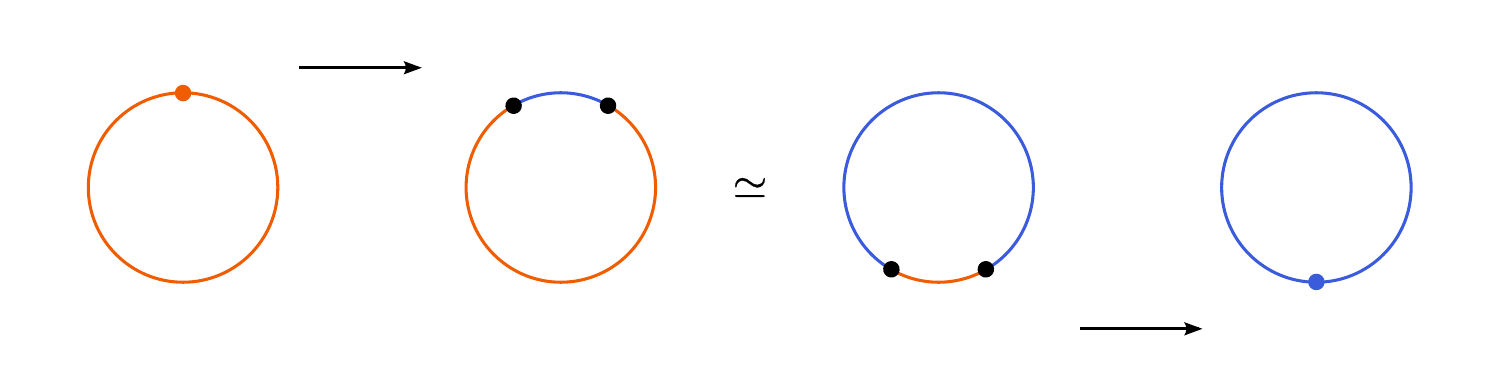
\caption{The Reidemeister trace (compare with Figure \ref{fig:bicat_trace})}
\end{figure}

There are a few different ways to make this idea precise -- see for instance \cite[\S 18.6]{MS} in the case where $B = *$. Here we will follow the techniques of \cite{cohen_multiplicative} and the appendix of \cite{malkiewich_thesis} closely. The idea is to model the $n$-th spectrum level of $E^{-TM}$ by embedding the fibers of $E$ into $\R^n$, and then taking the Thom space of the normal bundle of this embedding. To make an orthogonal spectrum, and to handle a possibly infinite base space, we allow ourselves to make different choices of embeddings, and we allow $n$ to go to $\infty$.

We begin with the geometric model for the circle product. Throughout this section $\odot_B$ will refer to the strict point-set formula for the circle product $\pi^{B}_{!}\Delta_B^*(- \osma -)$ (see Example \ref{ex:Ex}), while $\odot^{\bL}_B$ will refer to its derived form. We also abbreviate the suspension spectrum $\Sigma^{\infty}X_{+}$ of a space with a disjoint basepoint or section to $X_{+}$. We now give a simple condition for when $\odot$ is derived. Recall that an $h$-cofibration is a map satisfying the homotopy extension property, and an $h$-fibration is a Hurewicz fibration.
\begin{lemma}\label{lem:only_one_fibration}
If $X$ is an $h$-cofibrant retractive space (or spectrum with $h$-cofibrant levels) over $A \times B$, and $Y$ is an unbased space over $B \times C$ with $Y \ra B$ an $h$-fibration, then the point-set formula $\pi^{B}_{!} \Delta_{B}^* (X \oline{\sma} Y_+)$ for $X \odot_B Y_+$ is equivalent to the derived circle product.
\end{lemma}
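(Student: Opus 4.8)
\textit{Proof sketch.} The plan is to unwind the point-set circle product into its three constituent operations and check that, under the stated hypotheses, each is already homotopically correct, so that no fibrant or cofibrant replacement is needed. By definition $X \odot_B Y_+ = \pi^{B}_{!}\Delta_B^*(X \osma Y_+)$, where $\osma$ is the external smash over the product of bases, $\Delta_B$ is the diagonal on the two middle $B$-factors, and $\pi^B_!$ is the pushforward off of $B$; the derived circle product $\odot^{\bL}_B$ is computed by deriving each of these three functors in turn. Work throughout with the notion of a \emph{well-grounded} ex-spectrum from \cite{MS}*{Ch.~12}, i.e. one that is levelwise ex-$h$-cofibrant with levels of CW homotopy type, and recall that $\osma$ and all the functors $f_!$ preserve well-grounded objects and send equivalences between them to equivalences. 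For the external smash: $X$ is $h$-cofibrant by hypothesis, and $Y_+$ is $h$-cofibrant because adjoining a disjoint section to the unbased space $Y$ over $B \times C$ makes the section a coproduct inclusion. Hence both are well-grounded, so $X \osma Y_+$ represents the derived external smash and is well-grounded over $A \times B \times B \times C$, where the first $B$-factor comes from $X$ and the second from $Y$.

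The pullback along $\Delta_B$ is the only step that uses the hypothesis that $Y \ra B$ is an $h$-fibration. The spectrum $\Delta_B^*(X \osma Y_+)$ has fiber $X_{(a,b)} \wedge (Y_+)_{(b,c)}$ over each point $(a,b,c)$, and on underlying total spaces it is the fiber product, over $B$, of $X \ra B$ and $Y_+ \ra B$ (the extra $A$ and $C$ coordinates only contribute harmless products). Since $Y \ra B$ is an $h$-fibration and the projection $B \times C \ra B$ is an $h$-fibration, their disjoint union $Y_+ \ra B$ is again an $h$-fibration; hence this point-set fiber product is a homotopy fiber product. Equivalently, the Beck--Chevalley map $\alpha$ of \eqref{eq:beck_chevalley} for the defining square of $\Delta_B^*$ is an equivalence in this situation. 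Therefore $\Delta_B^*$ applied to $X \osma Y_+$ computes the derived restriction. Moreover, pulling the $h$-cofibrant section of $X$ back along the $h$-fibration $Y_+ \ra B$ keeps it an $h$-cofibration, so $\Delta_B^*(X \osma Y_+)$ is still well-grounded, now over $A \times B \times C$.

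Finally, $\pi^B_!$ preserves well-grounded objects and equivalences between them, so applying it to the well-grounded spectrum $\Delta_B^*(X \osma Y_+)$ computes the derived pushforward. Composing the three identifications shows that the point-set formula $\pi^{B}_{!} \Delta_B^*(X \osma Y_+)$ computes $X \odot^{\bL}_B Y_+$, as claimed.

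The main obstacle is the middle step. Unlike the external smash and the pushforward, which are homotopical on well-grounded inputs because they are suitable left adjoints, the pullback $\Delta_B^*$ is a right adjoint that in general requires replacing its input by a fibration over $A \times B \times B \times C$; the entire purpose of the $h$-fibration hypothesis on $Y$ is to make this replacement unnecessary. The delicacy is that $\Delta_B$ is itself not a fibration but a closed inclusion, so one cannot simply invoke the covering homotopy property for $\Delta_B$; instead one must use that $X \osma Y_+$ is a fibration in the second $B$-coordinate precisely because $Y \ra B$ is, which is what forces the point-set pullback along the diagonal to agree with the homotopy pullback.
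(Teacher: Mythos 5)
Your decomposition into the three constituent functors reaches the same conclusion as the paper and uses the same two ingredients: the $h$-fibration hypothesis makes the fiber product over $B$ a homotopy fiber product, and a Kieboom-type pullback theorem (pullback of an $h$-cofibration along an $h$-fibration is an $h$-cofibration) controls the section. The paper takes a shortcut: it identifies the entire point-set circle product at once as the pushout of $A \times C \leftarrow A \times B \times_B Y \rightarrow X \times_B Y$, uses Kieboom to see that the top map is an $h$-cofibration so that the square is a homotopy pushout, and then the question reduces to whether $X \times_B Y$ has the correct homotopy type — which is exactly where $Y \to B$ being an $h$-fibration is used. Your step-by-step route is more verbose but does isolate the role of the fibration hypothesis cleanly (it is used only in the $\Delta_B^*$ step); the gain either way is modest.

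Two points in your middle step should be tightened. First, the appeal to the Beck--Chevalley transformation $\alpha$ of \eqref{eq:beck_chevalley} is not the right device: that map compares $f_!\, i^*$ with $j^*\, g_!$ across a commuting square of base spaces, whereas what you need is a criterion guaranteeing that the point-set $\Delta_B^*$, applied to the specific non-fibrant object $X \osma Y_+$, already computes the derived pullback. Second, the total space of $\Delta_B^*(X \osma Y_+)$ is not the fiber product $X \times_B Y_+$; it is the pushout $(X \times_B Y) \cup_{A \times B \times_B Y} (A \times B \times C)$ obtained from that fiber product by collapsing the fiberwise wedge to the section. The $h$-fibration hypothesis enters exactly as you intend — it makes $X \times_B Y_+$ a homotopy fiber product and (via Kieboom) makes the wedge inclusion a cofibration — but you should state the quotient rather than identify the total space with the fiber product. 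Likewise, the closing assertion that ``$X \osma Y_+$ is a fibration in the second $B$-coordinate'' is not literally true (a fiberwise smash of ex-spaces need not be a Hurewicz fibration over the base); the correct statement concerns the fiber product underlying it, as above.
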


\begin{proof} 
It suffices to assume $X$ is a retractive space. By a diagram-chase the point-set formula for the circle product is given by the pushout square
\[ \xymatrix{
A \times B \times_B Y \ar[r] \ar[d] & X \times_B Y \ar[d] \\
A \times C \ar[r] & \pi^B_! \Delta_B^* X \osma Y_+ }
\]
Using for instance the main theorem of \cite{kieboom}, the top horizontal map is a cofibration, so this is a homotopy pushout square. Therefore it suffices to check that $X \times_B Y$ has the correct homotopy type, and for this it is enough to assume that $Y \arr B$ is an $h$-fibration.
\end{proof}

So if $X$ is a level-wise $h$-cofibrant spectrum over $A \times B$, and we regard the path space $B^I$ as a space over $B \times B$ by evaluating at the endpoints, the operation $X \longmapsto X \odot_B (B^I)_+$ is a replacement by a weakly equivalent spectrum whose levels are $h$-fibrations over $B$. When $A = *$ this is the fibrant replacement functor $P(-)$ used in \cite{coassembly}. The previous lemma eliminates the need to check that many of our objects are fibrant, and it also gives us geometric control of the shadow and its cyclic isomorphism. We record the details below.

\begin{proposition}\label{prop:model_for_odot_shadow} \hspace{2in}
\begin{itemize}
\item[(i)] If $X$ is a level-wise $h$-cofibrant spectrum over $A \times B$ and $Y_+$ is an ex-space with disjoint section over $B \times C$, then the point-set formula for the circle product $X \odot_{B} B^{I}_{+} \odot_{B} Y_{+}$ is equivalent to the derived circle product $X \odot_{B}^{\bL} Y_{+}$ in the homotopy category $\ho \Sp{}{A \times C}$.
\item[(ii)] If $X$ is a level-wise $h$-cofibrant spectrum over $A \times A$, then the point-set formula for the circle product $X \odot_{A \times A} A^{I}_{+}$ is equivalent to the shadow $\llan{X}$ of $X$ in the stable homotopy category.
\item[(iii)] If $X$ is a level-wise $h$-cofibrant spectrum over $E \times B$, and $f\colon E \arr B$ is a fibration, then both $\llan{X \odot^{\bL}_B S_f}$ and $\llan{S_f \odot^{\bL}_E X}$ are derived using the strict circle product and the model of the shadow from (ii). The cyclic isomorphism $\theta$ is given at spectrum level $n$ by the map of cofiber sequences
\[ \xymatrix{
E^I \ar[r] \ar[d] & (1 \times f)^* X_n \times_{E \times E} E^I \ar[d] \ar[r] & \llan{X \odot^{\bL}_B S_f}_n \ar[d]^-\theta \\
B^I \times_B E \ar[r] & X_n \times_{B \times B} B^I \ar[r] & \llan{S_f \odot^{\bL}_E X}_n } \]
where the vertical maps come from the obvious projection $E^I \ra B^I \times_B E$.
\end{itemize}
\end{proposition}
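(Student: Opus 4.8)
The plan is to obtain all three parts from Lemma~\ref{lem:only_one_fibration}, its proof (via Kieboom's pullback theorem \cite{kieboom} together with the usual homotopy (co)limit bookkeeping), and the single observation that the path spaces $B^I$, $A^I$, $E^I$, equipped with their two endpoint–evaluation maps, are $h$-fibrant replacements of the corresponding diagonals.

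\emph{Parts (i) and (ii).} For (i) I would first apply Lemma~\ref{lem:only_one_fibration} to the product $X \odot_B (B^I)_+$, with second factor $(B^I,(e_0,e_1))$ over $B \times B$; since $e_0 \colon B^I \to B$ is an $h$-fibration the strict formula computes $X \odot_B^{\bL} (B^I)_+ \simeq X$. By the discussion following Lemma~\ref{lem:only_one_fibration} this intermediate spectrum has levels that are $h$-fibrations over the remaining copy of $B$, and by the pushout square in that proof together with \cite{kieboom} its levels are still $h$-cofibrant. Hence the argument of Lemma~\ref{lem:only_one_fibration} applies once more---with the roles of the two factors arranged so that the intermediate spectrum is the one fibrant over the contracted $B$---to show that $(X \odot_B (B^I)_+) \odot_B Y_+$ computes $X \odot_B^{\bL} Y_+$. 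Part (ii) is the instance of Lemma~\ref{lem:only_one_fibration} in which the contracted space is $A \times A$ and the second factor is $A^I$ over $A \times A$, an $h$-fibration; to identify the resulting derived circle product with $\llan{X}$ I would use that $(A^I)_+$ models $(\Delta_A)_! S_A$ and invoke the projection formula $\rho$, as in Example~\ref{ex:shadow_Ex}, obtaining $X \odot_{A \times A}^{\bL} (A^I)_+ \simeq r_! \Delta_A^* X = \llan{X}$.

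\emph{Part (iii).} Since $f$ is an $h$-fibration, the base–change spectrum $S_f = \Sigma^\infty_{B \times E}(E,(f,\id))_+$ has levels that are $h$-fibrations over the first coordinate $B$; Lemma~\ref{lem:only_one_fibration} (first factor $X$, second factor $S_f$, whose structure map to the contracted $B$ is $f$) therefore identifies the strict product $X \odot_B S_f$ with $X \odot_B^{\bL} S_f$, and \cite{kieboom} again keeps its levels $h$-cofibrant, so part (ii) computes its shadow strictly. For $S_f \odot_E X$ I would instead use the canonical point–set isomorphism $S_f \odot_E X \cong (f \times \id_B)_! X$ from Example~\ref{ex:Ex}: this pushforward is already derived because $X$ is levelwise $h$-cofibrant, so part (ii) again applies. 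Finally, writing out the strict model of the shadow from part~(ii) exhibits both sides at level $n$ as the cofiber of the section inclusion---$E^I$ into $(1 \times f)^* X_n \times_{E \times E} E^I$, respectively $B^I \times_B E$ into $X_n \times_{B \times B} B^I$---and tracing the definition of the cyclic isomorphism $\theta$ (the canonical rearrangement of base–change functors of Example~\ref{ex:shadow_Ex} and \eqref{eq:def_theta}) through these point–set formulas identifies it with the map induced on cofibers by the evident projection $E^I \to B^I \times_B E$, $\delta \mapsto (f \circ \delta,\, \delta(1))$.

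\emph{Main obstacle.} The routine part is verifying that every comparison above is a genuine weak equivalence, which amounts to tracking which objects are $h$-cofibrant and which are $h$-fibrant at each stage. The substantive step is the last one: matching the abstractly defined $\theta$ with the explicit map of cofiber sequences requires carefully chasing the coherence isomorphisms relating $\Delta^*$, $\pi_!$, and the external smash through the strict point-set model, so that the rearrangement defining $\theta$ becomes visibly the map on cofibers coming from $f$.
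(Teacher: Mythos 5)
Your parts (i) and (ii) are essentially correct but travel a different route. For (i), the paper applies Lemma~\ref{lem:only_one_fibration} \emph{once}, after first observing the point-set identity $B^I_+ \odot_B Y_+ = (B^I \times_B Y)_+$ and noting that $B^I \times_B Y \arr B$ (the remaining copy of $B$) is an $h$-fibration; you instead compute $X \odot_B B^I_+$ first and then apply the lemma a second time "with the roles reversed." That reversed application is not literally covered by the lemma's statement, which has a specific asymmetry (the $h$-fibration hypothesis sits on the $Y$ side, the $h$-cofibrancy on the $X$ side), so you would need to formulate and verify a symmetric variant and in particular check that $X \odot_B B^I_+$ has $h$-cofibrant levels (a nontrivial point involving Kieboom and the pushout structure). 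Your route can be made to work, but it costs several verifications the paper's choice of decomposition makes unnecessary. For (ii), you route through the projection formula and $A^I_+ \simeq (\Delta_A)_! S_A$, whereas the paper gives a direct diagram chase identifying $r_! \Delta_A^* X \cong X \odot_{A \times A} A_+$ and then derives it by replacing $X$ with $A^I_+ \odot_A X \odot_A A^I_+$, using the concatenation isomorphism $A^I_+ \odot_A A^I_+ \cong A^I_+$ to collapse back to a single copy of $A^I_+$. Both routes are valid; the paper's version is set up so that it produces, as a by-product, the explicit pushout presentation of the strict model that is reused in (iii).

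Part (iii) has a genuine gap. You correctly see that both strict formulas are cofiber sequences and that the two objects $(1 \times f)^* X_n \times_{E \times E} E^I$ and $X_n \times_{B \times B} B^I$ are connected by a projection, but the claim that "tracing the definition of the cyclic isomorphism $\theta$ through these point-set formulas identifies it with the map induced on cofibers" does not follow by inspection, and in fact is not literally true at the point-set level. The cyclic isomorphism $\theta$ is defined on \emph{derived} shadows, and your two strict models for $\llan{X \odot^{\bL}_B S_f}$ and $\llan{S_f \odot^{\bL}_E X}$ are not related by a strict point-set isomorphism. The paper makes the comparison possible by first replacing $X$ with the symmetric cofibrant-and-fibrant replacement $E^I_+ \odot_E X \odot_B B^I_+$; with this replacement both shadows are computed by literally the same object $(E^I_+ \odot_E X \odot_B B^I_+) \odot_{B \times E} E_+$, and the cyclic isomorphism on this common model is the canonical identity. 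The paper then builds a three-row zig-zag of cofiber sequences connecting your two strict models through this symmetric middle term, and the final verification that the claimed projection map is compatible with the zig-zag is a homotopy, not an equality: it rests on the observation that the two maps $E^I \rightrightarrows B^I \times_B E^I$ given by $\gamma \mapsto (c_{f(\gamma(0))},\gamma)$ and $\gamma \mapsto (f \circ \gamma, c_{\gamma(1)})$ are homotopic over $B \times E$. None of this appears in your proposal. Without the symmetric replacement and the resulting zig-zag, you have no bridge between the two strict models, so there is no way to see $\theta$ as the asserted map of cofibers — the "substantive step" you flagged is exactly the part that your proposal leaves unargued, and it requires a specific new construction rather than a coherence chase.
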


\begin{proof}

(i) By Lemma \ref{lem:only_one_fibration} and the identity
\[ B^I_+ \odot_B Y_+ = (B^I \times_B Y)_+, \]
this follows from the fact that $B^I \times_B Y$ is a fibration over $B$.

(ii) A diagram chase shows that $r_!\Delta_A^* X$ is isomorphic to $X \odot_{A \times A} A_+$, and that it is derived if the levels of $X$ are $h$-cofibrant and $h$-fibrant. So if $X$ is merely cofibrant, we derive the shadow by
\[ r_!\Delta_A^*(A^I_+ \odot_A X \odot_A A^I_+) \cong (A^I_+ \odot_A X \odot_A A^I_+) \odot_{A \times A} A_+ \cong X \odot_{A \times A} A^I_+ \]
using the isomorphism $A^I_+ \odot_A A^I_+ \cong A^I_+$ that concatenates the paths. We will use later that this construction is given on each spectrum level by the pushout
\[ \xymatrix{
{*} \ar[r] & X \odot_A A^I_+ \\
A^I \ar[r] \ar[u] & X_n \times_{A \times A} A^I \ar[u] } \]

(iii) As in (ii), the operation $r_!\Delta_E^*(1\times f)^* X$ is isomorphic to $(1 \times f)^* X \odot_{E \times E} E_+$ and is derived when the levels of $X$ are $h$-cofibrant and $h$-fibrant. When $X$ is only cofibrant, we replace it by $E^I_+ \odot_E X \odot_B B^I_+$. The resulting model for $\llan{ X \odot^{\bL}_{B} S_f}$ is:
\[ (E^I_+ \odot_E X \odot_B B^I_+ \odot_B S_f) \odot_{E \times E} E_+ \cong (E^I_+ \odot_E X \odot_B B^I_+) \odot_{B \times E} E_+. \]
Similarly, the derived form of the operation $r_!\Delta_B^*(f\times 1)_! X$ is represented by
\[ (S_f \odot_E E^I_+ \odot_E X \odot_B B^I_+) \odot_{B \times B} B_+ \cong (E^I_+ \odot_E X \odot_B B^I_+) \odot_{B \times E} E_+. \]
The cyclic isomorphism for the strict shadow is given by the canonical identification that comes from describing both operations as a pullback of the external smash product from $E \times B \times B \times E$ to $E \times B$, and then pushed forward to a point. For the derived shadow, we apply the same operation to input that is cofibrant and fibrant. In this case, that operation gives a spectrum which at each level is a strict cofiber of the map
\[ \xymatrix{ (E^I \times B^I) \times_{B \times E} E \ar[r] & (E^I \times_E X_n \times_B B^I) \times_{B \times E} E } \]
coming from the basepoint section $E \times B \ra X_n$. If we don't derive the circle product but do derive the shadow, we get the same expression but with $E^I$ or $B^I$ replaced by constant paths. With those models, the cyclic isomorphism of shadows is given by the zig-zag of maps of cofibers
\[ \xymatrix{
E^I \ar[r] \ar[d]^-\sim & (1 \times f)^* X_n \times_{E \times E} E^I \ar[d]^-\sim \ar[r] & \llan{X \odot_B S_f}_n \ar[d]^-\sim \\
(E^I \times B^I) \times_{B \times E} E \ar[r] & (E^I \times_E X_n \times_B B^I) \times_{B \times E} E \ar[r] & \llan{X \odot_B B^I \odot_B S_f}_n \\
B^I \times_B E \ar[r] \ar[u]_-\sim & X_n \times_{B \times B} B^I \ar[u]_-\sim \ar[r] & \llan{S_f \odot_E X}_n \ar[u]_-\sim } \]
The marked equivalences are straightforward, but use the fact that $f$ is a fibration. It now suffices to check that the map in the claim commutes with these two equivalences up to homotopy. This reduces to the fact that the two maps
\[ E^I \rightrightarrows B^I \times_B E^I, \qquad \gamma \mapsto (c_{f(\gamma(0))},\gamma), \qquad \gamma \mapsto (f \circ \gamma,c_{\gamma(1)}) \]
are homotopic over $B \times E$.
\end{proof}

\noindent These geometric models for $\odot$ allow us to prove the following result.
\begin{proposition}\label{prop:comodule} For any perfect fibration $f \colon E \arr B$, the free loop transfer $\tau_\THH \colon LB_+ \arr LE_+$ is a map of $LB_+$-comodules.
\end{proposition}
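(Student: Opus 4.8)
The plan is to reuse the identification $\tau_\THH = \tr(\id_{S_f})$ of Theorem \ref{thm:reid_trace_is_THH_transfer} and exhibit that trace as a composite of comodule maps. First I would pin down the comodule structures: the space $LB$ is a cocommutative coalgebra in $\Top$ under the diagonal, so $\Sigma^\infty_+ LB$ is a cocommutative coalgebra spectrum, and this is exactly the structure $\llan{U_B}$ acquires as the shadow of the unit $1$-cell (the coaction on $\Sigma^\infty_+ LB \cong r_!\Delta^*\Delta_! S_B$ being induced by the diagonal of $LB$). More generally, the shadow functor $\llan{-}\colon\Ex(A,A)\to\ho\Sp{}{}$ refines canonically and naturally to a functor valued in $\Sigma^\infty_+ LA$-comodules: concretely, the coaction on $\llan{X}=r_!\Delta_A^* X$ records the monodromy of $X$ around the diagonal loops of $A$, and under the equivalence of Proposition \ref{prop:equiv_of_shadows} this is the statement that $\THH(\Sigma^\infty_+\Omega A; -)$ takes values in comodules over the coalgebra $\THH(\Sigma^\infty_+\Omega A)\simeq\Sigma^\infty_+ LA$. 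Taking $X = U_E\in\Ex(E,E)$ gives $\Sigma^\infty_+ LE$ its coalgebra structure, and restriction along the coalgebra map $Lf_*\colon\Sigma^\infty_+ LE\to\Sigma^\infty_+ LB$ induced by the map of spaces $Lf$ recovers precisely the $\Sigma^\infty_+ LB$-comodule structure on $\Sigma^\infty_+ LE$ in the statement.

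Now write $\tau_\THH = \tr(\id_{S_f}) = \eval\circ\theta\circ\coev$ as in Example \ref{ex:shadow_Ex}. The maps $\coev\colon\llan{U_B}\to\llan{S_f\odot_E DS_f}$ and $\eval\colon\llan{DS_f\odot_B S_f}\to\llan{U_E}$ are the images under $\llan{-}$ of $2$-cells in $\Ex(B,B)$ and $\Ex(E,E)$, so by the naturality of the comodule refinement just described they are maps of $\Sigma^\infty_+ LB$-comodules (for $\eval$ one first obtains a $\Sigma^\infty_+ LE$-comodule map and then restricts along $Lf_*$). It remains to treat the cyclic isomorphism $\theta\colon\llan{S_f\odot_E DS_f}\to\llan{DS_f\odot_B S_f}$: here I would use the explicit description of $\theta$ from Proposition \ref{prop:model_for_odot_shadow}(iii) as the map of cofiber sequences induced by the projection $E^I\to B^I\times_B E$, and check directly that it intertwines the monodromy-coactions; the point is that this projection covers the identity on the underlying loop in $B$, so once the path data is closed up into a loop it is a map over $LB$. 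Concatenating these three steps exhibits $\tau_\THH$ as a composite of $\Sigma^\infty_+ LB$-comodule maps, which is the assertion.

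The step I expect to be the main obstacle is the compatibility of $\theta$ with the monodromy-coactions: verifying it is a bookkeeping argument with the canonical base-change isomorphisms out of which both $\theta$ and the coaction maps are assembled, of the same flavor as in \S\ref{sec:parametrized_spectra} and the proof of Lemma \ref{lemma:DOES_commute}, and it must be carried out formally because the Costenoble--Waner dual $DS_f$ has no concrete geometric model for a general perfect fibration --- which is exactly why the point-set models of Proposition \ref{prop:model_for_odot_shadow}(iii) are the right tool, since $X$ there is allowed to be any level-wise $h$-cofibrant spectrum over $E\times B$, in particular (a cofibrant model of) $DS_f$. When $f$ is a smooth fiber bundle the proposition is instead immediate from the geometric model of Theorem \ref{thm:intro_geometric_THH_transfer}: the Pontryagin--Thom umkehr for $E\times_B LB\to LB$ and the fiberwise collapse onto the tubular neighborhood of $LE$ are both carried out fiberwise over $LB$, and a fiberwise-over-$LB$ construction automatically produces a map of $\Sigma^\infty_+ LB$-comodules.
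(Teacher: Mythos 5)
Your overall decomposition — $\tau_\THH = \llan{\eval}\circ\theta\circ\llan{\coev}$ and checking each piece is an $LB_+$-comodule map using the point-set models of Proposition~\ref{prop:model_for_odot_shadow} — is the same strategy the paper follows, and your observation that the smooth case is immediate from the geometric model is correct. However, the first paragraph contains a genuine gap: the claim that $\llan{-}\colon\Ex(A,A)\to\ho\Sp{}{}$ refines canonically and naturally to a functor valued in $\Sigma^\infty_+ LA$-comodules, for \emph{arbitrary} 1-cells $X$, is not justified and does not appear to be true. Unwinding the derived shadow $r_!\Delta_A^*X$ of a generic fibrant $X$ over $A\times A$ produces a path in $A$ whose endpoints sit over the two coordinates of $X$; there is no reason for this path to close up into a loop, so there is no natural projection to $LA$. (Equivalently, on the $\Bimod_S$ side, $\THH(\Sigma^\infty_+\Omega A;M)$ has no evident $\Sigma^\infty_+ LA$-coaction for a generic bimodule $M$, only for those $M$ that come equipped with suitable extra structure.) The same problem affects your treatment of $\eval$: $\llan{DS_f\odot_B S_f}$ is \emph{not} naturally an $LE_+$-comodule, since the relevant paths in $E$ only close up after applying $f$, so there is no map to $LE$; it is only an $LB_+$-comodule.

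The paper's proof supplies exactly the structure you are missing: the $LB_+$-coaction on the two intermediate shadows is \emph{not} read off from the shadow formalism alone but from the fact that $S_f\odot_E DS_f$ and $DS_f\odot_B S_f$ are pushforwards $\iota_!$ of 1-cells in the fiberwise bicategory $\Ex_B$ (using the model $DS_f\simeq(1\times f)_!D\_{S}_f$), so in the concrete models of Proposition~\ref{prop:model_for_odot_shadow} the relevant paths live over $B$ and hence close up into loops, giving a projection to $LB$. One then checks that $\coev$, $\eval$, and the cofibrant/fibrant replacement maps are all images under $\iota_!$ of maps in $\Ex_B$ (over $B$, resp.\ over $E\times_BE$), and any such map preserves this projection; the step for $\theta$ is Proposition~\ref{prop:model_for_odot_shadow}(iii), as you anticipated. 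So your outline can be repaired, but the coaction has to be \emph{constructed} from the fiberwise structure rather than invoked as a formal property of the shadow functor.
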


\begin{proof}
We prove that each step of the Reidemeister trace
\[
LB_+ \arr \llan{S_f \odot^{\bL}_{E} D S_{f}} \oarr{\theta} \llan{D S_{f} \odot^{\bL}_{B} S_f} \arr LE_+
\]
is a map of $LB_+$-comodules.
The comodule structure on $LB_+$ is defined by the diagonal and on $LE_+$ by the diagonal and $Lf$. The intermediate terms require more explanation. By Lemma \ref{lemma:dualizable_in_exb_detected_on_fibers} and Lemma \ref{lem:psi_equiv}, we may model $DS_f$ by the pushforward of the spectrum $D\_{S}_f$ from $E$ to $E \times B$ along $1 \times f$, and form its evaluation and coevaluation maps between appropriately cofibrant and fibrant models in the category $\Ex_B$, before pushing forward to $\Ex$ and taking shadows as in part (ii) of Proposition \ref{prop:model_for_odot_shadow}.

Therefore both $\llan{S_{f} \odot^{\bL}_E D S_{f}}$ and $\llan{D S_{f} \odot^{\bL}_B S_{f}}$ come with projection maps into $B^I$, which land in paths whose endpoints coincide. This gives $LB$-comodule structures, and they are compatible with $\theta$ by part (iii) of Proposition \ref{prop:model_for_odot_shadow}. The structure on $\llan{S_{f} \odot^{\bL}_E D S_{f}}$ is natural in maps of spectra over $B \times B$ that are pushforwards along $\Delta_B = \iota$. Since the coevaluation map and the cofibrant and fibrant replacement maps all have this property, they agree with the comodule structure. Similarly, the evaluation map and its attendant cofibrant and fibrant replacements are all maps over $E \times E$ that come from maps over $E \times_B E$, and every such map preserves the projection $E^I \times_{B \times B} B \arr LB$, so they preserve the comodule structure as well.
\end{proof}

Now we will build our geometric model for $\tau_\THH$, when $f: E \ra B$ is a smooth fiber bundle over a connected CW complex $B$, with compact fiber $M$. Fix $\epsilon > 0$. Let $\Emb_{\epsilon}(M,\R^\infty)$ be the space of smooth embeddings of $M$ into $\R^\infty$ with a tubular neighborhood of radius $\epsilon$. This is contractible and is filtered by the closed subspaces $\Emb_{\epsilon}(M,\R^n)$. The diffeomorphism group $\Diff(M)$ acts on the left on $\Emb_{\epsilon}(M,\R^n)$ by $\psi(i) = i \circ \psi^{-1}$. By the mixing construction, this defines a fiber bundle
\[
B_n := \overline{f}^*E\Diff(M) \times_{\Diff(M)} \Emb_{\epsilon}(M,\R^n) \arr B,
\]
where $\overline{f}^*E\Diff(M)$ is the pullback of $E\Diff(M)$ along a chosen classifying map $\overline{f} \colon B \arr B\Diff(M)$ for $f$.  It is natural to think of the fiber of this bundle as the space of embeddings of the fiber $E_b$ into $\R^n$ with tubular neighborhood $\epsilon$. When describing maps with formulas, we will describe a point in $B_n$ by naming the embedding $i \colon M \arr \R^n$, and leaving the choice of $b$ understood.

We let $O(n)$ act on $B_n$ by acting on the fiber by $\rho(i) = (\rho \circ i)$. This action is compatible with the inclusions $B_n \ra B_{n+1}$ along the homomorphisms $O(n) \ra O(n+1)$, so we get a parametrized spectrum over $B$ whose $n$-th space is $\Sigma^n_B (B_n)_+$.  We write $\B$ for the parametrized spectrum over $B \times B$ obtained by pushing forward along the diagonal, so that $\B(n) = \Sigma^{n}_{B \times B} (B_n)_+$.  Collapsing away the embeddings gives a stable equivalence of parametrized spectra $\B \ra \Sigma^{\infty}_{B \times B} B_+ = U_B$, and we use this map to identify $\B$ with the unit 1-cell $U_B$ in the homotopy category $\ho \Sp{}{B \times B} = \Ex(B, B)$.

%


Each point of $B_n$ determines an embedding of a compact manifold $E_b \cong M$ into $\R^n$. This manifold therefore inherits a smooth metric from $\R^n$, and we need a simple lemma about its geodesics.

\begin{lemma}
If $i \colon M \ra \R^n$ is a smooth embedding with tubular neighborhood of radius $\epsilon$, and $v \in \R^n$ is in this tubular neighborhood with closest point $x \in M$, then the open subspace of $M$ given by $U = \{y \in M : \|v - i(y)\| < \epsilon\}$ is contractible.
\end{lemma}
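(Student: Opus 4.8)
The key point is that the set $U$ consists of exactly those $y \in M$ whose image $i(y)$ lies in the open $\epsilon$-ball $B_\epsilon(v) \subset \R^n$ centered at $v$. Since $v$ lies in the tubular neighborhood of radius $\epsilon$ with closest point $x$, the segment from $v$ back toward $i(x)$ meets $i(M)$; more importantly, the ball $B_\epsilon(v)$ intersected with the tubular neighborhood of $i(M)$ is contained in the union of the normal disks over $U$. The plan is to use the nearest-point retraction $\pi \colon N_\epsilon \arr M$ from the tubular neighborhood $N_\epsilon$ of radius $\epsilon$ to $M$, which is well-defined and smooth, and to deform $U$ to the single point $x$ inside $U$ by straight-line homotopy in $\R^n$ followed by $\pi$.

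First I would set up the tubular neighborhood: write $N_\epsilon = \{w \in \R^n : d(w, i(M)) < \epsilon\}$, with smooth retraction $\pi \colon N_\epsilon \arr M$ sending $w$ to the unique closest point of $M$. The hypothesis says $v \in N_\epsilon$ and $\pi(v) = x$. Next I would observe that $i(U) \subseteq B_\epsilon(v) \cap i(M) \subseteq N_\epsilon$, and that the straight-line homotopy $H_t(w) = (1-t) w + t v$ for $w \in i(U)$ stays inside $B_\epsilon(v)$ (balls are convex) and hence inside $N_\epsilon$ (since $B_\epsilon(v) \subseteq N_\epsilon$, as $v$ has a point of $M$ within distance... wait, this needs care). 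The cleanest route: since $v \in N_\epsilon$, we have $B_\epsilon(v)$ need not lie in $N_\epsilon$ in general, but $B_\epsilon(v) \cap N_\epsilon$ is star-shaped with respect to $v$ within $N_\epsilon$ provided the segment from any point of $i(U)$ to $v$ stays in $N_\epsilon$. I would instead argue directly: for $y \in U$, the segment $s \mapsto (1-s) i(y) + s\, i(x)$ from $i(y)$ to $i(x)$ — both endpoints in the convex set $\overline{B_\epsilon(v)}$ when $\|v - i(x)\| < \epsilon$, which holds because $i(x)$ is the nearest point of $M$ to $v$ so $\|v - i(x)\| = d(v, i(M)) < \epsilon$ — stays in $\overline{B_\epsilon(v)}$, and its distance to $i(M)$ is at most $\|v - \text{(point)}\| \le \|v - i(x)\| < \epsilon$... no: convexity gives the segment lies in the ball, and any point $w$ on it satisfies $d(w, i(M)) \le \|w - i(x)\| \le \max(\|i(y)-i(x)\|, 0)$, which is not obviously $< \epsilon$.

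The robust fix, which I would actually carry out: deform through $v$ rather than through $i(x)$. For $y \in U$, define $G(y, t) = \pi\big((1-t)\, i(y) + t\, v\big)$. Since $i(y) \in B_\epsilon(v)$ and the ball is convex, the whole segment lies in $B_\epsilon(v)$; and since $\|v - i(x)\| = d(v, i(M)) < \epsilon$, every point of the segment is within $\epsilon$ of $i(x) \in i(M)$, hence in $N_\epsilon$, so $\pi$ is defined along it. Thus $G \colon U \times [0,1] \arr M$ is continuous, $G(\cdot, 0) = \mathrm{id}_U$ (using $\pi \circ i = \mathrm{id}$), and $G(\cdot, 1)$ is the constant map at $\pi(v) = x$. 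It remains only to check that $G$ lands in $U$, i.e. $\|v - i(G(y,t))\| < \epsilon$: indeed $i(G(y,t)) = i(\pi(w))$ is the nearest point of $M$ to $w := (1-t)i(y) + t v$, so $\|w - i(G(y,t))\| = d(w, i(M)) \le \|w - i(x)\| < \epsilon$, and then $\|v - i(G(y,t))\| \le \|v - w\| + \|w - i(G(y,t))\| $ — hmm, this triangle estimate need not close either.

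The genuinely clean statement is that $\|v - i(G(y,t))\| \le \|v - i(x)\| < \epsilon$ is false in general, so instead I would prove contractibility by exhibiting the deformation retraction onto $x$ within a possibly \emph{larger} but still appropriate contractible set and then noting $U$ is the preimage and the homotopy preserves it — concretely, I expect the correct estimate uses that $\pi$ is \emph{distance-nonincreasing toward $v$} is too strong, so the main obstacle is precisely pinning down that $G(y,t) \in U$. I would resolve it by choosing $\epsilon$ small enough (which is harmless since tubular neighborhoods of all small radii exist) that $\pi$ restricted to $N_\epsilon$ is $C$-Lipschitz with $C$ close to $1$, or better, by working with the convex function $y \mapsto \|v - i(y)\|^2$ along the geodesic — but the most honest plan is: parametrize $U$ by its image $i(U) = B_\epsilon(v) \cap i(M)$, and note this equals the graph of the normal-exponential chart over the geodesic ball in $M$ around $x$ of the appropriate radius, which is contractible because geodesic balls of small radius in a Riemannian manifold are contractible (they deformation retract to the center along radial geodesics). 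So the final plan is: (1) identify $U$ with $B_\epsilon(v) \cap i(M)$; (2) show this is the normal-coordinate image, over the nearest-point chart at $x$, of a star-shaped region, hence diffeomorphic to a geodesic ball of $M$ at $x$; (3) invoke that small geodesic balls are contractible. The main obstacle is step (2) — controlling the geometry of the intersection of a Euclidean ball with the embedded submanifold — which is exactly where the radius-$\epsilon$ tubular neighborhood hypothesis does its work, guaranteeing the nearest-point projection is a well-behaved diffeomorphism on the relevant region.
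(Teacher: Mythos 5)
You correctly identify the sticking point and correctly conclude you cannot close it: neither of your triangle-inequality estimates shows that $G(y,t) = \pi\bigl((1-t)i(y) + t v\bigr)$ stays inside $U$, and your fallback plan (identifying $U$ with a small geodesic ball in $M$) is asserted rather than argued --- $U$ is a sublevel set of $y \mapsto \|v - i(y)\|^2$, not a geodesic ball, so step (2) of your final plan is itself essentially the whole content of the lemma. There is also a smaller slip worth noting: the segment from $i(y)$ to $v$ lies in $N_\epsilon$ not because each point is within $\epsilon$ of $i(x)$ (your bound only gives $< 2\epsilon$), but because the point at parameter $t$ is within distance $t\|v - i(y)\| < \epsilon$ of $i(y) \in i(M)$.

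The paper closes the gap with a short Morse-theoretic argument that sidesteps the nearest-point projection entirely. Set $g(y) = \|v - i(y)\|^2$, a smooth function on $M$, so that $U = g^{-1}\bigl([0,\epsilon^2)\bigr)$ is an open sublevel set. The differential of $g$ at $y$ vanishes exactly when $v - i(y)$ is perpendicular to $T_{i(y)} i(M)$; if moreover $y \in U$, then $v$ lies in the open $\epsilon$-normal disk over $y$. The tubular-neighborhood hypothesis says these disks are pairwise disjoint and $v$ lies in the one over $x$, so $x$ is the \emph{only} critical point of $g$ in $U$, and it is the global minimum. Negative gradient flow of $g$ (with respect to any Riemannian metric on $M$, which is compact here) strictly decreases $g$ away from critical points, hence preserves the sublevel set $U$ and converges to the unique critical point $x$. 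This gives the contraction, and --- unlike the straight-line-and-project map --- comes with the built-in monotonicity ($g$ decreasing along the flow) that is exactly what you were missing when trying to keep your homotopy inside $U$.
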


\begin{proof}
If $y$ is a critical point of the smooth function $d(i(-),v)^2: M \ra \R$ then the line from $i(y)$ to $v$ is perpendicular to $i(M)$. So if $v$ is in the tubular neighborhood, there is only one such $y$ within $\epsilon$ of $v$. Therefore $U$ contains only one critical point, the minimum $x$. Flowing along the gradient provides the contracting homotopy. 
\end{proof}

\begin{construction}\label{construction_gamma}
To an embedding $i$ of the fiber $E_{b}$, a point $v$ in the $\epsilon$-tubular neighborhood of $i(E_b)$ with closest point $x \in E_b$, and a point $y \in E_b$ such that $\|v - i(y)\| < \epsilon$ we associate a path $\gamma_{x, y}$ in $E_b$ from $x$ to $y$ in the following way.  The open set $B_{\epsilon}(v) \cap i(E_b)$ is contractible and contains both $x$ and $y$, so we may find a path from $x$ to $y$ whose image under $i$ lies in this open set. We let $\gamma_{x, y}$ be the unique geodesic homotopic to this path. We parametrize $\gamma_{x,y}$ so that it is always defined on the unit interval and moves at constant speed. This guarantees that the choice of path depends in a continuous way on $x$, $y$, and $i$.
\end{construction}


In order to define our model for $U_{E}$, we write $\Sigma^{n, \epsilon}$ for the fiberwise suspension functor obtained by taking the fiberwise quotient by the subspace consisting of points whose suspension coordinates have total length at least $\epsilon$.  We write $\Sigma^{\infty, \epsilon}$ for the suspension spectrum functor given by $\Sigma^{n, \epsilon}$ at each level.  We let
\[
\E = \Sigma^{\infty, \epsilon}_{E \times E} (E^I, (e_0, e_1))_+
\]
be the modified suspension spectrum of the space of paths in $E$, considered as a space over $E \times E$ by evaluation at the endpoints.  The inclusion of constant paths defines a stable equivalence $U_{E} \simar \E$ of spectra over $E \times E$.

Next, we define our model for the right dual $DS_f$, which is a modified version of the spherical fibration whose Thom space is $E^{-TM}$. Let $\tilde T_n$ be the closed $\epsilon$-tube consisting of points $(i,v)$ with $i \in \Emb_{\epsilon}(M,\R^n)$, $v \in \R^n$, and
\[ \underset{m \in M}\min \|v - i(m)\| \leq \epsilon \]
Let $\partial \tilde T_n$ consist of those points for which the minimum distance is exactly $\epsilon$.
The diffeomorphism group acts on both of these spaces by $\psi(i,v) = (i \circ \psi^{-1},v)$, and $O(n)$ acts by $\rho(i,v) = (\rho \circ i,\rho(v))$.  Thus we get a bundle $T_n \ra B$ whose fiber is $\tilde T_n$, with a sub-bundle $\partial T_n$, for each $n$. The projection of each point $(i,v)$ to the closest point of $i(E_b)$ also defines a bundle $p_n: T_n \ra E$ whose fiber is $\Emb_{\epsilon}(M,\R^n) \times D^{n-d}_\epsilon$. We let $T_n /_{E} \partial T_n$ denote the fiberwise quotient over $E$, and write
\[
\E^{-TM} \colon n \longmapsto (T_n /_{E} \partial T_n) \cup_{E} (E \times B)
\]
for the pushforward parametrized spectrum over $E \times B$.

Now we define our model for the coevaluation map $\coev \colon U_{B} \arr S_f \odot_{E} D S_{f}$.  The target is equivalent to the derived base-change $(f, 1)_{!}DS_{f}$ which in our model has $n$-th spectrum level given by
\[
(f, 1)_{!} \E^{-TM} = T_n/_{E}\partial T_n \cup_{E} B \times B.
\]
Our geometric model for the coevaluation map is the map of spectra over $B \times B$
\begin{equation}\label{eq:coev_model}
\mathrm{\bf coev} \colon \B \arr (f, 1)_{!} \E^{-TM} 
\end{equation}
whose $n$-th level is the Pontryagin-Thom collapse map
\begin{align*}
(B_n)_{+} \sma_{B \times B} S^n_{B \times B} &\arr T_n/_{E} \partial T_n \cup_{E} (B \times B) \\
i \sma w &\longmapsto (i, w) 
\end{align*}

The model for the evaluation map $\ev \colon D S_{f} \odot_{B} S_{f} \arr U_{E}$ has source given by the derived base-change $(1 \times f)^* \E^{-TM}$.  Since the composite
\[
T_n /_{E} \partial T_n \oarr{p_n} E \oarr{ (1, f) } E \times B
\]
is a fibration, we may compute the derived pullback of $\E^{-TM}(n)$ along $1 \times f \colon E \times E \arr E \times B$ by the strict pullback, which we find to be the ex-space 
\[
(T_n /_{E} \partial T_n \times_{B} E) \cup_{E \times_{B} E} (E \times E)
\]
over $E \times E$. Our geometric model for the evaluation map is the map of spectra over $E \times E$
\begin{equation}\label{eq:eval_model}
\mathrm{\bf eval} \colon (1 \times f)^* \E^{-TM} \arr \E
\end{equation}
which is defined on the $n$-th spectrum level by the scanning map
\begin{align*}
(T_n /_{E} \partial T_n \times_{B} E) \cup_{E \times_{B} E} (E \times E) &\arr \Sigma^{n, \epsilon}_{E \times E} E^{I}_{+} \\
(i \in \Emb_{\epsilon}(E_b, \R^n), v \in \R^n, y \in E_b) &\longmapsto  (v - i(y)) \sma \gamma_{x, y},
\end{align*}
where $x = p_n(i, v)$ is the closest point in $M$ to $v$. Notice that we need only define the map when $\| v - i(y) \| < \epsilon$, in which case we may find a unique geodesic $\gamma_{x, y}$ from $x$ to $y$ in $M$ using Construction \ref{construction_gamma}.


\begin{proposition}\label{prop:geometric_duality}
The maps $\mathrm{\bf coev}$ and $\mathrm{\bf eval}$ define a dual pair $(S_f, \E^{-TM})$ in the bicategory $\Ex$.
\end{proposition}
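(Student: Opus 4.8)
The plan is to verify the two triangle identities of Definition~\ref{def:dual_pair_bicat} for the maps $\mathrm{\bf coev}$ and $\mathrm{\bf eval}$. We already know from Proposition~\ref{prop:base_change_dualble_Ex} that $S_f$ is right dualizable in $\Ex$, so its duality data is unique up to canonical isomorphism; by this uniqueness it would in principle suffice to produce a single equivalence $\E^{-TM} \simeq D S_f$ intertwining the (co)evaluation maps, but the cleanest route is to check directly that the composite
\[
S_f \xrightarrow{\mathrm{\bf coev}\,\odot\,1} S_f \odot_E \E^{-TM} \odot_B S_f \xrightarrow{1\,\odot\,\mathrm{\bf eval}} S_f
\]
and the composite
\[
\E^{-TM} \xrightarrow{1\,\odot\,\mathrm{\bf coev}} \E^{-TM} \odot_B S_f \odot_E \E^{-TM} \xrightarrow{\mathrm{\bf eval}\,\odot\,1} \E^{-TM}
\]
both agree with the respective identity 2-cells in the homotopy category. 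By Lemma~\ref{lem:only_one_fibration} and Proposition~\ref{prop:model_for_odot_shadow}, the strict point-set circle products occurring here compute the derived ones, so the two composites are genuine maps of the explicit parametrized spectra built above and may be analyzed on derived fibers.

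The second step is to reduce to a single fiber of $f$. Because $f$ is a smooth fiber bundle with fiber $M$, the parametrized spaces $B_n$, $T_n$, $\partial T_n$ and the path spaces entering the construction restrict over a point $b \in B$ to $\Emb_\epsilon(M,\R^n)$, to the closed $\epsilon$-tube $\tilde T_n$ of an embedding $M \hookrightarrow \R^n$, to its boundary, and to the path space of $M$; under this restriction $\mathrm{\bf coev}$ and $\mathrm{\bf eval}$ become the classical Pontryagin--Thom collapse and scanning maps, with the geodesic paths of Construction~\ref{construction_gamma} becoming ordinary geodesics on $M$. A map of parametrized spectra is an equivalence precisely when it is an equivalence on all derived fibers, and the triangle-identity composites are natural maps of parametrized spectra, so both identities hold as soon as they hold over each $b \in B$, that is, after restriction to $M$.

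The final step is the classical input. Over a point, $(S_M, S^{-TM})$ is the Costenoble--Waner dual pair of Example~\ref{ex:CWduality_of_M}, and the statement that the collapse and scanning maps realize this duality is Atiyah duality in its Pontryagin--Thom form; it is worked out in \cite{MS}*{\S 18.6}, and in precisely the orthogonal-spectra form needed here (with the $\epsilon$-tube conventions and the truncated suspension functors $\Sigma^{n,\epsilon}$) in \cite{cohen_multiplicative} and the appendix of \cite{malkiewich_thesis}. The geometric content is that scanning inside a tube undoes the collapse onto that tube, up to an explicit homotopy obtained by shrinking $\epsilon$ along the closest-point projection; since this homotopy is natural in the embedding, it transports fiberwise over $B$ and assembles into the required 2-cell identities.

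I expect the main obstacle to be bookkeeping rather than anything conceptual. One must carefully identify the derived fibers of $S_f \odot_E \E^{-TM} \cong (f,1)_! \E^{-TM}$ and of $\E^{-TM} \odot_B S_f \cong (1\times f)^*\E^{-TM}$ with the concrete spaces appearing on the fiber $M$, check that the $\epsilon$-tube and truncated-suspension conventions are compatible with restriction to a fiber, and verify that the homotopies witnessing the classical triangle identities can be chosen continuously in the bundle parameter so as to assemble into honest homotopies of parametrized spectra. All of this is routine given the explicit point-set models, but it is where essentially all the work of the proof resides.
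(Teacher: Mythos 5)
Your first paragraph considers the right idea (produce an equivalence $\E^{-TM} \simeq DS_f$ intertwining the (co)evaluation maps, using uniqueness of duals) and then sets it aside in favor of verifying the triangle identities directly. That decision leads to a genuine gap in your second paragraph. You claim that ``both identities hold as soon as they hold over each $b\in B$,'' justifying this by the fact that equivalences of parametrized spectra are detected on derived fibers. But that only tells you the triangle composites are \emph{equivalences}, not that they are homotopic to the \emph{identity}. The derived fiber functor $i_b^*\colon \ho\Sp{}{B}\to\ho\Sp{}{}$ is conservative but not faithful: under the equivalence $\ho\Sp{}{B}\simeq\ho\Mod_{\Sigma^\infty_+\Omega B}$ it is the forgetful functor, which typically loses information about morphisms. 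Concretely, the homotopy classes of self-maps of $S_B$ in $\ho\Sp{}{B}$ are $\pi_0$ of a homotopy fixed point spectrum $S^{h\Omega B}$ and this can be strictly larger than $\pi_0 S=\Z$ (already for $B=S^1$), so there exist self-maps restricting to the identity on every fiber that are not the identity. Your final paragraph in effect concedes this by saying the fiberwise homotopies must be ``chosen continuously in the bundle parameter so as to assemble,'' but then misfiles it as routine bookkeeping; that assembly is precisely what the alleged fiberwise reduction claims is unnecessary, and it is where essentially all of the work lives.

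The paper takes the adjunction route you considered and rejected. It shows that $-\odot^{\bL}_B S_f$ has a right adjoint $\Gamma(-)$ given by fiberwise section spaces, builds a scanning map $\sigma\colon Y_+\odot_E\E^{-TM}\to\Gamma(\cdots)$, and verifies $\sigma$ is an equivalence by checking on fibers --- which \emph{is} legitimate, since this is an is-an-equivalence statement, not an is-the-identity statement. By uniqueness of duals, $\E^{-TM}\simeq DS_f$ and the triangle identities come for free from the adjunction. The only homotopies the paper has to construct by hand are those identifying $\mathrm{\bf coev}$ and $\mathrm{\bf eval}$ with the unit $\sigma\circ\mathrm{\bf coev}$ and counit $\mathrm{ev}\circ\sigma$, and these are written out explicitly. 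If you want to salvage the direct triangle-identity approach, you must construct explicit homotopies over $B$ (not just fiberwise), and that will end up reproducing comparable work; the adjunction route is cleaner precisely because it reserves the fiberwise check for a statement where fiberwise checks are valid.
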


\begin{proof}
We first identify the operation $- \odot^{\bL}_{E} E^{-TM}$ with the derived right adjoint of $- \odot^{\bL}_B S_f$ using a scanning map, and then we check that our evaluation and coevaluation maps agree with the counit and unit of this adjunction. Notice that the levels of $E^{-TM}$ are $h$-cofibrant, so $(-)_+ \odot_{E} E^{-TM}$ is derived on fibrations over $E$, and $- \odot_B S_f$ is always derived.

The operation $- \odot_B S_f$ gives a functor from spectra over $A \times B$ to spectra over $A \times E$, for any CW complex $A$, that is isomorphic to the pullback operation $(1 \times f)^*$. By \cite{MS} this immediately implies that it has a right adjoint $(1 \times f)_*$, though its definition may take us outside the category of weak Hausdorff spaces. To describe this right adjoint more geometrically, we take each fiber bundle $Y$ over $A \times E$ to a fiber bundle over $A \times B$ whose fiber over $(a,b)$ is the space of sections $\Gamma(Y|_{\{a\} \times f^{-1}(b)})$. Let $Z_{a,b}$ refer to the space $Y|_{\{a\} \times f^{-1}(b)}$, a bundle over $f^{-1}(b) \cong M$ that is in general non-trivial. To assemble these spaces of sections into a bundle, we use that $A$ and $B$ are CW complexes and therefore locally contractible. Over each contractible neighborhood $U \times V$ containing $(a,b)$, we choose an isomorphism between the preimage in $A \times E$ with $U \times V \times M$. Then we use the contraction of this onto $* \times * \times M$ to choose a compatible isomorphism of $Y$ with $U \times V \times Z_{a,b}$ over $M$. This gives a local topology which is invariant under the choices of isomorphisms made above.

Let us call this geometric construction $\Gamma(-)$. It is the right adjoint of $(1 \times f)^* = - \times_B E$, since in each trivial neighborhood $U \times V$, the operation $- \times M$ is left adjoint to $\Map(M,-)$. This adjunction is derived when we require all spectrum levels to be fiber bundles.

Now that we have identified the right adjoint of $- \odot^{\bL}_B S_f$, we compare it to $- \odot^{\bL}_E \E^{-TM}$. It suffices to do this on $\Sigma^\infty_{E \times E} E^I_+$, the fibrant replacement of $U_E$, but we consider $\Sigma^\infty_{A \times E} Y_+$ for any bundle $Y$ over $A \times E$. We define a scanning map of spectra over $A \times B$
\[ \sigma: Y_+ \odot_E \E^{-TM} \ra \Gamma(\Sigma^{\infty,\epsilon}_{A \times E} (Y \times_E E^I)_+) \]
\[ y \sma (i,w) \mapsto (m \mapsto (w - i(m)) \sma y \sma \gamma_{p_n(i,w),m}) \]
Of course, $\sigma$ is only nontrivial when $i(m)$ is within $\epsilon$ of $w$, which ensures that the geodesic $\gamma_{p_n(i,w),m}$ is well-defined. The map $\sigma$ is an equivalence, because both source and target are bundles over $A \times B$, and on each fiber it is given up to homotopy equivalence by the classical scanning equivalence
\[ Z_{a,b}^{-TM} \overset\sim\ra \Gamma_M(\Sigma^{\infty,\epsilon}_{M} Z_{a,b} \times_M M^I_+) \]
\[ z \sma (i,w) \mapsto (m \mapsto w - i(m) \sma z \sma_M \gamma_{p_n(i,w),m}) \]
where $Z = Y|_{a \times f^{-1}(b)}$ is a bundle over $f^{-1}(b) \cong M$ (cf. \cite{malkiewich_thesis}*{App A}).

Therefore $- \odot^{\bL}_E \E^{-TM}$ is equivalent to the derived right adjoint of $- \odot^{\bL}_B S_f$, so $\E^{-TM}$ is the dual of $S_f$. To get the evaluation map, we compose $\sigma$ for $Y = U_E$ with the counit $\mathrm{ev}$ of the adjunction to get a map of spectra over $E \times E$
\[ E_+ \odot_E E^{-TM} \odot_B E_+ \overset\sigma\arr \Gamma(E^I_+) \odot_B E_+ \overset{\mathrm{ev}}\arr E^I_+ \]
In formulas, $(i,w)$ with closest point $e$ and a second point $e'$ in the same fiber are sent to the point $w - i(e') \sma \gamma_{e,e'}$ when $e$ and $e'$ are sufficiently close. This agrees on the nose with $\mathrm{\bf eval}$.

For coevaluation, we take instead $Y = U_B \odot_B S_f = S_f$ and show that the composite of $\mathrm{\bf coev}$ and $\sigma$ agrees with the unit of the adjunction, up to homotopy:
\[ \B \overset{\mathrm{\bf coev}}\arr (f \times 1)_!E^{-TM} \overset\sigma\arr \Gamma((f \times 1)_!E^I_+) \]
In formulas, this composite is
\[ w \sma i \mapsto (i,w) \mapsto (e' \mapsto w - i(e') \sma \gamma_{p_n(i,w),e'}) \]
where $e'$ must lie in the same fiber as $p_n(i,w)$. We modify this up to homotopy, first by pulling $p_n(i,w)$ along the path $\gamma_{p_n(i,w),e'}$ while shortening the path so that it ends up constant, and then by taking a straight-line homotopy between $i$ and the function sending all of $E$ to $0 \in \R^n$ to get rid of the term $i(e')$. These moves all respect the orthogonal spectrum structure and the projections to $B$ and $E$ on the left and right, respectively. At the end of the homotopy, we get the composite of the projection $\B \simar U_B$ and the unit of the adjunction 
\[ w \mapsto (e' \mapsto w \sma \gamma_{e',e'}) \]
This finishes the proof.
\end{proof}

By the uniqueness of duals, it follows from the Proposition that the dual pairs $(S_{f}, D S_{f})$ and $(S_{f}, \E^{-TM})$ are isomorphic in $\Ex$.  We will now use the new models for the coevaluation and evaluation maps to give concrete geometric descriptions of the three stages of the $\THH$ transfer. Observe that the map $Lf: LE \arr LB$ can be factored into three maps as follows:
\[ \xymatrix{ LE \ar[r] & P = E^I \times_{B \times B} B \ar[r]^-\sim & E \times_{B \times B} B^I \ar[r] & LB } \]
The second map projects the path in $E$ down to $B$ and remembers only the endpoint of the path in $E$; it is clearly an equivalence (see Figure \ref{fig:intro_PplusEBLB}). The last map is a pullback of $f: E \arr B$ and so it is also a fiber bundle with fiber $M$. The first map is a closed inclusion and a pullback of the fiberwise diagonal $E \arr E \times_B E$. It has a ``tubular neighborhood'' of all paths whose endpoints are at most $\epsilon$ apart along the metric internal to $M$. For instance, this point lies on its boundary:
\begin{figure}[H]
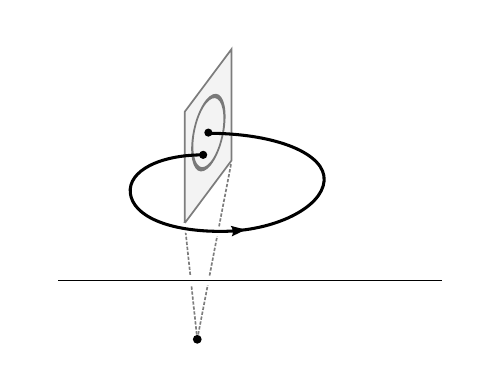
\end{figure}
The tubular neighborhood and its boundary are homotopy equivalent as a pair to the disc bundle and sphere bundle of the vertical tangent bundle $TM$, pulled back from $E$ to $LE$. By abuse of notation, we will let $TM$ denote the pullback of the vertical tangent bundle to any of the first three spaces above.

\begin{theorem}\label{thm:geometric_THH_transfer}
When $f$ is a fiber bundle with compact manifold fiber, the $THH$ transfer for $f$ is the composite
\[ \xymatrix{ \Sigma^\infty_+ LB \ar[r] & \Sigma^{-TM} E \times_{B \times B} B^I \ar@{<-}[r]^-\sim & \Sigma^{-TM} E^I \times_{B \times B} B \ar[r] & \Sigma^\infty_+ LE } \]
of the Pontryagin-Thom umkehr map for the fiber bundle $LB \times_{B} f$, the inverse of the above equivalence, and the desuspension by $TM$ of the umkehr map which collapses onto the tubular neighborhood of $LE$.
\end{theorem}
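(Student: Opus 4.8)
The plan is to combine Theorem~\ref{thm:reid_trace_is_THH_transfer}, which identifies $\tau_\THH$ with the Reidemeister trace of $f$, with the explicit geometric dual pair $(S_f, \E^{-TM})$ of Proposition~\ref{prop:geometric_duality} and the point-set models for the circle product, the shadow, and the cyclic isomorphism in Proposition~\ref{prop:model_for_odot_shadow}. By Theorem~\ref{thm:reid_trace_is_THH_transfer} and Example~\ref{ex:shadow_Ex}, computed using the dual pair $(S_f,\E^{-TM})$ with its coevaluation and evaluation maps $\mathrm{\bf coev}$ and $\mathrm{\bf eval}$, the transfer $\tau_\THH$ is the composite
\[
\Sigma^\infty_+ LB \cong \llan{U_B} \xarr{\mathrm{\bf coev}} \llan{S_f \odot^{\bL}_E \E^{-TM}} \xarr{\theta} \llan{\E^{-TM} \odot^{\bL}_B S_f} \xarr{\mathrm{\bf eval}} \llan{U_E} \cong \Sigma^\infty_+ LE ,
\]
so it suffices to identify each of the three stages with the corresponding map in the statement. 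Since the levels of $\E^{-TM}$ are $h$-cofibrant and fibrant over $E$ via the closest-point projection $p_n$, and $-\odot_B S_f$ is always derived, Proposition~\ref{prop:model_for_odot_shadow} lets me compute all three shadows from the strict point-set formulas, after the replacement by path spaces needed to make objects fibrant over $B$, resp.\ $E$.

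First I would compute $\llan{S_f \odot^{\bL}_E \E^{-TM}}$. Unwinding the definitions of $S_f$, $\E^{-TM}$, the operator $\odot_E$, and the shadow, the underlying retractive space at level $n$ is the fiberwise quotient of the $\epsilon$-tube bundle $T_n$, now regarded as a bundle over $E \times_{B\times B} B^I \cong E \times_B LB$; collapsing the contractible embedding directions identifies this with the $n$-th level of the Thom spectrum $\Sigma^{-TM}(E \times_{B\times B} B^I)$, where $TM$ is the pullback of the vertical tangent bundle of $f$ along the projection to $E$. Under this identification the coevaluation map, which by Proposition~\ref{prop:geometric_duality} is a Pontryagin--Thom collapse at each spectrum level, becomes exactly the Pontryagin--Thom umkehr map for the fiber bundle $LB \times_B f \colon E \times_B LB \arr LB$. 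Symmetrically, I would identify $\llan{\E^{-TM} \odot^{\bL}_B S_f}$ with $\Sigma^{-TM}(E^I \times_{B\times B} B) = \Sigma^{-TM} P$, the Thom spectrum over the space $P$ of paths in $E$ whose endpoints lie in a common fiber, and then use the scanning formula for $\mathrm{\bf eval}$ to show that $\llan{\mathrm{\bf eval}}$ is the desuspension by $TM$ of the umkehr collapse of $P$ onto the $\epsilon$-tubular neighborhood of $LE \subset P$. Here one checks, as indicated in the discussion preceding the statement, that this tubular neighborhood, consisting of paths whose endpoints are $\epsilon$-close in the fiber metric, is equivalent as a pair (neighborhood, boundary) to the disc and sphere bundles of $TM$ pulled back to $LE$, so that the collapse lands in $\Sigma^{TM} LE_+$ and its $TM$-desuspension is a map $\Sigma^{-TM} P \arr \Sigma^\infty_+ LE$.

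Next, the cyclic isomorphism $\theta$. Applying Proposition~\ref{prop:model_for_odot_shadow}(iii) with $X = \E^{-TM}$, the map $\theta$ in the direction $\llan{\E^{-TM} \odot^{\bL}_B S_f} \arr \llan{S_f \odot^{\bL}_E \E^{-TM}}$ is given at each level by the map of cofiber sequences induced by the projection $E^I \arr B^I \times_B E$; since $f$ is a fibration this is the equivalence $\Sigma^{-TM} P \xarr{\sim} \Sigma^{-TM}(E \times_{B\times B} B^I)$ induced by $P \xarr{\sim} E \times_B LB$. The instance of $\theta$ occurring in the Reidemeister trace is its inverse, i.e.\ the inverse of that equivalence. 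Assembling the three identifications of the previous paragraph with this one shows that $\tau_\THH$ is the composite of the Pontryagin--Thom umkehr of $LB \times_B f$, the inverse of the equivalence $\Sigma^{-TM} E^I\times_{B\times B} B \xarr{\sim} \Sigma^{-TM} E\times_{B\times B} B^I$, and the $TM$-desuspension of the tubular-neighborhood collapse — which is precisely the composite in the statement.

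The step I expect to be the main obstacle is the bookkeeping in the second paragraph: carefully tracing the modified suspension functor $\Sigma^{n,\epsilon}$, the $\epsilon$-tube conventions, and the string of pullback and pushforward base changes hidden inside $\odot$ and the shadow, in order to pin down the two shadow spectra as honest Thom spectra over $E \times_B LB$ and over $P$, and to verify that the level-wise Pontryagin--Thom collapse and scanning formulas of Proposition~\ref{prop:geometric_duality} become, respectively, the Pontryagin--Thom umkehr of $LB \times_B f$ and the desuspended collapse onto the tubular neighborhood of $LE$. Once these geometric identifications are in place, the $\theta$-step and the final assembly are essentially formal consequences of Propositions~\ref{prop:model_for_odot_shadow} and~\ref{prop:geometric_duality}.
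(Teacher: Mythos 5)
Your plan is exactly the paper's: use Proposition~\ref{prop:geometric_duality} to express $\tau_\THH$ as the shadow trace of the dual pair $(S_f,\E^{-TM})$, compute the three stages via the strict models of Proposition~\ref{prop:model_for_odot_shadow}, and identify the coevaluation with the Pontryagin--Thom umkehr for $LB\times_B f$, $\theta$ with the inverse of the projection equivalence $\Sigma^{-TM}(E^I\times_{B\times B}B)\simar\Sigma^{-TM}(E\times_{B\times B}B^I)$, and the evaluation with the $TM$-desuspended collapse onto the tubular neighborhood of $LE$ in $P$. The one place you likely underestimate the work is the last identification, which is more than bookkeeping: the scanning formula for $\mathrm{\bf eval}$ does not equal the collapse-then-cancel-$TM$ map on the nose, and the paper must interpolate between the formulas $t+v$ and $\exp_{\R^n}(v)-i(\exp_M(-t))$ by a straight-line homotopy in $\R^n$, then verify the homotopy is basepoint-preserving by passing from $\Sigma^{\infty,\epsilon}$ to $\Sigma^{\infty,\epsilon/2}$ and invoking the geodesic curvature bound $\|\gamma''\|\le 1/\epsilon$ that the $\epsilon$-tubular neighborhood provides.
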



\begin{proof}
By Proposition \ref{prop:geometric_duality}, the $\THH$ transfer is equal to the composite
\[
\llan{\B} \oarr{\mathrm{\bf coev}} \llan{S_f \odot^{\bL}_{E} \E^{-TM}} \oarr{\theta} \llan{\E^{-TM} \odot^{\bL}_{B} S_f} \oarr{\mathrm{\bf eval}} \llan{\E}.
\]
We use Proposition \ref{prop:model_for_odot_shadow} to give geometric models for the derived circle products and shadows occurring in this composite.  

The first step is the top map in the commutative diagram
\[
\xymatrix{
\B \odot_{B \times B} B^{I}_{+} \ar[d]_{\simeq} \ar[r]^-{\mathrm{\bf coev}} & ((f, 1)_! \E^{-TM}) \odot_{B \times B} B^{I}_{+} \ar[d]^{\simeq} \\
B_+ \odot_{B \times B} B^{I}_{+} \ar[r]^-{\mathrm{PT}} & ((f, f)_! S_{E}^{-TM}) \odot_{B \times B} B^{I}_{+},
}
\]
where the vertical maps are the equivalences induced by projecting the spaces of embeddings to a point.  The bottom map is isomorphic to the fiberwise Pontryagin-Thom collapse map
\[
LB_+ \arr (E \times_{B} LB)^{-TM}
\]
associated to the fiber bundle $f \times_{B} LB \colon E \times_{B} LB \arr LB$.

The second step is immediate from part (iii) of Proposition \ref{prop:model_for_odot_shadow}, setting $X = E^{-TM}$. We get the inverse of the desuspension of the claimed projection map $E^I \times_{B \times B} B \overset\sim\arr E \times_B LB$.

The final step is our geometric evaluation map, joined along $E \times E$ to the space of paths in $E$:
\[ (E^I \times_{B \times B} B)^{-TM} \cong E^I_+ \odot_{E \times E} (E^{-TM} \odot_B S_f) \arr E^I_+ \odot_{E \times E} (\Sigma^{\infty,\epsilon} E^I_+) \cong \Sigma^{\infty,\epsilon} LE_+ \]
We will compare it to the following collapse map. Let $P = E^I \times_{B \times B} B$, and let $D(TM)$ refer to the closed $\epsilon$-disc bundle of the vertical tangent bundle, embedded into $E \times_B E$ by the exponential map in the first coordinate. Consider the sequence of pullback squares, in which the vertical maps are fibrations and the horizontal maps are closed inclusions:
\[ \xymatrix{
LE \ar[r]^-\sim \ar[d]^-{e_0} & P|_{D(TM)} \ar[r] \ar[d] & P \ar[r] \ar[d] & E^I \ar[d] \\
E \ar[r]^-\sim & D(TM) \ar[r] & E \times_B E \ar[r] & E \times E
} \]
Since open and closed inclusions are preserved by pullbacks, $P|_{D(TM)}$ is the closure of an open neighborhood $\mathring{D}(TM)$ of $LE$ inside $P$.
Let $S(TM)$ refer to its boundary, the $\epsilon$-sphere bundle. Let $\hat{P}$ refer to the complement of $P|_{\mathring{D}(TM)}$ in $P$. Our desired collapse map is the composite
\[ P \ra P / \hat{P} \cong P|_{D(TM)}/P|_{S(TM)} \simeq D(e_0^* TM)/S(e_0^* TM) = (LE)^{TM}. \]
The homotopy equivalence in this composite is given by the formula
\vspace{1em}

\centerline{
\begin{tabular}{ccc}
$P|_{D(TM)}$ && $D(e_0^* TM)$ \\[2pt]
\includegraphics[scale=1.7]{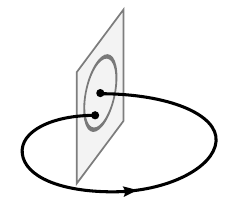}&&
\includegraphics[scale=1.7]{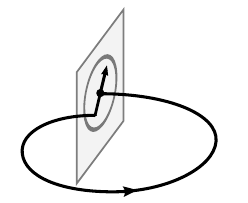}\\[2pt]
$\alpha$ & $\mapsto$ & $(\gamma_{\alpha(1),\alpha(0)} \cdot \alpha, -\exp^{-1}_{\alpha(1)}(\alpha(0)))$ \\[2pt]
\includegraphics[scale=1.7]{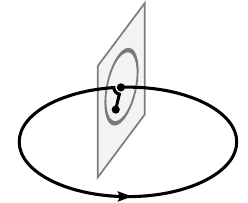}&&
\includegraphics[scale=1.7]{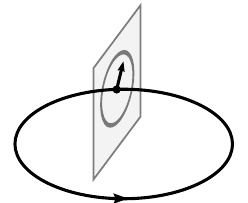}\\[2pt]
$\gamma_{\exp_{\beta(0)}(-x),\beta(0)} \cdot \beta$ & $\mapsfrom$ & $(\beta, x)$ \\
\end{tabular}
\vspace{1em}
}

Here $\cdot$ refers to concatenation of paths and $\gamma_{-,-}$ is our continuous rule for assigning any two points in the same fiber of $E$ that are less than $\epsilon$ apart to the unique short geodesic connecting them. It is elementary to verify that this gives a homotopy equivalence of pairs as stated. It respects the projection to the second copy of $E$, so we may take the fiberwish smash product over $E$ with $E^{-TM}$, giving
\[ P^{-TM} \ra P^{-TM}/\hat P^{-TM} \simar D(e_0^* TM)^{-TM} / S(e_0^* TM)^{-TM} \simar \Sigma^{\infty,\epsilon}_+ LE. \]
We define the last map at spectrum level $n$ by regarding each point in $\R^n$ close to $E$ as a vector $v$ in the normal bundle, of length less than $\epsilon$, using the exponential map. We add the given tangent vector $t$ to get a vector in $\R^n$, pictured in blue below.
\begin{figure}[H]
\def\svgwidth{.5\linewidth}
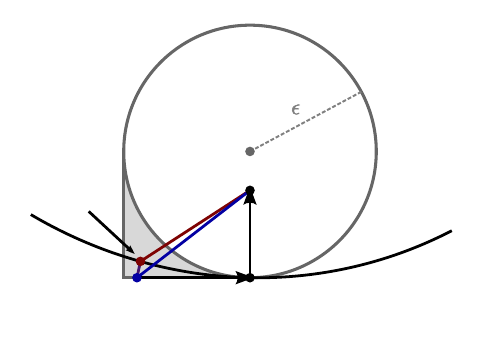
\end{figure}
This may be modified by a homotopy. For each $m$ in the fiber $M$ of $E \ra B$ with a normal vector $v$ and tangent vector $t$, we take the difference $v - \exp_{m}(-t)$, pictured in red. The homotopy between this and $t + v$ is the usual linear interpolation in $\R^n$.

The source of this homotopy consists of points where $\|v\|$ and $\|t\|$ are at most $\epsilon$, and we must check that when $v$ or $t$ has length exactly $\epsilon$, it hits the basepoint. For this purpose, we consider it as a homotopy of maps landing in $\Sigma^{\infty,\epsilon/2}_+ LE$ instead of $\Sigma^{\infty,\epsilon}_+ LE$. Now when $\|v\| = \epsilon$, the homotopy is through vectors that have length at least $\epsilon$, since an open ball of radius $\epsilon$ about $v$ cannot intersect $M$. Therefore these points are sent to the basepoint. On the other hand, since $M$ has a $\epsilon$-tubular neighborhood, each geodesic $\gamma$ has $\|\gamma'\| = 1$ and $\|\gamma''\| \leq \frac1{\epsilon}$ at all times. We focus on the geodesic that moves from $m$ in the direction of $t$, which reaches $t$ at time $\epsilon$. By elementary calculus, the distance between the endpoint of this geodesic and the closed disc representing all possible values of $v$ over $m$ is at least
\[ \left( t - \frac1{2\epsilon}t^2 \right)|_{t = \epsilon} = \frac12 \epsilon \]
Similarly, any smooth path in $\R^n$ with the same initial velocity and the same bound on the second derivative must end at least this far from the disc of all values of $v$. Therefore the straight-line homotopy between our two descriptions of the final map sends the points $(t,v)$ with $\|t\| = \epsilon$ to vectors that are at least $\frac12 \epsilon$ long, and therefore they also land on the basepoint. This concludes the check that our homotopy is well-defined.

Adopting the red-colored description for the map that cancels the $-TM$ and $TM$, it has the formula
\[ (\alpha, v, t) \mapsto ( \exp_{\R^n}(v) - i(\exp_M(-t))) \sma \alpha \]
when $\alpha$ is a free loop in $E$, and $v$ and $t$ are a normal and tangent vector, respectively, at $\alpha(0)$. The map $P^{-TM} \ra \Sigma^{\infty,\epsilon}_+ LE$ then has the formula
\[ (\alpha, v) \mapsto   (v - i(\alpha(0))) \sma \gamma_{\alpha(1),\alpha(0)} \cdot \alpha \]
when $\alpha$ is a path in $E$ with endpoints in the same fiber and $v$ is a point in $\R^n$ with closest point in $E$ given by $\alpha(1)$.
The final map $\mathrm{ \bf eval}$ in our model for the THH transfer is given by exactly the same formula, and the proof is complete.
\end{proof}

Since the Becker-Gottlieb transfer for the smooth bundle $E \ra B$ is well-known to factor into the Pontryagin-Thom umkehr map $B_+ \ra E^{-TM}$ and the inclusion of the zero section $E^{-TM} \ra E$, we can now derive Corollary \ref{cor:intro_becker_gottlieb_smooth_compatibility} from the introduction: for a smooth fiber bundle $f$, the square
\[
\xymatrix{
\Sigma^{\infty}_{+} B \ar[r]^-{\tau(f)} \ar[d]_{c} & \Sigma^{\infty}_{+} E \ar[d]^{c} \\
\Sigma^{\infty}_{+} LB \ar[r]^-{\tau_{\THH}} & \Sigma^{\infty}_{+} LE
}
\]
commutes up to homotopy. 

\begin{remark}
It would be quite striking if this diagram commuted for all perfect fibrations, since the corresponding square for $A$-theory is known not to commute in general.  We also note that the commutativity of this square in general would provide an alternative proof of the recent result of the second author and John Klein that the Becker-Gottlieb transfer is functorial up to homotopy \cite{MaKl}.
\end{remark}

\section{Applications}\label{sec:computations}

We now demonstrate the utility of our geometric model of $\tau_\THH$. Our first application is to covering spaces, where Theorem \ref{thm:geometric_THH_transfer} gives a complete description of the free loop transfer:
\begin{corollary}\label{cor:covering_spaces}
When $f: E \ra B$ is an $n$-sheeted covering space, so that $Lf: LE \ra LB$ is a covering space of at most $n$ sheets on each component, the free loop transfer for $f$ is the ordinary transfer for the covering $Lf$.
\end{corollary}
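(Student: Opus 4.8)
The plan is to specialize the geometric model of Theorem~\ref{thm:geometric_THH_transfer} to the case of a zero-dimensional fiber. First I would reduce to the case that $B$ is connected, since by Definition~\ref{def:tau_THH} the free loop transfer is additive over the path components of $B$ and of $E$, and the same is true of the classical transfer of a finite covering. So assume $B$ is connected; then the fiber $M$ of the covering $f$ is a finite set of $n$ points, and in particular the vertical tangent bundle $TM$ has rank zero. Consequently every Thom spectrum in Theorem~\ref{thm:geometric_THH_transfer} is an ordinary suspension spectrum: $\Sigma^{-TM}\big(E\times_{B\times B}B^I\big)\simeq\Sigma^\infty_+(E\times_B LB)$, $\Sigma^{-TM}\big(E^I\times_{B\times B}B\big)\simeq\Sigma^\infty_+ P$, and the desuspension by $TM$ of the tubular-neighborhood collapse is just that collapse map itself. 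The middle equivalence $P\xrightarrow{\ \sim\ }E\times_B LB$ I would carry along as a relabelling; under it the inclusion $LE\hookrightarrow P$ of Theorem~\ref{thm:geometric_THH_transfer} becomes the map $(e_0,Lf)\colon LE\to E\times_B LB$.

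With these simplifications $\tau_\THH$ becomes a composite of two maps. The first is the Pontryagin--Thom umkehr map of the bundle $E\times_B LB\to LB$; but this bundle is the pullback $e_0^*f$ of the covering $f$ along $e_0\colon LB\to B$, hence is itself a finite covering, and the Pontryagin--Thom umkehr map of a finite covering is, by inspection of Example~\ref{ex:becker_gottlieb_transfer} (this being the classical fact, going back to \cite{BG76}, that the Becker--Gottlieb transfer of a finite covering is the summation transfer), the classical summation transfer $\Sigma^\infty_+ LB\to\Sigma^\infty_+(E\times_B LB)$. The second is the collapse onto the $\epsilon$-disc bundle of $TM$ over $LE$; in the rank-zero case this disc bundle is just the subspace $LE$, which is open and closed in $E\times_B LB$ --- indeed, the map sending $(e,\gamma)$ to the endpoint of the lift of $\gamma$ starting at $e$ is continuous by unique path lifting, so $(e_0,Lf)(LE)$ is the preimage of the (clopen) diagonal $E\subset E\times_B E$. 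Thus $LE$ is a clopen sub-covering of $e_0^*f$ --- the ``monodromy-fixed-points'' sub-covering whose fiber over a loop $\gamma$ is $\{e\in f^{-1}(\gamma(0)) : \text{the lift of }\gamma\text{ at }e\text{ is again a loop}\}$ --- and the second map is the projection $\Sigma^\infty_+(E\times_B LB)\to\Sigma^\infty_+ LE$ onto the corresponding wedge summand.

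It then remains to identify the composite ``summation transfer of $e_0^*f$, followed by projection onto the clopen sub-covering $LE$'' with the summation transfer of $Lf\colon LE\to LB$. This is a standard additivity property of transfers of finite coverings: if $q\colon Y\to X$ is a finite covering and $Y=Y_0\sqcup Y_1$ is a decomposition into clopen sub-coverings, then $\tau_q$ equals the sum of $\Sigma^\infty_+ X\xrightarrow{\tau_{q_i}}\Sigma^\infty_+ Y_i\hookrightarrow\Sigma^\infty_+ Y$, so that collapsing $\tau_q$ onto the $Y_0$-summand recovers $\tau_{q_0}$; this follows at once from $\Sigma^\infty_+ Y\simeq\Sigma^\infty_+ Y_0\vee\Sigma^\infty_+ Y_1$ and the naturality of the transfer. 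Applying this with $q=e_0^*f$ and $Y_0=LE$ yields Corollary~\ref{cor:covering_spaces}. On a component of $LB$ not met by $Lf$ the fiber of $Y_0$ is empty, so both sides are the zero map there, matching the convention for non-surjective coverings; this also recovers Corollary~\ref{cor:intro_covering_spaces}.

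The step requiring the most care is the identification in the second paragraph: one must genuinely unwind the definition of the tubular-neighborhood collapse map of Theorem~\ref{thm:geometric_THH_transfer} in the rank-zero case and check that it reduces to the projection $P\to P/\overline{P\setminus LE}\cong\Sigma^\infty_+ LE$ onto the clopen piece $LE$, and verify that this $LE$ is precisely the sub-covering of $e_0^*f$ described above. The remaining points --- the reduction to connected $B$, the identification of the first map with the covering transfer, and the additivity of the covering transfer --- are routine.
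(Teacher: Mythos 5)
Your proposal is correct and follows essentially the same route as the paper: specialize Theorem~\ref{thm:geometric_THH_transfer} to rank-zero $TM$, identify the first map as the covering transfer of $E\times_B LB\to LB$, identify the second as the collapse onto the clopen subspace $LE$, and invoke additivity of covering transfers. Your write-up is more detailed than the paper's terse "it is easy to check," but the underlying argument and decomposition are the same.
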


\begin{proof}
The composite from Theorem \ref{thm:geometric_THH_transfer} becomes
\[ \xymatrix{ \Sigma^\infty_+ LB \ar[r] & \Sigma^\infty_+ E \times_{B \times B} B^I \ar@{<->}[r]^-\cong & \Sigma^\infty_+ E^I \times_{B \times B} B \ar[r] & \Sigma^\infty_+ LE } \]
which is the ordinary transfer for the covering map $E \times_{B \times B} B^I \ra LB$ and the map which deletes the complement of $LE$ in $E^I \times_{B \times B} B$. It is easy to check that the composite of a transfer and such a deletion is a transfer into the smaller subspace $LE$.
\end{proof}

This is a significant extension of the main result of \cite{schlichtkrull}, which dealt with covering maps of the form $BK \arr BG$, when $G$ is a discrete group and $K \leq G$ is a subgroup of finite index. In this special case the covering map $LE \ra LB$ becomes
\[ \xymatrix{ \coprod_{\lambda \in \langle K \rangle} BC_K(\lambda) \ar[r] & \coprod_{\omega \in \langle G \rangle} BC_G(\omega) } \]
where $\lambda$ ranges over conjugacy classes in $K$ and $\omega$ ranges over conjugacy classes in $G$. Schlichtkrull computed the $THH$ transfer in this case as a collection of ordinary transfers, one for each pair $(\lambda,\omega)$ with $\lambda \subseteq \omega$. Our corollary gives this same result by a different method. In the further special case where $G$ is abelian, this covering map simplifies to $K \times BK \ra G \times BG$. The transfer has degree $G/K$ on the components corresponding to $K \subseteq G$ and degree 0 on the other components.

Next we will compute $\tau_\THH$ on cohomology for the bundle $f: BS^1 \arr BS^3$. It turns out that $Lf^*$ is a rational isomorphism in the classes where we need to compute the transfer. So, it suffices to understand the effect of the composite
\[ \xymatrix{ \Sigma^\infty_+ LBS^3 \ar[r]^-{\tau_\THH} & \Sigma^\infty_+ LBS^1 \ar[r]^-{Lf} & \Sigma^\infty_+ LBS^3 } \]
on cohomology. We do this by proving Proposition \ref{prop:intro_euler} from the introduction:

\begin{proposition}\label{prop:euler}
If $f: E \arr B$ is a fibration with finite CW fiber $F$, and $B$ is simply-connected, then the composite map $\tau_\THH^* \circ Lf^*$ on $H^*(LB)$ is multiplication by $\chi(F)$.
\end{proposition}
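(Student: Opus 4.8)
The plan is as follows. Set $\phi \colon \Sigma^{\infty}_{+} LB \arr \Sigma^{\infty}_{+} LB$ equal to the composite of $\tau_{\THH}$ with the map $\Sigma^{\infty}_{+} LE \arr \Sigma^{\infty}_{+} LB$ induced by $Lf$, so that $\tau_{\THH}^{*} \circ Lf^{*} = \phi^{*}$ on $H^{*}(LB)$; note that a fibration with finite CW fiber is in particular a perfect fibration, so the earlier results apply. I would first use the comodule structure of Proposition \ref{prop:comodule} to show that $\phi^{*}$ is multiplication by a single integer, and then identify that integer with $\chi(F)$ by restricting to a fiber and invoking Theorem \ref{thm:intro_becker_gottlieb_compatibility}.

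\emph{Step 1: $\phi^{*}$ is multiplication by an integer.} The key point is that $\phi$ is a map of $\Sigma^{\infty}_{+} LB$-comodules, where $\Sigma^{\infty}_{+} LB$ carries the coaction $\Sigma^{\infty}_{+} LB \arr \Sigma^{\infty}_{+} LB \sma \Sigma^{\infty}_{+} LB$ induced by the topological diagonal of $LB$. Indeed $\tau_{\THH}$ is such a map by Proposition \ref{prop:comodule} (with $\Sigma^{\infty}_{+} LE$ made a comodule via the diagonal of $LE$ together with $Lf$), and the map induced by $Lf$ is a comodule map as well, since $\Delta_{LB} \circ Lf = (Lf \times Lf) \circ \Delta_{LE}$ by naturality of the diagonal. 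Hence the square expressing $\Delta \circ \phi \simeq (\phi \sma \id) \circ \Delta$ commutes up to homotopy. Applying $H^{*}(-)$, and using that the diagonal induces the cup product while the external product is natural with respect to smash products of maps, this square gives $\phi^{*}(x \smile y) = \phi^{*}(x) \smile y$ for all $x, y \in H^{*}(LB)$; taking $x = 1$ this reads $\phi^{*}(y) = \phi^{*}(1) \smile y$. Since $B$ is simply connected, $LB$ is path connected, so $\phi^{*}(1) \in H^{0}(LB) = \Z$, say $\phi^{*}(1) = d$, and therefore $\phi^{*} = d \cdot \id$ on all of $H^{*}(LB)$.

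\emph{Step 2: $d = \chi(F)$.} As $LB$ is connected, $d$ is the degree of the stable self-map of $\bbS$ given by $\bbS \arr \Sigma^{\infty}_{+} LB \xarr{\phi} \Sigma^{\infty}_{+} LB \arr \bbS$, where the first map is the constant loop at the basepoint and the last is the collapse. The constant basepoint loop factors through $c \colon \Sigma^{\infty}_{+} B \arr \Sigma^{\infty}_{+} LB$, and the collapse composed with the map induced by $Lf$ is the collapse of $LE$, which factors through $e_{0} \colon \Sigma^{\infty}_{+} LE \arr \Sigma^{\infty}_{+} E$. Thus $d$ equals the degree of
\[
\bbS \arr \Sigma^{\infty}_{+} B \xarr{c} \Sigma^{\infty}_{+} LB \xarr{\tau_{\THH}} \Sigma^{\infty}_{+} LE \xarr{e_{0}} \Sigma^{\infty}_{+} E \arr \bbS .
\]
By Theorem \ref{thm:intro_becker_gottlieb_compatibility} the middle four terms compose to the Becker--Gottlieb transfer $\tau(f) \colon \Sigma^{\infty}_{+} B \arr \Sigma^{\infty}_{+} E$, so this map is $\bbS \arr \Sigma^{\infty}_{+} B \xarr{\tau(f)} \Sigma^{\infty}_{+} E \arr \bbS$. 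By naturality of the Becker--Gottlieb transfer under the pullback square defining the fiber, this equals the transfer of $F \arr \ast$ followed by the collapse $\Sigma^{\infty}_{+} F \arr \bbS$, which is the trace of $\id_{\Sigma^{\infty}_{+} F}$ in the stable homotopy category, namely $\chi(F)$. Hence $d = \chi(F)$ and $\phi^{*} = \chi(F) \cdot \id$, as claimed.

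\emph{Main obstacle.} Everything after Step 1 is formal; the one piece of real content is the comodule compatibility in Step 1, i.e.\ that $\tau_{\THH}$ respects the loop-space comodule structures well enough that the square $\Delta \circ \phi \simeq (\phi \sma \id) \circ \Delta$ commutes up to homotopy. This is precisely Proposition \ref{prop:comodule}, and since we only ever use the conclusion on cohomology, no coherence beyond a single homotopy-commutative square is needed.
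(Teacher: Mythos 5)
Your proof is correct and uses the same two ingredients as the paper's proof -- the $LB_+$-comodule property of $\tau_\THH$ from Proposition \ref{prop:comodule} to reduce to a scalar, and the compatibility with the Becker--Gottlieb transfer from Theorem \ref{thm:intro_becker_gottlieb_compatibility} to identify that scalar with $\chi(F)$ -- merely in the opposite order (the paper first computes the $H^0$ value and then extends by module structure, while you first reduce to $H^0$ by module structure and then compute the value). The reorganization is harmless, and your degree/restriction-to-a-fiber computation of $d$ is equivalent to the paper's appeal to the bottom row of its diagram being multiplication by $\chi(F)$.
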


\begin{proof}
The diagram of cohomology groups
\[ \xymatrix{
H^*(LB) \ar[r]^-{Lf^*} & H^*(LE) \ar[r]^-{\tau_\THH^*} & H^*(LB) \ar[d]^-{c^*} \\
H^*(B) \ar[r]^-{f^*} \ar[u]^-{e_0^*} & H^*(E) \ar[u]^-{e_0^*} \ar[r]^-{\tau^*} & H^*(B) } \]
commutes by Theorem \ref{thm:intro_becker_gottlieb_compatibility}. On $H^0$ this becomes
\[ \xymatrix @R=1.5em{
\Z \ar[r]^-{Lf^*} \ar@{=}[d] & H^0(LE) \ar[r]^-{\tau_\THH^*} & \Z \ar@{=}[d] \\
\Z \ar[r]^-{f^*} & H^0(E) \ar[r]^-{\tau^*} & \Z } \]
since $B$ is simply connected. The bottom row is known to be multiplication by $\chi(F)$, so the top row is, too. The conclusion follows because the desired self-map of $H^*(LB)$ is a $H^*(LB)$-module map (by Proposition \ref{prop:comodule}) which sends the generator to $\chi(F)$ times the generator.
\end{proof}

\begin{remark}
If $F$ is finitely dominated, the composite is multiplication by $\chi(\Sigma^\infty_+ F)$.
\end{remark}

\begin{corollary}\label{prop:calculation_bs1}
Under the ring isomorphisms
\[ H^*(LBS^1) \cong \Lambda[a_1] \otimes \Z[a_2], \qquad H^*(LBS^3) \cong \Lambda[b_3] \otimes \Z[b_4], \qquad |a_i| = |b_i| = i \]
the ordinary map on cohomology $Lf^*$ is the ring map generated by
\begin{align*}
b_3 &\mapsto 2a_1a_2 \\
b_4 &\mapsto a_2^2,
\end{align*}
and the free loop transfer $\tau_\THH^*$ is given on all classes $(n \geq 0)$ by
\begin{align*}
a_2^{2n} &\mapsto 2b_4^n \\
a_1a_2^{2n} &\mapsto 0 \\
a_2^{2n+1} &\mapsto 0 \\
a_1a_2^{2n+1} &\mapsto b_3 b_4^n
\end{align*}
\end{corollary}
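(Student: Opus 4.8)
The plan is to compute the ordinary pullback $Lf^{*}$ first, by a short naturality argument, and then to extract $\tau_{\THH}^{*}$ from the Euler-characteristic identity of Proposition~\ref{prop:euler}. Recall the ring structures: the evaluation fibration $\Omega BG\to LBG\xrightarrow{e_{0}}BG$ is split by the constant-loops section $c$, and $\Omega BS^{1}\simeq S^{1}$, $\Omega BS^{3}\simeq S^{3}$. Since the section forces $e_{0}^{*}$ on $H^{2}(LBS^{1})$ and $H^{4}(LBS^{3})$ to be split injective, the relevant transgressions (which would otherwise make these groups finite) must vanish, so the Serre spectral sequences collapse and multiplicativity gives $H^{*}(LBS^{1})\cong\Lambda[a_{1}]\otimes\Z[a_{2}]$ and $H^{*}(LBS^{3})\cong\Lambda[b_{3}]\otimes\Z[b_{4}]$, where we take $a_{2}=e_{0}^{*}v$ and $b_{4}=e_{0}^{*}u$ for the generators $v\in H^{2}(BS^{1})$, $u\in H^{4}(BS^{3})$. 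In particular $H^{*}(LBS^{3})$ is free abelian and concentrated in degrees $\equiv 0,3\pmod 4$; this will dispose of the basis classes of $H^{*}(LBS^{1})$ that are not in the image of $Lf^{*}$.

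The main tool is the total evaluation map $\ev\colon S^{1}\times LX\to X$, $(\theta,\gamma)\mapsto\gamma(\theta)$, which is natural in $X$, so $\ev\circ(\id_{S^{1}}\times Lf)=f\circ\ev$ as maps $S^{1}\times LBS^{1}\to BS^{3}$. Writing $\epsilon\in H^{1}(S^{1})$ for the generator and using the K\"unneth isomorphism $H^{*}(S^{1}\times LX)\cong H^{*}(S^{1})\otimes H^{*}(LX)$, I would normalize the generators so that
\[
\ev^{*}(v)=1\otimes a_{2}+\epsilon\otimes a_{1},\qquad \ev^{*}(u)=1\otimes b_{4}+\epsilon\otimes b_{3}.
\]
Restricting $\ev$ to $\{0\}\times LX$ recovers $e_{0}$, which pins the $1\otimes$-coefficients to $e_{0}^{*}v=a_{2}$ and $e_{0}^{*}u=b_{4}$; the $\epsilon\otimes$-coefficients are a priori integer multiples of $a_{1}$ and $b_{3}$, but restricting $\ev$ to $S^{1}\times\Omega BG$ and collapsing to $S^{1}\wedge\Omega BG$ identifies this restriction with the counit $\Sigma\Omega BG\to BG$, i.e.\ a map $S^{2}\to BS^{1}$ (resp.\ $S^{4}\to BS^{3}$) that is surjective on $\pi_{2}$ (resp.\ $\pi_{4}$), hence an isomorphism on $H_{2}$ (resp.\ $H_{4}$) by the Hurewicz theorem; so the $\epsilon\otimes$-coefficient is a generator of $H^{1}(LBS^{1})$ (resp.\ $H^{3}(LBS^{3})$), which we absorb into the choice of $a_{1}$, $b_{3}$.

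Since $f$ is $B$ applied to the maximal-torus inclusion $S^{1}\hookrightarrow S^{3}=\mathrm{SU}(2)$, the map $f^{*}\colon H^{*}(BS^{3})\to H^{*}(BS^{1})$ sends $u\mapsto v^{2}$ for suitable signs of $u$, $v$, so $Lf^{*}(b_{4})=Lf^{*}e_{0}^{*}(u)=e_{0}^{*}f^{*}(u)=e_{0}^{*}(v^{2})=a_{2}^{2}$. Applying $H^{*}$ to the naturality square and evaluating on $u$ gives
\[
1\otimes Lf^{*}(b_{4})+\epsilon\otimes Lf^{*}(b_{3})=(\id\times Lf)^{*}\ev^{*}(u)=\ev^{*}f^{*}(u)=\ev^{*}(v^{2})=(1\otimes a_{2}+\epsilon\otimes a_{1})^{2}.
\]
Expanding in the graded ring $H^{*}(S^{1})\otimes H^{*}(LBS^{1})$ and using $\epsilon^{2}=0$, $a_{1}^{2}=0$ yields $1\otimes a_{2}^{2}+2\,\epsilon\otimes a_{1}a_{2}$; matching K\"unneth components recovers $Lf^{*}(b_{4})=a_{2}^{2}$ and gives $Lf^{*}(b_{3})=2a_{1}a_{2}$. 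The factor of $2$ is simply the cross term of a square, and this is the step where I expect to have to be most careful: the coefficient is meaningful only once one has verified (as above, via the counit/Hurewicz argument) that the $\epsilon$-components of $\ev^{*}(v)$ and $\ev^{*}(u)$ are \emph{units}. Alternatively, one can run the Sullivan minimal model of the free loop space, where the induced map sends $\bar u\mapsto s(v^{2})=2v\bar v$ for the degree $-1$ derivation $s$ with $s(v)=\bar v$; this already suffices because $H^{3}(LBS^{1};\Z)=\Z$ injects into rational cohomology.

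Finally, $f\colon BS^{1}\to BS^{3}$ is a fibration with finite CW fiber $F=\mathrm{SU}(2)/S^{1}\cong S^{2}$, $\chi(F)=2$, and $BS^{3}$ is simply connected, so Proposition~\ref{prop:euler} gives $\tau_{\THH}^{*}\circ Lf^{*}=2\cdot\id$ on $H^{*}(LBS^{3})$. As $Lf^{*}$ is a ring map, $Lf^{*}(b_{4}^{n})=a_{2}^{2n}$ and $Lf^{*}(b_{3}b_{4}^{n})=2a_{1}a_{2}^{2n+1}$, hence for all $n\geq 0$:
\[
\tau_{\THH}^{*}(a_{2}^{2n})=\tau_{\THH}^{*}(Lf^{*}(b_{4}^{n}))=2b_{4}^{n},\qquad 2\,\tau_{\THH}^{*}(a_{1}a_{2}^{2n+1})=\tau_{\THH}^{*}(Lf^{*}(b_{3}b_{4}^{n}))=2b_{3}b_{4}^{n},
\]
and torsion-freeness of $H^{4n+3}(LBS^{3})=\Z\langle b_{3}b_{4}^{n}\rangle$ forces $\tau_{\THH}^{*}(a_{1}a_{2}^{2n+1})=b_{3}b_{4}^{n}$. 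The remaining basis elements $a_{1}a_{2}^{2n}$ and $a_{2}^{2n+1}$ have degrees $\equiv 1$ and $\equiv 2\pmod 4$, so they land in $H^{4n+1}(LBS^{3})=0$ and $H^{4n+2}(LBS^{3})=0$ and are therefore killed. Since $\{a_{2}^{2n},\,a_{1}a_{2}^{2n},\,a_{2}^{2n+1},\,a_{1}a_{2}^{2n+1}\}_{n\geq 0}$ is a $\Z$-basis of $H^{*}(LBS^{1})$, this determines $\tau_{\THH}^{*}$ as stated.
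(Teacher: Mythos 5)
Your proof is correct, and it establishes the key coefficient $Lf^*(b_3)=2a_1a_2$ by a genuinely different route than the paper's. The paper extracts this factor of $2$ from the Serre spectral sequence of the fibration $LS^2\to LBS^1\to LBS^3$: one first shows $H^3(LS^2)\cong\Z/2\oplus\Z$ using the well-known alternating differentials in the SSS for $\Omega S^2\to LS^2\to S^2$, and then reads off the degree-$2$ extension on the base line. You instead use the \emph{total} evaluation map $\ev\colon S^1\times LX\to X$, whose naturality in $X$ plus the fact that $\ev^*$ is a ring map makes the factor of $2$ appear transparently as the cross term of $(1\otimes a_2+\epsilon\otimes a_1)^2$. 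The only thing requiring care in your argument is the normalization $\ev^*(u)=1\otimes b_4+\epsilon\otimes b_3$, and your justification is sound: the $1\otimes$-component is pinned down by restricting $\ev$ to $\{0\}\times LBG$ (recovering $e_0$), and the $\epsilon\otimes$-coefficient is a unit because the restriction of $\ev$ to $S^1\wedge\Omega BG$ is the adjunction counit $\Sigma\Omega BG\to BG$, which after identifying $\Omega BS^3\simeq S^3$ is a generator of $\pi_4 BS^3\cong H_4 BS^3$ by Hurewicz. Both proofs then finish identically using Proposition~\ref{prop:intro_euler} with $\chi(S^2)=2$, together with the freeness of $H^{4n+3}(LBS^3)$ to divide by $2$, and vanishing of $H^{4n+1}$ and $H^{4n+2}$ to kill the remaining basis elements. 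Your approach trades the two spectral sequence computations involving $LS^2$ for the $\ev^*$ formula and its normalization, which is arguably cleaner and makes the source of the $2$ more conceptual; the paper's approach is more elementary in not requiring any identification of the $\epsilon$-component of $\ev^*$. Your Sullivan-model remark gives yet a third (rational) check on the same coefficient.
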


\begin{proof}
We recall that the Serre spectral sequence on the path-loop fibration gives
\[ H^*(BS^1) \cong \Z[a_2], \qquad H^*(BS^3) \cong \Z[b_4] \]
and the fibration $S^2 \ra BS^1 \ra BS^3$ tells us that we may choose the generators so that $b_4$ is sent to $a_2^2$. Using the decomposition $LBS^1 \cong S^1 \times BS^1$ and the fibration with a section $S^3 \ra LBS^3 \ra BS^3$, we conclude that in both cases the standard fibration sequence $\Omega X \ra LX \ra X$ gives a Serre spectral sequence with no differentials, giving
\[ H^*(LBS^1) \cong \Lambda[a_1] \otimes \Z[a_2], \qquad H^*(LBS^3) \cong \Lambda[b_3] \otimes \Z[b_4] \]

Next we determine that the map $H^*(LBS^3) \ra H^*(LBS^1)$ sends $b_3$ to $\pm 2a_1a_2$. We recall that the Serre spectral sequence for $\Omega S^2 \ra LS^2 \ra S^2$ has differentials alternating between 0 and 2, starting with 0 at the bottom as the fibration as a section, so that the third cohomology of $LS^2$ is $\Z/2 \oplus \Z$. When this group is considered on the $y$-axis of the cohomology spectral sequence for $LS^2 \ra LBS^1 \ra LBS^3$, the $\Z$ must be killed to accommodate the fact that the limiting cohomology has rank 1, and the $\Z/2$ cannot be killed, so we have an extension of the $\Z$ on the $x$-axis by a $\Z/2$, from which we conclude the map $H^*(LBS^3) \ra H^*(LBS^1)$ has degree 2 in third cohomology. The rest of the calculation follows easily from Proposition \ref{prop:euler}.
\end{proof}

Our third application is a more hands-on computation of $\tau_\THH$ for the Hopf fibration $f: S^3 \arr S^2$. This is a principal $S^1$-bundle, so the composite $\tau_\THH^* \circ Lf^*$ must be zero, and the Becker-Gottlieb transfer $\tau$ vanishes as a map of spectra. Despite these restrictions, we may demonstrate that the free loop transfer $\tau_\THH$ is nonzero.

\begin{proposition}\label{prop:calculation_hopf}
The free loop transfer for the Hopf fibration $S^3 \ra S^2$ is nonzero on integral cohomology.
\end{proposition}

More precisely, $H^q(LS^3) \ra H^q(LS^2)$ is zero unless $q = 3,5,7,\ldots$, where the map is $\Z \ra \Z \oplus \Z/2$ sending the generator of $\Z$ to the generator of $\Z/2$. So the transfer is zero rationally, but nonzero with $\Z/2$ coefficients.

\begin{proof}
The ring $H^*(LS^2)$ has very few nonzero products, so we are forced to take a more direct approach. Recall our convention $P = E \times_B LB$. We use the top line of the Serre spectral sequence for $M \ra P \ra LB$ to compute the umkehr map $LB \ra P^{-TM}$. Then we use a geometric argument to identify $\hat P = P - LE$ and the map $\hat P \ra P$ on homology, which we use to compute the collapse $P \ra P / \hat P \simeq LE^{TM}$.

We recall the cohomology of $LS^3$ as a ring, and $LS^2$ as a group with a few of its multiplications:
\[ H^*(LS^3) \cong \Lambda[v_3] \otimes \Gamma[u_2] \]
\[ H^*(\Omega S^3) \cong \Gamma[u_2] \]
\[ H^*(\Omega S^2) \cong \Lambda[t_1] \otimes \Gamma[u_2'] \]
\[ \xymatrix @R=0.5em @C=0.5em{
q & 0 & 1 & 2 & 3 & 4 & 5 & \ldots \\
H^q(LS^3) & \Z_1 & 0 & \Z_u & \Z_v & \Z_{u^2} & \Z_{uv} & \ldots \\
H^q(LS^2) & \Z_1 & \Z_{a_1} & \Z_{b_2} & \Z_{a_3} \oplus \Z_{a_1b_2}/2\Z & \Z_{b_4} & \Z_{a_5} \oplus \Z_{a_3b_2}/2\Z & \ldots
} \]
Here $\Gamma$ means divided power algebra, so the generators as an abelian group are $1, u_2, u_4, u_6, \ldots$ with the relations $u_{2i}u_{2j} = \frac{(i+j)!}{i!j!}u_{2i+2j}$. In particular, $u_{2n} = \frac1{n!}u_2^n$. The classes $a_{2n+1}$ are the permanent cycles on the $y$-axis in the Serre spectral sequence for $\Omega S^2 \ra LS^2 \ra S^2$, so they map isomorphically to the odd degree classes in $H^*(\Omega S^2)$. The $b_{2n}$ are on the other vertical line and so they map to zero. The classes $2a_{2n+1}b_2$ die on this $E_2$-page but $a_{2n+1}b_2$ survives with $2$-torsion.

Since $S^1 \ra S^3$ is a map of topological groups, we get compatible trivializations
\[ \xymatrix @C=2em @R=2em{
LS^3 \ar[r] \ar[d]^-\cong & P \ar[d]^-\cong & \hat P \ar[l] \ar[d]^-\cong \\
S^3 \times \hofib(* \ra S^3) \ar[d]^-\sim \ar[r] & S^3 \times \hofib(S^1 \ra S^3) \ar[d]^-\sim & S^3 \times \hofib((S^1 \setminus \{*\}) \ra S^3) \ar[l] \ar[d]^-\sim \\
S^3 \times \Omega S^3 \ar[r] & S^3 \times \Omega S^2 & S^3 \times \Omega S^3 \ar[l]
} \]
Since the cofiber of $\hat P \ra P$ is $\Sigma LS^3$, the long exact sequence tells us that $\Omega S^3 \ra \Omega S^2$ is an isomorphism on second cohomology. Therefore we can rewrite our rings as
\[ H^*(\Omega S^3) \cong \Gamma[u_2] \]
\[ H^*(\Omega S^2) \cong \Lambda[t_1] \otimes \Gamma[u_2] \]
and the map sends $t_1$ to 0 and $u_2$ to $u_2$. The map $LS^3 \ra LS^2$ on cohomology then sends all the classes $a_{2n+1}$ to 0, and the class $b_2$ to 0, because it does not hit a multiple of $u_2$. In fact, all the remaining cohomology classes are also sent to 0. One can deduce this from the co-filtration on $\Sigma^\infty_+ LS^n$ coming from the dual of the filtration on the cyclic bar construction on $D_+(S^n)$, and the fact that the map of spaces $S^3 \ra S^2$ induces a map that must respect this filtration.

We can then say
\[ H^*(P) \cong \Lambda[t_1,v_3] \otimes \Gamma[u_2] \]
and the map $LS^3 \ra P$ on cohomology $H^*(P) \ra H^*(LS^3)$ preserves $v_3$ and $u_2$ while killing $t_1$. Returning to the cofiber sequence $\hat P \ra P \ra \Sigma LS^3$, we deduce that $\hat P \ra P$ is surjective on cohomology, with kernel exactly those monomials which contain $t$. It follows that the transfer $H^q(LS^3) \ra H^{q+1}(P)$ is in each degree just multiplication by $\pm t$.

Next we analyze the map $P \ra LS^2$ and its transfer on cohomology, using the Serre spectral sequence for $S^1 \ra P \ra LS^2$. We check that $\pi_1(LS^2)$ acts trivially on $H_*(S^1)$, because the bundle is trivial over $\Omega S^2$. The $E_2$ page is
\[ \xymatrix @R=1em @C=1em{
\Z \ar[rrd]^-\sim & \Z \ar[rrd] & \Z & \Z \oplus \Z/2\Z \ar[rrd] & \Z & \Z \oplus \Z/2\Z & \ldots \\
\Z_1 & \Z_{a_1} & \Z_{b_2} & \Z_{a_3} \oplus \Z_{a_1b_2}/2\Z & \Z_{b_4} & \Z_{a_5} \oplus \Z_{a_3b_2}/2\Z & \ldots
} \]
The unlabeled differentials send the $\Z$ to the $\Z/2$, and the undrawn differentials are zero, so the $E_3 = E_\infty$ page is
\[ \xymatrix @R=1em @C=1em{
0 & 2\Z & \Z & 2\Z \oplus \Z/2\Z & \Z & 2\Z \oplus \Z/2\Z & \ldots \\
\Z_1 & \Z_{a_1} & 0 & \Z_{a_3} & \Z_{b_4} & \Z_{a_5} & \ldots
} \]
Because the cohomology of $P$ is $\Z \oplus \Z$ in degrees 3 and above, each $\Z/2$ on the top line must be a nontrivial extensions of a $\Z_{b_{2n}}$ on the bottom line.
We deduce that in $H^*(LS^2) \ra H^*(P)$, $a_1$ is sent to $t$, and $b_2$ is killed.
Above this range we have
\begin{align*}
a_{2n+1} &\mapsto u_{2n}t_1 + 0u_{2n-2}v_3 \\
b_{2n} &\mapsto 0u_{2n} \pm 2u_{2n-2}t_1v_3 \\
a_{2n+1}b_2 &\mapsto 0
\end{align*}
The terms without $v_3$ are calculated by tracing the classes through the composite $\Omega S^2 \ra P \ra LS^2$, and the terms with $v_3$ are calculated from the composite $LS^3 \ra P \ra LS^2$, together with the above spectral sequence.

Finally we compute the transfer
\[ H^{q+1}(P) \ra H^q(LS^2) \]
This is the map from $H^{q+1}(P)$ into the top line of the above spectral sequence, which kills $1$ and $t$, and sends $u_2$ to $\pm a_1$. Above this range, it is completely determined by the property that it kills exactly the images of the maps into $H^*(P)$ we computed just above. We conclude the map is
\begin{align*}
u_{2n} t_1 &\mapsto 0b_{2n} \\
u_{2n-2} v_3 &\mapsto \pm b_{2n} \\
u_{2n+2} &\mapsto \pm 2a_{2n+1} + ?a_{2n-1}b_2 \\
u_{2n-2} t_1 v_3 &\mapsto 0a_{2n+1} + a_{2n-1}b_2
\end{align*}
Since the transfer $H^q(LS^3) \ra H^{q+1}(P)$ hits just the monomials containing $t$, we conclude that the $THH$ transfer $H^q(LS^3) \ra H^q(LS^2)$ sends each class $u_{2n-2} v_3$ to the nonzero class $a_{2n-1}b_2$ for $n \geq 1$, and all other classes to zero.

\end{proof}

\begin{bibdiv}
\begin{biblist}


\bib{ando_blumberg_gepner}{article}{
  title={Parametrized spectra, multiplicative {T}hom spectra, and the twisted {U}mkehr map},
  author={Ando, M.}
  author={Blumberg, A.J.}
  author={Gepner, D.},
  journal={[arXiv:1112.2203]},
  year={2011}
}

\bib{BG76}{article}{
 AUTHOR = {Becker, J.C.}
 author={Gottlieb, D.H.},
     TITLE = {Transfer maps for fibrations and duality},
   JOURNAL = {Compositio Math.},
  FJOURNAL = {Compositio Mathematica},
    VOLUME = {33},
      YEAR = {1976},
    NUMBER = {2},
     PAGES = {107--133},
      ISSN = {0010-437X},
   MRCLASS = {55E10 (55F05)},
  MRNUMBER = {0436137 (55 \#9087)},
MRREVIEWER = {S. Gitler},
}

\bib{BeMa}{article}{
 AUTHOR = {Bentzen, S.}
 author={Madsen, I.},
     TITLE = {Trace maps in algebraic {$K$}-theory and the {C}oates-{W}iles
              homomorphism},
   JOURNAL = {J. Reine Angew. Math.},
  FJOURNAL = {Journal f\"ur die Reine und Angewandte Mathematik. [Crelle's
              Journal]},
    VOLUME = {411},
      YEAR = {1990},
     PAGES = {171--195},
      ISSN = {0075-4102},
   MRCLASS = {19D55 (11R42 11R70 19F27)},
  MRNUMBER = {1072979},
}

\bib{bgt}{article}{
  title={A universal characterization of higher algebraic K-theory},
  author={Blumberg, A.J.},
  author={Gepner, D.},
  author={Tabuada, G.},
  journal={Geometry \& Topology},
  volume={17},
  number={2},
  pages={733--838},
  year={2013},
  publisher={Mathematical Sciences Publishers}
}

\bib{blumberg_mandell}{article}{
  title={Localization theorems in topological Hochschild homology and topological cyclic homology},
  author={Blumberg, A.J.},
  author={Mandell, M.A.},
  journal={Geometry \& Topology},
  volume={16},
  number={2},
  pages={1053--1120},
  year={2012},
  publisher={Mathematical Sciences Publishers}
}

\bib{blumberg_mandell2}{article}{
  title={Localization for $\THH(ku)$ and the topological Hochschild and cyclic homology of Waldhausen categories},
  author={Blumberg, A.J.},
  author={Mandell, M.A.},
  journal={[arXiv:1111.4003]},
  }

\bib{BHM}{article}{
AUTHOR = {B{\"o}kstedt, M.}
author={Hsiang, W.C.}
author={Madsen, I.},
     TITLE = {The cyclotomic trace and algebraic {$K$}-theory of spaces},
   JOURNAL = {Invent. Math.},
  FJOURNAL = {Inventiones Mathematicae},
    VOLUME = {111},
      YEAR = {1993},
    NUMBER = {3},
     PAGES = {465--539},
      ISSN = {0020-9910},
     CODEN = {INVMBH},
   MRCLASS = {55P42 (19D55 19L99 55P60)},
  MRNUMBER = {1202133 (94g:55011)},
MRREVIEWER = {Roland Schw{\"a}nzl},
}

\bib{cohen_multiplicative}{article}{
  title={Multiplicative properties of {A}tiyah duality},
  author={Cohen, R.L.},
  journal={Homology, Homotopy and Applications},
  volume={6},
  number={1},
  pages={269--281},
  year={2004},
  publisher={International Press of Boston}
}

\bib{DJ}{article}{
AUTHOR = {Dorabia{\l}a, W.}
author={Johnson, M.W.},
     TITLE = {Factoring the {B}ecker-{G}ottlieb transfer through the trace
              map},
   JOURNAL = {Pure Appl. Math. Q.},
  FJOURNAL = {Pure and Applied Mathematics Quarterly},
    VOLUME = {8},
      YEAR = {2012},
    NUMBER = {1},
     PAGES = {133--173},
      ISSN = {1558-8599},
   MRCLASS = {19D10 (19D55 55R12)},
  MRNUMBER = {2900083},
MRREVIEWER = {Thomas H{\"u}ttemann},
}

\bib{D}{article}{
 AUTHOR = {Douglas, C.L.},
     TITLE = {Trace and transfer maps in the algebraic {$K$}-theory of
              spaces},
   JOURNAL = {$K$-Theory},
  FJOURNAL = {$K$-Theory. An Interdisciplinary Journal for the Development,
              Application, and Influence of $K$-Theory in the Mathematical
              Sciences},
    VOLUME = {36},
      YEAR = {2005},
    NUMBER = {1-2},
     PAGES = {59--82 (2006)},
      ISSN = {0920-3036},
     CODEN = {KTHEEO},
   MRCLASS = {19D10 (55R12)},
  MRNUMBER = {2274158 (2007j:19002)},
MRREVIEWER = {Wojciech Dorabia{\l}a},
}

\bib{DWW}{article}{
AUTHOR = {Dwyer, W.}
author={Weiss, M.}
author={Williams, B.},
     TITLE = {A parametrized index theorem for the algebraic {$K$}-theory
              {E}uler class},
   JOURNAL = {Acta Math.},
  FJOURNAL = {Acta Mathematica},
    VOLUME = {190},
      YEAR = {2003},
    NUMBER = {1},
     PAGES = {1--104},
      ISSN = {0001-5962},
     CODEN = {ACMAA8},
   MRCLASS = {19D10 (57R10 58J22 58J52)},
  MRNUMBER = {1982793},
MRREVIEWER = {Kiyoshi Igusa},
}

\bib{ekmm}{book}{
  title={Rings, modules, and algebras in stable homotopy theory},
  author={Elmendorf, A.D.},
  author={Kriz, I.},
  author={Mandell, M.A.},
  author={May, J.P.},
  number={47},
  year={2007},
  publisher={American Mathematical Soc.}
}

\bib{calc1}{article}{
	title={Calculus {I}: {T}he first derivative of pseudoisotopy theory},
	author={Goodwillie, T.~G.},
	journal={K-theory},
	volume={4},
	number={1},
	pages={1--27},
	year={1990},
	publisher={Springer}
}

\bib{JS}{article}{
title={The geometry of tensor calculus I},
author={Joyal, A.},
author={Street, R.},
journal={Adv. Math.},
volume={88},
year={1991}
pages={55--112}
}

\bib{kieboom}{article}{
  title={A pullback theorem for cofibrations},
  author={Kieboom, R.W.},
  journal={Manuscripta Mathematica},
  volume={58},
  number={3},
  pages={381--384},
  year={1987},
  publisher={Springer}
}

\bib{lind_diagram_spaces}{article}{
AUTHOR = {Lind, J.A.},
     TITLE = {Diagram spaces, diagram spectra and spectra of units},
   JOURNAL = {Algebr. Geom. Topol.},
  FJOURNAL = {Algebraic \& Geometric Topology},
    VOLUME = {13},
      YEAR = {2013},
    NUMBER = {4},
     PAGES = {1857--1935},
      ISSN = {1472-2747},
   MRCLASS = {55P42 (18G55 55P43 55P47 55P48)},
  MRNUMBER = {3073903},
MRREVIEWER = {Stanis{\l}aw Betley},
}

\bib{LiMa1}{article}{
title={The Morita equivalence between parametrized spectra and module spectra}
author={J.A. Lind}
author={C. Malkiewich}
journal={to appear in \emph{Contemp. Math.} [arXiv:1702.07794]}
}

\bib{madsen_survey}{article}{
	title={Algebraic ${K}$-theory and traces},
	author={Madsen, I.},
	series={Current Developments in Mathematics},
	pages={191--321},
	year={1995},
	publisher={International Press}
}

\bib{malkiewich_thesis}{thesis}{
  title={Duality and linear approximations in Hochschild homology, {$K$}-theory, and string topology},
  author={Malkiewich, C.},
  year={2014},
  school={Stanford University}
}

\bib{coassembly}{article}{
  title={Coassembly and the {$ K $}-theory of finite groups},
  author={Malkiewich, C.},
  journal={Adv. Math.},
  volume={307}
  year={2017}
  pages={100--146}
}

\bib{MaKl}{article}{
title={The transfer is functorial}
author={Malkiewich, C.}
author={Klein, J.}
journal={[arXiv:1603.01872]}
}

\bib{mandell_may}{article}{
  title={Equivariant orthogonal spectra and {S}-modules},
  author={Mandell, M.A.},
  author={May, J.P.},
  number={755},
  year={2002},
  publisher={American Mathematical Soc.}
}

\bib{mmss}{article}{
  title={Model categories of diagram spectra},
  author={Mandell, M.A.},
  author={May, J.P.},
  author={Schwede, S.},
  author={Shipley, B.},
  journal={Proceedings of the London Mathematical Society},
  volume={82},
  number={02},
  pages={441--512},
  year={2001},
  publisher={Cambridge Univ Press}
}

\bib{MS}{book}{
title={Parametrized homotopy theory}
author={J.P. May}
author={J. Sigurdsson} 
date={2006}
series={Mathematical Surveys and Monographs}
volume={132}
publisher={American Mathematical Society}
}

\bib{PoThesis}{article}{
AUTHOR = {Ponto, K.},
     TITLE = {Fixed point theory and trace for bicategories},
   JOURNAL = {Ast\'erisque},
  FJOURNAL = {Ast\'erisque},
    NUMBER = {333},
      YEAR = {2010},
     PAGES = {xii+102},
      ISSN = {0303-1179},
      ISBN = {978-2-85629-293-8},
   MRCLASS = {55M20 (16D90 18D05)},
  MRNUMBER = {2741967 (2012b:55003)},
MRREVIEWER = {R. H. Street},
}

\bib{PS12}{article}{
AUTHOR = {K. Ponto} 
author={M. Shulman},
     TITLE = {Duality and traces for indexed monoidal categories},
   JOURNAL = {Theory Appl. Categ.},
  FJOURNAL = {Theory and Applications of Categories},
    VOLUME = {26},
      YEAR = {2012},
     PAGES = {582--659},
      ISSN = {1201-561X},
   MRCLASS = {18D10 (18D30)},
  MRNUMBER = {3065938},
MRREVIEWER = {Adriana Balan},
}

\bib{PS13}{article}{
AUTHOR = {Ponto, K.}
author={Shulman, M.},
     TITLE = {Shadows and traces in bicategories},
   JOURNAL = {J. Homotopy Relat. Struct.},
  FJOURNAL = {Journal of Homotopy and Related Structures},
    VOLUME = {8},
      YEAR = {2013},
    NUMBER = {2},
     PAGES = {151--200},
      ISSN = {2193-8407},
   MRCLASS = {18D05 (55M20)},
  MRNUMBER = {3095324},
MRREVIEWER = {Richard John Steiner},
}

\bib{PS14}{article}{
AUTHOR = {K. Ponto}
AUTHOR = {M. Shulman},
     TITLE = {The multiplicativity of fixed point invariants},
   JOURNAL = {Algebr. Geom. Topol.},
  FJOURNAL = {Algebraic \& Geometric Topology},
    VOLUME = {14},
      YEAR = {2014},
    NUMBER = {3},
     PAGES = {1275--1306},
      ISSN = {1472-2747}
}

\bib{schlichtkrull}{article}{
    AUTHOR = {Schlichtkrull, C.},
     TITLE = {The transfer map in topological {H}ochschild homology},
   JOURNAL = {J. Pure Appl. Algebra},
  FJOURNAL = {Journal of Pure and Applied Algebra},
    VOLUME = {133},
      YEAR = {1998},
    NUMBER = {3},
     PAGES = {289--316},
      ISSN = {0022-4049},
     CODEN = {JPAAA2},
   MRCLASS = {19D55 (19D10 55R12)},
  MRNUMBER = {1654263 (99j:19003)},
MRREVIEWER = {Erik K. Pedersen},
}

\bib{schwede_shipley}{article}{
  title={Equivalences of monoidal model categories},
  author={Schwede, S.},
  author={Shipley, B.},
  journal={Algebraic \& Geometric Topology},
  volume={3},
  number={1},
  pages={287--334},
  year={2003},
  publisher={Mathematical Sciences Publishers}
}

\bib{Sel}{article}{
title={A survey of graphical languages for monoidal categories}
author={Selinger, P.}
journal={Springer Lecture Notes in Physics}
volume={813}
year={2011}
pages={289--355}
}

\bib{shipley_THH}{article}{
  title={Symmetric spectra and topological {H}ochschild homology},
  author={Shipley, B.},
  journal={K-theory},
  volume={19},
  number={2},
  pages={155--183},
  year={2000},
  publisher={Springer}
}

\bib{Shul08}{article}{
 AUTHOR = {Shulman, M.},
     TITLE = {Framed bicategories and monoidal fibrations},
   JOURNAL = {Theory Appl. Categ.},
  FJOURNAL = {Theory and Applications of Categories},
    VOLUME = {20},
      YEAR = {2008},
     PAGES = {No. 18, 650--738},
      ISSN = {1201-561X},
   MRCLASS = {18D05 (18D10 18D30)},
  MRNUMBER = {2534210 (2011b:18010)},
}

\bib{wald2}{article}{
	title={Algebraic {K}-theory of topological spaces. {II}},
	author={Waldhausen, Friedhelm},
	journal={Algebraic Topology Aarhus 1978},
	pages={356--394},
	year={1979},
	publisher={Springer}
}

\bib{weiss_williams_assembly}{article}{
author={Michael Weiss}
author={Bruce Williams} 
title={Assembly}
booktitle={Novikov conjectures, index theorems and rigidity, Vol. 2 (Oberwolfach, 1993)}
series={London Math. Soc. Lecture Note Ser.}
volume={227}
year={1995}
publisher={Cambridge Univ. Press, Cambridge}
pages={332--352}
}

\bib{williams}{article}{
    AUTHOR = {Williams, B.},
     TITLE = {Bivariant {R}iemann {R}och theorems},
 BOOKTITLE = {Geometry and topology: {A}arhus (1998)},
    SERIES = {Contemp. Math.},
    VOLUME = {258},
     PAGES = {377--393},
 PUBLISHER = {Amer. Math. Soc., Providence, RI},
      YEAR = {2000},
   MRCLASS = {57R22 (14C35 14C40 19E15 19L10 55R12 57R10 57R55)},
  MRNUMBER = {1778119},
MRREVIEWER = {Steven R. Costenoble}
}

\end{biblist}
\end{bibdiv}

\end{document}